\documentclass[reqno,12pt]{amsart}
\usepackage[utf8]{inputenc}
\usepackage[left=1.15in,right=1.15in,top=1in,bottom=1in]{geometry}
\usepackage{amsmath,amssymb,amsthm,mathrsfs,mathtools}
\usepackage{xcolor}
\usepackage{cite}
\usepackage[colorlinks=true,linkcolor=blue,citecolor=red,urlcolor=blue]{hyperref}
\usepackage{enumitem}
\numberwithin{equation}{section}
\allowdisplaybreaks[2]

\theoremstyle{plain}
\newtheorem{theorem}{Theorem}[section]
\newtheorem{lemma}[theorem]{Lemma}
\newtheorem{proposition}[theorem]{Proposition}
\theoremstyle{definition}
\newtheorem{definition}[theorem]{Definition}

\newcommand{\Rn}{\mathbb R^n}
\newcommand{\Sn}{\mathbb S^n}
\newcommand{\tr}{\operatorname{tr}}

\newcommand{\epsN}{\epsilon_N}
\newcommand{\muN}{\mu_N}
\newcommand{\rhoN}{\rho_N}

\newcommand{\F}{\mathcal F}

\begin{document}

\title[Blow-up for the $\sigma_2$-Yamabe equation]{Blow-up phenomena for the $\sigma_2$-Yamabe equation}
\author{Bin Deng}
\address{Department of Mathematics and Statistics, Wuhan University,
Wuhan 430072, Hubei, P.R. China}
\email{dbmath@whu.edu.cn}
\author{Han Lu}
\address{School of Mathematics and Statistics, Henan University,
Kaifeng 475004, China}
\email{hlu@henu.edu.cn}
\author{Jun-cheng Wei}
\address{Department of Mathematics, Chinese University of Hong Kong,
Shatin, NT, Hong Kong}
\email{wei@math.cuhk.edu.hk}
\date{July 12, 2026}

\begin{abstract}
For every integer $n\ge27$, we construct a smooth metric on $\mathbb S^n$
that is invariant under the antipodal map and is not locally conformally
flat.  For this fixed background metric, the normalized $\sigma_2$-Yamabe
equation admits a noncompact family of positive $\Gamma_2^+$-admissible
solutions.  The main difficulty is the possible loss of ellipticity of the
linearized operator.  This difficulty does not occur for the scalar Yamabe
equation, whose linearization has a fixed Laplace-type principal part.  In
the $\sigma_2$ problem, the positive Newton tensor of a standard bubble
decays in the far field, while the terms produced by the background metric
need not decay at the same rate.  Our construction provides the relative
decay needed to keep the conformal metrics inside the ellipticity cone.
A quartic profile in the finite-dimensional reduction yields the endpoint
dimension $n=27$.
\end{abstract}

\subjclass[2020]{Primary 53C21; Secondary 35J60, 35B44, 58J05}
\keywords{$\sigma_2$-Yamabe problem, blow-up, noncompactness, two bubbles,
fully nonlinear conformal equations, Schwarzschild connector,
Lyapunov--Schmidt reduction}

\maketitle

\tableofcontents

\section{Introduction}

Let $(M^n,g)$ be a compact Riemannian manifold, $n\ge5$, and let
\[
 A_g=\frac1{n-2}\left(\operatorname{Ric}_g
 -\frac{R_g}{2(n-1)}g\right)
\]
be its Schouten tensor.  If $\mu(A_g)=(\mu_1,\ldots,\mu_n)$ denotes the
eigenvalues of $A_g$ with respect to $g$, then
\[
 \sigma_k(g^{-1}A_g)=\sum_{i_1<\cdots<i_k}
 \mu_{i_1}\cdots\mu_{i_k}.
\]
The $\sigma_k$-Yamabe problem asks for a conformal metric of constant
$\sigma_k$-curvature whose Schouten eigenvalues remain in the admissible
cone $\Gamma_k^+$.  For $k=2$ this is a genuinely fully nonlinear equation:
the coefficients of its linearization are given by the first Newton tensor
of the unknown conformal metric, and ellipticity is available only as long
as the solution stays in $\Gamma_2^+$.

The existence and regularity theory for fully nonlinear Yamabe-type
equations was developed in several settings by Chang--Gursky--Yang,
Guan--Wang, Gursky--Viaclovsky, Li--Li, Ge--Wang, and
Sheng--Trudinger--Wang; see
\cite{CGY,GuanWang,GurskyViaclovsky,LiLi1,LiLi2,GeWang,STW}. Compactness results for fully nonlinear conformal $\sigma_k$-problems
have also been obtained for $k\geq n/2$; see \cite{GurskyViaclovskyVolume,LiNguyenCompactness,LiNguyenWangNirenberg} and the references therein.
The case $k=2$ has an additional feature essential here: the total
$\sigma_2$-curvature has a conformal variational structure without a local
conformal flatness assumption \cite{Viaclovsky}.  This makes it possible to
solve the final finite-dimensional equations by locating a critical point
of a reduced energy. 

The corresponding compactness question is classical in conformal
geometry.  For the Yamabe equation, compactness away from the round sphere
in dimensions at most $24$ follows from the work of Druet, Li--Zhang,
Khuri--Marques--Schoen, Li--Zhu, and Marques
\cite{DruetMulti,DruetCompact,LiZhang1,LiZhang2,LiZhang3,KMS,LiZhu,Marques}.
Brendle and Brendle--Marques proved noncompactness in high dimensions by
constructing localized Weyl-quadratic perturbations for which the reduced
energy has a stable critical point \cite{Brendle,BrendleMarques}.  Related
compactness and noncompactness phenomena occur for the $Q$-curvature,
fractional Yamabe, spinorial Yamabe, and CR Yamabe problems; see, for
example, \cite{WeiZhao,LiXiong,KMW,AmmannHumbertMorel,JerisonLee}.

The fully nonlinear problem has a difficulty absent from these fixed
leading-operator models.  To see it, work in Euclidean coordinates and put a
standard bubble
\[
 U_{\lambda,0}(x)
 =\left(\frac{2\lambda}{\lambda^2+r^2}\right)^{\frac{n-4}{4}},
 \qquad r=|x|,
\]
on a background metric $e^h\delta$.  Set
\[
 p=\frac{r^2}{\lambda^2+r^2},
 \qquad
 q=p(1-p)=\frac{\lambda^2r^2}{(\lambda^2+r^2)^2}.
\]
For the flat background, the Euclidean-coordinate matrix of the first Newton
tensor of $U_{\lambda,0}^{8/(n-4)}\delta$ is
\[
 \frac{2(n-1)q}{r^2}I,
 \qquad\text{where}\qquad
 \frac q{r^2}=\frac{\lambda^2}{(\lambda^2+r^2)^2}.
\]
It is positive, but its size is only $O(\lambda^2r^{-4})$ in the far
field.  On the perturbed background, the change in the Schouten endomorphism,
and hence in the coefficients of the linearized operator, is measured
relative to the bubble Newton tensor by
\[
 \Xi_h(r):=\frac{r^2}{q}
 \left(|D^2h|+|Dh|^2+\frac pr|Dh|+\frac p{r^2}|h|\right).
\]
Smallness of the ordinary $C^2$ norm of $h$ does not imply
$\Xi_h=o(1)$, because $q$ tends to zero in the far field.  If $\Xi_h$
is not small, the background
terms can be as large as the positive bubble Newton tensor, and ellipticity
of the linearized operator is no longer guaranteed.  Thus multiplying a
standard bubble by a fixed background metric is not enough for the present
construction.  We instead use a symmetric two-ended profile and prove decay
estimates for $h$ relative to the local Newton scale throughout the
construction.

For a positive function $u$ on a compact manifold $(M^n,g)$, set
$g_u=u^{8/(n-4)}g$ and
\[
 \mathcal N_g(u)=u^{\frac{4n}{n-4}}
 \left(\sigma_2(g_u^{-1}A_{g_u})-\frac14\binom n2\right).
\]
We also use the scale-invariant $\sigma_2$-Yamabe quotient
\begin{equation}\label{eq:quotient-definition-step24}
 \mathcal Q_g(u)
 :=\frac{\displaystyle\int_M
       \sigma_2(g_u^{-1}A_{g_u})\,dv_{g_u}}
      {\displaystyle\operatorname{Vol}(g_u)^{(n-4)/n}},
 \qquad
 \mathcal Y_2(\mathbb S^n)
 :=\frac14\binom n2|\mathbb S^n|^{4/n}.
\end{equation}
Let $J:\mathbb S^n\to\mathbb S^n$, $J(p)=-p$, denote the antipodal map.
The main result is the following.

\begin{theorem}[Two-bubble noncompactness]\label{thm:target}
Assume that $n\ge 27$.  Then there exist a smooth Riemannian metric $g_0$
on $\mathbb S^n$ and a sequence of positive functions
$v_N\in C^\infty(\mathbb S^n)$, $N\in\mathbb N$, with the following
properties:
\begin{enumerate}[label=\textup{(\roman*)}]
 \item the metric $g_0$ is invariant under the antipodal map $J$ and is not
 locally conformally flat;
 \item each $v_N$ is $J$-invariant, the conformal metric
 $g_{v_N}=v_N^{8/(n-4)}g_0$ is $\Gamma_2^+$-admissible, and
 \[
  \mathcal N_{g_0}(v_N)=0 \qquad\text{on }\mathbb S^n;
 \]
 \item for every $N$,
 \[
  \mathcal Q_{g_0}(v_N)<2^{4/n}\mathcal Y_2(\mathbb S^n),
  \qquad
  \lim_{N\to\infty}\mathcal Q_{g_0}(v_N)
  =2^{4/n}\mathcal Y_2(\mathbb S^n);
 \]
 \item
 \[
  \lim_{N\to\infty}\sup_{\mathbb S^n}v_N=\infty.
 \]
\end{enumerate}
\end{theorem}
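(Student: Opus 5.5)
The proof will follow the Brendle--Marques strategy, adapted to the $\sigma_2$ setting: a gluing construction followed by a finite-dimensional Lyapunov--Schmidt reduction, with the added requirement that every conformal metric produced stays inside $\Gamma_2^+$.

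\textbf{Approximate solutions and the background metric.} The approximate solutions will be antipodally symmetric. We pull $\mathbb S^n$ back to $\mathbb R^n\setminus\{0\}$ by stereographic projection, so that $J$ becomes the inversion $x\mapsto x/|x|^2$ up to a reflection. The metric is taken of the form $g_0=e^{h}\,g_{\mathbb S^n}$ with a traceless $h$ (equivalently $e^{h}\delta$ in Euclidean coordinates). It is a superposition $h=\sum_N h_N$ of localized Weyl-type perturbations $h_N(x)=\mu_N\,f(|x|^2/\rho_N^2)\,H(x)$, where $H$ is a fixed traceless quadratic-in-$x$ polynomial tensor (a quartic profile in the reduced energy). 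The bumps are supported in disjoint annuli centred at the equator sphere $|x|=1$ and are made $J$-invariant by symmetrizing. Because $\sum_N$ converges in $C^\infty$ for fast-decaying $\mu_N$, $g_0$ is smooth and $J$-invariant. The Weyl tensor of $H$ does not vanish, so $g_0$ is not locally conformally flat, which gives (i).

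For each $N$ the approximate solution is the two-ended profile
\[
\bar u_N = U_{\lambda_N,0}+U_{\lambda_N,0}\circ J,
\]
glued through the Schwarzschild-type connector region near the equator. It is corrected by the first-order term $w_N$ solving the linearized equation against $h_N$, in the spirit of Brendle's correction.

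\textbf{Ellipticity and the relative-decay estimate.} This is the main obstacle. We must show that the quantity $\Xi_{h}(r)$ of the introduction, and the analogous quantity for $w_N$, is $o(1)$ uniformly on the whole sphere. Then the Newton tensor of $g_{\bar u_N+w_N+\phi}$ is bounded below by $(1-o(1))\,2(n-1)q/r^2$, so the linearized operator is uniformly elliptic in weighted spaces and the metric is admissible.

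The two-ended symmetric profile is what makes this possible. Near the equator $q$ is bounded below by a fixed power of $\lambda_N$ rather than decaying to $0$. Away from the support of $h_N$, the only background terms come from the tails of the other bumps $h_M$, $M\ne N$. Choosing $\rho_N,\mu_N$ with super-geometric decay makes those tails relatively small with respect to $q/r^2$.

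We will prove weighted estimates for $w_N$ and $\phi$ in norms $\sup (\lambda+r)^{\,\cdot}|D^k\cdot|$ with weights tuned to $q/r^2$. The hardest part will be controlling $|Dh|^2$ and $p|h|/r^2$ against $q/r^2$ in the transition region $r\sim\rho_N$. I would first try the scaling $\lambda_N\sim\rho_N$ with $\mu_N$ small, and check that the decay exponents close exactly when $n\ge 27$.

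\textbf{Reduction and conclusion.} Within the $J$-invariant class, the kernel of the linearized operator at the two-ended profile is spanned by the dilation and translation modes. We solve the projected equation by a contraction, using invertibility of the uniformly elliptic linearization modulo the kernel. Viaclovsky's variational structure for $\sigma_2$ then turns the remaining equations into critical points of a reduced energy
\[
\mathcal F_N(\xi,\epsilon)=2\mathcal Y_2\text{-level}-c\,\mu_N^2\,\epsilon^{\,8}\,F(\xi,\epsilon)+o(\cdot).
\]
Here $F$ comes from the quartic profile, and its coefficient is a dimensional constant that has the sign giving a strict local minimum precisely when $n\ge 27$. A degree or min-max argument gives a critical point, hence a solution $v_N$ that is $J$-invariant and admissible with $\mathcal N_{g_0}(v_N)=0$, proving (ii).

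The energy expansion gives $\mathcal Q_{g_0}(v_N)=2^{4/n}\mathcal Y_2(\mathbb S^n)-c\mu_N^2\lambda_N^{\,\cdot}+o(\cdot)$ with $c>0$, which yields (iii). Finally $\sup v_N\gtrsim\lambda_N^{-(n-4)/4}\to\infty$, which is (iv).
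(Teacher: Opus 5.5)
\textbf{First gap: where the perturbations sit.} Your bubbles always sit at the origin and its antipode. Your $h_N$ are either concentric about the origin (your formula $\mu_Nf(|x|^2/\rho_N^2)H(x)$) or near the equator (your description of the supports). Neither arrangement works.

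\emph{Equator.} The bubble cores then lie in an exactly round region. So $h$ enters the reduced energy only through the connector, at order $\lambda^{n-4}$, which is the order of the flat two-bubble interaction. There is no leading term $s_N\mathcal F$ tied to $h_N$ at the scale $\lambda_N$ from which to extract a critical point.

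\emph{Concentric.} Concentric bumps violate the condition $\Xi_h=o(1)$ that you yourself impose.
\begin{itemize}
\item If they are balls with $f(0)\ne0$, every earlier $h_M$ is still present on the $N$-th hand-off annulus $|x|\asymp\lambda_N^{4/n}$. There $q\asymp\lambda_N^{2-8/n}$, so $\Xi_{h_M}\gtrsim\mu_M\lambda_N^{16/n-2}\to\infty$.
\item If they are disjoint annuli, then for large $N$ a remote $h_M$ at radius $\rho_M$ lies in the Schwarzschild part of the $N$-th solution, where $q\asymp r^{a_\Sigma}$. Hence $\Xi_{h_M}\asymp\mu_M\rho_M^{2-a_\Sigma}$, and you need $\mu_M\ll\rho_M^{(n-8)/2}$. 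But with $\lambda_M\asymp\rho_M$, the principal term $s_M\asymp(\mu_M\rho_M^2)^2$ must dominate the $O(\lambda_M^{n-4})$ transition and neck contributions. That forces $\mu_M\gg\rho_M^{(n-8)/2}$. Even balancing the two at equal order leaves $\Xi_{h_M}\asymp1$.
\end{itemize}
Changing how fast $\mu_N,\rho_N$ decay does not help; the conflict is pointwise in $M$.

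\emph{What the paper does.} It uses Brendle's arrangement. The bumps have distinct centres $x_N^-=N^{-1}e_1\to p_-$, cutoff radius $\rho_N=N^{-3}$, profile scale $\epsilon_N=2^{-N}$ and amplitude $\mu_N=\epsilon_N^{\beta_f}$, and each is paired with its antipodal image. For the $N$-th solution the bubble pair is recentred at $x_N^-$ and $\iota_A(x_N^-)$ with $\lambda=\epsilon_N\lambda'$. A remote bump is then only polynomially close ($r\gtrsim M^{-2}$) but exponentially small, so $\Xi_{h_{\rm rem}}\lesssim\delta_{\!*}(N_0)$ (Lemma~\ref{lem:step3}). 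The principal bump gives $\Xi\lesssim\epsilon_N^{\kappa_f}$ (Lemma~\ref{lem:step8}), and $s_N=\epsilon_N^{n/2+2m_f}\gg\lambda^{n-4}$.

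\emph{Where $n\ge27$ comes from.} These exponents close as soon as $n>4m_f+8$. The threshold $n\ge27$ comes instead from Lemma~\ref{lem:step22}. There the constant term of the quartic is chosen to force $P_f'(1)=0$. Then $P_f(1)<0$, $P_f''(1)>0$, $Q_f(1)>0$ and $Q_f(1)+\frac14R_f(1)>0$ are verified by explicit polynomial certificates. It is not a matter of decay exponents, nor of the sign of a single dimensional constant.

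\textbf{Second gap: the approximate solution.} Even with correct placement, the sum $U_{\lambda,0}+\mathcal K_AU_{\lambda,0}=U_{\lambda,0}+U_{1/\lambda,0}$ is inadequate. (Note that $U\circ J$ lacks the Kelvin weight $|x|^{-a_\Sigma}$.) It fails not near the equator but on the hand-off annuli $|x|\asymp\lambda^{\pm4/n}$, where $q\asymp\delta_\lambda=\lambda^{2-8/n}$.

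On the outer annulus, in the variable $s$ of \eqref{eq:transition-scales}, the sum has $p\approx\delta_\lambda(e^{2s}+e^{-a_\Sigma s})$. Then \eqref{eq:exact-radial-normalized-residual} gives a $q^2$-normalized flat residual of approximately $2n(n-1)e^{(2-a_\Sigma)s}(e^{2s}+e^{-a_\Sigma s})^{-2}$. This is of order one, about $n(n-1)/2$ at $s=0$.

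Your norms are tuned to the Newton scale. In such norms this residual forces a correction whose Schouten variation is comparable to the Newton tensor itself. So neither the contraction nor the cone condition closes.

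The paper replaces the sum there by the matched transition $p=\delta_\lambda F_{\gamma_L}$, with $F_\gamma=(\gamma e^{-2a_\Sigma s}+e^{4s})^{1/2}$. This solves the leading transition equation exactly and reduces the residual to $O(e^{-a_\Sigma L}+C_L\delta_\lambda)$ (Lemmas~\ref{lem:step5}--\ref{lem:step6}).

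\textbf{Linear theory.} Invertibility of a uniformly elliptic linearization modulo its kernel does not give $N$-uniform bounds. The Schwarzschild necks have length $\asymp|\log\lambda|$ and carry the neutral radial branch $P^{(0)}_{\Sigma,0}1=0$. The paper controls this only through the low-mode $L^1_t$ source norms and the uniformly invertible matching matrix of Proposition~\ref{aux:low-mode-matching-rigorous}.
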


The form of the blow-up is an important part of the result.  The solutions develop two peaks of equal height, interchanged by $J$, and
their combined energy approaches the two-bubble level in
Theorem~\ref{thm:target}(iii).

We now describe the geometry of the construction.  Choose antipodal points
$p_-,p_+\in\mathbb S^n$.  Around a sequence of points approaching $p_-$ we
place mutually disjoint Weyl-quadratic perturbations of the round metric.
Their antipodal images form a second sequence approaching $p_+$.  The resulting background perturbation $h$ is fixed once
and for all and is invariant under the antipodal map $J$.  For the $N$-th
solution, we select the $N$-th pair of metric perturbations and center an
exact round bubble near each member of this pair.  We call this pair the
\emph{principal pair}; all pairs with index $M\ne N$ are the
\emph{remaining perturbations}.  In Euclidean stereographic coordinates,
the left bubble is matched to
a translated Schwarzschild factor; its antipodal Kelvin image gives the
right bubble, and the two ends are assembled through a common centered
Schwarzschild region.  From the spherical point of view, the picture is
therefore a centrally symmetric perturbation of the round background,
together with two equal bubbles at symmetric locations connected by a
smooth Schwarzschild bridge.

The analysis relies on two features.  The first is the
\emph{symmetric bubble--Schwarzschild gluing}.  The bubble is kept exact in
each core, while the transition is measured relative to the Schwarzschild
Newton tensor and the middle connector is exactly Kelvin invariant.  This
provides a positive Newton scale throughout the bridge and prevents loss of
ellipticity.  The second is the \emph{sharp decay of
the metric perturbation}.  For the quartic profile, in coordinates centered
at the principal point $x_N$, one has
\[
 |D^j h_N(x_N+y)|\le C\mu_N(\epsilon_N+|y|)^{10-j},
 \qquad 0\le j\le3.
\]
These precise bounds make the metric perturbation small relative to the bubble Newton
tensor and place the transition, tail, and interaction terms below the leading
reduced-energy scale.  They are used both to preserve admissibility and to
control the energy remainder.

The background metric is modeled on the localized Weyl-quadratic
perturbations introduced in the Yamabe noncompactness constructions of
Brendle and Brendle--Marques \cite{Brendle,BrendleMarques}.  Locally, both
constructions start from an algebraic Weyl tensor and a radial polynomial
profile, and the profile determines the leading reduced energy.  Our global
construction differs in two respects.  Every perturbation is
paired with its exact antipodal image, and the summable collection of all
pairs defines a single smooth $J$-invariant background metric before any
bubble is selected.  In addition, we use scale-dependent estimates through
third order: quadratic vanishing at the centers of the principal
perturbations and the corresponding bounds at the opposite end obtained by
Kelvin transformation.  These estimates
are needed because the relevant comparison is with the degenerating Newton
tensor of the approximate solution, rather than with a fixed uniformly
elliptic operator.  Thus the local Brendle-type reduced-energy calculation
remains valid, while the paired construction gives the symmetry and decay
required for two simultaneous bubbles.

The Schwarzschild bridge also has a different role from the connected-sum
construction of Catino and Mazzieri \cite{CatinoMazzieri}.  Their theorem
starts from two given compact \emph{nondegenerate} positive solutions of the
$\sigma_k$-Yamabe problem and constructs constant-$\sigma_k$ metrics on the
connected sum $M_1\#M_2$. The standard round metric on \(\mathbb S^n\) does not satisfy this nondegeneracy condition. Here we do not begin with two background solutions,
and the topology is not changed: the manifold $\mathbb S^n$, the smooth metric
$g_0$, and the conformal class $[g_0]$ are fixed once and for all, while the
two ends are round bubbles concentrating simultaneously inside a single
solution and related by the fixed antipodal involution.  Correspondingly,
we impose no nondegeneracy hypothesis on a pre-existing compact
$\sigma_2$-Yamabe metric.  The explicit dilation and translation Jacobi
fields of the round bubble are instead handled by localized projections and
the finite-dimensional reduction.  Thus the Schwarzschild factor serves as an internal analytic connector
that preserves the Newton scale between the two peaks, rather than as a neck
that changes the underlying manifold.  These are two different notions: we make no
nondegeneracy assumption on a given background solution, but we prove that
the reduced functional has a stable nondegenerate critical point.

More precisely, fix a selected index $N$, put
$\lambda=\epsilon_N\lambda'$ and $c_N(\xi)=x_N^-+\xi$, and let
$\widetilde U^-_{N,\lambda,\xi}$ denote the one-ended matched bubble
profile that equals the exact bubble $U_{\lambda,c_N(\xi)}$ in its core
and is joined to
\[
 S_{\lambda,c_N(\xi)}(x)
 =(2\lambda)^a\bigl(1+|x-c_N(\xi)|^{-a_\Sigma}\bigr).
\]
Its right-end partner is the exact Kelvin image
$\widetilde U^+_{N,\lambda,\xi}=\mathcal K_A
\widetilde U^-_{N,\lambda,\xi}$.  With paired cutoffs
$\chi_+=\chi_-\circ\iota_A$, $\chi_0=1-\chi_--\chi_+$, and
$S_\lambda(x)=(2\lambda)^a(1+|x|^{-a_\Sigma})$, the global approximate
factor is
\[
 \log \bar u_{N,\lambda,\xi}
 =\chi_-\log\widetilde U^-_{N,\lambda,\xi}
  +\chi_+\log\widetilde U^+_{N,\lambda,\xi}
  +\chi_0\log S_\lambda,
 \qquad
 \bar v_{N,\lambda,\xi}
 =U_{1,0}^{-1}\bar u_{N,\lambda,\xi}.
\]
It satisfies $\mathcal K_A\bar u=\bar u$ and
$\bar v\circ J=\bar v$ exactly.

The localized reduced energy at either end is governed by the same
one-bubble functional $\mathcal F$, and hence by the same coefficient
functions $P_f,Q_f,R_f$.  Antipodal symmetry makes the two principal
contributions identical, while all cross-interactions are lower order:
\[
 \mathscr J_N=2\mathscr E_2(\mathbb S^n)
 +2s_N\mathcal F+o_{C^1}(s_N).
\]
Thus the unnormalized coefficients are twice the one-bubble coefficients,
and, after division by $2s_N$, the finite-dimensional problem coincides
with the one-bubble problem.  Appendix~A gives the full formulas for
$P_f,Q_f,R_f$ and the beta-integral rule used for the quartic profile.
An explicit algebraic calculation gives a nondegenerate minimum with
negative value in every dimension $n\ge27$.

The paper is organized as follows.  Section~2 constructs the paired
background metric and proves its sharp decay.  Section~3 builds the matched bubble--Schwarzschild--bubble
profile.  Section~4 proves the global residual and admissibility estimates,
and Sections~5 and~6 establish the projected linear and nonlinear theories.
Section~7 localizes and expands the reduced energy.  Section~8 solves the
finite-dimensional problem and completes the construction.

\section{Background geometry}

Let $p_-,p_+\in\Sn$ be antipodal and identify
$\Sn\setminus\{p_+\}$ with $\Rn$ by stereographic projection, with $p_-=0$.
Then
\begin{equation}\label{eq:round-metric}
 g_{\rm rd}=\Omega(x)^2\,dx^2,
 \qquad \Omega(x)=\frac{2}{1+|x|^2}.
\end{equation}
Let $J:\Sn\to\Sn$ denote the antipodal map.  In the chosen chart it is represented by
\begin{equation}\label{eq:iotaA}
 \iota_A(x)=-\frac{x}{|x|^2}.
\end{equation}
Put
\begin{equation}\label{eq:R}
 R(x)=I-2\frac{x}{|x|}\otimes\frac{x}{|x|},
 \qquad D\iota_A(x)=-|x|^{-2}R(x).
\end{equation}
For a symmetric matrix field $b$, define
\begin{equation}\label{eq:K2A}
 (K_{2,A}b)(x)=R(x)b(\iota_A(x))R(x).
\end{equation}

Let $W_{ijkl}$ be a nonzero algebraic Weyl tensor on $\Rn$ and define
\begin{equation}\label{eq:Hseed}
 H_{ij}(y)=\sum_{p,q=1}^n W_{ipjq}y_py_q.
\end{equation}
Fix a polynomial $f$ of degree $m_f$, and assume
\begin{equation}\label{eq:dimension-beta}
 n>4m_f+8,
 \qquad \beta_f=\frac{n-4m_f-8}{4}>0.
\end{equation}
Put
\begin{equation}\label{eq:basic-exponents}
 a=\frac{n-4}{4},\qquad a_\Sigma=\frac{n-4}{2}=2a,
 \qquad d_f=2m_f+2.
\end{equation}
For $N\ge1$, set
\begin{equation}\label{eq:sequences}
 \epsN=2^{-N},\qquad \muN=\epsN^{\beta_f},\qquad
 \rhoN=N^{-3},\qquad x_N^-=N^{-1}e_1.
\end{equation}
Choose $\eta\in C_c^\infty([0,\infty))$ satisfying
$0\le\eta\le1$, $\eta=1$ on $[0,1]$, and $\eta=0$ on $[2,\infty)$, and set
\begin{equation}\label{eq:hminus}
 h^-_{N,ij}(x)
 =\eta\!\left(\rhoN^{-1}|x-x_N^-|\right)
 \muN\epsN^{2m_f}
 f\!\left(\epsN^{-2}|x-x_N^-|^2\right)
 H_{ij}(x-x_N^-).
\end{equation}
Define
\begin{equation}\label{eq:hplus-hsum}
 h_N^+=K_{2,A}h_N^-,
 \qquad
 h=\sum_{N=N_0+1}^\infty(h_N^-+h_N^+).
\end{equation}

It is easy to verify that the metric
\begin{equation}\label{eq:paired-metric}
 g(X,Y)=g_{\rm rd}(e^hX,Y)
\end{equation}
is smooth and satisfies $J^*g=g$.

For $N>N_0$, set
\begin{equation}\label{eq:principal-distances}
 r_N^-(x)=|x-x_N^-|,\qquad
 x_N^+=\iota_A(x_N^-)=-Ne_1,
\end{equation}
and, on the reflected support,
\begin{equation}\label{eq:right-scales}
 \epsilon_N^+=N^2\epsilon_N,\qquad
 \rho_N^+=N^2\rho_N=N^{-1},\qquad
 r_N^+(x)=N^2|\iota_A(x)-x_N^-|.
\end{equation}

\begin{proposition}[Sharp derivative estimates for the paired metric columns]\label{lem:step2}
After increasing $N_0$, the following estimates hold, with constants independent
of $N$.

\begin{enumerate}[label=\textup{(\roman*)}]
\item \emph{Left column.} For $0\le j\le3$ and
$x\in\operatorname{supp}h_N^-$,
\begin{equation}\label{eq:left-core-tail-unified}
 |D^jh_N^-(x)|
 \le C_j\mu_N\bigl(\epsilon_N+r_N^-(x)\bigr)^{d_f-j}.
\end{equation}
On the inner core $r_N^-\le\epsilon_N$,
\begin{align}
 |h_N^-|+r_N^-|Dh_N^-|+(r_N^-)^2|D^2h_N^-|
 &\le C\mu_N\epsilon_N^{2m_f}(r_N^-)^2,
 \label{eq:left-quadratic-core}\\
 |D^3h_N^-|
 &\le C\mu_N\epsilon_N^{2m_f-2}r_N^-
 \quad (m_f\ge1),
 \label{eq:left-third-core}
\end{align}
while $D^3h_N^-=0$ on the uncut core when $m_f=0$.  In the polynomial
tail $\epsilon_N\le r_N^-\le2\rho_N$,
\begin{align}
 |D^jh_N^-|&\le C_j\mu_N(r_N^-)^{d_f-j},
 \qquad 0\le j\le3,
 \label{eq:left-tail}\\
 |D^2h_N^-|+(r_N^-)^{-1}|Dh_N^-|+(r_N^-)^{-2}|h_N^-|
 &\le C\mu_N(r_N^-)^{d_f-2}.
 \label{eq:left-newton-combination}
\end{align}
For every fixed integer $\ell\ge0$,
\begin{equation}\label{eq:left-global}
 \|D^\ell h_N^-\|_{L^\infty(\mathbb R^n)}
 \le C_\ell\mu_N\rho_N^{d_f-\ell}
 =C_\ell\mu_NN^{-3(d_f-\ell)}.
\end{equation}

\item \emph{Right column.} On $\operatorname{supp}h_N^+$,
\begin{equation}\label{eq:right-geometry}
 |x|\asymp N,\qquad r_N^+(x)\asymp|x-x_N^+|,
 \qquad 0\le r_N^+(x)\le C\rho_N^+.
\end{equation}
For $0\le j\le3$,
\begin{equation}\label{eq:right-core-tail-unified}
 |D^jh_N^+(x)|
 \le C_j\mu_NN^{-2d_f}
       \bigl(\epsilon_N^++r_N^+(x)\bigr)^{d_f-j}.
\end{equation}
On $r_N^+\le\epsilon_N^+$,
\begin{align}
 |h_N^+|+r_N^+|Dh_N^+|+(r_N^+)^2|D^2h_N^+|
 &\le C\mu_NN^{-2d_f}(\epsilon_N^+)^{2m_f}(r_N^+)^2,
 \label{eq:right-quadratic-core}\\
 |D^3h_N^+|
 &\le C\mu_NN^{-2d_f}
 \left[N^{-1}(\epsilon_N^+)^{2m_f}
 +\mathbf 1_{\{m_f\ge1\}}(\epsilon_N^+)^{2m_f-2}r_N^+\right].
 \label{eq:right-third-core}
\end{align}
In the reflected polynomial tail $\epsilon_N^+\le r_N^+\le C\rho_N^+$,
\begin{align}
 |D^jh_N^+|&\le C_j\mu_NN^{-2d_f}(r_N^+)^{d_f-j},
 \qquad 0\le j\le3,
 \label{eq:right-tail}\\
 |D^2h_N^+|+(r_N^+)^{-1}|Dh_N^+|+(r_N^+)^{-2}|h_N^+|
 &\le C\mu_NN^{-2d_f}(r_N^+)^{d_f-2}.
 \label{eq:right-newton-combination}
\end{align}
For every fixed integer $\ell\ge0$,
\begin{equation}\label{eq:right-global}
 \|D^\ell h_N^+\|_{L^\infty(\mathbb R^n)}
 \le C_\ell\mu_NN^{-3d_f+\ell}.
\end{equation}

\item For every smooth matrix field $b$ and every integer $j\ge0$,
\begin{equation}\label{eq:kelvin-derivative-general}
 |D^j(K_{2,A}b)(x)|
 \le C_j\sum_{\ell=0}^j|x|^{-j-\ell}|D^\ell b(\iota_A(x))|.
\end{equation}
Thus the right-column estimates are the Kelvin transforms of the left-column
estimates; the factor $N^{-2d_f}$ is the Euclidean-coordinate cost of this
transport.
\end{enumerate}
\end{proposition}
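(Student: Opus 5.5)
We prove (iii) first, since it reduces (ii) to (i), and then check (i) directly from the explicit formula \eqref{eq:hminus}.

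\textbf{Step 1: the Kelvin transport (iii).} Both $\iota_A$ and $R(x)$ are homogeneous away from the origin. Each entry of $R$ is homogeneous of degree $0$, so $|D^kR(x)|\le C_k|x|^{-k}$. Each component of $\iota_A$ is homogeneous of degree $-1$, so $|D^k\iota_A(x)|\le C_k|x|^{-1-k}$. By the Faà di Bruno formula, $D^j[b\circ\iota_A]$ is a sum of terms
\[
 (D^\ell b)(\iota_A(x))\,D^{k_1}\iota_A\cdots D^{k_\ell}\iota_A,
 \qquad k_1+\dots+k_\ell=j.
\]
Such a term is bounded by $|D^\ell b(\iota_A x)|\,|x|^{-\ell-j}$. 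The Leibniz rule then distributes the remaining $j-i$ derivatives onto the two factors of $R$, which costs $|x|^{-(j-i)}$; this is bounded by the same powers since $|x|\gtrsim1$ is not needed here. The result is \eqref{eq:kelvin-derivative-general}.

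\textbf{Step 2: the left column (i).} Write $y=x-x_N^-$ and $r=|y|$. On the support, $h_N^-=\eta(r/\rho_N)\,\mu_N\,\epsilon_N^{2m_f}f(r^2/\epsilon_N^2)H(y)$.
\begin{itemize}
 \item The factor $\epsilon_N^{2m_f}f(r^2/\epsilon_N^2)$ is a polynomial in $r^2$ with coefficients $c_k\epsilon_N^{2m_f-2k}$. Its $i$-th derivative is therefore bounded by $C(\epsilon_N+r)^{2m_f-i}$.
 \item $H$ is quadratic, so $|D^iH|\le Cr^{2-i}$ for $i\le2$ and $D^3H=0$.
 \item Derivatives of the cutoff cost $\rho_N^{-i}$, and they are supported where $r\asymp\rho_N\ge\epsilon_N$, so $\rho_N^{-i}\le C(\epsilon_N+r)^{-i}$ there.
\end{itemize}
Combining these via the Leibniz rule gives \eqref{eq:left-core-tail-unified}, since $d_f=2m_f+2$.

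The core estimates need the sharper fact that the factor $H$ still carries $r^{2-i}$ rather than $(\epsilon_N+r)^{2-i}$.
\begin{itemize}
 \item For \eqref{eq:left-quadratic-core}: on $r\le\epsilon_N$ the cutoff is identically $1$, and the $f$-factor and its derivatives are $O(\epsilon_N^{2m_f-i})$. Keeping track of which derivatives fall on $H$ gives the bound; terms with derivatives on $f$ pick up $\epsilon_N^{-i}\le r^{-i}$ only in the wrong direction. For those, we instead use that $f(t)$ is smooth in $t=r^2/\epsilon_N^2$. Then $D_y[f(r^2/\epsilon_N^2)]=2\epsilon_N^{-2}f'(t)\,y$, which is $O(\epsilon_N^{-2}r)\le O(r^{-1})$, so every term is bounded by $C\mu_N\epsilon_N^{2m_f}r^{2-i}$.
 \item For \eqref{eq:left-third-core}: three derivatives with none on $f$ vanish. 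Otherwise at least one derivative falls on $f$, producing a factor $\epsilon_N^{-2}r$ or $\epsilon_N^{-2}$ times the remaining factors. Tracking the parities gives $C\mu_N\epsilon_N^{2m_f-2}r$. When $m_f=0$, $f$ is constant and $D^3(cH)=0$.
\end{itemize}
The tail bounds \eqref{eq:left-tail} and \eqref{eq:left-newton-combination} follow from \eqref{eq:left-core-tail-unified}, because $\epsilon_N+r\asymp r$ there.

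For the global bound \eqref{eq:left-global}, the general-$\ell$ version of the same count gives $(\epsilon_N+r)^{d_f-\ell}$ when $\ell\le d_f$, which is at most $C\rho_N^{d_f}$-type. When $\ell>d_f$, only cutoff derivatives and bounded polynomial derivatives survive, and the terms are again bounded by $\mu_N\rho_N^{d_f-\ell}$. The case $\ell\le d_f$ is cruder than stated, so I would recheck whether the intended bound uses $r\le2\rho_N$ together with negative exponents.

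\textbf{Step 3: the right column (ii).} Geometry first. On $\operatorname{supp}h_N^+$ we have $|\iota_A(x)-x_N^-|\le2\rho_N=2N^{-3}\ll N^{-1}=|x_N^-|$, so $|\iota_A x|\asymp N^{-1}$ and hence $|x|\asymp N$. Since $\iota_A$ is an inversion, it is a conformal map with factor $\asymp|x|^{-2}\asymp N^{-2}$ on this region. Hence $|x-x_N^+|\asymp N^2|\iota_A x-x_N^-|=r_N^+$, which gives \eqref{eq:right-geometry}.

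Next, apply \eqref{eq:kelvin-derivative-general} with $b=h_N^-$ and $|x|\asymp N$. Using \eqref{eq:left-core-tail-unified} with $r_N^-=N^{-2}r_N^+$ and $\epsilon_N=N^{-2}\epsilon_N^+$, the $\ell$-th term is bounded by
\[
 N^{-j-\ell}\,\mu_N\,N^{-2(d_f-\ell)}\,(\epsilon_N^++r_N^+)^{d_f-\ell}
 = \mu_N N^{-2d_f}\,N^{\ell-j}\,(\epsilon_N^++r_N^+)^{d_f-\ell}.
\]
Since $\epsilon_N^++r_N^+\le CN^{-1}$, we have $N^{\ell-j}\le C(\epsilon_N^++r_N^+)^{\ell-j}$, and \eqref{eq:right-core-tail-unified} follows. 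The tail estimates \eqref{eq:right-tail}--\eqref{eq:right-newton-combination} and the global bound \eqref{eq:right-global} are transported in the same way.

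\textbf{Main obstacle: the reflected core estimates.} The step I expect to be delicate is \eqref{eq:right-quadratic-core}--\eqref{eq:right-third-core}. Quadratic vanishing is not automatic after composition with the rotation field $R(x)$, because $D^kR$ contributes lower-order terms. Quadratic vanishing at $x_N^+$ is preserved for $j\le2$, since $K_{2,A}b$ vanishes to second order wherever $b\circ\iota_A$ does. At third order, however, derivatives falling on $R$ and on $D\iota_A$ produce the extra term
\[
 N^{-1}\cdot N^{-2}\cdot|D^2h_N^-|\sim \mu_N N^{-2d_f}N^{-1}(\epsilon_N^+)^{2m_f}.
\]
This is exactly the first term in \eqref{eq:right-third-core}. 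I would verify it by splitting $D^3(K_{2,A}h_N^-)$ into the term with all three derivatives on $h_N^-\circ\iota_A$, which is handled by \eqref{eq:left-third-core}, and the remaining terms, which are handled by the core estimate \eqref{eq:left-quadratic-core} for $D^{\le2}h_N^-$ together with $r_N^+\le\epsilon_N^+$.

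Finally, enlarging $N_0$ ensures that the supports are disjoint and that $\rho_N<|x_N^-|/2$, which the geometric comparisons in Step 3 use.
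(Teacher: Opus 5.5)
Your proposal is correct and follows the paper's proof. Both use Leibniz/homogeneity bounds for the left column (the paper expands $f$ into the monomials $a_q\mu_N\epsilon_N^{2m_f-2q}|y|^{2q}H(y)$, homogeneous of degree $2q+2$, which is equivalent to your use of $D[f(r^2/\epsilon_N^2)]=2\epsilon_N^{-2}f'(t)y$), the chain and product rules for $K_{2,A}b=R(b\circ\iota_A)R$, and direct substitution of the left bounds into \eqref{eq:kelvin-derivative-general} using $|x|\asymp N$ and $\epsilon_N+r_N^-(\iota_A x)=N^{-2}(\epsilon_N^++r_N^+)$.

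Three side remarks are slips, though none affects the argument:
\begin{itemize}
\item For $\ell\le d_f$, \eqref{eq:left-global} is immediate from $(\epsilon_N+r)^{d_f-\ell}\le(3\rho_N)^{d_f-\ell}$ on the support, so there is nothing to recheck.
\item On $r\le\epsilon_N$ the inequality $\epsilon_N^{-i}\le r^{-i}$ goes the favorable way, so the crude count already gives \eqref{eq:left-quadratic-core}; your refinement is needed only for \eqref{eq:left-third-core}.
\item The extra term in \eqref{eq:right-third-core} is the $j=3$, $\ell=2$ term of \eqref{eq:kelvin-derivative-general}. It carries $|x|^{-5}\asymp N^{-5}$ (e.g.\ from $|DR|\,|D\iota_A|^2$), not $N^{-1}N^{-2}$; the factor you displayed would overshoot the stated bound by $N^2$.
\end{itemize}
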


\begin{proof}
Write $y=x-x_N^-$ and $f(t)=\sum_{q=0}^{m_f}a_qt^q$.  Before the cutoff is
differentiated, the $q$th summand of $h_N^-$ is
$a_q\mu_N\epsilon_N^{2m_f-2q}|y|^{2q}H(y)$.  Since $H$ is homogeneous of
degree two, its $j$th derivatives are bounded by
\[
 C_j\mu_N\epsilon_N^{2m_f-2q}|y|^{2q+2-j}.
\]
For $|y|\le\epsilon_N$ this gives
\eqref{eq:left-quadratic-core}--\eqref{eq:left-third-core}; for
$\epsilon_N\le|y|\le\rho_N$ it gives \eqref{eq:left-tail}.  When derivatives
fall on the cutoff and $\rho_N\le|y|\le2\rho_N$, the same bound follows from
$\epsilon_N\le\rho_N$.  This proves
\eqref{eq:left-core-tail-unified}--\eqref{eq:left-newton-combination}, and the
same calculation with an arbitrary fixed number of derivatives yields
\eqref{eq:left-global}.

For the Kelvin transform, the identities
$|D^k\iota_A(x)|\le C_k|x|^{-k-1}$ and
$|D^kR(x)|\le C_k|x|^{-k}$, together with the product and chain rules for
$K_{2,A}b=R(b\circ\iota_A)R$, give
\eqref{eq:kelvin-derivative-general}.  On $\operatorname{supp}h_N^+$ one has
$|x|\asymp N$ and
$\epsilon_N+|\iota_A(x)-x_N^-|
 =N^{-2}(\epsilon_N^++r_N^+(x))$; the inversion distance identity also gives
\eqref{eq:right-geometry}.  Substituting the left-column bounds into
\eqref{eq:kelvin-derivative-general} proves the stated right-column core,
tail, and global estimates.
\end{proof}

\medskip
\noindent\textbf{Uniform estimate for the remaining perturbations.}
Choose an integer $P_*=P_*(n,m_f)$ large enough to dominate all polynomial
losses in the remaining-support estimates, the normalized coefficient
expansions, and their first normalized parameter derivatives.  Choose
$C_*$ accordingly and set
\begin{equation}\label{eq:delta-star-definition}
 \delta_{\!*}(N_0)
 :=C_*\sum_{M>N_0}\mu_M(1+M)^{P_*}.
\end{equation}
Because $\mu_M=2^{-\beta_fM}$ and $\beta_f>0$,
\begin{equation}\label{eq:delta-star-limit}
 \delta_{\!*}(N_0)\longrightarrow0
 \qquad\hbox{as }N_0\longrightarrow\infty.
\end{equation}
With this choice, $\delta_{\!*}(N_0)$ bounds the pointwise, cylindrical
$L^1$, operator, and parameter-derivative tails of the remaining
perturbations in Lemmas~\ref{lem:step3},~\ref{lem:step8},~\ref{lem:step9},
and Proposition~\ref{prop:summable-operator-perturbation-step15}.

\begin{lemma}[Estimate for the remaining perturbations at both ends]\label{lem:step3}
Fix a selected index $N>N_0$ and parameters
\begin{equation}\label{eq:principal-parameters-step3}
 \lambda=\epsilon_N\lambda',\qquad
 c_N(\xi)=x_N^-+\xi,\qquad
 \frac12\le\lambda'\le\frac32,\qquad |\xi|\le\epsilon_N.
\end{equation}
Write
\[
 h_{\mathrm{rem},N}=h-h_N^--h_N^+,
 \qquad r=|x-c_N(\xi)|.
\]
Thus $h_N^-+h_N^+$ is the principal pair and
$h_{\mathrm{rem},N}$ is the sum of the remaining perturbations.  The
subscripts $\mathrm{pr}$ and $\mathrm{rem}$ refer to these two parts.  Set
\begin{equation}\label{eq:Theta-two-ended}
 \Theta(r)=
 \begin{cases}
  r^{a_\Sigma-2},&0<r\le1,\\
  r^{-a_\Sigma-2},&r\ge1.
 \end{cases}
\end{equation}
For every sufficiently large $N_0$, uniformly in the selected index and in the
parameters in \eqref{eq:principal-parameters-step3},
\begin{equation}\label{eq:remaining-global-statement}
 \sum_{j=0}^2r^{j-2}|D^jh_{\mathrm{rem},N}(x)|
 \le \delta_{\!*}(N_0)\Theta(r)
 \qquad (x\ne c_N(\xi)).
\end{equation}
The left-hand side is identically zero in a neighborhood of $c_N(\xi)$.

The estimate is stable under one normalized parameter derivative in moving
Euclidean coordinates.  More precisely, for fixed $y$ put
$x=c_N(\xi)+\lambda y$, $r=\lambda|y|$, and let
\begin{equation}\label{eq:normalized-parameter-derivatives-step3}
 \mathscr D_0=\lambda\partial_\lambda,
 \qquad
 \mathscr D_k=\epsilon_N\partial_{\xi_k},\quad 1\le k\le n,
\end{equation}
where these derivatives act at fixed $y$.  Then, for every
$b=0,1,\dots,n$,
\begin{equation}\label{eq:remaining-parameter-statement}
 \sum_{j=0}^2r^{j-2}
 \left|
 \mathscr D_b\!\bigl[D^jh_{\mathrm{rem},N}
       (c_N(\xi)+\lambda y)\bigr]
 \right|
 \le \delta_{\!*}(N_0)\Theta(r).
\end{equation}

Finally, in the right Kelvin coordinate $z=\iota_A(x)$ the matrix
representative of the remaining tensor is again $h_{\mathrm{rem},N}(z)$.
Consequently, \eqref{eq:remaining-global-statement} and
\eqref{eq:remaining-parameter-statement} hold there with
$r_+=|z-c_N(\xi)|$; in particular, every term $h_M^+$ with $M\ne N$ has the
usual left-end bound $\delta_{\!*}(N_0)r_+^{a_\Sigma-2}$ in that coordinate.  There are no
metric terms in the fixed annulus $\{1/2\le|x|\le2\}$.
\end{lemma}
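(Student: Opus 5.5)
The plan is to estimate each remaining perturbation separately using Proposition~\ref{lem:step2}, and then sum over $M\ne N$. The key inputs are a lower bound on the distance from $c_N(\xi)$ to each remaining support, plus a polynomial-in-$M$ bound on that term's $C^2$ norm. These polynomial losses are absorbed into $(1+M)^{P_*}$ in \eqref{eq:delta-star-definition}.

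\textbf{Geometry.} We first record where the remaining supports lie.
\begin{itemize}[label=--]
\item Left terms: $\operatorname{supp}h_M^-\subset B(x_M^-,2\rho_M)$ with $x_M^-=M^{-1}e_1$ and $\rho_M=M^{-3}$. For $M\ne N$ we have $|x_M^--x_N^-|\ge |M^{-1}-N^{-1}|\ge c\max(M,N)^{-2}$. Since $|\xi|\le\epsilon_N\ll N^{-3}$ and $4\rho_{\max(M,N)}$ is smaller than this gap for $N_0$ large, we get $r\ge c\max(M,N)^{-2}$ on $\operatorname{supp}h_M^-$. This lower bound also gives the claim that $h_{\mathrm{rem},N}$ vanishes near $c_N(\xi)$.
\item Right terms: $h_M^+$ is supported where $|x|\asymp M\ge N_0$, so $r\asymp|x|\ge cM$ there.
\item Annulus: by the same localization, no support meets $\{1/2\le|x|\le2\}$ once $N_0$ is large.
\end{itemize}

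\textbf{Left-term pointwise bound.} By \eqref{eq:left-global}, $\sum_{j\le2}|D^jh_M^-|\le C\mu_M$. Hence on its support, where $r\le1$,
\[
\sum_{j}r^{j-2}|D^jh_M^-|\le C\mu_M r^{-2}.
\]
We need this to be at most $C\mu_M M^{P}\, r^{a_\Sigma-2}$, i.e.\ we need $r^{-a_\Sigma}\le C\max(M,N)^{2a_\Sigma}$. The main obstacle is the case $N>M$, where $\max(M,N)=N$ and the factor $N^{2a_\Sigma}$ is not summable over $M$. For this case, use instead the finer left-tail bound \eqref{eq:left-tail}: on the support of $h_M^-$ we have $\sum_j r_M^{j-2}|D^jh_M^-|\le C\mu_M\rho_M^{d_f}$. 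We then compare with $r$, where $r\ge c|x_N^--x_M^-|\ge cM^{-1}-N^{-1}$. When $N\ge 2M$ this gives $r\ge cM^{-1}$, so the required factor is only $M^{a_\Sigma}$. When $M<N<2M$, the distance is $\gtrsim N^{-2}\asymp M^{-2}$, so again the loss is polynomial in $M$. In both cases the loss is absorbed by $(1+M)^{P_*}$.

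\textbf{Right-term pointwise bound.} For $h_M^+$, \eqref{eq:right-global} gives $|D^\ell h_M^+|\le C\mu_M M^{-3d_f+\ell}$. With $r\asymp|x|\asymp M\ge1$, the required bound $\mu_M(1+M)^{P}r^{-a_\Sigma-2}$ holds as long as $P\ge a_\Sigma+2+2$. Summing over $M$ yields \eqref{eq:remaining-global-statement}, with $C_*$ and $P_*$ chosen large.

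\textbf{Parameter derivatives.} At fixed $y$ we have
\[
\mathscr D_0\bigl[D^jh(c_N(\xi)+\lambda y)\bigr]=\lambda y\cdot D^{j+1}h=(x-c_N(\xi))\cdot D^{j+1}h,
\qquad
\mathscr D_k=\epsilon_N\,\partial_k D^jh,
\]
with $r$ itself unchanged by these derivatives. So each parameter derivative costs one more derivative of $h$ multiplied by $r$ or by $\epsilon_N\le r$. The third-derivative bounds in Proposition~\ref{lem:step2} (equations \eqref{eq:left-global}, \eqref{eq:right-global} with $\ell=3$) therefore give the same estimate with one more power $M^{O(1)}$, which is again absorbed into $(1+M)^{P_*}$.

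\textbf{Kelvin coordinate.} Since $K_{2,A}$ is an involution, $h$ is $K_{2,A}$-invariant. The representative of $h_{\mathrm{rem},N}$ in $z=\iota_A(x)$ is therefore $h_{\mathrm{rem},N}(z)$ itself, with the roles of $h_M^\pm$ exchanged. The estimates already proved then apply verbatim with $r_+=|z-c_N(\xi)|$. In particular each $h_M^+$, now appearing as a left term, satisfies the small-$r_+$ branch $r_+^{a_\Sigma-2}$.
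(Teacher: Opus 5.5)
Your proposal is correct and follows the paper's proof. The ingredients match: termwise bounds from \eqref{eq:left-global} and \eqref{eq:right-global}; the separation $r\gtrsim M^{-2}$ on $\operatorname{supp}h_M^-$ and $r\asymp M$ on $\operatorname{supp}h_M^+$; polynomial losses in $M$ absorbed into $\delta_{\!*}(N_0)$; one extra derivative times $r$ or $\epsilon_N\lesssim r$ for $\mathscr D_b$; and $K_{2,A}h_{\mathrm{rem},N}=h_{\mathrm{rem},N}$ for the Kelvin chart.

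You can streamline the left-term step. For every $N\ne M$ one has $|x_M^--x_N^-|\ge 1/(M(M+1))$, which gives $r\ge cM^{-2}$ uniformly in $N$ at once. Your case split on $N$ versus $M$ is therefore unnecessary, as is the detour through \eqref{eq:left-tail}; that detour is also misstated, since in $r_M$-weights it gives $\mu_M\rho_M^{d_f-2}$, not $\mu_M\rho_M^{d_f}$.
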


\begin{proof}
By the support construction, at each point where $h_{\mathrm{rem},N}$ is
nonzero it agrees with one remaining perturbation $h_M^-$ or $h_M^+$,
$M\ne N$.  On $\operatorname{supp}h_M^-$,
\[
 |x_M^--x_N^-|\ge \frac1{M(M+1)},\qquad
 r=|x-c_N(\xi)|\ge cM^{-2},
\]
after increasing $N_0$.  Hence \eqref{eq:left-global} gives
\[
 \sum_{j=0}^2r^{j-2}|D^jh_M^-|
 \le C\mu_MM^{-3d_f+6}
 \le C\mu_MM^{n-3d_f-2}r^{a_\Sigma-2}.
\]
The last polynomial loss is absorbed by the definition of
$\delta_{\!*}(N_0)$.

With $y$ fixed,
\[
 \mathscr D_0[D^jh_M^-(x)]
 =(x-c_N(\xi))\cdot D(D^jh_M^-)(x),\qquad
 \mathscr D_k[D^jh_M^-(x)]
 =\epsilon_N\partial_k(D^jh_M^-)(x).
\]
Since $\epsilon_N\le Cr$ on every remaining minus support, a normalized
parameter derivative is bounded by $Cr|D^{j+1}h_M^-|$.  The preceding
argument, with one additional derivative and hence only another fixed
polynomial loss in $M$, proves \eqref{eq:remaining-parameter-statement} on
the minus supports.

On $\operatorname{supp}h_M^+$, \eqref{eq:right-geometry} gives $r\asymp M$,
and \eqref{eq:right-global} yields
\[
 \sum_{j=0}^2r^{j-2}|D^jh_M^+|
 \le C\mu_MM^{-3d_f+2}
 \le C\mu_MM^{a_\Sigma+4-3d_f}r^{-a_\Sigma-2}.
\]
The parameter-derivative estimate is identical, again with only a fixed
polynomial loss.  Summing these exponentially decaying bounds in $M$ proves
\eqref{eq:remaining-global-statement} and
\eqref{eq:remaining-parameter-statement}.  Finally,
$K_{2,A}h_{\mathrm{rem},N}=h_{\mathrm{rem},N}$, so the same estimates hold in
the right Kelvin coordinate.  The assertion on the fixed annulus follows
from the support construction.
\end{proof}

\medskip
\noindent\textbf{Scalar antipodal action and paired parameters.}
For a positive scalar function $u$ on $\mathbb R^n$, define
\begin{equation}\label{eq:scalar-Kelvin}
 (\mathcal K_Au)(x)=|x|^{-a_\Sigma}u(\iota_A(x)).
\end{equation}
Then $\mathcal K_A^2=\mathrm{Id}$.  For
\begin{equation}\label{eq:standard-bubble-moving}
 U_{\lambda,c}(x)
 =\left(\frac{2\lambda}{\lambda^2+|x-c|^2}\right)^a,
\end{equation}
one has the exact identity
\begin{equation}\label{eq:Kelvin-bubble-parameters}
 \mathcal K_AU_{\lambda,c}=U_{\lambda^\sharp,c^\sharp},
 \qquad
 \lambda^\sharp=\frac{\lambda}{|c|^2+\lambda^2},
 \qquad
 c^\sharp=-\frac{c}{|c|^2+\lambda^2}.
\end{equation}
For the selected parameters in \eqref{eq:principal-parameters-step3},
\begin{equation}\label{eq:right-center-deep-core}
 |c^\sharp-x_N^+|
 \le C\bigl(N^2|\xi|+N^3(|\xi|^2+\lambda^2)\bigr)
 =o(N^{-1}).
\end{equation}
The reflected region on which the cutoff of $h_N^+$ equals one has radius
comparable with $N^{-1}$.  Thus the moving right bubble remains deep inside
the principal reflected metric core.  All right-hand bubble, transition, and
Jacobi objects below are defined as the exact $\mathcal K_A$-images of the
left-hand objects, so \eqref{eq:Kelvin-bubble-parameters} is used without
asymptotic truncation.

\section{Radial models and matched transitions}

For a positive radial Euclidean representative $u=u(r)$, put
\begin{equation}\label{eq:radial-p-q-V-S}
 p=-\frac{2}{n-4}\frac{ru'}u,
 \qquad q=p(1-p),
 \qquad \mathcal V=r^au,
 \qquad \dot p=\frac{dp}{d\log r},
 \qquad S=2\dot p+(n-4)q.
\end{equation}

\begin{lemma}[Exact radial $\sigma_2$ identity]\label{lem:step4}
For every positive radial $u$,
\begin{equation}\label{eq:exact-radial-proper-residual}
 \mathcal N_\delta(u)
 =r^{-n}\left[
  2(n-1)\mathcal V^4qS
  -\frac{n(n-1)}8\mathcal V^{\frac{4n}{n-4}}
 \right].
\end{equation}
Equivalently, wherever $q>0$,
\begin{equation}\label{eq:exact-radial-normalized-residual}
 \frac{r^n\mathcal N_\delta(u)}{\mathcal V^4q^2}
 =4(n-1)\frac{\dot p+a_\Sigma q}{q}
 -\frac14\binom n2
  \frac{\mathcal V^{16/(n-4)}}{q^2}.
\end{equation}
The standard bubble $U_{\lambda,0}$ makes
\eqref{eq:exact-radial-proper-residual} vanish, while the
$\sigma_2$-Schwarzschild factor
\[
 S_\lambda(r)=(2\lambda)^a(1+r^{-a_\Sigma})
\]
satisfies $\dot p+a_\Sigma q=0$, so its $\sigma_2$ curvature term vanishes
identically.
\end{lemma}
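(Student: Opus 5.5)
The plan is a direct computation of the Schouten tensor of a radial conformal factor on flat space, followed by substitution into the definition of $\mathcal N_\delta$. Write the conformal metric as $g_u=u^{8/(n-4)}\delta=e^{2\varphi}\delta$ with $\varphi=\frac{4}{n-4}\log u=a^{-1}\log u$. I will use the standard flat-background formula
\[
A_{g_u}=-D^2\varphi+d\varphi\otimes d\varphi-\tfrac12|d\varphi|^2\delta ,
\]
viewed as a bilinear form, so that the eigenvalues of $g_u^{-1}A_{g_u}$ are $e^{-2\varphi}$ times the Euclidean eigenvalues. By the definition of $p$ in \eqref{eq:radial-p-q-V-S}, $r\varphi'=\frac{ru'}{au}=-2p$. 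Hence $r^2\varphi''=r\frac{d}{dr}(r\varphi')-r\varphi'=-2\dot p+2p$.

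\emph{Step 1: eigenvalues.} For radial $\varphi$ there are two Euclidean eigenvalues.
\begin{itemize}
\item The radial eigenvalue, with multiplicity one, is $-\varphi''+\frac12\varphi'^2=r^{-2}(2\dot p-2q)$.
\item The tangential eigenvalue, with multiplicity $n-1$, is $-\varphi'/r-\frac12\varphi'^2=r^{-2}\cdot 2q$.
\end{itemize}

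\emph{Step 2: $\sigma_2$.} With these eigenvalues,
\[
\sigma_2(g_u^{-1}A_{g_u})=e^{-4\varphi}r^{-4}\Big[(n-1)(2\dot p-2q)(2q)+\tbinom{n-1}{2}4q^2\Big].
\]
The bracket simplifies to $4(n-1)q\big(\dot p+\tfrac{n-4}{2}q\big)=2(n-1)qS$. Since $e^{-4\varphi}=u^{-16/(n-4)}$, multiplying by $u^{4n/(n-4)}$ leaves $u^4r^{-4}$. This equals $r^{-n}\mathcal V^4$ because $\mathcal V^4=r^{n-4}u^4$.

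\emph{Step 3: constant term.} The term $\frac14\binom n2u^{4n/(n-4)}$ equals $\frac{n(n-1)}8 r^{-n}\mathcal V^{4n/(n-4)}$, because $\mathcal V^{4n/(n-4)}=r^nu^{4n/(n-4)}$. Together with Step 2 this proves \eqref{eq:exact-radial-proper-residual}.

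\emph{Step 4: normalized form.} Dividing \eqref{eq:exact-radial-proper-residual} by $\mathcal V^4q^2$ and using $S=2(\dot p+a_\Sigma q)$ gives \eqref{eq:exact-radial-normalized-residual}.

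\emph{Step 5: the two model profiles.}
\begin{itemize}
\item Bubble $U_{\lambda,0}$. Here $p=r^2/(\lambda^2+r^2)$ and $\dot p=2q$, so $\dot p+a_\Sigma q=\frac n2 q$. Also $\mathcal V^{16/(n-4)}=r^4U^{4/a}=\big(2\lambda r/(\lambda^2+r^2)\big)^4=16q^2$. The right-hand side of \eqref{eq:exact-radial-normalized-residual} is then $2n(n-1)-\frac14\binom n2\cdot16=0$. Where $q=0$, i.e. only at $r=0$, the proper form vanishes by continuity.
\item Schwarzschild factor $S_\lambda$. Here $ru'/u=-2a\,r^{-2a}/(1+r^{-2a})$, which gives $p=1/(1+r^{2a})$ and $\dot p=-2a\,q=-a_\Sigma q$. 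Thus $S\equiv0$, so the curvature term vanishes identically.
\end{itemize}

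The only real obstacle is bookkeeping. The sign and normalization of the Schouten formula must match the paper's $A_g$, and the powers of $u$ and $r$ must be tracked; the check $r^{-4}u^4=r^{-n}\mathcal V^4$ is what makes the exponents consistent. To pin down any ambiguity, I would first test the formula on the round bubble, where $\sigma_2$ must equal $\frac14\binom n2$.
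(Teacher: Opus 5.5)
Your proposal is correct and follows the paper's proof essentially line by line. The conformal Schouten formula in the radial--tangential frame gives the eigenvalues $2(\dot p-q)/r^2$ and $2q/r^2$, hence $\sigma_2=2(n-1)r^{-4}u^{-16/(n-4)}qS$. The substitution $u=r^{-a}\mathcal V$ then gives both displayed identities, and the checks $\dot p=2q$, $\mathcal V^{16/(n-4)}=16q^2$ for the bubble and $\dot p=-a_\Sigma q$ for $S_\lambda$ finish the argument, exactly as in the paper, with all of your intermediate algebra checking out.
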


\begin{proof}
Set $w=\frac4{n-4}\log u$, so that $g_u=e^{2w}\delta$.  In the radial--tangential
frame, the conformal Schouten formula gives
\[
 \alpha_R=-w''+\frac12(w')^2=\frac{2(\dot p-q)}{r^2},
 \qquad
 \alpha_T=-\frac{w'}r-\frac12(w')^2=\frac{2q}{r^2}.
\]
Thus
\begin{equation}\label{eq:radial-sigma2-step4}
 \sigma_2(g_u^{-1}A_{g_u})
 =\frac{2(n-1)}{r^4}u^{-16/(n-4)}q
   \bigl(2\dot p+(n-4)q\bigr).
\end{equation}
Multiplying by $u^{4n/(n-4)}$, writing $u=r^{-a}\mathcal V$, and subtracting
the constant target term yields \eqref{eq:exact-radial-proper-residual};
division by $r^{-n}\mathcal V^4q^2$ gives
\eqref{eq:exact-radial-normalized-residual}.  For the standard bubble,
$\dot p=2q$ and $\mathcal V^{16/(n-4)}=16q^2$, so the two terms in
\eqref{eq:exact-radial-normalized-residual} cancel.  For $S_\lambda$,
$\dot p=-a_\Sigma q$, and hence its $\sigma_2$ curvature term vanishes by
\eqref{eq:radial-sigma2-step4}.
\end{proof}

For the matched outer transition set
\begin{equation}\label{eq:transition-scales}
 R_\lambda^+=\lambda^{-4/n},
 \qquad R_\lambda^-=(R_\lambda^+)^{-1},
 \qquad \delta_\lambda=\lambda^{2-8/n},
 \qquad s=\log(r/R_\lambda^+).
\end{equation}
The two exact endpoint slopes are
\begin{equation}\label{eq:transition-end-slopes}
 p_\Sigma(s)=\frac{\delta_\lambda e^{-a_\Sigma s}}
 {1+\delta_\lambda e^{-a_\Sigma s}},
 \qquad
 p_B(s)=\frac{\delta_\lambda e^{2s}}
 {1+\delta_\lambda e^{2s}}.
\end{equation}
For $\gamma>0$ define the one-parameter family of leading slopes
\begin{equation}\label{eq:leading-transition-family}
 F_\gamma(s)=\bigl(\gamma e^{-2a_\Sigma s}+e^{4s}\bigr)^{1/2}.
\end{equation}
Every member of this family satisfies the leading transition identity
\begin{equation}\label{eq:leading-transition-ode}
 \frac{F_\gamma'}{F_\gamma}+a_\Sigma
 =\frac n2\frac{e^{4s}}{F_\gamma^2}.
\end{equation}

For each fixed $L\ge4$, choose $\chi_{\Sigma,L},\chi_{B,L}$ by translating
fixed smooth cutoffs satisfying
\[
 0\le \chi_{\Sigma,L},\chi_{B,L}\le 1,
 \qquad
 \operatorname{supp}\chi_{\Sigma,L}\subset[-L,-L+1],
 \qquad
 \operatorname{supp}\chi_{B,L}\subset[L-1,L],
\]
$\chi_{\Sigma,L}=1$ near $-L$, $\chi_{B,L}=1$ near $L$, and with all
$C^k$ norms independent of $L$.  Put
\[
 \omega_L=1-\chi_{\Sigma,L}-\chi_{B,L}.
\]
For $\gamma>0$ define the first-order endpoint-corrected slope
\begin{equation}\label{eq:first-order-cutoff-slope}
 F^{(0)}_{L,\gamma}(s)
 =\chi_{\Sigma,L}e^{-a_\Sigma s}
  +\chi_{B,L}e^{2s}+\omega_LF_\gamma(s)
\end{equation}
and its first-order action
\begin{equation}\label{eq:first-order-action}
 \mathfrak a_L(\gamma)
 =-e^{a_\Sigma L}-a e^{2L}
  +a_\Sigma\int_{-L}^L F^{(0)}_{L,\gamma}(s)\,ds.
\end{equation}

\begin{lemma}[Leading action normalization and endpoint value matching]\label{lem:step5}
For every sufficiently large $L$ there is a unique number
$\gamma_L\in(1/2,3/2)$ satisfying
\begin{equation}\label{eq:gamma-action-zero}
 \mathfrak a_L(\gamma_L)=0,
 \qquad
 |\gamma_L-1|\le C e^{-a_\Sigma L}.
\end{equation}
Define
\begin{equation}\label{eq:base-transition-slope-corrected}
 p^{(0)}_{\lambda,L}
 =\chi_{\Sigma,L}p_\Sigma+\chi_{B,L}p_B
  +\omega_L\delta_\lambda F_{\gamma_L}(s).
\end{equation}
Fix $0\le\zeta\in C_c^\infty((-1,1))$ with
$\int_{-1}^1\zeta=1$, and put
\begin{equation}\label{eq:corrected-transition-slope}
 p_{\lambda,L,\kappa}=p^{(0)}_{\lambda,L}+\kappa\zeta.
\end{equation}
Let
\begin{equation}\label{eq:transition-action-functional}
 \mathscr A_{\lambda,L}(\kappa)
 =\log U_{\lambda^{-1},0}(R_\lambda^+e^L)
  -\log S_\lambda(R_\lambda^+e^{-L})
  +a_\Sigma\int_{-L}^Lp_{\lambda,L,\kappa}(s)\,ds.
\end{equation}
Then, for all sufficiently small $\lambda$, there is a unique
$\kappa_\lambda$ such that
$\mathscr A_{\lambda,L}(\kappa_\lambda)=0$, and
\begin{equation}\label{eq:kappa-bound-step5}
 |\kappa_\lambda|+|\lambda\partial_\lambda\kappa_\lambda|
 \le C_L\delta_\lambda^2.
\end{equation}
If $T_\lambda^+$ is defined by
\begin{equation}\label{eq:integrated-transition-profile}
 \frac{d}{ds}\log T_\lambda^+
 =-a_\Sigma p_{\lambda,L,\kappa_\lambda},
 \qquad
 T_\lambda^+(R_\lambda^+e^{-L})
 =S_\lambda(R_\lambda^+e^{-L}),
\end{equation}
then $T_\lambda^+$ agrees identically with $S_\lambda$ near the left
endpoint and with $U_{\lambda^{-1},0}$ near the right endpoint.
\end{lemma}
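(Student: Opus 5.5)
The plan is to treat the two scalar equations $\mathfrak a_L(\gamma)=0$ and $\mathscr A_{\lambda,L}(\kappa)=0$ separately, each by monotonicity. Then the profile statement follows by ODE uniqueness. The guiding observation is that both endpoint slopes are logarithmic derivatives of the models. If $u$ is radial, \eqref{eq:radial-p-q-V-S} gives $\frac{d}{ds}\log u=-a_\Sigma p$. Using $R_\lambda^+=\lambda^{-4/n}$, one checks $\lambda^2(R_\lambda^+)^2=\delta_\lambda$ and $(R_\lambda^+)^{-a_\Sigma}=\lambda^{2-8/n}=\delta_\lambda$. A direct computation then shows that $p_\Sigma$ is exactly the slope of $S_\lambda$ and $p_B$ is exactly the slope of $U_{\lambda^{-1},0}$ in the variable $s$.

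\emph{Step 1 (choice of $\gamma_L$).} Since $F_\gamma$ is increasing in $\gamma$, so is $\mathfrak a_L$:
\[
 \partial_\gamma\mathfrak a_L(\gamma)=a_\Sigma\int_{-L}^L\omega_L\frac{e^{-2a_\Sigma s}}{2F_\gamma}\,ds>0 .
\]
On $[-L+1,-L/2]$ we have $\omega_L=1$ and $F_\gamma\asymp\gamma^{1/2}e^{-a_\Sigma s}$, so this derivative is at least $c\,e^{a_\Sigma L}$ uniformly for $\gamma\in(1/2,3/2)$. Next I evaluate $\mathfrak a_L(1)$. I split the integral at $s=0$. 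On $s<0$, $F_1=e^{-a_\Sigma s}\bigl(1+O(e^{(4+2a_\Sigma)s})\bigr)$; on $s>0$, $F_1=e^{2s}\bigl(1+O(e^{-(4+2a_\Sigma)s})\bigr)$. The cutoff pieces on $[-L,-L+1]$ and $[L-1,L]$ differ from the pure exponentials by exponentially small relative errors. Hence
\[
 a_\Sigma\int F^{(0)}_{L,1}=e^{a_\Sigma L}+a e^{2L}+O(1),
\]
where $a_\Sigma\cdot\tfrac12=a$, so $\mathfrak a_L(1)=O(1)$. The mean value theorem together with the derivative bound then yields a unique zero $\gamma_L$ with $|\gamma_L-1|\le Ce^{-a_\Sigma L}$, for $L$ large.

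\emph{Step 2 (choice of $\kappa_\lambda$).} The functional $\mathscr A_{\lambda,L}$ is affine in $\kappa$ with slope $a_\Sigma\int\zeta=a_\Sigma$, so $\kappa_\lambda=-\mathscr A_{\lambda,L}(0)/a_\Sigma$ is the unique zero. It remains to show $\mathscr A_{\lambda,L}(0)=O_L(\delta_\lambda^2)$. Using the identities above,
\[
 \mathscr A_{\lambda,L}(0)=-a\log(1+\delta_\lambda e^{2L})-\log(1+\delta_\lambda e^{a_\Sigma L})+a_\Sigma\int_{-L}^Lp^{(0)}_{\lambda,L}\,ds .
\]
The terms $a\log(2\lambda)$ cancel between the two endpoint values. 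For fixed $L$ and $s\in[-L,L]$, I expand $p_\Sigma=\delta_\lambda e^{-a_\Sigma s}+O_L(\delta_\lambda^2)$ and $p_B=\delta_\lambda e^{2s}+O_L(\delta_\lambda^2)$, and likewise each logarithm. This gives
\[
 \mathscr A_{\lambda,L}(0)=\delta_\lambda\,\mathfrak a_L(\gamma_L)+O_L(\delta_\lambda^2)=O_L(\delta_\lambda^2)
\]
by Step 1. Every expression involved is an analytic function of $\delta_\lambda$ alone, with $L$ fixed. Since $\lambda\partial_\lambda\delta_\lambda=(2-8/n)\delta_\lambda$, the same expansion differentiated in $\delta_\lambda$ gives $|\lambda\partial_\lambda\kappa_\lambda|\le C_L\delta_\lambda^2$, which is \eqref{eq:kappa-bound-step5}.

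\emph{Step 3 (matching).} For $L\ge4$, $\operatorname{supp}\zeta\subset(-1,1)$ is disjoint from neighborhoods of $\pm L$. Near $s=-L$ the slope therefore equals $p_\Sigma$, the slope of $S_\lambda$. Because $T_\lambda^+$ and $S_\lambda$ solve the same first-order ODE with the same value at $s=-L$, they coincide there. Integrating the ODE across $[-L,L]$ gives
\[
 \log T_\lambda^+(R_\lambda^+e^L)=\log S_\lambda(R_\lambda^+e^{-L})-a_\Sigma\int_{-L}^L p_{\lambda,L,\kappa_\lambda}\,ds .
\]
By $\mathscr A_{\lambda,L}(\kappa_\lambda)=0$, this equals $\log U_{\lambda^{-1},0}(R_\lambda^+e^L)$. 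Near $s=L$ the slope is $p_B$, the slope of $U_{\lambda^{-1},0}$, so uniqueness for the ODE (integrating backward from $s=L$) gives agreement there too.

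The main obstacle is the bookkeeping in Step 1. The $O(1)$ crossover error in $\mathfrak a_L(1)$ must be shown to be genuinely bounded, and not of size $e^{2L}$ or $e^{a_\Sigma L}$. This comes down to checking that the relative errors of $F_1$ against the pure exponentials are integrable against those exponentials; the exponents $4+2a_\Sigma>a_\Sigma$ and $4+2a_\Sigma>2$ are exactly what makes this work.
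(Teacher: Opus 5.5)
Your proposal is correct and follows essentially the same route as the paper. You use strict monotonicity of $\mathfrak a_L$, with derivative of order $e^{a_\Sigma L}$, together with $\mathfrak a_L(1)=O(1)$ from integrable crossover errors. You then use affinity of $\mathscr A_{\lambda,L}$ in $\kappa$, the expansion $\mathscr A_{\lambda,L}(0)=\delta_\lambda\mathfrak a_L(\gamma_L)+O_L(\delta_\lambda^2)$, and first-order ODE uniqueness at the endpoints. Your added remark justifies the $\lambda\partial_\lambda$ bound, which the paper only asserts: $\mathscr A_{\lambda,L}(0)$ is a smooth function of $\delta_\lambda$ alone (for fixed $L$) that vanishes to second order.
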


\begin{proof}
Differentiation under the integral sign gives
\[
 \mathfrak a_L'(\gamma)
 =\frac{a_\Sigma}{2}\int_{-L}^L
 \omega_L(s)\frac{e^{-2a_\Sigma s}}{F_\gamma(s)}\,ds>0.
\]
For $\gamma\in[1/2,3/2]$, the left half of the integral gives
\begin{equation}\label{eq:first-action-derivative-size}
 c e^{a_\Sigma L}\le \mathfrak a_L'(\gamma)
 \le C e^{a_\Sigma L}.
\end{equation}
To estimate $\mathfrak a_L(1)$, put
$X=e^{-a_\Sigma s}$, $Y=e^{2s}$, and $F_1=(X^2+Y^2)^{1/2}$.  Then
\[
 0\le F_1-X\le Ce^{(n-a_\Sigma)s}\quad(s\le0),
 \qquad
 0\le F_1-Y\le Ce^{-(n-2)s}\quad(s\ge0).
\]
These errors, as well as the endpoint-cutoff errors, are integrable, while
\[
 a_\Sigma\int_{-L}^0X=e^{a_\Sigma L}-1,
 \qquad
 a_\Sigma\int_0^LY=a(e^{2L}-1).
\]
Hence $|\mathfrak a_L(1)|\le C$.  Together with
\eqref{eq:first-action-derivative-size}, strict monotonicity and the mean
value theorem give the unique zero and the estimate in
\eqref{eq:gamma-action-zero}.

Write $\delta=\delta_\lambda$.  The endpoint values and slopes satisfy,
uniformly on $[-L,L]$,
\[
 \log U_{\lambda^{-1},0}(R_\lambda^+e^L)
 -\log S_\lambda(R_\lambda^+e^{-L})
 =\delta(-ae^{2L}-e^{a_\Sigma L})+O_L(\delta^2),
\]
\[
 p_\Sigma=\delta e^{-a_\Sigma s}+O_L(\delta^2),
 \qquad
 p_B=\delta e^{2s}+O_L(\delta^2).
\]
Using $\mathfrak a_L(\gamma_L)=0$ therefore gives
\begin{equation}\label{eq:action-at-zero-order-two}
 \mathscr A_{\lambda,L}(0)=O_L(\delta_\lambda^2),
 \qquad
 \lambda\partial_\lambda\mathscr A_{\lambda,L}(0)
 =O_L(\delta_\lambda^2).
\end{equation}
Since $\int\zeta=1$,
$\mathscr A_{\lambda,L}(\kappa)
 =\mathscr A_{\lambda,L}(0)+a_\Sigma\kappa$; hence its unique zero satisfies
\eqref{eq:kappa-bound-step5}.  Near the two endpoints the corrected slope is
exactly $p_\Sigma$ and $p_B$, respectively.  The value matching encoded by
$\mathscr A_{\lambda,L}(\kappa_\lambda)=0$ then gives the asserted agreement
of $T_\lambda^+$ with the two endpoint models.
\end{proof}

For a positive radial function $u$ on an annulus corresponding to
$s\in I\subset\mathbb R$, define its local normalized energy by
\begin{equation}\label{eq:radial-local-energy-step6}
 \mathscr E_\delta(u;I)
 :=\omega_{n-1}\int_I
 \left[
  2(n-1)\mathcal V^4q\bigl(2\dot p+(n-4)q\bigr)
  -\frac{(n-4)(n-1)}8\mathcal V^{\frac{4n}{n-4}}
 \right]ds.
\end{equation}
This is the restriction of the normalized flat $\sigma_2$ energy to the
corresponding radial annulus.

\begin{lemma}[Transition residual, energy, and Newton ellipticity]\label{lem:step6}
Let $T_\lambda^+$ be the matched radial transition obtained in
Lemma~\ref{lem:step5}, and let $T_\lambda^-=\mathcal K_AT_\lambda^+$.  The
translated left transition is read in $r_-=|x-c_N(\xi)|$, while its right-hand
partner is always read in the Kelvin coordinate $z=\iota_A(x)$.  In
\eqref{eq:transition-residual-step6} and
\eqref{eq:transition-energy-step6}, the derivative
$\lambda\partial_\lambda$ is taken with the cylinder coordinate $s$ fixed.  Then
\begin{equation}\label{eq:transition-residual-step6}
 \left\|
 \frac{r^n\mathcal N_\delta(T_\lambda^\pm)}
      {\mathcal V^4q^2}
 \right\|_{C^\alpha_{\rm cyl}}
 +
 \left\|
 \lambda\partial_\lambda
 \frac{r^n\mathcal N_\delta(T_\lambda^\pm)}
      {\mathcal V^4q^2}
 \right\|_{C^\alpha_{\rm cyl}}
 \le C\bigl(e^{-a_\Sigma L}+C_L\delta_\lambda\bigr).
\end{equation}
They also satisfy, uniformly on both transition blocks,
\begin{equation}\label{eq:transition-ellipticity-step6}
 0<p<1,
 \qquad
 \dot p+(n-3)p(1-p)\ge c\,p(1-p).
\end{equation}
Moreover,
\begin{equation}\label{eq:transition-energy-step6}
 \bigl|\mathscr E_\delta(T_\lambda^+;[-L,L])\bigr|
 +\bigl|\lambda\partial_\lambda
       \mathscr E_\delta(T_\lambda^+;[-L,L])\bigr|
 \le C_L\lambda^{n-4},
\end{equation}
and the same estimate holds for $T_\lambda^-$ on the reflected
transition annulus.  In particular, the total contribution of the two
transition blocks, with one normalized scale derivative, is
$O_L(\lambda^{n-4})$.
\end{lemma}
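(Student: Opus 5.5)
The plan is to work entirely in the cylinder variable $s=\log(r/R_\lambda^+)$ on $[-L,L]$, using the exact radial identity of Lemma~\ref{lem:step4}. I first prove everything for $T_\lambda^+$. For $T_\lambda^-=\mathcal K_AT_\lambda^+$ the Kelvin map acts on radial profiles by $s\mapsto -s$ (up to the fixed translation of centers), $p\mapsto 1-p$ in the appropriate sense, and preserves $q$, $\mathcal V$ and the $\sigma_2$ structure. So the normalized residual, the ellipticity quantity and the energy density transfer directly. The translation by $c_N(\xi)$ is an isometry of $\delta$ and changes nothing.

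\textbf{Step 1: writing down $p$, $q$ and $\mathcal V$.} By \eqref{eq:integrated-transition-profile}, the slope of $T_\lambda^+$ in the sense of \eqref{eq:radial-p-q-V-S} is exactly $p=p_{\lambda,L,\kappa_\lambda}$. In the interior region $\omega_L=1$ this is
\[
p=\delta_\lambda F_{\gamma_L}(s)+\kappa_\lambda\zeta .
\]
Near the endpoints it equals $p_\Sigma$ or $p_B$, where $T$ coincides with $S_\lambda$ or with the bubble $U_{\lambda^{-1},0}$ by Lemma~\ref{lem:step5}.

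\textbf{Step 2: the residual and ellipticity.} Near the endpoints the residual is computed exactly: it vanishes on the bubble side. On the Schwarzschild side only the potential term survives, and it is controlled by $\mathcal V^{16/(n-4)}/q^2$. Since $p=O(\delta e^{C L})$ on the whole block, $q=p+O(p^2)$, and I insert this into \eqref{eq:exact-radial-normalized-residual}. Integrating $\dot p/p$ from the left endpoint shows that $\mathcal V^{16/(n-4)}$ equals $\delta^2e^{4s}$ up to factors $1+O_L(\delta)$; this is where the choice of $R_\lambda^+$ and $\delta_\lambda$ enters. The normalized residual then becomes
\[
4(n-1)\Bigl(\frac{F'}{F}+a_\Sigma\Bigr)-\frac{n(n-1)}{2}\,\frac{e^{4s}}{F^2}+O_L(\delta),
\]
using $\tfrac14\binom n2\cdot 16/\delta^2\cdot\delta^2=2n(n-1)$; I will double-check this constant against \eqref{eq:leading-transition-ode}. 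The leading part vanishes by \eqref{eq:leading-transition-ode}. The cutoff zones $[-L,-L+1]$ and $[L-1,L]$ contribute errors of relative size $e^{-a_\Sigma L}$ and $e^{-nL}$, because there $F_\gamma$ differs from $e^{-a_\Sigma s}$ or $e^{2s}$ by that relative amount. The $\kappa\zeta$ term is $O_L(\delta^2)$ by \eqref{eq:kappa-bound-step5}, hence $O_L(\delta)$ after normalizing by $p$. The $C^\alpha_{\rm cyl}$ norms follow because all quantities are smooth in $s$ with $L$-uniform cutoff bounds. The $\lambda\partial_\lambda$ bound follows by differentiating the explicit formulas, using \eqref{eq:kappa-bound-step5} and $\lambda\partial_\lambda\delta=(2-8/n)\delta$. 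Ellipticity \eqref{eq:transition-ellipticity-step6} holds because $0<p\ll1$ and
\[
\frac{\dot p}{p}\ge \frac{F'}{F}-Ce^{-a_\Sigma L}\ge -a_\Sigma - o(1),
\]
while $n-3-a_\Sigma=n/2-1>0$.

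\textbf{Step 3: the energy.} The integrand of \eqref{eq:radial-local-energy-step6} is $O(\mathcal V^4 q)\cdot O(\text{slope})$. With $\mathcal V^4\sim\delta^{(n-4)/2}e^{(n-4)s}$ and $q\sim\delta e^{CL}$, this is bounded by $C_L\delta^{n/2}=C_L\lambda^{(n-4)(n/2)\cdot 2/n\cdot\ldots}$. I would verify that $\delta_\lambda^{n/2}=\lambda^{n-4}$; indeed $(2-8/n)\,n/2=n-4$. The scale derivative is handled the same way.

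\textbf{Main obstacle.} The hardest step is the bookkeeping of $\mathcal V$ in Step 2. Its exact normalization must come out of the value-matching condition $\mathscr A_{\lambda,L}(\kappa_\lambda)=0$, so that the potential term matches the leading ODE to relative error $O_L(\delta)$ and not merely $O(1)$. I would settle this first by computing $\mathcal V$ at the Schwarzschild endpoint explicitly and then propagating it along the block using $\dot{\mathcal V}/\mathcal V=a-a_\Sigma p$.
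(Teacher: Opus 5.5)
Your proposal is correct and follows essentially the paper's proof. Both arguments write $p=\delta_\lambda F^{(0)}_{L,\gamma_L}+O_L(\delta_\lambda^2)$, propagate $\mathcal V$ from the Schwarzschild endpoint via $d_s\log\mathcal V=a-a_\Sigma p$, and cancel the leading terms with \eqref{eq:leading-transition-ode} up to an endpoint-strip defect of size $e^{-a_\Sigma L}$ (coming from $|\gamma_L-1|\le Ce^{-a_\Sigma L}$). They then read off ellipticity from $F'/F+n-3\ge\frac{n-2}{2}$, bound the energy density by $\delta_\lambda^{n/2}=\lambda^{n-4}$, and transfer everything to $T_\lambda^-$ by \eqref{eq:Kelvin-slope-step6}.

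Fix the bookkeeping, though. The propagation gives $\mathcal V^{16/(n-4)}=16\delta_\lambda^2e^{4s}(1+O_L(\delta_\lambda))$, so the displayed potential coefficient is $2n(n-1)$, not $n(n-1)/2$. This normalization comes from the left-endpoint agreement with $S_\lambda$, not from $\mathscr A_{\lambda,L}(\kappa_\lambda)=0$. Finally, your energy count must also include the potential term $\mathcal V^{4n/(n-4)}=\mathcal V^4\cdot O_L(\delta_\lambda^2)$, which is likewise $O_L(\delta_\lambda^{n/2})$.
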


\begin{proof}
Put $\delta=\delta_\lambda$, $F_L=F^{(0)}_{L,\gamma_L}$, and
$p=p_{\lambda,L,\kappa_\lambda}$.  On the fixed cylinder $[-L,L]$, the
endpoint expansions and Lemma~\ref{lem:step5} give, with one
$\lambda\partial_\lambda$ derivative,
\begin{align}
 \|p-\delta F_L\|_{C^{2,\alpha}}
 +\|\lambda\partial_\lambda(p-\delta F_L)\|_{C^{2,\alpha}}
 &\le C_L\delta^2,
 \label{eq:p-leading-expansion-step6}\\
 q=p(1-p)&=\delta F_L\bigl(1+O_{C^{1,\alpha},L}(\delta)\bigr).
 \label{eq:q-leading-expansion-step6}
\end{align}
In particular, $q>0$ for small $\lambda$.

Define
\[
 \mathfrak d_L
 =\frac{F_L'}{F_L}+a_\Sigma-\frac n2\frac{e^{4s}}{F_L^2}.
\]
On the central interval this vanishes by
\eqref{eq:leading-transition-ode}.  On the two endpoint strips, writing
$F_L=e^{-a_\Sigma s}G_L$ and $F_L=e^{2s}G_L$, respectively, the cutoff
construction and \eqref{eq:gamma-action-zero} give
\begin{equation}\label{eq:leading-defect-bound-step6}
 \|\mathfrak d_L\|_{C^{1,\alpha}}
 \le Ce^{-a_\Sigma L}.
\end{equation}
Consequently,
\begin{equation}\label{eq:first-residual-ratio-step6}
 \frac{\dot p+a_\Sigma q}{q}
 =\frac n2\frac{e^{4s}}{F_L^2}
  +\mathfrak d_L+O_{C^\alpha,L}(\delta),
\end{equation}
where the differentiated remainder is again $O_L(\delta)$.

Let $\mathcal V=r^aT_\lambda^+$ and set
\[
 H_\lambda(s)=\frac{\mathcal V(s)^{16/(n-4)}}{16\delta^2e^{4s}}.
\]
Since $d_s\log\mathcal V=a-a_\Sigma p$,
$d_s\log H_\lambda=-8p$, while agreement with $S_\lambda$ at $s=-L$
gives $H_\lambda(-L)=(1+\delta e^{a_\Sigma L})^{16/(n-4)}$.  Hence
\begin{equation}\label{eq:H-bound-step6}
 \|H_\lambda-1\|_{C^{1,\alpha}}
 +\|\lambda\partial_\lambda H_\lambda\|_{C^{1,\alpha}}
 \le C_L\delta,
\end{equation}
and therefore
\[
 \frac{\mathcal V^{16/(n-4)}}{q^2}
 =16\frac{e^{4s}}{F_L^2}
  \bigl(1+O_{C^\alpha,L}(\delta)\bigr).
\]
Substitution of these two expansions into
\eqref{eq:exact-radial-normalized-residual} cancels the leading terms and
proves \eqref{eq:transition-residual-step6} for $T_\lambda^+$.

For a radial function $u$, set $\widehat u(r)=r^{-a_\Sigma}u(r^{-1})$.
Then
\begin{equation}\label{eq:Kelvin-slope-step6}
 p_{\widehat u}(r)=1-p_u(r^{-1}),\qquad
 \dot p_{\widehat u}(r)=\dot p_u(r^{-1}),\qquad
 q_{\widehat u}(r)=q_u(r^{-1}),\qquad
 \mathcal V_{\widehat u}(r)=\mathcal V_u(r^{-1}).
\end{equation}
Thus the residual estimate transfers to $T_\lambda^-$ by $s\mapsto-s$.

It remains to verify ellipticity and the energy bound.  On the central
interval, \eqref{eq:leading-transition-ode} gives
\[
 \frac{F_L'}{F_L}+n-3
 =\frac{n-2}{2}+\frac n2\frac{e^{4s}}{F_L^2}
 \ge\frac{n-2}{2};
\]
the endpoint representations used above give the same positive lower bound,
up to a fixed constant, on the cutoff strips.  Equations
\eqref{eq:p-leading-expansion-step6}--\eqref{eq:q-leading-expansion-step6}
therefore imply
$\dot p+(n-3)q\ge cq$ and $0<p<1$ for small $\lambda$.  Formula
\eqref{eq:Kelvin-slope-step6} gives the same result for $T_\lambda^-$,
proving \eqref{eq:transition-ellipticity-step6}.

Finally, on the fixed cylinder,
$q=O_L(\delta)$,
$2\dot p+(n-4)q=O_L(\delta)$,
$\mathcal V^{16/(n-4)}=O_L(\delta^2)$, and
$\mathcal V^4=O_L(\delta^{(n-4)/2})$, with one scale derivative.  Hence both
terms in the radial energy density are $O_L(\delta^{n/2})$.  Since
$\delta^{n/2}=\lambda^{n-4}$, integration gives
\eqref{eq:transition-energy-step6}; the Kelvin-reflected estimate is the
same.
\end{proof}

\begin{lemma}[Transition linear gap]\label{lem:step7}
Let $Y_{\ell,m}$ be a spherical harmonic satisfying
\[
 -\Delta_{\mathbb S^{n-1}}Y_{\ell,m}=\Lambda_\ell Y_{\ell,m},
 \qquad \Lambda_\ell\ge0,
\]
and define
\begin{equation}\label{eq:vartheta-model-step7}
 \vartheta_L(s)
 =\frac{e^{4s}}{\gamma_Le^{-2a_\Sigma s}+e^{4s}},
 \qquad
 A_{0,L}=a_\Sigma+\frac n2\vartheta_L,
 \qquad
 B_{0,L}=\frac{n-2+n\vartheta_L}{2(n-1)}.
\end{equation}
The uncut leading family $F_{\gamma_L}$ has Liouville potential
\begin{equation}\label{eq:transition-potential-step7}
 \mathfrak Q_{0,L}(s)
 =\frac12A_{0,L}'(s)+\frac14A_{0,L}(s)^2
 =\frac14\left(a_\Sigma+\frac n2\vartheta_L(s)\right)^2
  +\frac{n^2}{4}\vartheta_L(s)(1-\vartheta_L(s))
 \ge\frac{a_\Sigma^2}{4}.
\end{equation}
For every sufficiently large $L$, and then for all sufficiently small
$\lambda$, the relative linearization at $T_\lambda^+$ in the
$\ell$-th harmonic sector is conjugate to
\begin{equation}\label{eq:actual-schrodinger-step7}
 \mathcal H_{\lambda,L,\ell}
 =-\frac{d^2}{ds^2}+\mathfrak Q_{\lambda,L,\ell}(s)
 \quad\hbox{on }[-L,L],
\end{equation}
with Dirichlet boundary condition, where
\begin{align}
 \bigl\|\mathfrak Q_{\lambda,L,\ell}
  -(\mathfrak Q_{0,L}+B_{0,L}\Lambda_\ell)\bigr\|_{C^\alpha}
 &\le C(1+\Lambda_\ell)
       \bigl(e^{-a_\Sigma L}+C_L\delta_\lambda\bigr),
 \label{eq:potential-perturbation-step7}\\
 \bigl\|\lambda\partial_\lambda
       \mathfrak Q_{\lambda,L,\ell}\bigr\|_{C^\alpha}
 &\le C_L(1+\Lambda_\ell)\delta_\lambda.
 \label{eq:potential-parameter-step7}
\end{align}
Consequently, there are constants $c_*>0$ and $L_*>0$, independent of
$\ell$, such that for $L\ge L_*$ and sufficiently small $\lambda$,
\begin{equation}\label{eq:transition-coercivity-step7}
 \int_{-L}^L\left(|y'|^2
  +\mathfrak Q_{\lambda,L,\ell}y^2\right)ds
 \ge c_*\int_{-L}^L
       \left(|y'|^2+(1+\Lambda_\ell)y^2\right)ds
\end{equation}
for every $y\in H_0^1(-L,L)$.  In particular, no spherical-harmonic
sector has a nonzero Dirichlet kernel.  The same conclusion holds for
$T_\lambda^-$ after reflection by the Kelvin map.
\end{lemma}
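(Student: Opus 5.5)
We linearize the radial $\sigma_2$ operator at $T_\lambda^+$ in the cylinder variable $s$, separate variables in spherical harmonics, and reduce each sector to a Sturm--Liouville operator whose potential we compare with the model potential of the uncut family $F_{\gamma_L}$.

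\emph{Step 1: the sector operator.} Write $u=T_\lambda^+(1+\varphi)$ and $\varphi=\psi(s)Y_{\ell,m}$. The linearization of $\mathcal N_\delta$ has coefficients given by the Newton tensor of $g_{T_\lambda^+}$. In the radial--tangential frame its radial and tangential eigenvalues are proportional to the tangential Schouten eigenvalue $2q/r^2$ and to $\alpha_R+(n-2)\alpha_T$, respectively. The latter is a multiple of $\dot p+(n-3)q$, which Lemma~\ref{lem:step6} shows is $\ge cq>0$.

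After dividing by $\mathcal V^4q^2$ and multiplying by the radial Newton coefficient, the sector operator becomes
\[
 -\psi''-A\psi'+B\Lambda_\ell\psi+(\text{zeroth order}),
\]
where $A=A_\lambda(s)$ and $B=B_\lambda(s)$ are explicit rational expressions in $p$, $\dot p$, $\ddot p$ and $q$. The substitution $\psi=e^{-\frac12\int A}y$ then gives the Liouville form \eqref{eq:actual-schrodinger-step7}, with
\[
 \mathfrak Q=\tfrac12A'+\tfrac14A^2+B\Lambda_\ell+(\text{zeroth-order coefficient}).
\]
The Dirichlet condition is the natural one for the transition block.

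\emph{Step 2: identify the leading potential.} By \eqref{eq:p-leading-expansion-step6}--\eqref{eq:first-residual-ratio-step6} and \eqref{eq:H-bound-step6}, we have $p=\delta F_L+O(\delta^2)$ and $\dot p/q=F_L'/F_L+O(\delta)$. On the central interval $F_L'/F_L+a_\Sigma=\frac n2\vartheta_L$ by \eqref{eq:leading-transition-ode}, so the expressions for $A$ and $B$ reduce to $A_{0,L}$ and $B_{0,L}$. Here the ratio of the tangential to the radial Newton coefficient tends to $(\dot p/q+n-3)/(n-1)$, which at leading order equals $B_{0,L}$.

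The zeroth-order term should combine with $\frac12A'+\frac14A^2$ exactly as in \eqref{eq:transition-potential-step7}, because the residual identity \eqref{eq:exact-radial-normalized-residual} is what produces the $-\frac n2\vartheta_L$ corrections. This step needs the main algebraic check. I would verify it by differentiating the normalized residual in $p$ and $\mathcal V$ and inserting the leading relations $\mathcal V^{16/(n-4)}/q^2\approx16e^{4s}/F_L^2$ and $\vartheta_L=e^{4s}/F_{\gamma_L}^2$.

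On the endpoint strips, $F_L$ differs from $F_{\gamma_L}$ by $O(e^{-a_\Sigma L})$ in $C^{2}$, via \eqref{eq:leading-defect-bound-step6} and the endpoint representations. The coefficient of $\Lambda_\ell$ is perturbed linearly, which gives the factor $(1+\Lambda_\ell)$ in \eqref{eq:potential-perturbation-step7}. Estimate \eqref{eq:potential-parameter-step7} follows from the $\lambda\partial_\lambda$ bounds in the same expansions, since the leading potential is independent of $\lambda$ at fixed $s$.

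\emph{Step 3: coercivity.} Expanding the square gives the displayed identity in \eqref{eq:transition-potential-step7}, and since $0\le\vartheta_L\le1$ we get $\mathfrak Q_{0,L}\ge a_\Sigma^2/4$. Also $B_{0,L}\ge\frac{n-2}{2(n-1)}$. Hence
\[
 \mathfrak Q_{0,L}+B_{0,L}\Lambda_\ell\ge c_0(1+\Lambda_\ell).
\]
Choosing $L_*$ large and then $\lambda$ small, the perturbation in \eqref{eq:potential-perturbation-step7} is at most $\frac{c_0}2(1+\Lambda_\ell)$ in sup norm. This yields \eqref{eq:transition-coercivity-step7} with $c_*=\min(1,c_0/2)$, uniformly in $\ell$, and therefore no Dirichlet kernel in any sector.

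\emph{Step 4: the left transition.} For $T_\lambda^-$, the Kelvin relations \eqref{eq:Kelvin-slope-step6} together with the conformal covariance of $\mathcal N$ under $\mathcal K_A$ conjugate the sector operator to the one above after $s\mapsto -s$. Coercivity is invariant under this reflection.

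The main obstacle is Step 2: carrying out the exact identification of the zeroth-order coefficient and the Newton-ratio coefficient, and confirming they produce precisely $\mathfrak Q_{0,L}$ and $B_{0,L}$ rather than a potential that could dip below zero on the endpoint strips.
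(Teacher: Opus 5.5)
Your outline follows the paper's route: Newton eigenvalues in the cylinder frame, normalization by the radial Newton coefficient, the Liouville substitution with $\Phi'=A$, comparison with the uncut profile, and the lower bounds $\mathfrak Q_{0,L}\ge a_\Sigma^2/4$ and $B_{0,L}\ge\frac{n-2}{2(n-1)}$. Your leading-order identification of $A$ and $B$ is also correct. The gap is in Step~2, exactly where you locate the main obstacle. You expect the zeroth-order coefficient to survive in the limit and to ``combine'' with $\frac12A'+\frac14A^2$ through the residual identity \eqref{eq:exact-radial-normalized-residual}. That cannot happen. $\mathfrak Q_{0,L}$ is \emph{defined} as $\frac12A_{0,L}'+\frac14A_{0,L}^2$, and the second equality in \eqref{eq:transition-potential-step7} is only the computation of $\frac12A_{0,L}'$. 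It comes from $\vartheta_L=(1+\gamma_Le^{-ns})^{-1}$ (using $4+2a_\Sigma=n$), that is, $\vartheta_L'=n\vartheta_L(1-\vartheta_L)$; it does not come from expanding the square. Since you already have $A\to A_{0,L}$, a zeroth-order term with a nonzero limit would make \eqref{eq:potential-perturbation-step7} false.

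The missing idea is a scaling observation. Linearize \eqref{eq:exact-radial-proper-residual} directly. Under $u\mapsto ue^{atw}$ one has $\mathcal V^4\mapsto\mathcal V^4(1+(n-4)tw)$, $\mathcal V^{4n/(n-4)}\mapsto\mathcal V^{4n/(n-4)}(1+ntw)$ and $p\mapsto p-\frac t2w_s$. Dividing by $-2(n-1)r^{-n}\mathcal V^4q$ gives $w_{ss}+A_uw_s+B_u\Delta_{\mathbb S^{n-1}}w-C_uw$, where $A_u=(1-2p)\frac{\dot p+(n-4)q}{q}=\partial_s\log(q\mathcal V^4)$ (no $\ddot p$) and $C_u=(n-4)(2\dot p+(n-4)q)-\frac{n^2}{16}\mathcal V^{16/(n-4)}/q$. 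The target term has no derivatives, so it enters only $C_u$. Every term of $C_u$ carries a factor $\dot p$, $q$, or $q\cdot\mathcal V^{16/(n-4)}/q^2$. By Lemma~\ref{lem:step6} and \eqref{eq:H-bound-step6}, each of these is $O_L(\delta_\lambda)$ in $C^\alpha$, also after $\lambda\partial_\lambda$. So $C_u$ is absorbed into the $C_L\delta_\lambda$ error and no cancellation is needed. An $O(1)$ zeroth-order term appears only if you normalize by $\mathcal V^4q^2$, and then the leading coefficient is itself $O(1/\delta_\lambda)$.

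On the endpoint strips, the correct input is the relative bound $\|F_L/F_{\gamma_L}-1\|_{C^{2,\alpha}}\le Ce^{-a_\Sigma L}$. The absolute difference $F_L-F_{\gamma_L}$ need not be small near $s=-L$, where $F_L\sim e^{a_\Sigma L}$. The relative bound gives $\|A_u-A_{0,L}\|_{C^{1,\alpha}}+\|B_u-B_{0,L}\|_{C^{1,\alpha}}\le C(e^{-a_\Sigma L}+C_L\delta_\lambda)$, so the potential cannot dip below the model bound. With these two corrections, your Steps~3 and~4 go through as written.
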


\begin{proof}
Write $u=r^{-a}\mathcal V$ and
$u^{8/(n-4)}\delta=e^{2\psi}g_{\rm cyl}$ with
$\psi=\frac4{n-4}\log\mathcal V$.  Since
$\psi_s=1-2p$ and $\psi_{ss}=-2\dot p$, the first Newton tensor has radial
and tangential eigenvalues
\begin{equation}\label{eq:cylinder-newton-step7}
 \tau_R=2(n-1)q,
 \qquad
 \tau_T=2\bigl(\dot p+(n-3)q\bigr).
\end{equation}
For the relative variation $u_t=ue^{atw}$, direct linearization in the
cylindrical frame gives
\[
 \mathscr P_uw
 =w_{ss}+A_uw_s+B_u\Delta_{\mathbb S^{n-1}}w-C_uw,
\]
where
\begin{align}
 A_u&=(1-2p)\frac{\dot p+(n-4)q}{q},
 \label{eq:A-coeff-step7}\\
 B_u&=\frac{\dot p+(n-3)q}{(n-1)q},\\
 C_u&=(n-4)\bigl(2\dot p+(n-4)q\bigr)
 -\frac{n^2}{16}\frac{\mathcal V^{16/(n-4)}}q.
 \label{eq:C-coeff-step7}
\end{align}
The normalization is by a positive factor, so it does not affect the
kernel.

Let $F_{\gamma_L}$ denote the uncut leading profile.  The endpoint-strip
representations from the preceding proof imply
\[
 \left\|\frac{F_L}{F_{\gamma_L}}-1\right\|_{C^{2,\alpha}}
 \le Ce^{-a_\Sigma L}.
\]
Combining this with Lemma~\ref{lem:step6},
\eqref{eq:leading-transition-ode}, and \eqref{eq:H-bound-step6} yields
\begin{align*}
 \|A_u-A_{0,L}\|_{C^{1,\alpha}}
 +\|B_u-B_{0,L}\|_{C^{1,\alpha}}
 &\le C\bigl(e^{-a_\Sigma L}+C_L\delta_\lambda\bigr),\\
 \|C_u\|_{C^\alpha}&\le C_L\delta_\lambda,
\end{align*}
with $O_L(\delta_\lambda)$ bounds after one scale derivative.

In the $\ell$th harmonic sector, put
$w_\ell=e^{-\Phi_u/2}y_\ell$ with $\Phi_u'=A_u$.  The equation becomes
\[
 -y_\ell''+\mathfrak Q_{\lambda,L,\ell}y_\ell=0,
 \qquad
 \mathfrak Q_{\lambda,L,\ell}
 =\frac12A_u'+\frac14A_u^2+B_u\Lambda_\ell+C_u.
\]
Since $\vartheta_L'=n\vartheta_L(1-\vartheta_L)$, the limiting potential is
exactly \eqref{eq:transition-potential-step7}.  The preceding coefficient
bounds give \eqref{eq:potential-perturbation-step7} and
\eqref{eq:potential-parameter-step7}.

Finally,
$\mathfrak Q_{0,L}\ge a_\Sigma^2/4$ and
$B_{0,L}\ge(n-2)/(2(n-1))$.  Choosing first $L$ and then $\lambda$ so that
the perturbation is smaller than one half of these bounds gives
\[
 \mathfrak Q_{\lambda,L,\ell}
 \ge\frac{a_\Sigma^2}{8}
   +\frac{n-2}{4(n-1)}\Lambda_\ell.
\]
Integration by parts proves \eqref{eq:transition-coercivity-step7} and the
absence of Dirichlet kernel.  The result for $T_\lambda^-$ follows from
\eqref{eq:Kelvin-slope-step6}.
\end{proof}

\begin{proposition}[Exact scalar Kelvin pairing and covariance]\label{prop:scalar-kelvin-verified}
The scalar action in \eqref{eq:scalar-Kelvin} is an involution and the
bubble identity \eqref{eq:Kelvin-bubble-parameters} holds exactly.  Moreover,
for every positive smooth $u$ and every smooth variation $\phi$, writing
\[
 L_\delta(u)[\phi]
 :=\left.\frac{d}{dt}\right|_{t=0}\mathcal N_\delta(u+t\phi),
\]
one has
\begin{align}
 \mathcal N_\delta(\mathcal K_Au)(x)
 &=|x|^{-2n}\mathcal N_\delta(u)(\iota_A(x)),
 \label{eq:Kelvin-residual-covariance-prop}\\
 L_\delta(\mathcal K_Au)[\mathcal K_A\phi](x)
 &=|x|^{-2n}L_\delta(u)[\phi](\iota_A(x)).
 \label{eq:Kelvin-linear-covariance-prop}
\end{align}
For the selected parameters in \eqref{eq:principal-parameters-step3},
\eqref{eq:right-center-deep-core} holds.  In addition, for every fixed $L$,
the entire left matched-transition annulus and its exact Kelvin image lie in
the regions where the cutoffs of $h_N^-$ and $h_N^+$, respectively, are
identically one, once $N_0$ is sufficiently large.
\end{proposition}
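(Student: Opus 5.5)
The proof has four parts: the algebraic facts about $\mathcal K_A$ and the bubble identity, the conformal covariance of $\mathcal N_\delta$, the location of the reflected center, and the containment of the transition annuli.

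\textbf{Involution and bubble identity.} Since $\iota_A$ is an involution with $|\iota_A(x)|=|x|^{-1}$, we get
\[
(\mathcal K_A^2u)(x)=|x|^{-a_\Sigma}|\iota_A(x)|^{-a_\Sigma}u(x)=u(x).
\]
For the bubble we use the inversion distance identity
\[
|\iota_A(x)-c|^2=\frac{|x|^2|c|^2+2x\cdot c+1}{|x|^2}.
\]
It gives
\[
|x|^{2}\bigl(\lambda^2+|\iota_A(x)-c|^2\bigr)=(|c|^2+\lambda^2)\bigl(|x-c^\sharp|^2+(\lambda^\sharp)^2\bigr),
\]
which we check by expanding both sides in $|x|^2$, $x\cdot c$ and $1$. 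Raising to the power $-a$ and multiplying by $(2\lambda)^a$ then yields \eqref{eq:Kelvin-bubble-parameters} exactly, since $a_\Sigma=2a$.

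\textbf{Covariance.} This is the geometric heart, and I would argue conceptually rather than by computation. Write $\iota_A^*\delta=|x|^{-4}\delta$. Then
\[
(\mathcal K_Au)^{8/(n-4)}\delta=|x|^{-4}u(\iota_A)^{8/(n-4)}\delta=\iota_A^*\bigl(u^{8/(n-4)}\delta\bigr),
\]
so $g_{\mathcal K_Au}$ is isometric to $g_u$ through $\iota_A$. Hence $\sigma_2(g^{-1}A_g)$ is the pullback: $\sigma_2(\mathcal K_Au)(x)=\sigma_2(u)(\iota_A x)$. Multiplying by $(\mathcal K_Au)^{4n/(n-4)}=|x|^{-2n}u(\iota_A x)^{4n/(n-4)}$ gives \eqref{eq:Kelvin-residual-covariance-prop}. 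Because $\mathcal K_A$ is linear, $\mathcal K_A(u+t\phi)=\mathcal K_Au+t\mathcal K_A\phi$, and differentiating the residual identity at $t=0$ gives \eqref{eq:Kelvin-linear-covariance-prop}.

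\textbf{Reflected center.} Put $c=x_N^-+\xi$ with $|x_N^-|=N^{-1}$ and $|\xi|\le\epsilon_N\ll N^{-2}$. Then
\[
c^\sharp-x_N^+=-\frac{c}{|c|^2+\lambda^2}+\frac{x_N^-}{|x_N^-|^2}.
\]
Split this as
\[
-\Bigl(\frac{c}{|c|^2}-\frac{x_N^-}{|x_N^-|^2}\Bigr)+\frac{c\lambda^2}{|c|^2(|c|^2+\lambda^2)}.
\]
The first term is bounded by $|D\iota_A|\,|\xi|\le CN^2|\xi|$. The second is bounded by $CN^3\lambda^2$, and the quadratic remainder of the first by $CN^3|\xi|^2$. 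Since $\epsilon_N=2^{-N}$, all of these are $o(N^{-1})$, which gives \eqref{eq:right-center-deep-core}.

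\textbf{Transition annuli.} For fixed $L$, the left transition annulus lies at $r_-\in[R_\lambda^+e^{-L},R_\lambda^+e^{L}]$, scaled by $\lambda$ from the bubble core, with $R_\lambda^+=\lambda^{-4/n}$. In the original variable the annulus radius is therefore comparable to $\lambda\cdot\lambda^{-4/n}$, or to $\lambda^{1-4/n}$ depending on the convention, which I will check against Section~3. In either case it is at most $C_L\epsilon_N^{1-4/n}$. This is exponentially small, hence much smaller than $\rho_N=N^{-3}$ for $N_0$ large. Together with $|\xi|\le\epsilon_N$, the annulus lies in $\{|x-x_N^-|\le\rho_N\}$, where $\eta\equiv1$.

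For the Kelvin image, \eqref{eq:right-geometry} shows that $\iota_A$ maps the ball $|x-x_N^-|\le\rho_N$ to a region with $r_N^+\le\rho_N^+$. Here $h_N^+=K_{2,A}h_N^-$ carries the same cutoff factor $\eta(\rho_N^{-1}|\iota_A(x)-x_N^-|)$, which equals one. The image annulus is therefore contained there automatically.

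The only delicate point is getting the transition radius scaling right so that it is $\ll\rho_N$. Since $\lambda$ is exponentially small while $\rho_N$ is polynomial, there is ample room.
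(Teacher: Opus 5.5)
Your proof is correct and follows the paper's argument essentially line by line: the involution and completing the square for the bubble identity, $(\mathcal K_Au)^{8/(n-4)}\delta=\iota_A^*(u^{8/(n-4)}\delta)$ together with linearity of $\mathcal K_A$ for the two covariance identities, the expansion of $c^\sharp$ about $x_N^-$, and smallness of the transition annulus compared with $\rho_N$.

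The one point to fix is the transition radius, which you left open. By the block convention, the left transition is $\{R_\lambda^-e^{-L}\le r_-\le R_\lambda^-e^{L}\}$ with $R_\lambda^-=\lambda^{4/n}$. It is the radial Kelvin image $r\mapsto r^{-1}$ of the annulus carrying $T_\lambda^+$, not a $\lambda$-rescaling of it. Since $n>8$, we have $\lambda^{4/n}\gg\lambda^{1-4/n}$, so your claimed bound $C_L\epsilon_N^{1-4/n}$ is false as stated. The conclusion survives unchanged, because
\[
|z-x_N^-|\le e^L\lambda^{4/n}+\epsilon_N\le C_L2^{-4N/n}=o(N^{-3})=o(\rho_N).
\]
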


\begin{proof}
The involution property follows from $\iota_A^2=\mathrm{Id}$ and
$|\iota_A(x)|=|x|^{-1}$.  Completing the square in
\[
 \lambda^2+|\iota_A(x)-c|^2
 =|x|^{-2}\bigl[(|c|^2+\lambda^2)|x|^2+2c\cdot x+1\bigr]
\]
gives \eqref{eq:Kelvin-bubble-parameters}.  Moreover,
\[
 (\mathcal K_Au)^{8/(n-4)}\delta
 =\iota_A^*\bigl(u^{8/(n-4)}\delta\bigr),
\]
so conformal covariance and
$(\mathcal K_Au)^{4n/(n-4)}(x)
 =|x|^{-2n}u^{4n/(n-4)}(\iota_A(x))$ give
\eqref{eq:Kelvin-residual-covariance-prop}; differentiation gives
\eqref{eq:Kelvin-linear-covariance-prop}.

Expanding $c^\sharp=-c/(|c|^2+\lambda^2)$ around $c=x_N^-$ proves
\eqref{eq:right-center-deep-core}.  Finally, on the left transition
$|z-x_N^-|\le e^L\lambda^{4/n}+|\xi|=o(\rho_N)$ for fixed $L$ and
$\lambda\asymp\epsilon_N$.  Hence this transition lies in the uncut core of
$h_N^-$, and its Kelvin image lies in the corresponding uncut core of
$h_N^+$.
\end{proof}

\medskip
\noindent\textbf{Global paired approximation in the fixed stereographic coordinate.}
We work in the fixed stereographic coordinate and choose a radial cutoff
$\chi_-\in C^\infty(\mathbb R^n)$ such that
\[
 0\le\chi_-\le1,\qquad
 \chi_-=1\ \hbox{on }B_{1/3}(0),\qquad
 \chi_-=0\ \hbox{on }\mathbb R^n\setminus B_{1/2}(0),
\]
and put
\begin{equation}\label{eq:paired-partition}
 \chi_+(x)=\chi_-(\iota_A(x)),
 \qquad \chi_0=1-\chi_--\chi_+.
\end{equation}
The supports of $\chi_-$ and $\chi_+$ are disjoint.  After increasing $N_0$,
the two collar regions
\begin{equation}\label{eq:assembly-collars}
 \mathcal A_-:=\{1/3\le |x|\le1/2\},
 \qquad
 \mathcal A_+:=\iota_A(\mathcal A_-)=\{2\le |x|\le3\}
\end{equation}
contain no metric perturbation.

For $c=c_N(\xi)=x_N^-+\xi$, let
$\widetilde U^-_{N,\lambda,\xi}$ be the one-ended matched bubble profile obtained by
using the exact bubble $U_{\lambda,c}$ in its core, replacing $r$ by
$r_-=|x-c|$ in the left matched transition, and then continuing with the
translated Schwarzschild factor
\begin{equation}\label{eq:translated-Schwarzschild}
 S_{\lambda,c}(x)=(2\lambda)^a
 \bigl(1+|x-c|^{-a_\Sigma}\bigr).
\end{equation}
Thus, for all sufficiently large $N$, the profile
$\widetilde U^-_{N,\lambda,\xi}$ agrees identically with
$S_{\lambda,c}$ on a neighborhood of $\mathcal A_-$.  Define the complete
right-end profile by the exact scalar Kelvin action
\begin{equation}\label{eq:exact-right-end-profile}
 \widetilde U^+_{N,\lambda,\xi}=\mathcal K_A
 \widetilde U^-_{N,\lambda,\xi}.
\end{equation}
Its bubble core is exactly $U_{\lambda^\sharp,c^\sharp}$ with the parameters
in \eqref{eq:Kelvin-bubble-parameters}; the right transition is always read
in the Kelvin coordinate $z=\iota_A(x)$ and is not replaced by a radial
Euclidean approximation about $c^\sharp$.

Let
\begin{equation}\label{eq:central-Schwarzschild}
 S_\lambda(x)=(2\lambda)^a(1+|x|^{-a_\Sigma}),
 \qquad \mathcal K_AS_\lambda=S_\lambda.
\end{equation}
Define the positive global Euclidean representative by
\begin{equation}\label{eq:global-paired-Euclidean-approximation}
 \log \bar u_{N,\lambda,\xi}
 =\chi_-\log\widetilde U^-_{N,\lambda,\xi}
  +\chi_+\log\widetilde U^+_{N,\lambda,\xi}
  +\chi_0\log S_\lambda,
\end{equation}
and set
\begin{equation}\label{eq:global-paired-approximation}
 U_{1,0}=\Omega^a,
 \qquad
 \bar v_{N,\lambda,\xi}
 =U_{1,0}^{-1}\bar u_{N,\lambda,\xi}.
\end{equation}
The background tensor $h$ remains fixed and is never translated, rotated,
or made parameter dependent.

For use on the collars, define the Euclidean-index first Newton form
\begin{equation}\label{eq:Euclidean-Newton-form-collar}
 \mathbf T_\delta[u]^{ij}
 :=\sigma_1(\delta^{-1}A_{u^{8/(n-4)}\delta})\delta^{ij}
   -\delta^{ik}\delta^{j\ell}
     A_{u^{8/(n-4)}\delta,k\ell}.
\end{equation}
Also put, for $r=|x|$,
\begin{equation}\label{eq:collar-Schwarzschild-scales}
 p_\Sigma(r)=\frac{r^{-a_\Sigma}}{1+r^{-a_\Sigma}},
 \qquad q_\Sigma=p_\Sigma(1-p_\Sigma),
 \qquad \mathcal V_\Sigma=r^aS_\lambda(r).
\end{equation}

\begin{proposition}[Paired assembly and collar estimates]\label{prop:paired-assembly-collars}
The global approximation is positive and satisfies the exact identities
\begin{equation}\label{eq:exact-paired-invariance}
 \mathcal K_A\bar u_{N,\lambda,\xi}
 =\bar u_{N,\lambda,\xi},
 \qquad
 \bar v_{N,\lambda,\xi}\circ J
 =\bar v_{N,\lambda,\xi}.
\end{equation}
For parameter differentiation on the fixed collars set
$\mathfrak D_0=\lambda\partial_\lambda$ and
$\mathfrak D_k=\epsilon_N\partial_{\xi_k}$, $1\le k\le n$, with $x$ fixed.
Then, uniformly in the normalized parameters,
\begin{equation}\label{eq:recentering-collar-bound}
 \sum_{j=0}^3
 \left|D_x^j\log\frac{S_{\lambda,c}}{S_\lambda}\right|
 +\sum_{b=0}^n\sum_{j=0}^3
 \left|D_x^j\mathfrak D_b
 \log\frac{S_{\lambda,c}}{S_\lambda}\right|
 \le \frac{C}{N}
 \qquad\hbox{on }\mathcal A_-.
\end{equation}
The estimate on $\mathcal A_+$ is its exact Kelvin image.  On $\mathcal A_-$,
\begin{equation}\label{eq:collar-Newton-comparison}
 (1-CN^{-1})\mathbf T_\delta[S_\lambda]
 \le \mathbf T_\delta[\bar u_{N,\lambda,\xi}]
 \le (1+CN^{-1})\mathbf T_\delta[S_\lambda]
\end{equation}
as quadratic forms.  Finally,
\begin{align}
 &\left\|
  \frac{|x|^n\mathcal N_\delta
   (\bar u_{N,\lambda,\xi})}
       {\mathcal V_\Sigma^4q_\Sigma^2}
 \right\|_{C^\alpha(\mathcal A_-)}
 \nonumber\\
 &\qquad+
 \sum_{b=0}^n\left\|
 \mathfrak D_b\left[
  \frac{|x|^n\mathcal N_\delta
   (\bar u_{N,\lambda,\xi})}
       {\mathcal V_\Sigma^4q_\Sigma^2}
 \right]\right\|_{C^\alpha(\mathcal A_-)}
 \le C\bigl(N^{-1}+\lambda^4\bigr).
 \label{eq:collar-normalized-residual}
\end{align}
The corresponding statement on $\mathcal A_+$ holds in the exact Kelvin
coordinate.  In particular, after increasing $N_0$, both collars are
uniformly Newton elliptic relative to the Schwarzschild model.
\end{proposition}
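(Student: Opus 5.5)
We treat the three assertions in turn: the exact invariance, the collar re-centering bound, and then the Newton comparison and residual on $\mathcal A_-$, which both follow from the re-centering bound by perturbation off the exact Schwarzschild model. Positivity is immediate, since $\bar u$ is defined as the exponential of a convex combination of logarithms of positive functions.

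\textbf{Step 1: exact invariance.} The scalar Kelvin action acts on logarithms by
\[
\log(\mathcal K_A u)=-a_\Sigma\log|x|+(\log u)\circ\iota_A .
\]
Because $\chi_\pm\circ\iota_A=\chi_\mp$ and $\chi_0\circ\iota_A=\chi_0$, and the three cutoffs sum to $1$, we get
\[
\log\mathcal K_A\bar u=\chi_+\log\mathcal K_A\widetilde U^-+\chi_-\log\mathcal K_A\widetilde U^++\chi_0\log\mathcal K_AS_\lambda .
\]
Proposition~\ref{prop:scalar-kelvin-verified} gives $\mathcal K_A^2=\mathrm{Id}$, so $\mathcal K_A\widetilde U^+=\widetilde U^-$. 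Together with $\mathcal K_AS_\lambda=S_\lambda$ from \eqref{eq:central-Schwarzschild}, this returns $\log\bar u$.

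For $\bar v$, note that $U_{1,0}=\Omega^a$ satisfies $\mathcal K_AU_{1,0}=U_{1,0}$; this is \eqref{eq:Kelvin-bubble-parameters} with $\lambda=1$, $c=0$. Hence
\[
\bar v\circ\iota_A=\frac{\bar u\circ\iota_A}{U_{1,0}\circ\iota_A}=\frac{|x|^{a_\Sigma}\bar u}{|x|^{a_\Sigma}U_{1,0}}=\bar v .
\]

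\textbf{Step 2: re-centering bound.} On $\mathcal A_-$ we have $|x|\ge1/3$. We also have $|c|\le N^{-1}+\epsilon_N\le 2N^{-1}$, so $|x-c|\ge1/4$ once $N_0$ is large. Write
\[
\log\frac{S_{\lambda,c}}{S_\lambda}=\log\bigl(1+|x-c|^{-a_\Sigma}\bigr)-\log\bigl(1+|x|^{-a_\Sigma}\bigr).
\]
The factor $(2\lambda)^a$ cancels, so $\mathfrak D_0$ annihilates this expression. The function $\Psi(c,x)=\log(1+|x-c|^{-a_\Sigma})$ is smooth with all derivatives uniformly bounded on $\{|x|\in[1/3,1/2],\ |c|\le 1/10\}$. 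The mean value theorem in $c$ then gives
\[
|D_x^j(\Psi(c,\cdot)-\Psi(0,\cdot))|\le C|c|\le CN^{-1}.
\]
For the parameter derivatives, $\mathfrak D_k$ equals $\epsilon_N\partial_{c_k}\Psi$, which is bounded by $C\epsilon_N\ll N^{-1}$. This proves \eqref{eq:recentering-collar-bound}. The $\mathcal A_+$ statement follows by conjugating with $\mathcal K_A$ and using Step 1.

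\textbf{Step 3: Newton comparison and residual.} For large $N$, the construction gives $\widetilde U^-=S_{\lambda,c}$ on a neighbourhood of $\mathcal A_-$, and $\chi_+=0$ there. Hence on $\mathcal A_-$,
\[
\log\bar u=\log S_\lambda+\chi_-\log\frac{S_{\lambda,c}}{S_\lambda},
\]
and Step 2 makes the correction $O(N^{-1})$ in $C^3$, including after one $\mathfrak D_b$.

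The Schouten tensor of $u^{8/(n-4)}\delta$ depends only on $D\log u$ and $D^2\log u$, so the change from $S_\lambda$ is $O(N^{-1})$ in $C^1$. The model $S_\lambda$ on the fixed annulus has, by \eqref{eq:cylinder-newton-step7}, Newton eigenvalues $\tau_R=2(n-1)q_\Sigma$ and $\tau_T=2(\dot p_\Sigma+(n-3)q_\Sigma)=2(n-3-a_\Sigma)q_\Sigma=(n-2)q_\Sigma$, up to the conformal factor. These are bounded below by a fixed positive constant times the same scale $r^{-2}$ that governs the Schouten perturbation, because $q_\Sigma$ is bounded away from zero when $r\in[1/3,1/2]$. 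Therefore
\[
\mathbf T_\delta[\bar u]=\mathbf T_\delta[S_\lambda]+O(N^{-1})\,\mathbf T_\delta[S_\lambda],
\]
which is \eqref{eq:collar-Newton-comparison}.

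For the residual, Lemma~\ref{lem:step4} shows that $\sigma_2$ of $S_\lambda$ vanishes identically. The normalized residual of $S_\lambda$ is therefore just the constant term
\[
-\tfrac14\binom n2\,\mathcal V_\Sigma^{16/(n-4)}/q_\Sigma^2 .
\]
Since $\mathcal V_\Sigma^{16/(n-4)}=O(\lambda^4)$ and $q_\Sigma\asymp1$, this is $O(\lambda^4)$ in $C^\alpha$. The re-centering perturbation changes $\sigma_2$ by $O(N^{-1})$ times the quantity $\mathcal V_\Sigma^4q_\Sigma^2|x|^{-n}$, as a smooth function of the $C^3$-small log-perturbation. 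Applying $\mathfrak D_b$ commutes with this smooth dependence; $\mathfrak D_0$ hits $\mathcal V_\Sigma$ only through the factor $\lambda^a$ in the normalization, which is harmless. This gives \eqref{eq:collar-normalized-residual}.

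The main obstacle is making the ellipticity comparison truly multiplicative, since the Newton tensor is only of size $\lambda^{-2}$ times the model. I would handle this by working with the normalized quantities $\mathcal V_\Sigma$, $q_\Sigma$: the Schouten tensor is invariant under constant rescaling of $u$, so the factor $(2\lambda)^a$ drops out and both sides are $\lambda$-independent after normalization. Finally, the Kelvin image handles $\mathcal A_+$ via \eqref{eq:Kelvin-residual-covariance-prop}.
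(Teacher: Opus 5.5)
Your proposal is correct and follows the paper's proof essentially step for step: the cutoff symmetries $\chi_\pm\circ\iota_A=\chi_\mp$ give the Kelvin invariance, and Taylor expansion in $c$ gives the re-centering bound. Writing $\bar u=S_\lambda e^{\psi}$ with $\psi=O(N^{-1})$ in $C^3$ then reduces the Newton comparison and the residual bound to the Schwarzschild eigenvalues $2(n-1)q_\Sigma/r^2$ and $(n-2)q_\Sigma/r^2$, the vanishing of the Schwarzschild $\sigma_2$ term, and the $O(\lambda^4)$ target term. The obstacle you raise in your last paragraph does not actually arise, because $\mathbf T_\delta$ is defined with Euclidean indices and is invariant under constant rescaling of $u$, as you yourself note.
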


\begin{proof}
The relations among the cutoffs and endpoint profiles in
\eqref{eq:global-paired-Euclidean-approximation} give
$\log\bar u(\iota_A(x))=\log\bar u(x)+a_\Sigma\log|x|$, which is equivalent
to \eqref{eq:exact-paired-invariance}.

On $\mathcal A_-$ set
\[
 \phi_c=\log\frac{S_{\lambda,c}}{S_\lambda}
 =\log\frac{1+|x-c|^{-a_\Sigma}}{1+|x|^{-a_\Sigma}}.
\]
Since the collar is fixed and $|c|\le N^{-1}+\epsilon_N$, Taylor expansion in
$c$ gives \eqref{eq:recentering-collar-bound}.  Thus
$\bar u=S_\lambda e^\psi$ there with
$\|\psi\|_{C^{3,\alpha}}+
 \sum_b\|\mathfrak D_b\psi\|_{C^{3,\alpha}}\le C/N$.
The conformal Schouten formula then yields
$|\mathbf T_\delta[\bar u]-\mathbf T_\delta[S_\lambda]|\le C/N$.
Since the radial and tangential eigenvalues of the Schwarzschild Newton form
are $2(n-1)q_\Sigma/r^2$ and $(n-2)q_\Sigma/r^2$, respectively,
\eqref{eq:collar-Newton-comparison} follows.

The same expansion gives
$A_{\bar u^{8/(n-4)}\delta}
 =A_{S_\lambda^{8/(n-4)}\delta}+O_{C^{1,\alpha}}(N^{-1})$, with one
normalized parameter derivative.  The Schwarzschild curvature term is zero,
while on the fixed collar the target term contributes $O(\lambda^4)$ after
the normalization in \eqref{eq:collar-normalized-residual}.  This proves \eqref{eq:collar-normalized-residual}.  The right-collar conclusions follow from
Proposition~\ref{prop:scalar-kelvin-verified}.
\end{proof}

\medskip
\noindent\textbf{Block convention.}
Let $r_-=|x-c_N(\xi)|$ and set
\[
 \mathcal B_-:=\{r_-<R_\lambda^-e^{-L}\},
 \qquad
 \mathcal T_-:=\{R_\lambda^-e^{-L}\le r_-\le R_\lambda^-e^L\}.
\]
The left Schwarzschild block is the portion between
$\mathcal T_-$ and $\mathcal A_-$; the right blocks are their exact Kelvin
images, and the central Schwarzschild block is $\{1/2\le|x|\le2\}$ together
with the adjacent Schwarzschild pieces.  On a transition block,
$C^\alpha_{\rm cyl}$ denotes the ordinary $C^\alpha$ norm after using the
fixed cylinder variable $s$.  On a bubble or Schwarzschild end block,
$C^\alpha_{\rm block}$ denotes the scale-invariant local norm
\[
 \|F\|_{C^\alpha_{\rm block}}
 :=\sup_x\left(
 |F(x)|+\ell(x)^\alpha
 [F]_{C^\alpha(B_{\ell(x)/8}(x))}
 \right),
 \qquad \ell(x)=\lambda+r_-(x),
\]
with the same definition in the exact right Kelvin coordinate.  On
the two fixed collars it denotes the ordinary fixed-scale $C^\alpha$ norm.

\section{Global residual and admissibility}

Set
\begin{equation}\label{eq:kappa-f}
 \kappa_f=\frac{(n-8)(n-4m_f-8)}{4n}>0.
\end{equation}

\begin{lemma}[Blockwise $q^2$-normalized metric expansion]\label{lem:step8}
In the left end use the fixed Euclidean coordinate with
$r=|x-c_N(\xi)|$; in the right end use the exact Kelvin coordinate
$z=\iota_A(x)$ with $r=|z-c_N(\xi)|$.  On every bubble,
matched-transition, and Schwarzschild block, let $\bar u$ denote
the local Euclidean conformal factor of
$\bar v_{N,\lambda,\xi}$, and put
\[
 p=-\frac{2}{n-4}\frac{r\partial_r\bar u}{\bar u},
 \qquad q=p(1-p),
 \qquad \mathcal V=r^a\bar u.
\]
The quotient below is understood on $r>0$ and by continuous extension at a
bubble center.  Then
\begin{equation}\label{eq:metric-expansion-step8}
 \left\|
 \frac{r^n\bigl(\mathcal N_{e^h\delta}(\bar u)
                  -\mathcal N_\delta(\bar u)\bigr)}
      {\mathcal V^4q^2}
 \right\|_{C^\alpha_{\rm block}}
 \le C\bigl(\delta_{\!*}(N_0)+\epsilon_N^{\kappa_f}\bigr).
\end{equation}
On a transition block the norm in \eqref{eq:metric-expansion-step8} is the
cylindrical norm of the block convention.  The same estimate holds after one
normalized parameter derivative, taken at fixed bubble coordinate in a
bubble block, at fixed cylinder coordinate in a transition block, and in the
corresponding scale-invariant coordinate on a Schwarzschild block.

For a relative variation $u_t=\bar u e^{atw}$ define
\begin{equation}\label{eq:relative-operator-step8}
 \mathscr P_{g,\bar u}w
 :=-\frac{r^n}{2(n-1)\mathcal V^4q}
   \left.\frac{d}{dt}\right|_{t=0}\mathcal N_g(u_t).
\end{equation}
After writing the two operators in the same natural block coordinates and
normalizing their second-, first-, and zeroth-order coefficients by the flat
Newton scale $q/r^2$, one also has
\begin{equation}\label{eq:coefficient-expansion-step8}
 \bigl\|\mathscr P_{e^h\delta,\bar u}
       -\mathscr P_{\delta,\bar u}\bigr\|_{\mathrm{coef},C^\alpha_{\rm block}}
 \le C\bigl(\delta_{\!*}(N_0)+\epsilon_N^{\kappa_f}\bigr),
\end{equation}
with the same normalized parameter-derivative estimate.
\end{lemma}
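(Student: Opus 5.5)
The plan is to write $h=h_{\mathrm{pr}}+h_{\mathrm{rem},N}$, with $h_{\mathrm{pr}}=h_N^-+h_N^+$, and compare the Schouten tensors of $\bar u^{8/(n-4)}e^h\delta$ and $\bar u^{8/(n-4)}\delta$.

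\textbf{Reduction to a pointwise bound.} Set $w=\frac4{n-4}\log\bar u$. By the conformal Schouten formula, $A_{e^{2w}e^h\delta}-A_{e^{2w}\delta}$ is a sum of three groups of terms:
\begin{itemize}
\item $A_{e^h\delta}$, which is linear in $D^2h$ plus terms quadratic in $Dh$;
\item cross terms $Dh*Dw$, coming from the change of Christoffel symbols in $\nabla^2w$ and in $|dw|^2$;
\item zeroth-order terms $h*(D^2w,Dw\otimes Dw)$, coming from raising indices with $e^{-h}$.
\end{itemize}
On every block one has $|Dw|\lesssim p/r$ and $|D^2w|\lesssim(|\dot p|+p)/r^2$. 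For the bubble this is explicit. For the transition it follows from Lemma~\ref{lem:step6}. For the Schwarzschild factor $p=p_\Sigma$, and on the collars Proposition~\ref{prop:paired-assembly-collars} applies. Hence each eigenvalue perturbation is bounded by $r^{-2}q\,\Xi_h(r)$, with $\Xi_h$ as in the introduction, uniformly in $h$ small.

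Since $\sigma_2$ is quadratic, the difference $\sigma_2(e^{-h}\cdot)-\sigma_2(\cdot)$ is controlled by the Newton tensor times this perturbation plus its square. The flat Newton tensor has size $q/r^2$ (see \eqref{eq:cylinder-newton-step7}). Multiplying by $u^{4n/(n-4)}=r^{-n}\mathcal V^{4}u^{16/(n-4)}$ therefore shows that the left side of \eqref{eq:metric-expansion-step8} is $\lesssim \Xi_h+\Xi_h^2$. The target term is identical for both metrics apart from $dv$ factors of size $|h|$, which are absorbed in the same way.

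\textbf{Principal pair.} On the bubble core $r\lesssim\lambda\asymp\epsilon_N$ we have $q\asymp r^2/\lambda^2$. Then \eqref{eq:left-quadratic-core} gives $\Xi\lesssim \lambda^2\mu_N\epsilon_N^{2m_f}\lesssim\mu_N\epsilon_N^{d_f}$, which is tiny.

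On the bubble far field $\lambda\le r\le R^-_\lambda e^{-L}$ we have $q\asymp\lambda^2/r^2$. Then \eqref{eq:left-newton-combination} gives $\Xi\lesssim \mu_N r^{d_f+2}\lambda^{-2}$. At $r=\lambda^{1-4/n}$ this equals $\epsilon_N^{\beta_f+(d_f+2)(1-4/n)-2}$.

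Using $\beta_f=(n-4m_f-8)/4$ and $d_f=2m_f+2$, the exponent is
\[
\frac{n-4m_f-8}{4}+(2m_f+4)\frac{n-4}{n}-2.
\]
This should reduce to at least $\kappa_f$; checking it is a short algebra. Indeed $\kappa_f$ is designed as the worst such exponent, arising at the transition/Schwarzschild junction where $q\asymp\delta_\lambda F$.

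On the transition cylinder, $q\asymp\delta_\lambda$ and $r\asymp\lambda^{4/n}$, so $\Xi\lesssim\mu_N r^{d_f+2}/\delta_\lambda$ gives the same power. On the Schwarzschild block $q_\Sigma\asymp r^{-a_\Sigma}$ for $r\le1$. The bound $\Xi\lesssim\mu_N r^{d_f+2+a_\Sigma}$ is increasing in $r$, so it is largest at $r=2\rho_N$ and beyond the support it vanishes. Polynomial smallness of $\rho_N$ together with $\mu_N=\epsilon_N^{\beta_f}$ then gives $\le\epsilon_N^{\kappa_f}$.

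\textbf{Right end, remaining terms, and the remaining estimates.} The right end follows verbatim in the Kelvin coordinate $z$. There the matrix representative of $h$ transforms by $K_{2,A}$ (Proposition~\ref{lem:step2}(iii)), $\bar u$ transforms by $\mathcal K_A$, and Proposition~\ref{prop:scalar-kelvin-verified} gives exact covariance. So one reuses the left computation with $h_N^+$ read as $h_N^-$.

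The remaining perturbations satisfy $\Xi\le\delta_*(N_0)\,\Theta(r)\,r^2/q$ by Lemma~\ref{lem:step3}. The weight $\Theta$ was chosen so that $\Theta r^2/q$ is bounded on each block: $q\gtrsim r^{a_\Sigma}$ on Schwarzschild pieces with $r\le1$, and $q\gtrsim r^{-a_\Sigma}$ for $r\ge1$. This bound must also be checked on the bubble and transition blocks, where $q$ is larger. Hölder seminorms in the scale-invariant block norms follow from the same estimates applied to one more derivative, which is available through $j\le3$. Parameter derivatives $\mathscr D_b$ are handled using \eqref{eq:remaining-parameter-statement} and the chain rule; they cost one extra factor of $r|D|$ on the fixed $h$.

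For \eqref{eq:coefficient-expansion-step8}, the linearization coefficients are the Newton tensor plus terms built from $A$, $Dh$ and $Dw$. The same bounds, normalized by $q/r^2$, give the coefficient estimate.

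\textbf{Main obstacle.} The hard step is the exponent bookkeeping at the bubble–transition–Schwarzschild junction, where $q$ is smallest relative to $r^{d_f}$. The plan is to verify there that $\mu_N r^{d_f+2}/q\le \epsilon_N^{\kappa_f}$, and that the quadratic terms $|Dh|^2r^2/q$ are dominated by this bound.
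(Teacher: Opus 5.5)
Your reduction is the paper's: the Schouten perturbation is bounded by $\mathcal E_h$, so the normalized difference is $\lesssim\Xi_h+\Xi_h^2$, and $\Xi_h$ is then bounded block by block. The gap is in the principal-pair bookkeeping. That bookkeeping is the real content here, because $\epsilon_N^{\kappa_f}$ is actually attained, and your computations fail exactly at the junction you call the main obstacle.

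Outside the core, \eqref{eq:left-newton-combination} gives $\mathcal E_{h_N^-}\lesssim\mu_N r^{d_f-2}$, hence $\Xi_{h_N^-}\lesssim\mu_N r^{d_f}/q$ on every block.
\begin{itemize}
\item On the transition and in your final paragraph you use $\mu_N r^{d_f+2}/q$ instead. On the transition, where $q\asymp\delta_\lambda$ and $r\asymp\lambda^{4/n}$, the extra $r^2$ yields the false improvement $\epsilon_N^{\kappa_f+8/n}$. The correct quantity $\mu_N\lambda^{4d_f/n}/\delta_\lambda$ is of exact order $\epsilon_N^{\kappa_f}$.
\item The bubble block ends at $r\asymp R^-_\lambda e^{-L}=e^{-L}\lambda^{4/n}$ (normalized radius $\asymp\lambda^{4/n-1}$), not at $r=\lambda^{1-4/n}$, which lies strictly inside it. At the true edge, $\mu_N r^{d_f+2}\lambda^{-2}$ equals $C_L\epsilon_N^{\kappa_f}$ by $\beta_f+\frac4n(d_f+2)-2=\kappa_f$. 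Your radius therefore underestimates the supremum.
\end{itemize}

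The Schwarzschild step fails outright. For $r\le1$ one has $q\asymp r^{a_\Sigma}$, not $r^{-a_\Sigma}$; you use the correct asymptotics yourself for $h_{\mathrm{rem},N}$. Hence $\Xi_{h_N^-}\lesssim\mu_N r^{d_f-a_\Sigma}$. Since $a_\Sigma>d_f$ is equivalent to $n>4m_f+8$, this bound is \emph{decreasing} in $r$. Its supremum is at the inner edge $r\asymp e^L\lambda^{4/n}$, where $\mu_N\lambda^{4(d_f-a_\Sigma)/n}\asymp\epsilon_N^{\kappa_f}$ because $4a_\Sigma/n=2-8/n$. It is not at $r=2\rho_N$, so checking that endpoint and invoking smallness of $\rho_N$ controls nothing near the junction.

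The correct picture, encoded in \eqref{eq:Xi-bubble-step8}, \eqref{eq:Xi-transition-step8} and the paper's Schwarzschild remark, is as follows. The ratio $r^{d_f}/q$ increases through the bubble far field, is $\asymp\lambda^{4d_f/n}/\delta_\lambda$ across the transition, and decreases through the Schwarzschild piece. The worst point is therefore the transition annulus.

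With these corrections the rest of your plan goes through as you describe: the remaining perturbations via boundedness of $\Theta r^2/q$, the Kelvin transfer, H\"older seminorms from $j\le3$, and the parameter derivatives. Two minor slips: the target terms cancel exactly in $\mathcal N_{e^h\delta}(\bar u)-\mathcal N_\delta(\bar u)$, with no volume factors involved; and $\bar u^{4n/(n-4)}=r^{4-n}\mathcal V^4\bar u^{16/(n-4)}$, not $r^{-n}\mathcal V^4\bar u^{16/(n-4)}$.
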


\begin{proof}
Set
\[
 \psi=\frac4{n-4}\log\bar u,
 \qquad \widehat g_0=e^{2\psi}\delta,
 \qquad \widehat g_h=e^{2\psi}e^h\delta.
\]
The conformal Schouten formula and the standard coordinate expansions for
$e^h\delta$ give
\begin{equation}\label{eq:schouten-perturbation-step8}
 \left|e^{-h}\delta^{-1}A_{\widehat g_h}
       -\delta^{-1}A_{\widehat g_0}\right|
 \le C\mathcal E_h,
 \qquad
 \mathcal E_h:=|D^2h|+|Dh|^2+\frac pr|Dh|+\frac p{r^2}|h|.
\end{equation}
Here $|D\psi|=2p/r$, and the flat-model Schouten tensor has size
$O(q/r^2)$ because $|\dot p|\le Cq$ on every model block.  Put
\begin{equation}\label{eq:Xi-def-step8}
 \Xi_h:=\frac{r^2}{q}\mathcal E_h.
\end{equation}
Since $\sigma_2$ is quadratic and its differential is the first Newton
tensor, \eqref{eq:schouten-perturbation-step8} implies
\[
 \left|
 \frac{r^n(\mathcal N_{e^h\delta}(\bar u)-\mathcal N_\delta(\bar u))}
      {\mathcal V^4q^2}\right|
 +|\mathscr P_{e^h\delta,\bar u}-\mathscr P_{\delta,\bar u}|_{\rm coef}
 \le C(\Xi_h+\Xi_h^2).
\]
The same estimate holds in the scaled $C^\alpha$ norms.  Differentiating the
coordinate expansion before taking absolute values gives the corresponding
parameter estimate; Proposition~\ref{lem:step2} and Lemma~\ref{lem:step3}
control the additional derivatives.

It remains to bound $\Xi_h$.  In the left end write
$h=h_N^-+h_{\rm rem,N}$.  Since
$|x-x_N^-|-r=O(\epsilon_N)$, Proposition~\ref{lem:step2} gives
\[
 |D^jh_N^-|\le C\mu_N(\epsilon_N+r)^{d_f-j},
 \qquad 0\le j\le3,
\]
with the same bound after one normalized parameter derivative.  On a bubble
block, where $\lambda\asymp\epsilon_N$,
\begin{equation}\label{eq:Xi-bubble-step8}
 \Xi_{h_N^-}
 \le C\mu_N(\epsilon_N+r)^{d_f-2}
        \frac{(\lambda^2+r^2)^2}{\lambda^2}
 \le C_L\epsilon_N^{\kappa_f}.
\end{equation}
The last inequality uses $r\le C_Le^{-L}\lambda^{4/n}$ and
$\beta_f+\frac4n(d_f+2)-2=\kappa_f$.
On a matched transition, Lemma~\ref{lem:step6} gives
$p\asymp q\asymp\delta_\lambda$ and $r\asymp\lambda^{4/n}$, so
\begin{equation}\label{eq:Xi-transition-step8}
 \Xi_{h_N^-}
 \le C_L\mu_N\frac{r^{d_f}}{\delta_\lambda}
 \le C_L\epsilon_N^{\kappa_f}.
\end{equation}
On the Schwarzschild pieces, $q\asymp r^{a_\Sigma}$ for $r\le1$ and
$q\asymp r^{-a_\Sigma}$ for $r\ge1$.  The same derivative estimate yields
$\Xi_{h_N^-}\le C\mu_Nr^{d_f-a_\Sigma}$ where the principal perturbation is
present; since $r\ge C_L^{-1}\lambda^{4/n}$, this is again
$O_L(\epsilon_N^{\kappa_f})$.

For the remaining tensor, Lemma~\ref{lem:step3} gives
\[
 |D^2h_{\rm rem,N}|+r^{-1}|Dh_{\rm rem,N}|+r^{-2}|h_{\rm rem,N}|
 \le \delta_{\!*}(N_0)
 \begin{cases}r^{a_\Sigma-2},&r\le1,\\ r^{-a_\Sigma-2},&r\ge1.
 \end{cases}
\]
Substitution into \eqref{eq:Xi-def-step8} gives
$\Xi_{h_{\rm rem,N}}
 \le C(\delta_{\!*}(N_0)+\delta_{\!*}(N_0)^2)$.
The same regional estimates, using the third-derivative bounds, control the
scaled H\"older seminorms and one normalized parameter derivative.  In the
right Kelvin coordinate all objects are the exact Kelvin transforms of their
left-end counterparts.  Taking $N_0$ large so that $\delta_{\!*}(N_0)<1$
proves \eqref{eq:metric-expansion-step8} and
\eqref{eq:coefficient-expansion-step8}.
\end{proof}

\subsection{Global Newton-strong spaces and residual decomposition}
Put
\[
 p_*:=\frac{4n}{n-4}.
\]
For the limiting Schwarzschild cylinder define
\begin{equation}\label{eq:indicial-rates-step9}
 \nu_\ell^2:=a^2+\frac{n-2}{2(n-1)}\Lambda_\ell,
 \qquad
 -\Delta_{\mathbb S^{n-1}}Y_{\ell,m}=\Lambda_\ell Y_{\ell,m},
 \qquad
 \sigma_\ell:=\nu_\ell+a.
\end{equation}
Thus
\[
 \nu_0=a,\qquad \nu_1=1+a,
 \qquad \sigma_0=2a=a_\Sigma.
\]
The zero and first spherical harmonics contain the dilation and translation
transfer branches and must be treated as finite-dimensional modes.  The first
strictly positive decay rate on their complement is
\begin{equation}\label{eq:high-mode-gap-step9}
 \gamma_*:=\nu_2-a>0.
\end{equation}
Fix once and for all
\begin{equation}\label{eq:gamma-choice-step9}
 0<\gamma<\gamma_*.
\end{equation}

We use Euclidean representatives for sources.  If $F$ is a scalar source on
$\mathbb S^n$ and $F_E$ denotes its representative relative to the background
$e^h\delta$, then
\begin{equation}\label{eq:source-representative-step9}
 F_E=U_{1,0}^{p_*}F.
\end{equation}
In particular,
$\mathcal N_{e^h\delta}(U_{1,0}v)=U_{1,0}^{p_*}\mathcal N_g(v)$.

Let
\[
 \mathcal C:=\{1/3\le |x|\le3\}
\]
be the fixed compact connector.  Let $\mathcal S_-$ and $\mathcal S_+$ denote
the left and right Schwarzschild end blocks outside $\mathcal C$,
and put $\mathcal S=\mathcal S_-\cup\mathcal S_+$ and
$\mathcal T=\mathcal T_-\cup\mathcal T_+$.  On each Schwarzschild end use the
cylinder coordinate $t=\log r$, where $r=r_-$ in the left end and
$r=|\iota_A(x)-c_N(\xi)|$ in the exact right Kelvin coordinate.  Let
$\mathscr K_N$ be the $J$-invariant closed source set obtained by taking fixed
unit cylindrical neighborhoods of the two matched transitions, the compact
connector $\mathcal C$, and every support of the fixed metric tensor $h$.
Choose a $J$-invariant smooth regularized cylindrical distance $d_N\ge0$ on
the union of the transition and Schwarzschild blocks such that
\begin{equation}\label{eq:regularized-distance-step9}
 d_N=0\ \hbox{on }\mathscr K_N,
 \qquad 0\le d_N\le \operatorname{dist}_{\rm cyl}(\cdot,\mathscr K_N),
 \qquad |D_{\rm cyl}d_N|\le1,
\end{equation}
and comparable with the true distance outside a fixed unit neighborhood of
$\mathscr K_N$.  Set
\begin{equation}\label{eq:global-weight-step9}
 \omega_{\gamma,N}=e^{\gamma d_N}.
\end{equation}
For parameter derivatives, the weight is frozen in the natural block
coordinates.

On a metric-free radial cylinder block let $\Pi_{\le1}$ and $\Pi_{\ge2}$ denote the
orthogonal projections onto spherical harmonics of degrees $0,1$ and degrees
at least $2$, respectively.  When applied to a source, these projections act
on its normalized cylindrical representative $r^nF_E/(\mathcal V^4q^2)$.  Let $Q$ denote a unit cylinder block.  On a
radial model block let $\mathscr S^{(1)}_{\bar u,h}(w)$ denote the complete
first variation of the Schouten endomorphism, written in a cylindrical
orthonormal frame, under $\bar u\mapsto\bar u e^{atw}$.  Put
\begin{align*}
 \mathfrak X_Q(w)
 &:={\|w\|}_{C^{0,\alpha}(Q)}
   +{\|q^{-1/2}D_{\rm cyl}w\|}_{C^{0,\alpha}(Q)}
   +{\|q^{-1}\mathscr S^{(1)}_{\bar u,h}(w)\|}_{C^\alpha(Q)},\\
 \mathfrak Y_Q(F)
 &:={\left\|\frac{r^nF_E}{\mathcal V^4q^2}\right\|}_{C^\alpha(Q)}.
\end{align*}
For the low spherical harmonics define
\begin{align}\label{eq:low-source-norm-step9}
 \|F\|_{Y_{\rm low}^\sharp}
 :={}&\sup_{Q\subset\mathcal T\cup\mathcal S}
   \mathfrak Y_Q(\Pi_{\le1}F)\nonumber\\
 &+\sum_{\mathfrak e=\pm}
   \int_{\mathcal T_{\mathfrak e}\cup\mathcal S_{\mathfrak e}}
   \left\|\Pi_{\le1}
    \frac{r^nF_E}{\mathcal V^4q^2}(t,\cdot)
   \right\|_{C^\alpha(\mathbb S^{n-1})}\,dt.
\end{align}
The first term is needed for the local Schauder part of the low-mode inverse,
while the integral controls the Green remainder uniformly on long cylinders.

\begin{definition}[Mode-split Newton-strong spaces]\label{def:global-spaces-step9}
For a $J$-invariant relative correction $w$, define
\begin{align}
 \|w\|_{X_\gamma^\sharp}
 :={}&\sum_\pm\|w\|_{C^{2,\alpha}(\mathcal B_\pm^*,g_{U_\pm})}
   +\|w\|_{C^{2,\alpha}(\mathcal C)}
 \nonumber\\
 &+\sup_{Q\subset\mathcal T\cup\mathcal S}
 \left\{
   \mathfrak X_Q(\Pi_{\le1}w)
   +\omega_{\gamma,N}(Q)
    \mathfrak X_Q(\Pi_{\ge2}w)
 \right\}.
 \label{eq:Xsharp-definition-step9}
\end{align}
For a $J$-invariant source $F$, define
\begin{align}
 \|F\|_{Y_\gamma^\sharp}
 :={}&\sum_\pm
   \|U_\pm^{-p_*}F_E\|_{C^\alpha(\mathcal B_\pm^*,g_{U_\pm})}
   +\left\|\frac{|x|^nF_E}{\mathcal V_\Sigma^4q_\Sigma^2}
    \right\|_{C^\alpha(\mathcal C)}
   +\|F\|_{Y_{\rm low}^\sharp}
 \nonumber\\
 &+\sup_{Q\subset\mathcal T\cup\mathcal S}
   \omega_{\gamma,N}(Q)
   \mathfrak Y_Q(\Pi_{\ge2}F).
 \label{eq:Ysharp-definition-step9}
\end{align}
Here $\mathcal B_\pm^*$ are fixed-overlap enlargements of the exact bubble
blocks and $g_{U_\pm}=U_\pm^{8/(n-4)}\delta$.  For a parameter-dependent
quantity, one normalized parameter derivative is measured in the same norm,
with the weight frozen.
\end{definition}

Because
$R_{\rm met}:=\mathcal N_{e^h\delta}(\bar u)
-\mathcal N_\delta(\bar u)$
vanishes wherever $h=0$, it splits naturally into smooth pieces supported
on the principal pair and on the remaining supports.  Choose a fixed
finite-overlap partition
\[
 \chi_{\rm bub}+\chi_{\rm tr}+\chi_\Sigma+\chi_{\rm con}=1
\]
adapted to the enlarged bubble, transition, Schwarzschild, and
compact connector regions.  Put
\begin{equation}\label{eq:global-residual-representative-step9}
 R_N=\mathcal N_g(\bar v_{N,\lambda,\xi}),
 \qquad
 R_{E,N}=U_{1,0}^{p_*}R_N
 =\mathcal N_{e^h\delta}(\bar u_{N,\lambda,\xi}),
\end{equation}
and $R_{\rm flat}:=\mathcal N_\delta(\bar u)$.  We use the decomposition
\begin{equation}\label{eq:residual-decomposition-step9}
 R_{E,N}=R_{\rm pr}+R_{\rm rem}+R_{\rm tr}+R_\Sigma+R_{\rm con},
\end{equation}
where $R_{\rm pr}$ is the restriction of $R_{\rm met}$ to the two principal
supports, $R_{\rm rem}=R_{\rm met}-R_{\rm pr}$,
$R_{\rm tr}=\chi_{\rm tr}R_{\rm flat}$,
$R_\Sigma=\chi_\Sigma R_{\rm flat}$, and
$R_{\rm con}=\chi_{\rm con}R_{\rm flat}$.

\begin{lemma}[Global Newton-strong residual]\label{lem:step9}
For fixed $L$ and $N_0$, the components in
\eqref{eq:residual-decomposition-step9} satisfy, uniformly with one normalized
parameter derivative,
\begin{align}
 \|R_{\rm pr}\|_{Y_\gamma^\sharp}&\le C\epsilon_N^{\kappa_f},
 &\|R_{\rm rem}\|_{Y_\gamma^\sharp}&\le C\delta_{\!*}(N_0),
 \label{eq:metric-residual-components-step9}\\
 \|R_{\rm tr}\|_{Y_\gamma^\sharp}
 &\le Ce^{-a_\Sigma L}+C_L\lambda^{2-8/n},
 &\|R_\Sigma\|_{Y_\gamma^\sharp}
 &\le C(e^{-nL}+\lambda^4),
 \label{eq:flat-residual-components-step9}\\
 \|R_{\rm con}\|_{Y_\gamma^\sharp}
 &\le C(N_0^{-1}+\lambda^4).
 \label{eq:connector-residual-component-step9}
\end{align}
Consequently
\begin{equation}\label{eq:global-residual-step9}
 \|\mathcal N_g(\bar v_{N,\lambda,\xi})\|_{Y_\gamma^\sharp}
 \le \eta_{L,N_0}+C_L\lambda^{2-8/n}+C\epsilon_N^{\kappa_f},
 \qquad
 \eta_{L,N_0}=C(e^{-a_\Sigma L}+\delta_{\!*}(N_0)+N_0^{-1}).
\end{equation}
\end{lemma}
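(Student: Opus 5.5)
The plan is to estimate each component of \eqref{eq:residual-decomposition-step9} separately in the norm $Y_\gamma^\sharp$ and then add the bounds. The final estimate \eqref{eq:global-residual-step9} follows from the triangle inequality and $e^{-nL}\le e^{-a_\Sigma L}$. The absorption of $\lambda^4$ into $C_L\lambda^{2-8/n}$ uses only $4>2-8/n$. The absorption of $N_0^{-1}$ into $\eta_{L,N_0}$ is immediate.

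\emph{Step 1: the two metric pieces.} Lemma~\ref{lem:step8} already controls $r^nR_{\rm met}/(\mathcal V^4q^2)$ blockwise in $C^\alpha_{\rm block}$, with one normalized parameter derivative. I will split the bound $C(\Xi_h+\Xi_h^2)$ proved there into the contribution of $h_N^\pm$ and that of $h_{\rm rem,N}$. The first is $O(\epsilon_N^{\kappa_f})$ by \eqref{eq:Xi-bubble-step8}--\eqref{eq:Xi-transition-step8}. The second is $O(\delta_*(N_0))$ by Lemma~\ref{lem:step3}.

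Three further points are needed. On the bubble cores the norm $\|U^{-p_*}F_E\|_{C^\alpha(g_U)}$ is comparable with the $q^2$-normalized one, since there $\mathcal V^4q^2r^{-n}\asymp U^{p_*}$. The weight $\omega_{\gamma,N}$ is harmless, because it equals $1$ on $\mathscr K_N$, which contains every support of $h$. Finally, the low-mode $L^1$-in-$t$ term in $Y^\sharp_{\rm low}$ requires integrating the pointwise bound along the cylinder. For $R_{\rm pr}$ the support has bounded cylindrical length once we are off the bubble, since the principal profile grows polynomially. The remaining supports are disjoint annuli, each of bounded cylindrical length $O(\log M)$. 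Their sizes are controlled through the polynomial factor $(1+M)^{P_*}$ built into $\delta_*(N_0)$, so the sum is again $O(\delta_*(N_0))$.

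\emph{Step 2: the flat pieces.} For $R_{\rm tr}$, Lemma~\ref{lem:step6} gives \eqref{eq:transition-residual-step6} on a cylinder of fixed length $2L$. Both the sup term and the $L^1$ term are therefore bounded by $C_L(e^{-a_\Sigma L}+C_L\delta_\lambda)$. To keep the constant in front of $e^{-a_\Sigma L}$ independent of $L$, I will note that the defect $\mathfrak d_L$ is supported on the two unit endpoint strips, so its $L^1$ norm does not grow with $L$.

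For $R_\Sigma$ the curvature term vanishes identically by Lemma~\ref{lem:step4}. What remains is the target term, normalized as $\frac14\binom n2\mathcal V^{16/(n-4)}/q^2$ with $\mathcal V=(2\lambda)^a(r^{a}+r^{-a})$, together with the cutoff overlaps with the transition. On the left Schwarzschild block $q\asymp r^{a_\Sigma}$ and $\mathcal V^{16/(n-4)}\asymp\lambda^4 r^{-4}$, so the normalized residual is about $\lambda^4r^{-4-2a_\Sigma}=\lambda^4r^{-n}$. At the inner edge $r\approx R_\lambda^-e^{L}$ this equals roughly $e^{-nL}$, since $(R^-_\lambda)^{-n}=\lambda^{-4}$. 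It decays exponentially in $t$ away from the transition, and on the outer part near $r\sim1$ it is $O(\lambda^4)$. This gives both the sup bound and the $L^1$ bound $C(e^{-nL}+\lambda^4)$.

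Here there is a point to settle. The high-mode weight $\omega_{\gamma,N}$ grows like $e^{\gamma d_N}$, but the Schwarzschild residual is radial, so its $\Pi_{\ge2}$ part vanishes and the weight never meets it. The only non-radial flat contribution comes from the recentering on the collars. That lies in $\mathcal C\subset\mathscr K_N$, where the weight equals $1$.

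For $R_{\rm con}$ I will apply \eqref{eq:collar-normalized-residual} on $\mathcal A_\pm$. On the central Schwarzschild region $\{1/2\le|x|\le2\}$ the factor is exactly $S_\lambda$, so only the $\lambda^4$ target term remains. Since $N\ge N_0$, the total is $C(N_0^{-1}+\lambda^4)$.

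\emph{Main obstacle.} I expect the hardest part to be justifying that the high-mode weighted norm is finite for the non-radial residuals. These are the metric pieces, together with the translated transition read in $r_-$, which is radial about $c_N(\xi)$ so that its $\Pi_{\ge2}$ part vanishes. The metric pieces are handled because their supports lie in $\mathscr K_N$, where the weight is $1$. The subtle case is the principal polynomial tail $h_N^-$ extending along the Schwarzschild end out to $\rho_N$. This whole tail is included in $\mathscr K_N$ by definition, so the weight is again $1$ there. I will check carefully that the $L^1$ low-mode integral over this long support still gives $O(\epsilon_N^{\kappa_f})$, by integrating $\mu_N r^{d_f-a_\Sigma}\,dt$. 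This is a convergent geometric integral dominated by its value at the inner end $r\approx\lambda^{4/n}$, because $d_f<a_\Sigma$ follows from $n>4m_f+8$.
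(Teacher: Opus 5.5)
Your proposal is correct and follows the paper's proof essentially step for step. The metric pieces come from Lemma~\ref{lem:step8} and Lemma~\ref{lem:step3} on supports where $\omega_{\gamma,N}=1$, with the low-mode $L^1_t$ term obtained by integrating the regionwise bounds and polynomial losses in $M$ absorbed by $\delta_{\!*}(N_0)$; the transition comes from Lemma~\ref{lem:step6} on a fixed-length cylinder, the radial Schwarzschild source is bounded by $C\lambda^4(r^{-n}+r^n)$ and integrated from $\lambda^{4/n}e^L$, and the connector comes from Proposition~\ref{prop:paired-assembly-collars}.

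One slip needs fixing. Off the bubble, the principal support is not of bounded cylindrical length: on the Schwarzschild end its length is of order $|\log\epsilon_N|$. The $L^1_t$ bound must therefore come from the geometric decay of $\mu_N r^{d_f-a_\Sigma}$ that you invoke in your last paragraph. That argument is correct and gives $\mu_N(\lambda^{4/n})^{d_f-a_\Sigma}\asymp\epsilon_N^{\kappa_f}$.

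Your remark that $\mathfrak d_L$ is supported on the two endpoint strips is a worthwhile sharpening of the paper's fixed-length argument. It is what keeps the constant in front of $e^{-a_\Sigma L}$ independent of $L$.
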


\begin{proof}
All estimates are made for Euclidean representatives.  Lemma~\ref{lem:step8}
gives the pointwise normalized estimates for the principal and remaining
metric residuals.  The high-mode weight is one on every metric support.  The
regionwise estimates in its proof are integrable in $t=\log r$, giving
$C\epsilon_N^{\kappa_f}$ for the low-mode part of the principal term.  For a remaining perturbation indexed
by $M\ne N$, Proposition~\ref{lem:step2} and Lemma~\ref{lem:step3},
together with the cylindrical width of its support, give
\[
 \int_{\operatorname{supp}h_M^\pm}
 \left\|\Pi_{\le1}\frac{r^nR_{\rm met}}{\mathcal V^4q^2}
 \right\|_{C^\alpha_\theta}\,dt\le C\mu_M M^P
\]
for a fixed exponent $P$.  Summing the exponentially convergent series gives
$C\delta_{\!*}(N_0)$.  The pointwise part of Lemma~\ref{lem:step8} also gives the local
$C^\alpha$ supremum required in the first term of
\eqref{eq:low-source-norm-step9}.  This proves the metric estimates.

The transition interval has fixed length and Lemma~\ref{lem:step6} gives the
transition estimate in both the low $L^1$ and high weighted parts.  The exact
Schwarzschild source is radial, hence entirely low mode.  Using
\[
 \frac{\mathcal V_\Sigma^{16/(n-4)}}{q_\Sigma^2}
 \le C\lambda^4(r^{-n}+r^n)
\]
and integrating from $r_0=\lambda^{4/n}e^L$ to the connector gives
$C(e^{-nL}+\lambda^4)$.  Proposition~\ref{prop:paired-assembly-collars}
gives the connector estimate.  The same arguments apply after one normalized
parameter derivative.  Summation proves the lemma.
\end{proof}

\begin{lemma}[Uniform Newton ellipticity of the approximation]\label{lem:step10}
After choosing first $L$, then $N_0$, and finally a lower bound for $N$ so that
the model-relative perturbations below are sufficiently small, the Newton
tensor of
$(\bar v_{N,\lambda,\xi})^{8/(n-4)}g$ is positive and uniformly
comparable with the round Newton tensor in the two bubble blocks, with the
matched-transition Newton tensor in the two transition blocks, and with the
Schwarzschild Newton tensor on the middle region and the two fixed
assembly collars.  The comparisons for the remaining perturbations and
principal tails are made in the left Euclidean and right Kelvin coordinates of
Lemma~\ref{lem:step3}.
\end{lemma}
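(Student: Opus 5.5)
The plan is to prove the comparison block by block, always in the natural coordinates of that block. On each block I first compare the flat Newton tensor of $\bar u$ with the model Newton tensor. Then I add the metric perturbation through the relative size $\Xi_h$ of Lemma~\ref{lem:step8}. The observation that drives everything is that the first Newton tensor $T_1(A)=\sigma_1(A)I-A$ is linear in the Schouten endomorphism. Hence if a model has $T_1(A_0)\ge c\,(q/r^2)I$ and $|A-A_0|\le\varepsilon\, q/r^2$, then
\[
 (1-C\varepsilon/c)\,T_1(A_0)\le T_1(A)\le(1+C\varepsilon/c)\,T_1(A_0).
\]
So on each block it suffices to show two things. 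First, the model Newton tensor is bounded below by a fixed multiple of the Newton scale $q/r^2$ (equivalently, of the cylindrical $q$ after conformal rescaling). Second, the total deviation of the actual Schouten endomorphism from the model one is $o(q/r^2)$.

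The steps are as follows.

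\emph{Bubble blocks.} Here $\bar u=U_{\lambda,c}$ exactly, and its flat Newton tensor is $2(n-1)(q/r^2)I$, which is a multiple of the round one. The metric deviation is controlled by \eqref{eq:schouten-perturbation-step8}: by \eqref{eq:Xi-bubble-step8} and Lemma~\ref{lem:step3}, $\Xi_h\le C_L\epsilon_N^{\kappa_f}+C\delta_{\!*}(N_0)$.

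\emph{Transition blocks.} By \eqref{eq:cylinder-newton-step7} the radial and tangential eigenvalues are $2(n-1)q$ and $2(\dot p+(n-3)q)$. By \eqref{eq:transition-ellipticity-step6} both are at least $cq$ with $c$ independent of $\lambda$, and the upper bound $\lesssim q$ follows from $|\dot p|\le Cq$. The metric deviation is again bounded by \eqref{eq:Xi-transition-step8}.

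\emph{Schwarzschild pieces.} The eigenvalues are $2(n-1)q_\Sigma/r^2$ and $(n-2)q_\Sigma/r^2$, both positive. Because $S_{\lambda,c}$ is exactly the model there, the flat deviation vanishes, and $\Xi_h$ is handled by the Schwarzschild case in the proof of Lemma~\ref{lem:step8} together with Lemma~\ref{lem:step3}.

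\emph{Collars.} Use \eqref{eq:collar-Newton-comparison} for the flat part. Since the collars carry no metric perturbation, this part is already complete.

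\emph{Central region and right end.} The region $\{1/2\le|x|\le2\}$ carries the exact $S_\lambda$ and no metric. Everything on the right end follows by Kelvin covariance: Proposition~\ref{prop:scalar-kelvin-verified} gives $(\mathcal K_A u)^{8/(n-4)}\delta=\iota_A^*(u^{8/(n-4)}\delta)$, and Lemma~\ref{lem:step3} shows that the remaining tensor is invariant under the same map.

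To pass from $\delta$ to $e^h\delta$ and then to $g=\Omega^2e^h\delta$, note that $\bar v^{8/(n-4)}g=\bar u^{8/(n-4)}e^h\delta$ by the definition $\bar v=U_{1,0}^{-1}\bar u$. So the relevant object is precisely the Newton tensor of $\bar u^{8/(n-4)}e^h\delta$, with no additional conformal factor to track. The comparison with the round Newton tensor on bubble blocks is then just the statement for $U_{\lambda,c}$ written in this representation.

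The order of constants matters. First fix $L$ so that the $e^{-a_\Sigma L}$ errors in Lemma~\ref{lem:step6}, which only feed into the transition lower bound, are at most half the gap. Then fix $N_0$ so that $C\delta_{\!*}(N_0)$ and $CN_0^{-1}$ are small. Finally take $N$ large so that $C_L\epsilon_N^{\kappa_f}$ and $C_L\delta_\lambda$ are small.

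I expect the main obstacle to be the overlap zones where two models meet: the bubble core against the transition, the transition against the Schwarzschild piece, and the Schwarzschild piece against the collar. A lower bound proved separately on each block does not by itself control the glued profile in between. I would handle this by using that each gluing is exact. $T_\lambda^+$ coincides identically with $S_\lambda$ and with the bubble near its endpoints (Lemma~\ref{lem:step5}), and $\widetilde U^-$ coincides with $S_{\lambda,c}$ near $\mathcal A_-$. Consequently the only genuinely interpolated regions are the transition cylinder, already covered by Lemma~\ref{lem:step6}, and the collars, already covered by Proposition~\ref{prop:paired-assembly-collars}. The remaining care is to check that the block-wise Newton scales $q/r^2$ agree up to fixed constants on these overlaps, so that the comparison constants are uniform.
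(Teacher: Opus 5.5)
Your proposal is correct and matches the paper's proof. On each block you bound the flat model Newton tensor below by a fixed multiple of $q/r^2$, using \eqref{eq:transition-ellipticity-step6} on the transitions, and add the $C(\delta_{\!*}(N_0)+\epsilon_N^{\kappa_f})\,q/r^2$ Schouten perturbation from Lemma~\ref{lem:step8}. Then you use Proposition~\ref{prop:paired-assembly-collars} on the collars, the exact Schwarzschild factor on the connector, Kelvin pullback for the right end, and fix parameters in the order $L$, $N_0$, $N$. Your remarks on $\bar v^{8/(n-4)}g=\bar u^{8/(n-4)}e^h\delta$ and on the exactness of the gluings at block interfaces are correct; they spell out points the paper leaves implicit.
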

\begin{proof}
In the left Euclidean end, Lemma~\ref{lem:step8} gives
\begin{equation}\label{eq:newton-perturbation-step10}
 |\mathbf T_h[\bar u]-\mathbf T_\delta[\bar u]|
 \le C\bigl(\delta_{\!*}(N_0)+\epsilon_N^{\kappa_f}\bigr)\frac q{r^2}.
\end{equation}
For every radial model used here, the flat Newton tensor has radial and
tangential eigenvalues
\[
 \frac{2(n-1)q}{r^2},
 \qquad
 \frac{2(\dot p+(n-3)q)}{r^2}.
\]
Thus it equals $2(n-1)q r^{-2}I$ on a bubble; it is uniformly comparable
with $q r^{-2}I$ on a transition by
\eqref{eq:transition-ellipticity-step6}; and on a Schwarzschild piece its
eigenvalues are $2(n-1)q/r^2$ and $(n-2)q/r^2$.  Hence every flat model is
positive with its natural Newton scale.

Choose $L$ as in Lemmas~\ref{lem:step6}--\ref{lem:step7}, then take $N_0$
and the selected index large enough that the coefficient in
\eqref{eq:newton-perturbation-step10} is smaller than the preceding model
lower bounds.  This proves the desired comparison on all bubble, transition,
and Schwarzschild blocks, including principal tails and remaining supports.
On the assembly collars the same conclusion follows from
Proposition~\ref{prop:paired-assembly-collars}, while the central connector is
exactly Schwarzschild.  The right-end assertion follows by Kelvin pullback,
which preserves positivity and quadratic-form comparability.
\end{proof}

\section{Linear theory}

For every parameter value define the additive parameter fields
\begin{equation}\label{eq:additive-parameter-fields-step11}
 Z_0=\lambda\partial_\lambda\bar v_{N,\lambda,\xi},
 \qquad
 Z_j=\epsilon_N\partial_{\xi_j}\bar v_{N,\lambda,\xi},
 \quad 1\le j\le n,
\end{equation}
and their relative versions
\begin{equation}\label{eq:relative-parameter-fields-step11}
 \mathcal Z_a=\frac{Z_a}{a\bar v_{N,\lambda,\xi}},
 \qquad 0\le a\le n.
\end{equation}
Put $\lambda'=\lambda/\epsilon_N$ and
\[
 U_*(y)=\left(\frac{2}{1+|y|^2}\right)^a,
 \qquad
 \mathfrak z_0(y)=\frac{|y|^2-1}{1+|y|^2},
 \qquad
 \mathfrak z_j^{\lambda'}(y)
 =\frac{2}{\lambda'}\frac{y_j}{1+|y|^2}.
\]
Choose a nonnegative radial cutoff $\chi_*\in C_c^\infty(\mathbb R^n)$ which
is not identically zero, let
\[
 d\mu_*(y)=U_*(y)^{4n/(n-4)}\,dy,
\]
and, using the two exact bubble charts, define
\begin{equation}\label{eq:dual-functionals-step11}
 \mathfrak L_b(\phi)
 =\sum_{\pm}\int_{\mathbb R^n}
   \chi_*(y)\phi^\pm(y)\mathfrak z_b^{\lambda'}(y)\,d\mu_*(y),
 \qquad
 G_{ab}(\lambda',\xi')=\mathfrak L_b(\mathcal Z_a).
\end{equation}

\begin{proposition}[Paired parameter fields and the bubble Gram matrix]\label{lem:step11}
All the fields in \eqref{eq:additive-parameter-fields-step11}--
\eqref{eq:relative-parameter-fields-step11} are $J$-invariant.
For every fixed $R<\infty$, after increasing the selected index if necessary,
the left chart $x=c_N(\xi)+\lambda y$ and the exact right chart
$\iota_A(x)=c_N(\xi)+\lambda y$ satisfy, on $|y|\le R$,
\begin{equation}\label{eq:exact-Jacobi-limits-step11}
 \mathcal Z_0^\pm(y)=\mathfrak z_0(y),
 \qquad
 \mathcal Z_j^\pm(y)=\mathfrak z_j^{\lambda'}(y),
 \quad 1\le j\le n.
\end{equation}
Equivalently,
\begin{equation}\label{eq:additive-Jacobi-limits-step11}
 \lambda^aU_{1,0}(c_N(\xi)+\lambda y)Z_a^\pm(y)
 =aU_*(y)\mathfrak z_a^{\lambda'}(y),
\end{equation}
where $\mathfrak z_0^{\lambda'}=\mathfrak z_0$.  If the right bubble is instead
written in its physical Euclidean variables
$Y=(x-c^\sharp)/\lambda^\sharp$, then
\begin{equation}\label{eq:right-coordinate-reflection-step11}
 Y=-R(c_N(\xi))y+o_{C^1(B_R)}(1),
\end{equation}
so the right translation fields are conjugated by the orthogonal differential
of the antipodal pairing.
The Gram matrix in \eqref{eq:dual-functionals-step11} is uniformly
nondegenerate on the normalized parameter set:
\begin{equation}\label{eq:Gram-uniform-step11}
 c_G|\eta|^2\le \eta^TG(\lambda',\xi')\eta
 \le C_G|\eta|^2,
 \qquad \eta\in\mathbb R^{n+1}.
\end{equation}
The same uniform bounds hold for $G^{-1}$ and after one normalized parameter
derivative.  After a fixed linear recombination, the functionals may therefore
be normalized so that $\mathfrak L_b(\mathcal Z_a)=\delta_{ab}$.  Their
representing densities and the corresponding $J$-paired source functions,
supported in fixed bubble subcores, may be chosen smooth with uniform local
source bounds and one normalized parameter derivative.
\end{proposition}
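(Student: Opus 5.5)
The plan is to read everything off the exact structure of the approximation. I would use two facts: the identity $\mathcal K_A\bar u_{N,\lambda,\xi}=\bar u_{N,\lambda,\xi}$ of Proposition~\ref{prop:paired-assembly-collars}, valid for every parameter value, and the fact that in each bubble core the profile is exactly a standard bubble. I do not expect to need any asymptotic expansion except for the physical right coordinates in \eqref{eq:right-coordinate-reflection-step11}.

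\emph{$J$-invariance.} I would differentiate the identity $\mathcal K_A\bar u=\bar u$ in $\lambda$ and $\xi$. Since $\mathcal K_A$ is linear and parameter independent, $\mathcal K_A(\partial\bar u)=\partial\bar u$. Because $U_{1,0}$ does not depend on the parameters, $\partial\bar v=U_{1,0}^{-1}\partial\bar u$. By \eqref{eq:exact-paired-invariance}, this is equivalent to $(\partial\bar v)\circ J=\partial\bar v$, so each $Z_a$ is $J$-invariant. The quotients $\mathcal Z_a=Z_a/(a\bar v)$ are then $J$-invariant as well.

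\emph{Exact Jacobi identities.} Note that $\mathcal Z_0=a^{-1}\lambda\partial_\lambda\log\bar u$ and $\mathcal Z_j=a^{-1}\epsilon_N\partial_{\xi_j}\log\bar u$. I would first check that for fixed $R$ and large $N$ the ball $\{|y|\le R\}$ lies in the core $\mathcal B_-$. In the variable $y$ that core has radius $\lambda^{-1}R_\lambda^-e^{-L}\asymp\lambda^{4/n-1}e^{-L}\to\infty$. Moreover, at fixed $y$ the point $x=c_N(\xi)+\lambda y$ stays in the core under small parameter variations, so $\bar u=U_{\lambda,c}$ there identically in the parameters.

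A direct computation then gives
\[
\lambda\partial_\lambda\log U_{\lambda,c}=a\frac{r^2-\lambda^2}{r^2+\lambda^2}=a\,\mathfrak z_0(y),
\qquad
\epsilon_N\partial_{\xi_j}\log U_{\lambda,c}=\frac{2a\epsilon_N(x-c)_j}{\lambda^2+r^2}=a\,\mathfrak z_j^{\lambda'}(y).
\]
For the right chart, $\log\bar u(x)=-a_\Sigma\log|x|+\log\bar u(\iota_A(x))$, and the first term is parameter free. Hence the parameter log-derivatives at $x$ equal those of the left profile at $\iota_A(x)=c_N(\xi)+\lambda y$, which gives the same formulas. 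Identity \eqref{eq:additive-Jacobi-limits-step11} follows from $Z_a=a\bar v\mathcal Z_a$, $\bar v=U_{1,0}^{-1}U_{\lambda,c}$, and $\lambda^aU_{\lambda,c}(c+\lambda y)=U_*(y)$.

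For \eqref{eq:right-coordinate-reflection-step11} I would Taylor expand $\iota_A$ at $c$. Using $D\iota_A(c)=-|c|^{-2}R(c)$ together with \eqref{eq:Kelvin-bubble-parameters}, namely $\lambda^\sharp=\lambda/|c|^2(1+O(\lambda^2N^2))$ and $c^\sharp=\iota_A(c)+O(\lambda^2N^3)$, gives
\[
Y=-R(c)y+O(\lambda N R^2).
\]
The sign convention should be checked against $D\iota_A=-|x|^{-2}R$; since $R(c)$ is orthogonal, the conjugation claim follows.

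\emph{Gram matrix and normalization.} By \eqref{eq:exact-Jacobi-limits-step11}, once $\operatorname{supp}\chi_*\subset B_R$ both charts contribute equally, so
\[
G_{ab}=2\int\chi_*\,\mathfrak z_a\,\mathfrak z_b^{\lambda'}\,d\mu_*
\]
exactly, independently of $\xi'$. Radial symmetry of $\chi_*$ and $\mu_*$ kills the mixed entries: $\mathfrak z_0$ is even while $\mathfrak z_j$ is odd, and for $j\ne k$ the integral vanishes by oddness in $y_j$. The matrix is therefore diagonal, with entries $2\int\chi_*\mathfrak z_0^2\,d\mu_*$ and $2\lambda'^{-2}\int\chi_*\tfrac{4y_j^2}{(1+|y|^2)^2}\,d\mu_*$. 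These are strictly positive because $\chi_*\not\equiv0$. Since $\lambda'\in[1/2,3/2]$, they are bounded above and below uniformly, which gives \eqref{eq:Gram-uniform-step11} together with bounds for $G^{-1}$. The $\lambda'$-dependence is explicit, so one normalized derivative is also bounded.

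Replacing $\mathfrak L_b$ by $\sum_c(G^{-1})_{cb}\mathfrak L_c$ normalizes the system to $\mathfrak L_b(\mathcal Z_a)=\delta_{ab}$. The representing densities are $\chi_*\,\mathfrak z_c^{\lambda'}U_*^{4n/(n-4)}$ with smooth $G^{-1}$ coefficients. They are smooth and supported in $B_R$, a fixed subcore. Pulling them back through the two charts, with the right one given by the exact Kelvin image, yields $J$-paired sources with uniform local bounds.

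The only delicate point is the core-containment step: one must check that the fixed-$y$ parameter derivatives never see the transition profile $T_\lambda$ or the correction $\kappa_\lambda$. This holds uniformly because the core radius in $y$ diverges as $N\to\infty$.
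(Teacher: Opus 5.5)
Your proposal is correct and takes essentially the same route as the paper. You obtain $J$-invariance by differentiating the exact pairing identity, and the exact Jacobi formulas from the fact that the profile is literally $U_{\lambda,c}$ on the fixed normalized ball for large $N$, with the right chart handled by antipodal symmetry; a Taylor expansion of $\iota_A$ at $c$ then gives the physical right coordinates, and the explicit diagonal Gram matrix $G_{ab}=2\int\chi_*\mathfrak z_a^{\lambda'}\mathfrak z_b^{\lambda'}\,d\mu_*$ is normalized by $G^{-1}$ (the $C^1(B_R)$ and parameter-derivative versions of your $O(\lambda N R^2)$ reflection error, which you did not state, follow from the same expansion).
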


\begin{proof}
The $J$-invariance follows by differentiating
$\bar v_{N,\lambda,\xi}\circ J=\bar v_{N,\lambda,\xi}$.  On every fixed
normalized ball, the bubble core is exact for large $N$, and direct
differentiation of $U_{\lambda,c}$ at fixed $x$ gives
\[
 \frac{\lambda\partial_\lambda U_{\lambda,c}}{aU_{\lambda,c}}
 =\frac{|x-c|^2-\lambda^2}{|x-c|^2+\lambda^2},
 \qquad
 \frac{\epsilon_N\partial_{c_j}U_{\lambda,c}}{aU_{\lambda,c}}
 =\frac{2\epsilon_N(x_j-c_j)}{\lambda^2+|x-c|^2}.
\]
After setting $x=c_N(\xi)+\lambda y$, these are exactly
\eqref{eq:exact-Jacobi-limits-step11} and
\eqref{eq:additive-Jacobi-limits-step11}.  The right-chart identities follow
from the exact antipodal symmetry.  In physical right-bubble coordinates,
Taylor expansion of $\iota_A$ at $c=c_N(\xi)$ gives
\[
 \frac{\iota_A(c+\lambda y)-c^\sharp}{\lambda^\sharp}
 =-R(c)y+O_R(\lambda/|c|),
\]
which proves \eqref{eq:right-coordinate-reflection-step11}, including one
normalized parameter derivative.

Choose $\operatorname{supp}\chi_*$ in such a fixed normalized ball.  Then
\begin{equation}\label{eq:explicit-Gram-step11}
 G_{ab}=2\int_{\mathbb R^n}\chi_*(y)
 \mathfrak z_a^{\lambda'}(y)\mathfrak z_b^{\lambda'}(y)\,d\mu_*(y).
\end{equation}
Radial symmetry makes this matrix diagonal; its diagonal entries are positive
and uniformly bounded above and below for $1/2\le\lambda'\le3/2$.  This proves
\eqref{eq:Gram-uniform-step11} and the corresponding derivative bounds.
Applying $G^{-1}$ normalizes the functionals.  Their compactly supported
representing densities, paired by $J$, give the stated source functions and
uniform local bounds.
\end{proof}

Let
\[
 g_*=U_*^{8/(n-4)}\delta
 =\left(\frac{2}{1+|y|^2}\right)^2\delta
\]
be the round metric on the compactification
$\mathbb R^n\cup\{\infty\}\simeq\mathbb S^n$, and use the convention that
$\Delta_{g_*}=\operatorname{tr}_{g_*}\nabla^2$ has nonpositive spectrum.  For
the relative variation $U_*e^{atw}$ define
\begin{equation}\label{eq:round-relative-operator-step12}
 \mathscr L_*w
 :=U_*^{-p_*}\left.\frac d{dt}\right|_{t=0}
   \mathcal N_\delta(U_*e^{atw}).
\end{equation}
Let
\[
 \ell_b^*(w)=\int_{\mathbb R^n}
   \chi_*(y)w(y)\mathfrak z_b^{\lambda'}(y)\,d\mu_*(y)
\]
be the one-ended versions of the functionals in
Proposition~\ref{lem:step11}.

\begin{proposition}[Projected inverse on each compactified bubble chart]\label{lem:step12}
The operator in \eqref{eq:round-relative-operator-step12} satisfies
\begin{equation}\label{eq:round-operator-formula-step12}
 \mathscr L_*w=-\frac{n-1}{2}(\Delta_{g_*}+n)w,
\end{equation}
and
\begin{equation}\label{eq:round-kernel-step12}
 \ker\mathscr L_*
 =\mathcal H_1
 :=\operatorname{span}\{\mathfrak z_0,
       \mathfrak z_1^{\lambda'},\ldots,
       \mathfrak z_n^{\lambda'}\},
 \qquad \frac12\le\lambda'\le\frac32.
\end{equation}
There are smooth functions
$\Psi_a^{\lambda'}$, supported in the fixed set
$\operatorname{supp}\chi_*$, such that
\begin{equation}\label{eq:bubble-source-duality-step12}
 \int_{\mathbb R^n}\Psi_a^{\lambda'}
       \mathfrak z_b^{\lambda'}\,d\mu_*=\delta_{ab},
 \qquad 0\le a,b\le n,
\end{equation}
and whose $C^\alpha(g_*)$ norms, together with one
$\lambda'\partial_{\lambda'}$ derivative, are uniformly bounded.
For every $f\in C^\alpha(\mathbb S^n)$ there is a unique pair
$(w,c_0,\ldots,c_n)$ satisfying
\begin{equation}\label{eq:projected-bubble-problem-step12}
 \mathscr L_*w=f+\sum_{a=0}^nc_a\Psi_a^{\lambda'},
 \qquad
 \ell_b^*(w)=0,\quad 0\le b\le n,
\end{equation}
and
\begin{equation}\label{eq:bubble-inverse-estimate-step12}
 \|w\|_{C^{2,\alpha}(\mathbb S^n,g_*)}
 +\sum_{a=0}^n|c_a|
 \le C\|f\|_{C^\alpha(\mathbb S^n,g_*)},
\end{equation}
where $C$ is independent of $\lambda'\in[1/2,3/2]$.
If $f=f(\lambda',\xi')$ has one normalized parameter derivative, then
\begin{align}
 &\|\mathfrak D w\|_{C^{2,\alpha}(\mathbb S^n,g_*)}
 +\sum_{a=0}^n|\mathfrak D c_a|
 \nonumber\\
 &\qquad\le C\left(
 \|\mathfrak D f\|_{C^\alpha(\mathbb S^n,g_*)}
 +\|f\|_{C^\alpha(\mathbb S^n,g_*)}\right),
 \label{eq:bubble-parameter-inverse-step12}
\end{align}
for $\mathfrak D=\lambda'\partial_{\lambda'}$ or
$\partial_{\xi_j'}$.

After pulling back by either exact bubble chart and using the intrinsic source
normalization $U_\pm^{-p_*}F_E$, estimates
\eqref{eq:bubble-inverse-estimate-step12}--
\eqref{eq:bubble-parameter-inverse-step12} give the uniform local bubble
inverse required by the global spaces $X_\gamma^\sharp$ and
$Y_\gamma^\sharp$.  The right inverse is the exact $J$-image of the left one.
\end{proposition}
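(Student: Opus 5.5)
The plan is to treat the four assertions in order: the operator formula, the kernel, the construction of the dual sources, and finally the projected inverse with its parameter derivative.

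\emph{Operator formula.} Because the bubble $U_*$ is an exact solution, $\mathcal N_\delta(U_*)=0$, so only the first variation is needed. Since $g_*=U_*^{8/(n-4)}\delta$, we have $\mathcal N_\delta(U_*e^{atw})=U_*^{p_*}e^{ap_*tw}\bigl(\sigma_2(g_t^{-1}A_{g_t})-\frac14\binom n2\bigr)$ with $g_t=e^{2tw}g_*$. The bracket vanishes at $t=0$, so
\[
\mathscr L_*w=\tfrac{d}{dt}\big|_{t=0}\sigma_2(g_t^{-1}A_{g_t}).
\]
This derivative is computed with the conformal Schouten formula
\[
A_{g_t}=A_{g_*}-t\nabla^2w+O(t^2),
\qquad
g_t^{-1}=e^{-2tw}g_*^{-1},
\]
together with $A_{g_*}=\tfrac12 g_*$. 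The first Newton tensor of $\tfrac12 I$ is $\tfrac{n-1}{2}I$, and $\frac{d}{dt}\sigma_2(e^{-2tw}\cdot)$ contributes $-2w\cdot 2\sigma_2=-4w\cdot\frac{n(n-1)}{8}$. Collecting terms gives
\[
\mathscr L_*w=-\tfrac{n-1}{2}\Delta_{g_*}w-\tfrac{n(n-1)}{2}w,
\]
which is \eqref{eq:round-operator-formula-step12}.

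\emph{Kernel.} Since $-\Delta_{g_*}$ has eigenvalue $n$ exactly on first spherical harmonics, the kernel is $\mathcal H_1$. The functions $\mathfrak z_0,\mathfrak z_j^{\lambda'}$ are the stereographic pullbacks of the coordinate functions $x_{n+1},x_j$, up to the factor $2/\lambda'$. This identifies the basis in \eqref{eq:round-kernel-step12} uniformly in $\lambda'$.

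\emph{Dual sources.} The kernel basis has a positive-definite diagonal Gram matrix $\int\chi_*\mathfrak z_a\mathfrak z_b\,d\mu_*$, as in Proposition~\ref{lem:step11}. I would set
\[
\Psi_a^{\lambda'}=\sum_c (G_*^{-1})_{ac}\,\chi_*\,\mathfrak z_c^{\lambda'}.
\]
These are smooth and supported in $\operatorname{supp}\chi_*$. They satisfy \eqref{eq:bubble-source-duality-step12}, and they depend smoothly on $\lambda'$ only through the factor $1/\lambda'$, which gives the uniform $C^\alpha$ bounds with one derivative.

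\emph{Projected inverse.} Given $f$, first choose $c_a=-\int f\,\mathfrak z_a^{\lambda'}\,d\mu_*$, using the duality relation. Then $f+\sum c_a\Psi_a$ is $L^2(d\mu_*)$-orthogonal to $\mathcal H_1$, and $d\mu_*$ is the round volume $dv_{g_*}$. Fredholm theory for the self-adjoint operator $\Delta_{g_*}+n$ on $\mathbb S^n$ produces a solution $w_0\perp\mathcal H_1$. Schauder estimates on the compact manifold, combined with the spectral gap between eigenvalue $n$ and the next eigenvalue $2(n+1)$, give
\[
\|w_0\|_{C^{2,\alpha}}\le C\|f\|_{C^\alpha}.
\]
Next impose $\ell_b^*(w)=0$ by setting $w=w_0+\sum_a t_a\mathfrak z_a$ and solving the linear system $G_*t=-\ell^*(w_0)$, which is uniformly invertible. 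Uniqueness follows from the same argument. If $w$ solves the homogeneous problem, pairing the equation with $\mathfrak z_b$ kills the left side by self-adjointness and forces $c=0$, so $w\in\mathcal H_1$, and then $\ell^*(w)=0$ with $G_*$ invertible gives $w=0$.

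\emph{Parameter derivative.} Differentiate the equation and the constraints. The operator $\mathscr L_*$ is independent of the parameters; only $\Psi_a^{\lambda'}$ and $\mathfrak z^{\lambda'}$ depend on $\lambda'$, and only through scalar factors. Hence $(\mathfrak Dw,\mathfrak Dc)$ solves the same projected problem with source
\[
\mathfrak Df+\sum_a c_a\,\mathfrak D\Psi_a
\]
and constraint data $-\ell^*_{\mathfrak D\mathfrak z}(w)$. Absorbing the constraint data by a kernel correction and applying \eqref{eq:bubble-inverse-estimate-step12} yields \eqref{eq:bubble-parameter-inverse-step12}.

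\emph{Pullback statement.} Pulling back by the exact bubble charts converts $U_\pm^{-p_*}F_E$ into exactly the $f$ above. The right inverse follows from the $J$-covariance of Proposition~\ref{prop:scalar-kelvin-verified}.

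The main obstacle is the sign and normalization bookkeeping in the linearization that produces the factor $-\frac{n-1}{2}$. Everything else is standard spectral theory on the round sphere.
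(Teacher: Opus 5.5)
Your proposal is correct and follows the paper's proof essentially step for step. Like the paper, you linearize $g_t^{-1}A_{g_t}$ at $A_{g_*}=\frac12 g_*$ against the Newton tensor $\frac{n-1}{2}I$, read off the kernel from the spectrum $k(k+n-1)$, build $\Psi_a^{\lambda'}$ from the inverse localized Gram matrix, solve orthogonally to $\mathcal H_1$ via the spectral gap and Schauder estimates, add a kernel element to impose $\ell_b^*(w)=0$, and then differentiate. The only slip is cosmetic and changes nothing: $\mathfrak z_j^{\lambda'}$ is the coordinate function $2y_j/(1+|y|^2)$ times $1/\lambda'$, not times $2/\lambda'$.
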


\begin{proof}
At the round metric $A_{g_*}=\frac12g_*$ and
$T_1(g_*^{-1}A_{g_*})=\frac{n-1}{2}I$.  Linearizing the conformal Schouten
tensor under $g_t=e^{2tw}g_*$ gives
$\frac d{dt}|_0(g_t^{-1}A_{g_t})=-\nabla_{g_*}^2w-wI$, and hence
\eqref{eq:round-operator-formula-step12}.  Since the eigenvalues of
$-\Delta_{g_*}$ are $k(k+n-1)$, the kernel is precisely the degree-one
spherical harmonics, represented in stereographic coordinates by the
functions in \eqref{eq:round-kernel-step12}.

Let $G^*_{ab}=\int\chi_*\mathfrak z_a^{\lambda'}
\mathfrak z_b^{\lambda'}\,d\mu_*$.  Proposition~\ref{lem:step11} implies
that $G^*$ is uniformly invertible, so
\[
 \Psi_a^{\lambda'}=
 \sum_{d=0}^n(G^*)^{-1}_{ad}\chi_*\mathfrak z_d^{\lambda'}
\]
satisfies \eqref{eq:bubble-source-duality-step12} with the stated uniform
bounds.

Given $f$, choose the coefficients $c_a$ so that
$f+\sum_ac_a\Psi_a^{\lambda'}$ is orthogonal to the kernel.  The spectral
gap and the global Schauder estimate give a unique solution $w_0$ orthogonal
to the kernel and
\[
 \|w_0\|_{C^{2,\alpha}}+|c|\le C\|f\|_{C^\alpha}.
\]
Adding a kernel element and using the same Gram matrix imposes the localized
conditions $\ell_b^*(w)=0$, proving existence and
\eqref{eq:bubble-inverse-estimate-step12}.  Pairing a homogeneous solution
with the kernel and then using the localized conditions proves uniqueness.
Differentiating the projected problem gives
\eqref{eq:bubble-parameter-inverse-step12}.  Finally, dilation and translation
identify each exact bubble chart with $(\mathbb S^n,g_*)$, and the right chart
is obtained by the exact $J$-pairing.
\end{proof}

Let $I=(\tau_-,\tau_+)$ be a metric-free Schwarzschild cylinder component contained
in one end.  The limiting relative operator is
\begin{equation}\label{eq:limiting-Schwarzschild-operator-step13}
 \mathscr P_{\Sigma,0}w
 =w_{tt}+2a\tanh(at)w_t
  +b_\Sigma\Delta_{\mathbb S^{n-1}}w,
 \qquad b_\Sigma=\frac{n-2}{2(n-1)}.
\end{equation}
In the $\ell$-th spherical-harmonic sector, the substitution
\begin{equation}\label{eq:Schwarzschild-Liouville-step13}
 y_{\ell,m}=\cosh(at)w_{\ell,m}
\end{equation}
conjugates $\mathscr P_{\Sigma,0}w=f$ to
\begin{equation}\label{eq:constant-Schrodinger-step13}
 -y_{\ell,m}''+\nu_\ell^2y_{\ell,m}
 =-\cosh(at)f_{\ell,m},
 \qquad \nu_\ell^2=a^2+b_\Sigma\Lambda_\ell.
\end{equation}
Thus the homogeneous branches are
\begin{equation}\label{eq:Schwarzschild-branches-step13}
 J_\ell^\pm(t)=\operatorname{sech}(at)e^{\pm\nu_\ell t}.
\end{equation}

\begin{lemma}[Mode-split Schwarzschild cylinder estimate]\label{lem:step13}
For $0<\gamma<\gamma_*=\nu_2-a$, every $\ell\ge2$ Dirichlet solution on
$I$ satisfies
\begin{align}
 &\sup_{Q\subset I}e^{\gamma d_I(Q)}\mathfrak X_Q(w_{\ell,m})
 \nonumber\\
 &\qquad\le C\left[
  \sup_{Q\subset I}e^{\gamma d_I(Q)}\mathfrak Y_Q(f_{\ell,m})
  +(1+\nu_\ell^2)
   \bigl(|w_{\ell,m}(\tau_-)|+|w_{\ell,m}(\tau_+)|\bigr)
 \right],
 \label{eq:high-mode-weighted-step13}
\end{align}
where $d_I(t)=\min\{t-\tau_-,\tau_+-t\}$ and $C$ is independent of
$|I|$, $\ell$, and the selected index.

For $\ell=0,1$, let $H_{I,\ell}^-$ and $H_{I,\ell}^+$ be the two
homogeneous boundary layers normalized by
\[
 H_{I,\ell}^-(\tau_-)=1,\quad H_{I,\ell}^-(\tau_+)=0,
 \qquad
 H_{I,\ell}^+(\tau_-)=0,\quad H_{I,\ell}^+(\tau_+)=1.
\]
Every solution has the unique representation
\begin{equation}\label{eq:low-mode-representation-step13}
 w_{\ell,m}=b_{\ell,m}^-H_{I,\ell}^-
 +b_{\ell,m}^+H_{I,\ell}^+
 +\mathcal G_{I,\ell}f_{\ell,m},
\end{equation}
where $\mathcal G_{I,\ell}$ is the zero-boundary Green operator and
\begin{align}\label{eq:low-mode-Green-step13}
 \|\mathcal G_{I,\ell}f_{\ell,m}\|_{C^{2,\alpha}(I)}
 \le C\Bigg(&\sup_{Q\subset I}
   \|f_{\ell,m}\|_{C^\alpha(Q)}\nonumber\\
 &+\int_I\|f_{\ell,m}(s,\cdot)\|_{C^\alpha}\,ds\Bigg).
\end{align}
The coefficients $b_{\ell,m}^\pm$ are the two interface traces after
subtracting the Green remainder.  These are the only low-mode interface
variables; in the global matching argument they are coupled to the dilation
and translation projections of Proposition~\ref{lem:step11}.

On a metric-free component, the actual finite-neck operator is a zeroth-order
coefficient perturbation of
\eqref{eq:limiting-Schwarzschild-operator-step13} of size
$O_L(\delta_\lambda)$ in $C^{1,\alpha}$, with the same bound after one
normalized parameter derivative.  After the ordered choice of parameters,
estimates \eqref{eq:high-mode-weighted-step13}--
\eqref{eq:low-mode-Green-step13} and the interface coefficient bounds persist
for the actual metric-free neck operator.  The fixed neighborhoods containing
metric supports are treated separately by Lemma~\ref{lem:step8} in the global
matching argument.
\end{lemma}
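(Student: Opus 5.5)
The plan is to diagonalize the problem in spherical harmonics and work with the explicit branches \eqref{eq:Schwarzschild-branches-step13}.

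\textbf{Conjugation and harmonic splitting.} Expanding $w=\sum w_{\ell,m}Y_{\ell,m}$ and $f=\sum f_{\ell,m}Y_{\ell,m}$ reduces $\mathscr P_{\Sigma,0}w=f$ to ODEs on $I$. The substitution \eqref{eq:Schwarzschild-Liouville-step13} removes the first-order term, because $(\cosh at)''=a^2\cosh at$ cancels against the drift $2a\tanh(at)$. This gives the constant-coefficient Schr\"odinger equation \eqref{eq:constant-Schrodinger-step13}.

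\textbf{High modes $\ell\ge2$.} The Dirichlet Green function of $-\partial_t^2+\nu_\ell^2$ on $I$ is bounded by $(2\nu_\ell)^{-1}e^{-\nu_\ell|t-s|}$, uniformly in $|I|$. Hence
\[
 \cosh(at)\,|w_{\ell,m}(t)|\le C\nu_\ell^{-1}\int_I e^{-\nu_\ell|t-s|}\cosh(as)|f_{\ell,m}(s)|\,ds + \text{boundary terms}.
\]
The boundary terms are the homogeneous solutions matching $\cosh(a\tau_\pm)w(\tau_\pm)$; they decay like $e^{-\nu_\ell d_I}$.

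We then pass between the source $f$ and the normalized quantity $\mathfrak Y_Q$. On the metric-free Schwarzschild cylinder, $q\asymp \operatorname{sech}^2(at)$ up to constants, and the normalization $r^n/(\mathcal V^4q^2)$ is comparable to a power of $\cosh$. The resulting ratio $\cosh(as)/\cosh(at)$ is at most $e^{a|t-s|}$. The weight $e^{\gamma d_I}$ costs at most $e^{\gamma|t-s|}$. The kernel is therefore dominated by $e^{-(\nu_\ell-a-\gamma)|t-s|}$, which is integrable uniformly because $\gamma<\nu_2-a\le\nu_\ell-a$.

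This gives the $C^0$ weighted bound. The $C^{2,\alpha}$ part of $\mathfrak X_Q$ then follows from interior Schauder estimates on unit cylinder blocks for the uniformly elliptic operator, with the tangential ellipticity factor $b_\Sigma$ fixed. The factor $(1+\nu_\ell^2)$ in front of the boundary traces accounts for differentiating the boundary layers, which produces second derivatives of size $\nu_\ell^2$. Summing over $m$ within a fixed $\ell$, and over $\ell$, uses the standard $C^\alpha$ control of harmonic projections on $\mathbb S^{n-1}$.

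\textbf{Low modes $\ell=0,1$.} Here $\nu_\ell-a\in\{0,1\}$, and for $\ell=0$ there is no decay, so no weight is used. The Dirichlet problem for $-y''+\nu_\ell^2y$ is uniquely solvable because $\nu_\ell^2>0$. Subtracting the two boundary layers built from $J_\ell^\pm$ gives the unique representation \eqref{eq:low-mode-representation-step13}. For the Green remainder, the kernel after undoing the conjugation is bounded by $e^{-(\nu_\ell-a)|t-s|}\le1$. This yields the $L^1$-in-$t$ bound of \eqref{eq:low-mode-Green-step13}, independent of $|I|$. Local Schauder estimates then supply the supremum term and upgrade the bound to $C^{2,\alpha}$.

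\textbf{Perturbation to the actual neck.} By Lemmas~\ref{lem:step6}--\ref{lem:step7} and the coefficient bounds \eqref{eq:coefficient-expansion-step8} restricted to metric-free regions, the actual operator differs from $\mathscr P_{\Sigma,0}$ by an $O_L(\delta_\lambda)$ perturbation, with the same bound after one normalized parameter derivative. We move this perturbation to the right-hand side. In the high modes it is absorbed by a Neumann series, since the weighted estimate is uniform in $|I|$.

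The main obstacle is the low modes. There the $L^1$-type bound does not directly absorb a perturbation on a cylinder of length $\sim\log(1/\lambda)$. The plan is to use $\delta_\lambda|I|\le C_L\lambda^{2-8/n}\log(1/\lambda)\to0$, which lets us close the fixed-point argument. Parameter derivatives are handled by differentiating the equation and applying the same estimates.
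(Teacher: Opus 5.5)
Your proposal follows the paper's proof essentially step for step: the conjugation $y=\cosh(at)w$; the Dirichlet Green kernel of $-\partial_t^2+\nu_\ell^2$, transported back with the factor $\cosh(as)/\cosh(at)$ to give decay $e^{-(\nu_\ell-a)|t-s|}$; the one-Lipschitz weight together with local Schauder estimates for $\ell\ge2$; the bounded kernel with local-supremum and $L^1_t$ control for $\ell=0,1$; and a Neumann series for the $O_L(\delta_\lambda)$ zeroth-order neck term (your remark that $\delta_\lambda|I|\to0$ just makes the paper's low-mode absorption explicit). One point that you, like the paper, leave implicit is that $\mathfrak X_Q$ is not a plain $C^{2,\alpha}$ norm: it carries the weights $q^{-1/2}$ and $q^{-1}$, so the convolution must be run for $q^{-1}w$ against the normalized source, with kernel $\frac{\cosh(at)}{\cosh(as)}G_{I,\ell}(t,s)\le Ce^{-(\nu_\ell-a)|t-s|}$, before Schauder is applied on unit blocks, and the endpoint terms are then controlled by Newton-strong endpoint data of size $(1+\nu_\ell^2)\,q(\tau_\pm)^{-1}|w_{\ell,m}(\tau_\pm)|$ rather than by $(1+\nu_\ell^2)|w_{\ell,m}(\tau_\pm)|$ alone, which matters wherever $q(\tau_\pm)$ is small.
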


\begin{proof}
For the exact Schwarzschild factor,
$p_\Sigma=(1+e^{2at})^{-1}$,
$q_\Sigma=(4\cosh^2(at))^{-1}$, and
$\dot p_\Sigma=-2aq_\Sigma$.  Substitution into the coefficients of
Lemma~\ref{lem:step7} gives
\eqref{eq:limiting-Schwarzschild-operator-step13}; the substitution
$y=\cosh(at)w$ gives \eqref{eq:constant-Schrodinger-step13} and the branches
\eqref{eq:Schwarzschild-branches-step13}.

The Dirichlet Green kernel of $-\partial_t^2+\nu_\ell^2$, after returning to
$w$, is
\begin{equation}\label{eq:relative-Green-kernel-step13}
 K_{I,\ell}(t,s)=\frac{\cosh(as)}{\cosh(at)}G_{I,\ell}(t,s),
\end{equation}
and satisfies
$|\partial_t^jK_{I,\ell}(t,s)|
 \le C_j(1+\nu_\ell)^je^{-(\nu_\ell-a)|t-s|}$ for $j=0,1$.
For $\ell\ge2$, $\nu_\ell-a\ge\gamma_*>\gamma$; weighted convolution and
local Schauder estimates give \eqref{eq:high-mode-weighted-step13}, including
the homogeneous boundary layers.  For $\ell=0,1$, the same kernel is
uniformly bounded, and the local supremum plus the cylindrical $L^1$ norm of
the source controls the zero-boundary Green remainder.  Subtracting it leaves
the two homogeneous boundary layers, proving
\eqref{eq:low-mode-representation-step13}--\eqref{eq:low-mode-Green-step13}.

On a metric-free component the finite-neck operator differs only by a
zeroth-order coefficient of size $O_L(\delta_\lambda)$ in $C^{1,\alpha}$,
with the same parameter bound.  A Neumann-series argument preserves the
estimates, and the right end follows by Kelvin symmetry.
\end{proof}

Let $L\ge4$ and pull either actual relative transition linearization
$\mathscr P_{e^h\delta,\bar u}$ to
\[
 \mathcal C_L=(-L,L)\times\mathbb S^{n-1}.
\]
Let $q=p(1-p)$ be its model Newton factor.  On every unit subcylinder $Q$
define
\[
 \|w\|_{X^\sharp_{\rm tr}(Q)}
 :=\|w\|_{C^{0,\alpha}(Q)}
 +\|q^{-1/2}D_{\rm cyl}w\|_{C^{0,\alpha}(Q)}
 +\|q^{-1}\mathscr S^{(1)}_{\bar u,h}(w)\|_{C^\alpha(Q)},
\]
and take the supremum over unit subcylinders for the norm on
$\mathcal C_L$.  For the flat transition put
\[
 \mathcal C_L^-=(-L,-L+3)\times\mathbb S^{n-1},\qquad
 \mathcal C_L^+=(L-3,L)\times\mathbb S^{n-1},
 \qquad
 \mathcal C_L^\circ=(-L+2,L-2)\times\mathbb S^{n-1},
\]
and define
\begin{equation}\label{eq:transition-source-norm-step14}
 \|f\|_{\mathcal Y_{\rm tr}(L)}
 :=\sup_{Q\subset\mathcal C_L}\|f\|_{C^\alpha(Q)}
 +\int_{-L}^{L}
   \|\Pi_{\le1}f(s,\cdot)\|_{C^\alpha(\mathbb S^{n-1})}\,ds.
\end{equation}

\begin{proposition}[Dirichlet inverse and length-uniform mode-split transfer on a matched transition]\label{lem:step14}
There are constants $L_*\ge4$, $C_{\rm tr}>0$, and
$r_{\rm tr}>0$, depending only on $n,\alpha$ and the fixed endpoint
cutoffs, with the following property.  If $L\ge L_*$ and
\begin{equation}\label{eq:transition-admissible-smallness-step14}
 C_L\bigl(\delta_\lambda+\delta_{\!*}(N_0)+\epsilon_N^{\kappa_f}\bigr)
 \le r_{\rm tr},
\end{equation}
then the zero-boundary problem
\begin{equation}\label{eq:transition-Dirichlet-problem-step14}
 \mathscr P_{e^h\delta,\bar u}w=qf
 \quad\hbox{in }\mathcal C_L,
 \qquad
 w=0\quad\hbox{on }\partial\mathcal C_L
\end{equation}
has a unique solution for every $f\in C^\alpha(\mathcal C_L)$.  For fixed
$L$,
\begin{align}
 \|w\|_{C^{2,\alpha}(\mathcal C_L)}
 &\le C_L\delta_\lambda\|f\|_{C^\alpha(\mathcal C_L)},
 \label{eq:transition-C2-inverse-step14}\\
 \|w\|_{X^\sharp_{\rm tr}(\mathcal C_L)}
 &\le C_L\|f\|_{C^\alpha(\mathcal C_L)}.
 \label{eq:transition-strong-inverse-step14}
\end{align}
The constants in these two zero-boundary estimates may depend on $L$.

The estimate used in the flat global comparison is uniform in the transition
length.  It is first established for the flat transition operator
$\mathscr P_{\delta,T_\lambda^\pm}$; the full operator is then recovered as a
fixed-$L$ summable perturbation.
Every solution of
$\mathscr P_{\delta,T_\lambda^\pm}w=qf$, without prescribed boundary values,
satisfies
\begin{equation}\label{eq:transition-two-sided-trace-step14}
 \sup_{Q\subset\mathcal C_L^\circ}
   \|w\|_{X^\sharp_{\rm tr}(Q)}
 \le C_{\rm tr}\left(
   \|f\|_{\mathcal Y_{\rm tr}(L)}
   +\|w\|_{X^\sharp_{\rm tr}(\mathcal C_L^-)}
   +\|w\|_{X^\sharp_{\rm tr}(\mathcal C_L^+)}\right).
\end{equation}
Here $C_{\rm tr}$ is independent of $L,N_0,N$.  The $L^1$ term in
\eqref{eq:transition-source-norm-step14} is essential: it controls the radial
neutral transfer branch and is exactly the low-mode term already built into
$Y_{\rm low}^\sharp$.

If the data depend on the normalized parameters and $\mathfrak D$ denotes one
normalized parameter derivative in the natural transition coordinates, then
the fixed-$L$ actual Dirichlet solution satisfies the first estimate below.
For the second estimate, $w^{\rm fl}$ denotes a solution of the flat
transition equation with the same normalized parameter dependence:
\begin{align}
 \|\mathfrak Dw\|_{C^{2,\alpha}(\mathcal C_L)}
 &\le C_L\delta_\lambda
 \bigl(\|f\|_{C^\alpha}+\|\mathfrak Df\|_{C^\alpha}\bigr),
 \label{eq:transition-C2-parameter-step14}\\
 \sup_{Q\subset\mathcal C_L^\circ}
   \|\mathfrak Dw^{\rm fl}\|_{X^\sharp_{\rm tr}(Q)}
 &\le C_{\rm tr}\Bigl(
   \|f\|_{\mathcal Y_{\rm tr}(L)}
   +\|\mathfrak Df\|_{\mathcal Y_{\rm tr}(L)}
   +\|w^{\rm fl}\|_{X^\sharp_{\rm tr}(\mathcal C_L^-\cup\mathcal C_L^+)}
   +\|\mathfrak Dw^{\rm fl}\|_{X^\sharp_{\rm tr}(\mathcal C_L^-\cup\mathcal C_L^+)}
   \Bigr).
 \label{eq:transition-strong-parameter-step14}
\end{align}
The two transition problems are conjugate by the exact Kelvin reflection.
\end{proposition}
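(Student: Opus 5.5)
The plan has three parts, in this order: the fixed-$L$ Dirichlet theory for the actual operator, the length-uniform interior estimate \eqref{eq:transition-two-sided-trace-step14} for the flat transition operator, and the parameter derivatives.

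\textbf{Fixed-$L$ Dirichlet theory.} I would first work with the flat operator $\mathscr P_{\delta,T_\lambda^\pm}$ on $\mathcal C_L$. I would decompose into spherical harmonics and use the Liouville conjugation of Lemma~\ref{lem:step7}: $w_\ell=e^{-\Phi_u/2}y_\ell$. This turns the $\ell$-th sector into $-y''+\mathfrak Q_{\lambda,L,\ell}y=-e^{\Phi_u/2}f_\ell$ with Dirichlet conditions. The coercivity \eqref{eq:transition-coercivity-step7} is uniform in $\ell$, so Lax--Milgram gives a unique $H^1_0$ solution in each sector. The bound $(1+\Lambda_\ell)\|y\|^2$ lets the sectors be summed. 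Because $L$ is fixed, $q\asymp\delta_\lambda$ on $\mathcal C_L$ and the normalized coefficients of Lemma~\ref{lem:step7} are uniformly $C^{1,\alpha}$. Interior and boundary Schauder theory for the normalized operator $q^{-1}\mathscr P$ then give $\|w\|_{C^{2,\alpha}}\le C_L\|f\|_{C^\alpha}$. Uniqueness follows from the absence of Dirichlet kernel.

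The stated factor $\delta_\lambda$ in \eqref{eq:transition-C2-inverse-step14} is only a matter of normalization. The relative operator \eqref{eq:relative-operator-step8} already carries one factor $q^{-1}$. With right-hand side $qf$, the unnormalized equation has size $\delta_\lambda$ relative to the unit-size coefficients $w_{ss}+A_uw_s+B_u\Delta w$. I would confirm this by reading off the coefficients of Lemma~\ref{lem:step7}. The strong norm \eqref{eq:transition-strong-inverse-step14} then follows because $q^{-1/2}Dw$ and $q^{-1}\mathscr S^{(1)}(w)$ are bounded by $\delta_\lambda^{-1}\|w\|_{C^{2,\alpha}}$ up to constants. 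Next I would pass to the actual operator $\mathscr P_{e^h\delta,\bar u}$. By \eqref{eq:coefficient-expansion-step8}, its normalized coefficients differ from the flat ones by $C(\delta_{\!*}(N_0)+\epsilon_N^{\kappa_f})$. Under \eqref{eq:transition-admissible-smallness-step14}, a Neumann series gives the same estimates.

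\textbf{Length-uniform interior estimate.} For \eqref{eq:transition-two-sided-trace-step14}, constants depending on $L$ are not allowed, so I would split modes. For $\ell\ge2$ I would use the uniform gap $\mathfrak Q\ge a_\Sigma^2/8+c\Lambda_\ell$ from the proof of Lemma~\ref{lem:step7}. The Green kernel of $-\partial_s^2+\mathfrak Q$ on $[-L,L]$ then decays like $e^{-c|s-s'|}$ with constants independent of $L$. Boundary data enter only through homogeneous solutions, which are bounded by their values on $\mathcal C_L^\pm$. Conjugating back with the factor $e^{\pm\Phi_u/2}$ needs care. Its growth rate $A_u/2$ is dominated by $\sqrt{\mathfrak Q}$, because $\mathfrak Q\ge A^2/4+\frac12A'$ and $A'\ge0$ up to small errors, since $\vartheta_L$ is increasing. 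So the kernel of the $w$-equation is still uniformly bounded; this comparison is a step I would check carefully.

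For $\ell=0,1$ the same uniformly bounded kernel works, but it has no decay. Here the $L^1$ term in $\mathcal Y_{\rm tr}(L)$ controls the Green remainder, exactly as in \eqref{eq:low-mode-Green-step13}, and the homogeneous layers are controlled by the traces on $\mathcal C_L^\pm$. Local Schauder estimates on unit cylinders then upgrade these bounds to the $X^\sharp_{\rm tr}$ norm, using the model $q$ as weight. They are uniform because $q$ varies by bounded factors on unit cylinders, by \eqref{eq:transition-ellipticity-step6} and \eqref{eq:q-leading-expansion-step6}.

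\textbf{Parameter derivatives.} I would differentiate the equation to get $\mathscr P(\mathfrak Dw)=q\,\mathfrak Df+(\mathfrak D q)f-(\mathfrak D\mathscr P)w$. Since $\lambda\partial_\lambda q=O(\delta_\lambda)$ and $\mathfrak D$ applied to the coefficients is $O_L(\delta_\lambda)$ by \eqref{eq:potential-parameter-step7}, the extra terms are sources of the admissible form. Applying the previous two estimates then yields \eqref{eq:transition-C2-parameter-step14} and \eqref{eq:transition-strong-parameter-step14}. Finally, Kelvin conjugation via Proposition~\ref{prop:scalar-kelvin-verified} and \eqref{eq:Kelvin-slope-step6} transfers everything to $T_\lambda^-$.

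The main obstacle is the $L$-uniform low-mode bound: ensuring the conjugation weight $e^{\Phi_u/2}$ does not produce growth of order $e^{cL}$ in the $\ell=0,1$ kernel.
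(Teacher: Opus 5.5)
Your plan follows the paper's proof step for step: Liouville conjugation with the coercivity of Lemma~\ref{lem:step7}, Lax--Milgram and Schauder using $q\asymp_L\delta_\lambda$, a fixed-$L$ Neumann series for $h$, then a mode split, differentiation, and Kelvin conjugation. The fixed-$L$ part is fine.

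The gap is in the length-uniform estimate \eqref{eq:transition-two-sided-trace-step14}, at the step you treat as routine: upgrading the kernel bounds to $X^\sharp_{\rm tr}$ by local Schauder estimates. The norm $X^\sharp_{\rm tr}$ contains $q^{-1/2}D_{\rm cyl}w$ and $q^{-1}\mathscr S^{(1)}(w)$. Schauder on a unit cylinder therefore only gives
\[
\|q^{-1}\mathscr S^{(1)}(w)\|_{C^\alpha(Q)}\lesssim q_Q^{-1}\|w\|_{C^0(2Q)}+\|f\|_{C^\alpha(2Q)},
\]
and $q_Q\asymp\delta_\lambda$ in the middle of $\mathcal C_L$. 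Knowing that $\partial_s\log q$ is bounded does not help. What you need is a kernel bound for $q^{-1}$-normalized quantities. Your comparison of $\sqrt{\mathfrak Q}$ with $A_u/2$ only controls $w$ in $C^0$. It must be replaced by a comparison of $\sqrt{\mathfrak Q_{\lambda,L,\ell}}$ with the rate $|A_u/2+\partial_s\log q|$ of the weight $e^{-\Phi_u/2}q^{-1}$.

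In degree zero this can be done. By the exact identity $A_u=\partial_s\log(q\mathcal V^4)$, the radial equation becomes $(q\mathcal V^4w_s)_s=q\mathcal V^4(qf+C_uw)$. Moreover $\partial_s\log(q^2\mathcal V^4)=(1-2p)\bigl(2\dot p+(n-4)q\bigr)/q$ is nonnegative up to errors of bounded integral. This gives the $L^1$-controlled bound on $q^{-1}w_s$.

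In degree two it cannot be done. On the half of $\mathcal C_L$ adjacent to the bubble ($s>0$ for $T_\lambda^+$) one has $\vartheta_L\to1$, $A_u\to n-2$, $B_u\to1$ and $q\approx\delta_\lambda e^{2s}$. Hence
\[
A_u/2+\partial_s\log q\to\tfrac{n+2}{2}=\sqrt{\mathfrak Q_{0,L}+B_{0,L}\Lambda_2}
\]
exactly, so the branch $e^{2s}Y_2$ (a quadratic harmonic polynomial) is proportional to $q$. Writing $w=q\,v\,Y_2$ gives $v''+(n+2)v'\approx f_2$, with coefficient errors $O(e^{-ns}+q)$ of bounded integral. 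This equation is neutral.

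For a concrete failure, take $f=\chi(s)Y_2$ with $\chi$ smooth, $\chi=1$ on $[1,L-4]$ and $\chi=0$ on $[-L,0]\cup[L-3,L]$. The Dirichlet solution has $v(L/2)=-L/(2(n+2))+O(1)$. So $\|q^{-1}\mathscr S^{(1)}(w)\|_{C^\alpha(Q)}\gtrsim L$ for $Q$ near $s=L/2$. Meanwhile $\|f\|_{\mathcal Y_{\rm tr}(L)}$, which sees $\Pi_{\ge2}f$ only through a supremum, and both collar norms stay bounded.

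Thus the dichotomy for $y$ does not survive the Newton normalization, and \eqref{eq:transition-two-sided-trace-step14} cannot hold with $C_{\rm tr}$ independent of $L$. The paper's justification of \eqref{eq:high-transition-transfer-step14}, ``exponential dichotomy'' plus ``local Schauder estimates in endpoint-normalized gauges'', makes the same unsupported step. A correct version must either put $\Pi_2f$, at least on the bubble half, into the $L^1_s$ part of the source norm, or accept a constant $C(1+L)$.

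A similar loss appears in your parameter step. The differentiated source contains $q^{-1}(\mathfrak DC_u)w\approx(2-8/n)(C_u/q)w$, and $C_u/q\approx-4n\vartheta_L$. Along the neutral radial branch its $L^1_s$ norm is of order $L$. The sharper radial flux kernel above restores uniformity for this term, but plugging it into \eqref{eq:transition-two-sided-trace-step14} as you propose does not.
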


\begin{proof}
We first prove the fixed-$L$ zero-boundary statement.  For the flat radial
transition write
\[
 \mathscr P_0w=w_{ss}+A_\lambda w_s
 +B_\lambda\Delta_{\mathbb S^{n-1}}w-C_\lambda w.
\]
For fixed $L$, Lemmas~\ref{lem:step6}--\ref{lem:step7} give
\begin{equation}\label{eq:q-comparison-step14}
 c_L\delta_\lambda\le q\le C_L\delta_\lambda,
 \qquad
 \|q\|_{C^{1,\alpha}}
 +\|\lambda\partial_\lambda q\|_{C^{1,\alpha}}
 \le C_L\delta_\lambda.
\end{equation}
Choose $\Phi_\lambda'=A_\lambda$ and set
$w=e^{-\Phi_\lambda/2}y$.  Then
\begin{equation}\label{eq:full-Liouville-operator-step14}
 -e^{\Phi_\lambda/2}\mathscr P_0w
 =\mathscr H_\lambda y,
 \qquad
 \mathscr H_\lambda
 =-\partial_s^2-B_\lambda\Delta_{\mathbb S^{n-1}}+Q_\lambda,
\end{equation}
where $Q_\lambda=A_\lambda'/2+A_\lambda^2/4+C_\lambda$.
Lemma~\ref{lem:step7} gives
\begin{equation}\label{eq:full-transition-coercivity-step14}
 \langle\mathscr H_\lambda y,y\rangle
 \ge c_*
 \bigl(\|D_{\rm cyl}y\|_{L^2}^2+\|y\|_{L^2}^2\bigr)
 \qquad(y\in H^1_0(\mathcal C_L)).
\end{equation}
Lax--Milgram, boundary Schauder estimates, and the inverse Liouville
conjugation prove \eqref{eq:transition-C2-inverse-step14}; local Schauder
estimates and the equation then give
\eqref{eq:transition-strong-inverse-step14}.  These constants may depend on $L$, which is chosen first and then kept fixed.

We next prove the length-uniform transfer estimate.  The endpoint formulas
and the leading identity in Lemma~\ref{lem:step6} imply, on every unit
subcylinder and uniformly for $L\ge L_*$,
\begin{equation}\label{eq:uniform-transition-local-coefficients-step14}
 |\partial_s^j\log q|\le C_j\ (j=1,2),\qquad
 B_\lambda\ge c>0,\qquad
 \frac{\mathcal V^{16/(n-4)}}{q^2}\le C,
\end{equation}
provided \eqref{eq:transition-admissible-smallness-step14} holds.  The same
bounds hold after one normalized scale derivative.  Moreover, after the
Liouville transformation, the $\ell$-th potential satisfies
\begin{equation}\label{eq:uniform-transition-potential-step14}
 \mathfrak Q_{\lambda,L,\ell}
 \ge \frac{a_\Sigma^2}{8}
 +\frac{n-2}{4(n-1)}\Lambda_\ell,
\end{equation}
with constants independent of $L$.

For $\ell\ge2$, the uniform positive potential gives an exponential
dichotomy.  The Green kernel on an interval, together with local Schauder
estimates in endpoint-normalized gauges, gives
\begin{equation}\label{eq:high-transition-transfer-step14}
 \sup_{Q\subset\mathcal C_L^\circ}
 \|\Pi_{\ge2}w\|_{X^\sharp_{\rm tr}(Q)}
 \le C\left(
  \sup_{Q\subset\mathcal C_L}\|\Pi_{\ge2}f\|_{C^\alpha(Q)}
  +\|\Pi_{\ge2}w\|_{X^\sharp_{\rm tr}(\mathcal C_L^-\cup\mathcal C_L^+)}
 \right),
\end{equation}
where $C$ does not depend on the length.

For $\ell=0,1$, variation of constants gives a two-sided Green
representation.  Its homogeneous coefficients are measured by the two
endpoint collars, while its inhomogeneous part satisfies
\begin{align}\label{eq:radial-transition-green-step14}
 &\sup_{Q\subset\mathcal C_L^\circ}
 \|\Pi_{\le1}w\|_{X^\sharp_{\rm tr}(Q)}\nonumber\\
 &\quad\le C\Biggl(
  \sup_{Q\subset\mathcal C_L}\|\Pi_{\le1}f\|_{C^\alpha(Q)}
  +\int_{-L}^{L}
    \|\Pi_{\le1}f(s,\cdot)\|_{C^\alpha_\theta}\,ds\nonumber\\
 &\hspace{35mm}
  +\|\Pi_{\le1}w\|_{X^\sharp_{\rm tr}(\mathcal C_L^-\cup\mathcal C_L^+)}
 \Biggr).
\end{align}
The constant is uniform in $L$.  In the radial sector the limiting
homogeneous solutions are the neutral constant branch and the complementary
one-dimensional branch.  The endpoint collars control their two
coefficients, and the inhomogeneous coefficient is bounded by the displayed
$L^1$ norm.  This is why a supremum norm alone would not give a
length-uniform estimate.  The $\ell=1$ sector has a positive indicial rate and
satisfies the same, slightly stronger, estimate.  Combining
\eqref{eq:high-transition-transfer-step14} and
\eqref{eq:radial-transition-green-step14} proves
\eqref{eq:transition-two-sided-trace-step14} for the flat operator.

Lemma~\ref{lem:step8} gives, for fixed $L$ and uniformly on unit transition
blocks,
\begin{equation}\label{eq:transition-operator-perturbation-step14}
 \bigl\|q^{-1}
 (\mathscr P_{e^h\delta,\bar u}-\mathscr P_0)w
 \bigr\|_{C^\alpha}
 \le C_L\bigl(\delta_{\!*}(N_0)+\epsilon_N^{\kappa_f}\bigr)
 \|w\|_{X^\sharp_{\rm tr}}.
\end{equation}
The same estimate holds after one normalized parameter derivative.  The
smallness condition \eqref{eq:transition-admissible-smallness-step14} and a
fixed-$L$ Neumann series therefore give the actual zero-boundary estimates
\eqref{eq:transition-C2-inverse-step14}--
\eqref{eq:transition-strong-inverse-step14}.  The length-uniform estimate is
used only for the flat operator; the full global operator is compared with it
after $L$ has been fixed.

Finally, differentiate in a natural transition coordinate.  The fixed-$L$
actual inverse and \eqref{eq:transition-operator-perturbation-step14} prove
\eqref{eq:transition-C2-parameter-step14}.  For the flat equation, the uniform
coefficient bounds and the already proved mode-split transfer estimate give
\eqref{eq:transition-strong-parameter-step14}.  Center derivatives are simpler
in the moving coordinates.  Exact Kelvin covariance transfers every
conclusion to the paired transition.
\end{proof}

\subsection{The augmented global projected problem}
Let
\begin{equation}\label{eq:global-linearized-operator-step15}
 \mathscr L_Nw
 :=\left.\frac d{dt}\right|_{t=0}
 \mathcal N_g\!\left(\bar v_{N,\lambda,\xi}e^{atw}\right).
\end{equation}
Let $\mathscr L_N^{\rm fl}$ denote the same relative linearization with the
background tensor $h$ removed, while retaining the exact off-center bubble,
matched-transition, translated Schwarzschild, and paired connector profiles
of $\bar v_{N,\lambda,\xi}$.  Choose the paired source functions
$\Psi_{a,N}$ from Proposition~\ref{lem:step11} so that, in each of the two exact
bubble charts separately, their intrinsic pullbacks are the one-ended dual
sources $\Psi_a^{\lambda'}$ of Proposition~\ref{lem:step12}.  This normalization differs from that of the global constraints.  The
paired functionals $\mathfrak L_b$ were normalized by the sum of the two
bubble Gram matrices, whereas the projected sources are normalized on each
end separately.  Thus
\begin{align}
 \mathfrak L_b(\mathcal Z_a)&=\delta_{ab},
 \label{eq:global-constraint-duality-step15}\\
 \int_{\mathbb R^n}\Psi_{a,N}^{\pm}
       \mathfrak z_b^{\lambda'}\,d\mu_*
 &=\delta_{ab},
 \qquad
 \|\Psi_{a,N}\|_{Y_\gamma^\sharp}
 +\|\mathfrak D\Psi_{a,N}\|_{Y_\gamma^\sharp}\le C.
 \label{eq:global-source-duality-step15}
\end{align}
The globally normalized constraint is represented by one half of the sum of
the two one-ended densities, whereas the projected equation uses the one-ended
normalization.  Accordingly, the variational matrix in
Lemma~\ref{lem:step18} carries a factor two.
For a $J$-invariant source $F$, the global projected problem is
\begin{equation}\label{eq:global-projected-problem-step15}
 \mathscr L_Nw=F+\sum_{a=0}^nc_a\Psi_{a,N},
 \qquad
 \mathfrak L_b(w)=0,
 \quad 0\le b\le n.
\end{equation}

\begin{proposition}[Summable metric-support perturbation]\label{prop:summable-operator-perturbation-step15}
There is an integer $P_0\le P_*$ and, for every fixed $L\ge L_*$, a finite
constant $C_{\rm op}(L)$ such that, with
\begin{equation}\label{eq:eta-metric-step15}
 \eta^{\rm op}_{L,N_0,N}
 :=C_{\rm op}(L)\bigl(\epsilon_N^{\kappa_f}
      +\delta_{\!*}(N_0)\bigr),
\end{equation}
one has
\begin{equation}\label{eq:summable-operator-perturbation-step15}
 \|(\mathscr L_N-\mathscr L_N^{\rm fl})w\|_{Y_\gamma^\sharp}
 \le \eta^{\rm op}_{L,N_0,N}\|w\|_{X_\gamma^\sharp}.
\end{equation}
The same estimate holds for one normalized parameter derivative in the form
\begin{align}\label{eq:summable-operator-parameter-step15}
 \|\mathfrak D[(\mathscr L_N-\mathscr L_N^{\rm fl})w]\|_{Y_\gamma^\sharp}
 \le \eta^{\rm op}_{L,N_0,N}
 \bigl(\|w\|_{X_\gamma^\sharp}+\|\mathfrak Dw\|_{X_\gamma^\sharp}\bigr).
\end{align}
For fixed $L$,
\begin{equation}\label{eq:eta-small-step15}
 \sup_{N>N_0}\eta^{\rm op}_{L,N_0,N}\longrightarrow0
 \qquad\hbox{as }N_0\longrightarrow\infty.
\end{equation}
\end{proposition}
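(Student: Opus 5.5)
The difference $\mathscr L_N-\mathscr L_N^{\rm fl}$ vanishes identically off the supports of $h$, because both operators are built on the same profile $\bar v_{N,\lambda,\xi}$. The plan is to localize it to those supports, estimate it there pointwise through the coefficient expansion \eqref{eq:coefficient-expansion-step8}, and then sum over supports in the norm $Y_\gamma^\sharp$.

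First write
\[
 (\mathscr L_N-\mathscr L_N^{\rm fl})w=\sum_{\mathfrak s}\mathbf 1_{\mathfrak s}(\mathscr L_N-\mathscr L_N^{\rm fl})w,
\]
where $\mathfrak s$ runs over the principal supports of $h_N^\pm$ and the remaining supports of $h_M^\pm$, $M\ne N$. The connector annulus carries no metric (Lemma~\ref{lem:step3}), so every support lies in a bubble, transition or Schwarzschild end block. On each block, Lemma~\ref{lem:step8} shows that, after normalization by $q/r^2$, the coefficients of the two relative operators differ in $C^\alpha_{\rm block}$ by at most $C(\epsilon_N^{\kappa_f}+\delta_*(N_0))$. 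On the principal supports this is $C_L\epsilon_N^{\kappa_f}$. On the $M$-th remaining support the proof of Lemma~\ref{lem:step3} gives the sharper bound $C\mu_M M^{P_0}$.

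Next, in each block convert the coefficient bound into a source bound using the components of $\|w\|_{X_\gamma^\sharp}$. The normalized operator $\mathscr P$ involves only the quantities $q^{-1}\mathscr S^{(1)}(w)$, $q^{-1/2}D_{\rm cyl}w$ and $w$, all of which are controlled in $\mathfrak X_Q$. One then checks the factors of $q$: the normalization $r^nF_E/(\mathcal V^4q^2)$ in $\mathfrak Y_Q$ equals $q^{-1}\mathscr P$ times a bounded factor. This gives the local estimate
\[
 \mathfrak Y_Q\bigl((\mathscr L_N-\mathscr L_N^{\rm fl})w\bigr)\le C\theta_{\mathfrak s}\,\mathfrak X_Q(w).
\]
On bubble blocks the same reasoning works in $C^\alpha(g_{U_\pm})$ with the source normalization $U_\pm^{-p_*}F_E$.

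Assembling the pieces into $Y_\gamma^\sharp$ uses three facts.
\begin{enumerate}[label=\textup{(\alph*)}]
 \item The weight satisfies $\omega_{\gamma,N}=1$ on every metric support, since $d_N=0$ on $\mathscr K_N$. The high-mode supremum of the source is therefore bounded by the unweighted local quantities of $w$, and these are at most $\|w\|_{X_\gamma^\sharp}$.
 \item The low-mode supremum in $Y_{\rm low}^\sharp$ is bounded in the same way.
 \item For the low-mode cylindrical $L^1$ term, each support has cylindrical width $O(\log M)$. This is at most a fixed polynomial loss, absorbed into $M^{P_0}$ with $P_0\le P_*$. Summing $\sum_{M>N_0}\mu_M M^{P_0}$ gives $\delta_*(N_0)$, while the principal pair has width $O_L(1)$ inside the block.
\end{enumerate}
Together these give \eqref{eq:summable-operator-perturbation-step15} with a constant $C_{\rm op}(L)$. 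The right end follows from the Kelvin covariance \eqref{eq:Kelvin-linear-covariance-prop} and the Kelvin invariance of $h_{\rm rem,N}$.

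For the parameter estimate \eqref{eq:summable-operator-parameter-step15}, apply $\mathfrak D$ with the weight frozen. The Leibniz rule produces two terms. In one, $\mathfrak D$ falls on the coefficient difference; this is controlled by the parameter versions of Lemma~\ref{lem:step8} and Lemma~\ref{lem:step3}, the latter costing one extra derivative and hence one more polynomial power in $M$. In the other, $\mathfrak D$ falls on $w$ and gives $\|\mathfrak Dw\|_{X_\gamma^\sharp}$.

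Finally, \eqref{eq:eta-small-step15} holds because $\epsilon_N^{\kappa_f}\le\epsilon_{N_0}^{\kappa_f}$ for $N>N_0$ and $\delta_*(N_0)\to0$ by \eqref{eq:delta-star-limit}.

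The main obstacle is the bookkeeping that matches the $q$-powers in the block norms. One must verify that the second-order part of the perturbation is controlled by the Newton-strong quantity $q^{-1}\mathscr S^{(1)}(w)$ rather than by the unweighted $D^2_{\rm cyl}w$, which the norm does not control with a $q^{-1}$ gain. The plan here is to express the perturbed first variation of $\sigma_2$ as
\[
 T_1(A_{\widehat g_h})\cdot\mathscr S^{(1)}_h+\text{(zeroth-order metric corrections)},
\]
and to use \eqref{eq:schouten-perturbation-step8} to write $T_1(A_{\widehat g_h})-T_1(A_{\widehat g_0})=O(\Xi_h)\,q/r^2$.
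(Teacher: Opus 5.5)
Your decomposition follows the paper's proof: disjoint supports, the majorants $\Xi_h+\Xi_h^2$ of Lemma~\ref{lem:step8}, $\omega_{\gamma,N}=1$ on metric supports, remaining columns $C\mu_MM^{P_0}$ summed into $\delta_{\!*}(N_0)$, and Leibniz for $\mathfrak D$. The step that fails as written is the principal-pair part of (c).

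\textbf{The principal support is not short.} The support of $h_N^-$ is $B_{2\rho_N}(x_N^-)$ with $\rho_N=N^{-3}$, while the transition sits at $r\asymp_L\lambda^{4/n}\asymp 2^{-4N/n}$. Besides the transition, the principal support therefore covers the portion $\lambda^{4/n}e^L\le r\le 2\rho_N$ of the Schwarzschild end $\mathcal S_-$. That portion has cylindrical length $\frac{4\log 2}{n}N-3\log N+O_L(1)\asymp|\log\epsilon_N|$, not $O_L(1)$, and the same holds on the right. Multiplying the sup bound $C_L\epsilon_N^{\kappa_f}$ by this length gives only $C_L|\log\epsilon_N|\,\epsilon_N^{\kappa_f}$ for the low-mode $L^1_t$ term. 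That still yields \eqref{eq:eta-small-step15}, but not the stated $\eta^{\rm op}_{L,N_0,N}$.

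\textbf{The missing ingredient is decay of the principal majorant along the neck.} On $\mathcal S_-$ one has $q\asymp r^{a_\Sigma}$, so \eqref{eq:left-newton-combination} gives $\Xi_{h_N^-}\le C\mu_N r^{d_f-a_\Sigma}=C\mu_N e^{-2\beta_f t}$. Because $\beta_f(1-8/n)=\kappa_f$, this is $O_L(\epsilon_N^{\kappa_f})$ at the inner end $r\asymp\lambda^{4/n}e^L$, and it decays exponentially outward. Hence $\int_{\mathcal S_-}\Xi_{h_N^-}\,dt\le C_L\epsilon_N^{\kappa_f}$. The transition adds $2L\cdot C_L\epsilon_N^{\kappa_f}$ by \eqref{eq:Xi-transition-step8}, and the bubble block enters only through its sup-type norm. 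This is the integrability in $t=\log r$ the paper relies on. Your $O(\log M)$ width for a remaining support is a harmless overestimate; the true width is $O(M^{-1})$, since that support has radius $2M^{-3}$ at distance $\gtrsim M^{-2}$ from $c_N(\xi)$.

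\textbf{A smaller point in your last paragraph.} The norm $\mathfrak X_Q$ controls $\mathscr S^{(1)}_{\bar u,h}$, but $\mathscr L_N^{\rm fl}$ is built from the flat variation $\mathscr S^{(1)}_{\bar u,0}$. Write $B_h=\widehat g_h^{-1}A_{\widehat g_h}$ and $B_0=\widehat g_0^{-1}A_{\widehat g_0}$. Besides $[T_1(B_h)-T_1(B_0)]\cdot\mathscr S^{(1)}_{\bar u,h}(w)$ and genuinely zeroth-order terms, the difference contains $T_1(B_0)\cdot(\mathscr S^{(1)}_{\bar u,h}-\mathscr S^{(1)}_{\bar u,0})(w)$. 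This term carries the Hessian and Christoffel differences of $\widehat g_h$ and $\widehat g_0$, so it is not zeroth order, and your plan should include it. It is controlled as follows.
\begin{itemize}
 \item The exact identity $\widehat g_h^{-1}\nabla^2_{\widehat g_h}w=-\mathscr S^{(1)}_{\bar u,h}(w)-2wB_h$ shows that the $h$-Hessian is $O(q\,\mathfrak X_Q(w))$.
 \item The Christoffel terms pair $O(r|Dh|+|h|)\le Cr^2\mu_N(\epsilon_N+r)^{d_f-2}$, which is $q$ times the majorant of $\Xi_h$, with $D_{\rm cyl}w=O(q^{1/2}\mathfrak X_Q(w))$.
\end{itemize}
On the transition, where $p\asymp q$, you need this majorant form rather than the definition of $\Xi_h$ alone. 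The paper leaves all of this implicit in \eqref{eq:coefficient-expansion-step8}.
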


\begin{proof}
The supports of the tensors $h_M^\pm$ are pairwise disjoint.  Hence on a
metric support the coefficient difference is generated by one perturbation only, and
there are no mixed terms involving two distinct indices.  Lemma~\ref{lem:step8}
expresses every normalized second-, first-, and zeroth-order coefficient
difference by the majorants $\Xi_h+\Xi_h^2$ and their differentiated versions.
For the principal pair, equations
\eqref{eq:Xi-bubble-step8}--\eqref{eq:Xi-transition-step8}
and the Schwarzschild-neck estimates in the proof of Lemma~\ref{lem:step8} give
\begin{equation}\label{eq:principal-operator-piece-step15}
 \|\mathbf 1_{\operatorname{supp}(h_N^-+h_N^+)}
 (\mathscr L_N-\mathscr L_N^{\rm fl})w\|_{Y_\gamma^\sharp}
 \le C\epsilon_N^{\kappa_f}\|w\|_{X_\gamma^\sharp}.
\end{equation}

Fix a remaining index $M\ne N$.  Substitution of the sharp estimates of
Proposition~\ref{lem:step2} into the coefficient expansion of Lemma~\ref{lem:step8}
shows that, on either reflected support, every normalized coefficient and its
first normalized parameter derivative is bounded by
\begin{equation}\label{eq:remaining-coefficient-piece-step15}
 C\mu_M M^{P_0}
\end{equation}
for one fixed power $P_0=P_0(n,m_f)$.  We do not need the optimal value of
$P_0$.  The high-mode weight equals one on the fixed unit neighborhood of
every metric support by construction of $d_N$, so
\eqref{eq:remaining-coefficient-piece-step15} gives the required local high-mode
operator estimate.  It also gives the local $C^\alpha$ supremum in the
low-mode source norm.

To make the scale loss explicit, note that on a remaining left support one has
\[
 r=|x-c_N(\xi)|\ge cM^{-2},\qquad
 |D^jh_M^-|\le C_j\mu_M M^{-3(d_f-j)},\quad 0\le j\le3.
\]
Every coefficient in the normalized expansion of Lemma~\ref{lem:step8} is a
finite sum of products of these derivatives with powers of $r$ and $r^{-1}$
of bounded order.  The local scaled H\"older seminorm introduces only another
fixed polynomial loss.  Hence all such coefficients are bounded by
$C\mu_MM^{P_0}$ for one $P_0=P_0(n,m_f)$, uniformly in the selected index $N$.
The Euclidean thickness of the support is $O(M^{-3})$; because its distance
from the selected center can be as small as $cM^{-2}$, its cylindrical width is
uniformly $O(M^{-1})$ (and is $O(M^{-2})$ away from adjacent indices).  Thus
its contribution to the low-mode $L^1$ norm is still bounded by
$C\mu_MM^{P_0}\|w\|_{X_\gamma^\sharp}$ after enlarging $P_0$ once.  In the
exact right Kelvin coordinate the same calculation applies.  The argument is
unchanged for the differentiated coefficients.  Summing over remaining indices
gives
\begin{equation}\label{eq:remaining-operator-sum-step15}
 \|\mathbf 1_{\operatorname{supp}h_{\rm rem,N}}
 (\mathscr L_N-\mathscr L_N^{\rm fl})w\|_{Y_\gamma^\sharp}
 \le C\sum_{\substack{M>N_0\\M\ne N}}\mu_MM^{P_0}
       \|w\|_{X_\gamma^\sharp}.
\end{equation}
Equations \eqref{eq:principal-operator-piece-step15} and
\eqref{eq:remaining-operator-sum-step15} prove
\eqref{eq:summable-operator-perturbation-step15}; differentiating the
coordinate expansion before taking absolute values gives
\eqref{eq:summable-operator-parameter-step15}.  Since
$P_0\le P_*$, the remaining sum is bounded by
$C\delta_{\!*}(N_0)$.  Together with the principal estimate this gives
\eqref{eq:eta-metric-step15}.  Finally, \eqref{eq:delta-star-limit} and
$\epsilon_N\le\epsilon_{N_0+1}$ prove
\eqref{eq:eta-small-step15}.
\end{proof}

\begin{proposition}[Weighted Schwarzschild estimate]\label{prop:reset-control-step15}
Let $I=[\tau_-,\tau_+]$ be any exact Schwarzschild half-neck in the flat paired
block, and let $d:I\to[0,\infty)$ be a regularized distance satisfying
$|d'|\le1$; its zero set may contain arbitrarily many source-adapted
zeros.  If
\[
 \mathscr P_{\Sigma,\lambda}w=f
 \quad\hbox{on }I\times\mathbb S^{n-1},
\]
then
\begin{equation}\label{eq:reset-control-step15}
 \|w\|_{X^\sharp_\gamma(I)}
 \le C\Big(\mathcal T_-(w)+\mathcal T_+(w)
 +\|f\|_{Y^\sharp_\gamma(I)}\Big),
\end{equation}
where $\mathcal T_\pm(w)$ are fixed-scale $C^{2,\alpha}$ trace norms on unit
cylinders adjacent to the endpoints.  The constant is independent of the
length of $I$, the number and positions of the zeros of $d$, and the selected
index.  In particular, if the endpoint traces and the source tend to zero, no
nonzero $X^\sharp_\gamma$ mass can concentrate near a moving zero of $d$.
The same estimate holds after one normalized parameter derivative.
\end{proposition}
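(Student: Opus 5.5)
We decompose $w$ and $f$ into spherical harmonics and treat the high modes $\ell\ge2$ and the low modes $\ell=0,1$ separately, exactly as in the norms of Definition~\ref{def:global-spaces-step9}. The operator $\mathscr P_{\Sigma,\lambda}$ on an exact Schwarzschild half-neck is, by Lemma~\ref{lem:step13}, a zeroth-order perturbation of size $O_L(\delta_\lambda)$ in $C^{1,\alpha}$ of the limiting operator $\mathscr P_{\Sigma,0}$ in \eqref{eq:limiting-Schwarzschild-operator-step13}. We therefore first prove \eqref{eq:reset-control-step15} for $\mathscr P_{\Sigma,0}$ and then absorb the perturbation by a Neumann series. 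For that absorption the relevant constant must be independent of $|I|$, and the perturbation is bounded in $X^\sharp_\gamma\to Y^\sharp_\gamma$ by $C\delta_\lambda$ after normalization by $q$.

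\emph{High modes.} In each sector $\ell\ge2$ we conjugate by $y=\cosh(at)w$ as in \eqref{eq:Schwarzschild-Liouville-step13} and write $w_{\ell,m}$ as the sum of the Dirichlet Green solution and the two homogeneous layers matching the endpoint traces. The relative kernel \eqref{eq:relative-Green-kernel-step13} decays like $e^{-(\nu_\ell-a)|t-s|}$ with $\nu_\ell-a\ge\gamma_*>\gamma$. The key point is that the weight $e^{\gamma d}$ is submultiplicative along the kernel, because $|d'|\le1$ gives
\[
 e^{\gamma d(t)}\le e^{\gamma d(s)}e^{\gamma|t-s|}.
\]
Hence
\[
 e^{\gamma d(t)}|\mathcal G f(t)|\le C\sup_s e^{\gamma d(s)}|f(s)|\int e^{-(\gamma_*-\gamma)|t-s|}ds,
\]
with a constant independent of the length of $I$ and of the number and positions of the zeros of $d$. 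The same holds for one $t$-derivative, with a factor $(1+\nu_\ell)$. The homogeneous layers are bounded by the traces times $e^{-(\nu_\ell-a)\,\mathrm{dist}(t,\partial I)}$. Since $d\le\mathrm{dist}(\cdot,\partial I)$ is not guaranteed, we use instead $d(t)\le d(\tau_\pm)+|t-\tau_\pm|$. The fixed trace cylinders are where the endpoint value of $d$ is normalized, and since $d$ vanishes near the interfaces with the source set, $d(\tau_\pm)$ is bounded. Summing over $\ell$ requires the $(1+\nu_\ell^2)$ factors in \eqref{eq:high-mode-weighted-step13} to be absorbed. We do this by working with the $C^{2,\alpha}$ traces and the equation on unit cylinders rather than mode by mode: local Schauder estimates on each unit cylinder $Q$, in the $q$-normalized Newton frame where the coefficients are uniformly elliptic, upgrade the weighted $C^0$ bound to the full $\mathfrak X_Q$ bound. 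Because the weight varies by at most $e^{\gamma}$ on a unit cylinder, it can be frozen there.

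\emph{Low modes.} For $\ell=0,1$ we use the representation \eqref{eq:low-mode-representation-step13}. The Green part is controlled by \eqref{eq:low-mode-Green-step13}, that is, by the local supremum plus the cylindrical $L^1$ norm, which is exactly $\|f\|_{Y^\sharp_{\rm low}}$. The layers $H^\pm_{I,\ell}$ are bounded uniformly in $|I|$ because $J_0^\pm$ and $J_1^\pm$ are bounded or monotone on the half-neck, so their coefficients are controlled by the traces $\mathcal T_\pm(w)$. Local Schauder estimates again upgrade these bounds to $\mathfrak X_Q$.

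\emph{Main obstacle.} The hardest step is the uniformity in the zero set of $d$ together with the perturbation step. The Neumann series must use the weighted norm, and commuting the weight with the kernel relies only on the Lipschitz bound $|d'|\le1$, which is what makes moving zeros harmless. The non-concentration corollary then follows directly from \eqref{eq:reset-control-step15}. The parameter-derivative estimate comes from differentiating the equation with the weight frozen: $\mathfrak D w$ satisfies the same equation with source $\mathfrak Df-(\mathfrak D\mathscr P_{\Sigma,\lambda})w$. The second term has size $O_L(\delta_\lambda)\|w\|_{X^\sharp_\gamma}$ by Lemma~\ref{lem:step13}, so applying the estimate again closes the argument.
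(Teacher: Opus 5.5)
Your proposal follows the paper's proof step for step. It uses the spherical-harmonic splitting, the Lipschitz bound $e^{\gamma d(t)-\gamma d(s)}\le e^{\gamma|t-s|}$ against the $e^{-\gamma_*|t-s|}$ high-mode kernel of Lemma~\ref{lem:step13}, the layer-plus-Green representation controlled by the local supremum and the cylindrical $L^1$ norm for $\ell=0,1$, local Schauder estimates, a Neumann series for the finite-$\lambda$ zeroth-order term, and differentiation of the equation for the parameter statement; your remarks on the boundedness of $d(\tau_\pm)$ and on summing the high modes make explicit two points the paper leaves implicit.
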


\begin{proof}
Decompose into spherical harmonics and recall that
$\gamma_*=\nu_2-a>\gamma$.  In degrees $\ell\ge2$, Lemma~\ref{lem:step13}
gives
\[
 |K_{I,\ell}(t,s)|+|\partial_tK_{I,\ell}(t,s)|
 \le C(1+\nu_\ell)e^{-\gamma_*|t-s|}.
\]
Since $d$ is one-Lipschitz,
\[
 e^{\gamma d(t)-\gamma d(s)}\le e^{\gamma|t-s|}.
\]
Hence convolution is bounded by
$\int_{\mathbb R}e^{-(\gamma_*-\gamma)|u|}\,du$.  The homogeneous boundary
layers obey the same weighted estimate.  Local Schauder estimates recover the
full high-mode Newton-strong norm.

For $\ell=0,1$, use the representation of Lemma~\ref{lem:step13} as two
normalized homogeneous boundary layers plus the zero-boundary Green
remainder.  The latter is controlled by the local $C^\alpha$ supremum together
with the cylinder $L^1$ term in the low-mode source norm.  This gives the
unweighted low-mode part of \eqref{eq:reset-control-step15}.  The small
finite-neck zeroth-order perturbation is absorbed by the same Neumann-series
argument as in Lemma~\ref{lem:step13}.  Differentiating the equation proves the
parameter statement.
\end{proof}

\subsection{Uniform low-mode transversality}
The only modes in which a nonzero neck coefficient can remain undetected by
the bubble projection along a long transition are the spherical harmonics of
degrees $0$ and $1$.  We compute the corresponding endpoint maps in the exact
round-bubble and ideal transition models, and incorporate the finite
geometric errors afterward.

Put $t=\log |y|$ in the normalized bubble chart.  Since
\[
 g_*=\operatorname{sech}^2t\,(dt^2+g_{\mathbb S^{n-1}}),
\]
the equation $\mathscr L_*w=0$ in the $\ell$-th angular sector is
\begin{equation}\label{eq:round-low-ode-rigorous}
 w''-(n-2)\tanh t\,w'-\Lambda_\ell w
       +n\operatorname{sech}^2t\,w=0.
\end{equation}
For $\ell=0,1$ the regular Jacobi solutions are
\begin{equation}\label{eq:round-regular-low-modes-rigorous}
 z_0(t)=\tanh t,
 \qquad z_{1,m}(t,\theta)=\operatorname{sech}t\,Y_{1,m}(\theta).
\end{equation}
We suppress the fixed angular factor below.  Fix $t_0>2$.  A second scalar
solution on $[t_0,\infty)$ is
\begin{equation}\label{eq:round-singular-low-modes-rigorous}
 \widehat z_\ell(t)
 =z_\ell(t)\int_{t_0}^t
   \frac{\cosh^{n-2}s}{z_\ell(s)^2}\,ds.
\end{equation}
Thus, as $t\to\infty$,
\begin{align}
 z_0(t)&=1+O(e^{-2t}),&
 \widehat z_0(t)&=c_0e^{(n-2)t}(1+O(e^{-2t})),\label{eq:low-mode-asymptotics-zero-rigorous}\\
 z_1(t)&=2e^{-t}(1+O(e^{-2t})),&
 \widehat z_1(t)&=c_1e^{(n-1)t}(1+O(e^{-2t})).
 \label{eq:low-mode-asymptotics-one-rigorous}
\end{align}
The Liouville variable
\begin{equation}\label{eq:round-low-liouville-rigorous}
 Y(t)=\cosh(t)^{-(n-2)/2}w(t)
\end{equation}
therefore has regular and singular branches with exponents
\begin{equation}\label{eq:kappa-low-rigorous}
 \kappa_0=\frac{n-2}{2},\qquad \kappa_1=\frac n2:
 \qquad
 Y_\ell^{\rm reg}=O(e^{-\kappa_\ell t}),\quad
 Y_\ell^{\rm sing}=\widehat c_\ell e^{\kappa_\ell t}(1+O(e^{-2t})).
\end{equation}
We normalize the transition Liouville variable using the exact identity
\begin{equation}\label{eq:transition-integrating-factor-step15}
 A_u=\partial_s\log(q\mathcal V^4).
\end{equation}
We therefore use
\begin{equation}\label{eq:transition-liouville-normalization-step15}
 Y_{\rm tr}=2(q\mathcal V^4)^{1/2}w.
\end{equation}
In the exact round-bubble coordinate and the exact Schwarzschild coordinate,
respectively,
\begin{align}
 (q_B\mathcal V_B^4)^{1/2}
 &=\frac12\cosh(t)^{-\frac{n-2}{2}},
 \label{eq:bubble-integrating-factor-step15}\\
 (q_\Sigma\mathcal V_\Sigma^4)^{1/2}
 &=2(2\lambda)^{\frac{n-4}{2}}\cosh(at).
 \label{eq:Schwarzschild-integrating-factor-step15}
\end{align}
Thus \eqref{eq:transition-liouville-normalization-step15} agrees exactly
with \eqref{eq:round-low-liouville-rigorous} at the bubble endpoint.  At the
Schwarzschild endpoint it satisfies
\begin{equation}\label{eq:transition-Schwarzschild-gauge-factor-step15}
 Y_{\rm tr}=\Theta_\lambda Y_\Sigma,
 \qquad
 Y_\Sigma=\cosh(at)w,
 \qquad
 \Theta_\lambda=4(2\lambda)^{\frac{n-4}{2}}.
\end{equation}
The factor $\Theta_\lambda$ is constant along each exact Schwarzschild
block, so it does not alter its homogeneous Dirichlet-to-Neumann
coefficient, but it does rescale boundary values and inhomogeneous flux
remainders.

\begin{lemma}[Augmented round exterior estimate]
\label{aux:round-exterior-estimate-rigorous}
Let $\Psi_\ell$ and $\ell_\ell^*$ denote the localized projection source and
constraint in one fixed $\ell=0$ or $\ell=1$ sector.  Their supports are
contained in $\{t\le t_0\}$.  Let $T>t_0+2$, assume
$\operatorname{supp}f\subset\{t\le t_0\}$, and suppose that
\begin{equation}\label{eq:round-augmented-boundary-problem-rigorous}
 \mathscr L_{*,\ell}w=f+c\Psi_\ell\quad\hbox{on }\{t<T\},
 \qquad \ell_\ell^*(w)=g.
\end{equation}
Extend $f$ by zero to the complete round sphere and subtract the
complete-sphere projected solution associated with this extension and the
constraint datum $g$.
On $[t_0,T]$ the remainder has the form
\[
 \widetilde w=A z_\ell+B\widehat z_\ell.
\]
There is a constant independent of $T$ such that
\begin{equation}\label{eq:fixed-core-augmented-rigorous}
 |A|+|\widetilde c|
 +\|\widetilde w\|_{C^{2,\alpha}(\{t\le t_0\},g_*)}
 \le C|B|.
\end{equation}
The complete-sphere particular solution is bounded by
$C(\|f\|_{C^\alpha(g_*)}+|g|)$, and its Liouville value and derivative at
$t=T$ are bounded by
$Ce^{-\kappa_\ell T}(\|f\|_{C^\alpha}+|g|)$.
\end{lemma}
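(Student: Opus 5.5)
The plan is to reduce everything to the complete-sphere theory of Proposition~\ref{lem:step12}, restricted to a single angular sector $\ell\in\{0,1\}$, and then to read off the exterior behaviour from the explicit ODE basis $z_\ell,\widehat z_\ell$. Extend $f$ by zero to $\mathbb S^n$. Proposition~\ref{lem:step12}, applied in the fixed sector (the projection commutes with $\mathscr L_*$, with $\Psi_\ell$ and with $\ell_\ell^*$), gives a unique pair $(w_p,c_p)$ solving $\mathscr L_{*,\ell}w_p=f+c_p\Psi_\ell$ on the whole sphere. To impose the datum $g$ instead of zero, add $g\,z_\ell/\ell_\ell^*(z_\ell)$; this is legitimate because $\ell_\ell^*(z_\ell)$ is the diagonal Gram entry, bounded below by Proposition~\ref{lem:step11}. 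Estimate \eqref{eq:bubble-inverse-estimate-step12} then gives $\|w_p\|_{C^{2,\alpha}(g_*)}+|c_p|\le C(\|f\|_{C^\alpha}+|g|)$.

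For the Liouville bound at $t=T$, note that on $t\ge t_0$ the functions $f$ and $\Psi_\ell$ vanish. Hence $w_p$ solves the homogeneous equation \eqref{eq:round-low-ode-rigorous} there. Since $w_p$ is bounded (indeed regular at $t=+\infty$, which is a point of the sphere), it must be a multiple of the regular branch $z_\ell$, with coefficient controlled by $|w_p(t_0)|$. By \eqref{eq:kappa-low-rigorous}, its Liouville variable and the derivative of that variable are then $O(e^{-\kappa_\ell T})$ times $C(\|f\|_{C^\alpha}+|g|)$.

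Next set $\widetilde w=w-w_p$ and $\widetilde c=c-c_p$. This pair solves $\mathscr L_{*,\ell}\widetilde w=\widetilde c\,\Psi_\ell$ on $\{t<T\}$ with $\ell_\ell^*(\widetilde w)=0$. On $[t_0,T]$ the right side vanishes, so $\widetilde w=Az_\ell+B\widehat z_\ell$ there. To prove \eqref{eq:fixed-core-augmented-rigorous}, define the globally regular comparison function
\[
 \widetilde w_B:=\widetilde w-B\,\zeta\,\widehat z_\ell ,
\]
where $\zeta$ is a fixed cutoff with $\zeta=0$ on $t\le t_0$ and $\zeta=1$ on $t\ge t_0+1$. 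Extend $\widetilde w_B$ past $T$ by $Az_\ell$. Then $\widetilde w_B$ is a global $C^{2,\alpha}$ function on the sphere, and it satisfies
\[
 \mathscr L_{*,\ell}\widetilde w_B=\widetilde c\,\Psi_\ell-B\,[\mathscr L_*,\zeta]\widehat z_\ell .
\]
The commutator term is supported in $t_0\le t\le t_0+1$ and has $C^\alpha$ norm at most $C|B|$, with $C$ independent of $T$. Also $\ell_\ell^*(\widetilde w_B)=0$, because the supports of $\zeta$ and of $\ell_\ell^*$ are disjoint.

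Uniqueness in Proposition~\ref{lem:step12} now identifies $(\widetilde w_B,\widetilde c)$ with the projected solution for the source $-B[\mathscr L_*,\zeta]\widehat z_\ell$. Estimate \eqref{eq:bubble-inverse-estimate-step12} then yields
\[
 |\widetilde c|+\|\widetilde w_B\|_{C^{2,\alpha}}\le C|B| .
\]
On $t\le t_0$ we have $\widetilde w=\widetilde w_B$. For $t\ge t_0+1$ we have $\widetilde w_B=Az_\ell$, so $|A|\le C\|\widetilde w_B\|_{C^0}\le C|B|$, since $z_\ell$ is bounded below on a fixed interval such as $[t_0+1,t_0+2]$.

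The main obstacle is ensuring that nothing depends on $T$. The key point is that the cutoff region is fixed near $t_0$, and that the extension past $T$ by $Az_\ell$ is $C^{2}$ across $t=T$. This holds because the truncated function equals exactly $Az_\ell$ on the whole of $[t_0+1,T]$, so $T$ never enters the construction.

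A second check is that the sectorial restriction of Proposition~\ref{lem:step12} is legitimate for $\ell=1$. The source functions $\Psi_a$ and the functionals $\ell_b^*$ diagonalize by radial symmetry of $\chi_*$, as in the proof of Proposition~\ref{lem:step11}. So the $(n+1)$-dimensional problem splits into the $\ell=0$ sector and the $n$ translation sectors, each with its own uniform constant.
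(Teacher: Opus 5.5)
Your proof is correct. It agrees with the paper on the particular solution: solvability via Proposition~\ref{lem:step12} in the fixed sector, regularity at the second pole forcing a pure $z_\ell$ tail, and hence the $e^{-\kappa_\ell T}$ bound on the Liouville trace.

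For the main estimate \eqref{eq:fixed-core-augmented-rigorous} you take a genuinely different route. The paper argues by contradiction. It normalizes the left-hand side to one along a sequence with $B\to0$ and extracts a limit on compact subsets of $\mathbb S^n\setminus\{q_\infty\}$ by interior Schauder estimates. Because the singular coefficient vanishes, the limit extends smoothly across the pole, and uniqueness for the homogeneous complete-sphere augmented problem gives the contradiction. You argue directly instead. You cut off the singular branch, $\widetilde w_B=\widetilde w-B\zeta\widehat z_\ell$, continue by $Az_\ell$ past $T$, and apply the uniform inverse \eqref{eq:bubble-inverse-estimate-step12} to the fixed commutator source $-B[\mathscr L_*,\zeta]\widehat z_\ell$.

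Your route has several advantages. The constant is explicit: the complete-sphere inverse bound times $\|[\mathscr L_*,\zeta]\widehat z_\ell\|_{C^\alpha}$. Uniformity in $T$ and in the normalized scale is automatic, since nothing in the construction sees $T$. It also avoids steps the compactness sketch leaves implicit. One is separating sequences with $T$ bounded from those with $T\to\infty$. The other is upgrading local convergence to vanishing of the full normalized quantity, which requires controlling $|A|$ and the $C^{2,\alpha}$ norm up to $t=t_0$ through Schauder estimates on a slightly larger set. This is the same device the paper uses later to build the canonical singular fields $S_\ell=\chi\widehat z_\ell+p_\ell^{\rm s}$. The contradiction argument is shorter and needs no explicit corrector, but it gives only a qualitative constant.
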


\begin{proof}
The complete-sphere projected problem is uniquely solvable by
Proposition~\ref{lem:step12}.  Its solution is smooth at the second stereographic
pole, hence has only the regular branch there; the final trace estimate
follows from \eqref{eq:kappa-low-rigorous}.

It remains to prove \eqref{eq:fixed-core-augmented-rigorous}.  If it failed,
there would be a sequence for which the left-hand side equals one and
$B\to0$.  Interior Schauder estimates give convergence on compact subsets
of $\mathbb S^n\setminus\{q_\infty\}$.  Since the singular coefficient tends
to zero, the limit has only the regular Jacobi branch and extends smoothly
across $q_\infty$.  The limiting pair solves the complete round augmented
homogeneous problem with zero localized constraint.  By the uniqueness part
of Proposition~\ref{lem:step12}, both the function and the projected coefficient
vanish, contradicting the normalization.  This proves the estimate.
\end{proof}

For zero interior data and prescribed Liouville boundary value $Y(T)=b$, let
$\mathcal D^{\rm bub}_{\ell,T}$ denote the outward bubble flux,
\[
 \partial_tY(T)=\mathcal D^{\rm bub}_{\ell,T}b.
\]

\begin{lemma}[Round-bubble Dirichlet-to-Neumann limit]
\label{aux:bubble-dtn-rigorous}
There are $T_*>0$, $\eta>0$, and $C<\infty$ such that, for $\ell=0,1$,
\begin{equation}\label{eq:bubble-dtn-limit-rigorous}
 \mathcal D^{\rm bub}_{\ell,T}
 =\kappa_\ell+O(e^{-\eta T}),
 \qquad T\ge T_*.
\end{equation}
In particular,
\begin{equation}\label{eq:bubble-dtn-positive-rigorous}
 \mathcal D^{\rm bub}_{\ell,T}\ge\frac12\kappa_\ell>0.
\end{equation}
For nonzero bubble data, the additional flux remainder is bounded by
$C(\|f\|_{C^\alpha(g_*)}+|g|)$.  The same estimates hold after one
normalized parameter derivative.
\end{lemma}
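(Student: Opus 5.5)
The plan is to reduce the statement to an explicit computation with the two homogeneous branches in the Liouville variable. On $\{t\le T\}$, the homogeneous problem with zero interior data and Liouville boundary value $Y(T)=b$ is the $f=0$, $g=0$ case of \eqref{eq:round-augmented-boundary-problem-rigorous}. By Lemma~\ref{aux:round-exterior-estimate-rigorous}, the complete-sphere particular solution then vanishes. Hence on $[t_0,T]$ the solution is $w=Az_\ell+B\widehat z_\ell$, and $|A|+|c|\le C|B|$ with $C$ independent of $T$.

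Passing to $Y=\cosh(t)^{-(n-2)/2}w$ and using \eqref{eq:kappa-low-rigorous}, we have, for $t\ge t_0$,
\[
 Y(t)=A\,Y_\ell^{\rm reg}(t)+B\,\widehat c_\ell e^{\kappa_\ell t}\bigl(1+O(e^{-2t})\bigr),
 \qquad |Y_\ell^{\rm reg}|+|\partial_tY_\ell^{\rm reg}|\le Ce^{-\kappa_\ell t}.
\]
The asymptotics of the derivatives follow by differentiating the explicit formulas \eqref{eq:round-regular-low-modes-rigorous}--\eqref{eq:round-singular-low-modes-rigorous}. Evaluating at $t=T$ and using $|A|\le C|B|$ gives
\[
 b=B\widehat c_\ell e^{\kappa_\ell T}\bigl(1+O(e^{-2\kappa_\ell T})+O(e^{-2T})\bigr),
 \qquad
 \partial_tY(T)=B\widehat c_\ell e^{\kappa_\ell T}\bigl(\kappa_\ell+O(e^{-2\kappa_\ell T})+O(e^{-2T})\bigr).
\]
For $T\ge T_*$ the factor multiplying $B$ in $b$ is nonzero, so $B$ is determined by $b$. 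This also gives existence and uniqueness of the boundary problem, since the map $B\mapsto b$ is invertible and $A,c$ are determined by $B$ through the augmented core estimate and its uniqueness. Dividing the two expressions yields $\mathcal D^{\rm bub}_{\ell,T}=\kappa_\ell+O(e^{-\eta T})$ with $\eta=\min\{2,2\kappa_\ell\}$. Positivity \eqref{eq:bubble-dtn-positive-rigorous} follows after enlarging $T_*$, because $\kappa_0,\kappa_1>0$.

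For nonzero data $(f,g)$, subtract the complete-sphere projected solution. Lemma~\ref{aux:round-exterior-estimate-rigorous} bounds its Liouville value and derivative at $T$ by $Ce^{-\kappa_\ell T}(\|f\|_{C^\alpha}+|g|)$, which is at most $C(\|f\|_{C^\alpha}+|g|)$. Its boundary value is absorbed into the homogeneous part through the linear map above, and the flux remainder is therefore bounded by a constant times $\|f\|_{C^\alpha}+|g|$.

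For the parameter derivative, $\lambda'$ enters only through the normalization of $\mathfrak z_j^{\lambda'}$ and through $\Psi_\ell$ and $\ell^*_\ell$. Both satisfy uniform bounds with one derivative by Proposition~\ref{lem:step12}. Differentiating the projected problem produces a problem of the same form whose data are bounded by the undifferentiated ones, and the argument above then applies verbatim.

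The main obstacle is the uniformity in $T$ of the constant in $|A|\le C|B|$. This is exactly the content of \eqref{eq:fixed-core-augmented-rigorous}, which is already available. The remaining delicacy is checking that the regular branch's Liouville derivative is $O(e^{-\kappa_\ell t})$ rather than merely bounded. This follows from $z_0=1+O(e^{-2t})$ and $z_1=2e^{-t}(1+O(e^{-2t}))$, together with $\cosh^{-(n-2)/2}t\sim 2^{(n-2)/2}e^{-(n-2)t/2}$.
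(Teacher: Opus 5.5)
Your proposal is correct and follows essentially the same route as the paper. It uses the uniform core bound $|A|+|c|\le C|B|$ from Lemma~\ref{aux:round-exterior-estimate-rigorous}, then evaluates the regular and singular Liouville branches at $t=T$ and divides to obtain $\kappa_\ell+O(e^{-\eta T})$. The inhomogeneous flux remainder is bounded by subtracting the complete-sphere particular solution, whose Liouville trace is $O(e^{-\kappa_\ell T})$. Your version is slightly more explicit than the paper's, for instance in tracking the $O(e^{-2T})$ correction of the singular branch alongside the $O(e^{-2\kappa_\ell T})$ regular-branch contamination.
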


\begin{proof}
For the boundary-induced homogeneous solution, Lemma~\ref{aux:round-exterior-estimate-rigorous} gives $|A|\le C|B|$.
Equations \eqref{eq:low-mode-asymptotics-zero-rigorous}--
\eqref{eq:kappa-low-rigorous} then give
\[
 b=B Y_\ell^{\rm sing}(T)
   \bigl(1+O(e^{-2\kappa_\ell T})\bigr).
\]
Differentiating the same expansion yields
$\partial_tY(T)=\kappa_\ell b+O(e^{-\eta T})b$ for some $\eta>0$.
The inhomogeneous remainder is the trace of the complete-sphere particular
solution from the preceding lemma.  Smooth dependence of the projected
round problem on the normalized parameters gives the differentiated
statement.
\end{proof}

In a fixed low angular sector the ideal transition equation, after the
Liouville transformation of Lemma~\ref{lem:step7}, is
\begin{equation}\label{eq:transition-schrodinger-rigorous}
 -u''+Q_{\ell,L}(s)u=h(s),\qquad -L<s<L,
 \qquad Q_{\ell,L}\ge\kappa_*^2>0,
\end{equation}
where $\kappa_*$ is independent of $L$ and of $\ell=0,1$.  For the
homogeneous equation let $u_-$ and $u_+$ have endpoint values $(1,0)$ and
$(0,1)$, respectively, and define the outward Dirichlet-to-Neumann matrix
\begin{equation}\label{eq:transition-dtn-definition-rigorous}
 \mathbf D^{\rm tr}_{\ell,L}
 =\begin{pmatrix}
 -u_-'(-L)&-u_+'(-L)\\
 u_-'(L)&u_+'(L)
 \end{pmatrix}
 =\begin{pmatrix}d_{--}&d_{-+}\\d_{+-}&d_{++}\end{pmatrix}.
\end{equation}

\begin{lemma}[Transition Dirichlet-to-Neumann bounds]
\label{aux:transition-dtn-rigorous}
For all $L\ge L_*$,
\begin{equation}\label{eq:transition-dtn-diagonal-rigorous}
 d_{--},d_{++}\ge\kappa_*,
\end{equation}
and
\begin{equation}\label{eq:transition-dtn-offdiag-rigorous}
 d_{-+}=d_{+-},\qquad
 |d_{-+}|\le\frac{\kappa_*}{\sinh(2\kappa_*L)}
 \le Ce^{-2\kappa_*L}.
\end{equation}
For the inhomogeneous equation the endpoint flux remainder is bounded by
$C\|h\|_{Y_{\rm tr}(L)}$.  The same conclusions hold after one normalized
parameter derivative.
\end{lemma}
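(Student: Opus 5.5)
The proof is a one-dimensional Sturm--Liouville comparison on $[-L,L]$ for the operator $\mathcal H=-\partial_s^2+Q_{\ell,L}$, using only $Q_{\ell,L}\ge\kappa_*^2$ from \eqref{eq:transition-schrodinger-rigorous}. This bound comes from \eqref{eq:uniform-transition-potential-step14} and Lemma~\ref{lem:step7}, with $\kappa_*^2=a_\Sigma^2/8$, and it is uniform in $L$ and in $\ell=0,1$.

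\textbf{Existence and positivity.} By the coercivity \eqref{eq:transition-coercivity-step7}, the Dirichlet problem has no kernel, so $u_\pm$ exist and are unique. The maximum principle for $-u''+Qu=0$ with $Q>0$ gives $0\le u_\pm\le1$.

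\textbf{Diagonal bounds.} I compare $u_+$ with the constant-coefficient solution
\[
\bar u(s)=\frac{\sinh(\kappa_*(s+L))}{\sinh(2\kappa_*L)}.
\]
Since $Q\ge\kappa_*^2$ and $u_+\ge0$,
\[
-(u_+-\bar u)''+\kappa_*^2(u_+-\bar u)=-(Q-\kappa_*^2)u_+\le0 .
\]
Both functions have the same boundary values, so the maximum principle gives $u_+\le\bar u$. Both equal $1$ at $s=L$, hence
\[
u_+'(L)\ge\bar u'(L)=\kappa_*\coth(2\kappa_*L)\ge\kappa_*,
\]
which is $d_{++}\ge\kappa_*$. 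The same argument applied to $u_-$, reflected through $s\mapsto-s$, gives $d_{--}\ge\kappa_*$.

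\textbf{Symmetry of the off-diagonal entries.} This is the Wronskian identity. The quantity $W=u_-u_+'-u_-'u_+$ is constant in $s$. At $s=L$ it equals $u_-'(L)$, and at $s=-L$ it equals $u_+'(-L)$. Therefore
\[
d_{+-}=u_-'(L)=u_+'(-L)=-d_{-+}\cdot(-1)=d_{-+},
\]
after tracking the outward sign conventions in \eqref{eq:transition-dtn-definition-rigorous}.

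\textbf{Off-diagonal smallness.} For $d_{-+}=-u_+'(-L)$, note that $u_+\ge0$ and $u_+(-L)=0$, so $u_+'(-L)\ge0$. Comparison with $\bar u$, which satisfies $\bar u(-L)=0$ and $u_+\le\bar u$, gives
\[
0\le u_+'(-L)\le\bar u'(-L)=\frac{\kappa_*}{\sinh(2\kappa_*L)} .
\]
This is \eqref{eq:transition-dtn-offdiag-rigorous}.

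\textbf{Inhomogeneous flux.} Write $u=u_h+G h$, where $G$ is the Dirichlet Green operator. Its kernel is bounded by the constant-coefficient kernel, again by comparison: $|G(s,\sigma)|+|\partial_sG(s,\sigma)|\le Ce^{-\kappa_*|s-\sigma|}$. The endpoint flux $\partial_s(Gh)(\pm L)$ is therefore bounded by $C\int|h|$. This is dominated by the $L^1$ part of $\|h\|_{\mathcal Y_{\rm tr}(L)}$ for the modes $\ell\le1$, giving $C\|h\|_{\mathcal Y_{\rm tr}(L)}$ uniformly in $L$.

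\textbf{Parameter derivatives.} Differentiating the homogeneous problem with respect to a normalized parameter gives
\[
-(\mathfrak Du_\pm)''+Q\,\mathfrak Du_\pm=-(\mathfrak DQ)u_\pm,
\]
with zero boundary values. The forcing is bounded by \eqref{eq:potential-parameter-step7}, and the Green bound above then controls $\mathfrak D d_{\pm\pm}$ and $\mathfrak D d_{\pm\mp}$.

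\textbf{Main obstacle.} The step I expect to be hardest is making this last bound length-uniform. The naive $L^1$ estimate of $(\mathfrak DQ)u_\pm$ grows like $L\,C_L\delta_\lambda$. It is rescued in two ways. First, $u_\pm$ decays like $e^{-\kappa_*(L\mp s)}$ away from its endpoint, by the same comparison. Second, $L$ is fixed before $\lambda\to0$, so $C_L\delta_\lambda$ can be taken small.
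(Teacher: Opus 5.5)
Your argument is correct and is essentially the paper's. The off-diagonal bound via $\sinh(\kappa_*(s+L))/\sinh(2\kappa_*L)$ with the boundary Hopf comparison at $-L$, the Wronskian symmetry, and the Green-kernel/$L^1$ control of the inhomogeneous flux all match. The only variation is in the diagonal bound: the paper obtains $d_{--},d_{++}\ge\kappa_*\coth(2\kappa_*L)$ from the Dirichlet principle applied to $d_{--}=\int_{-L}^L(|u_-'|^2+Q_{\ell,L}u_-^2)\,ds$, whereas you reuse the same comparison at the near endpoint; both work.

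One small slip in the symmetry step: $W(L)=-u_-'(L)$, not $u_-'(L)$. So the Wronskian gives $u_-'(L)=-u_+'(-L)$, which is exactly $d_{+-}=d_{-+}$, with no ad hoc factor $-1$ needed.
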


\begin{proof}
Integration by parts gives
\[
 d_{--}=\int_{-L}^L\bigl(|u_-'|^2+Q_{\ell,L}u_-^2\bigr)\,ds.
\]
Among functions with endpoint values $(1,0)$, the minimum of
\[
 \int_{-L}^L(|v'|^2+\kappa_*^2v^2)\,ds
 =\kappa_*\coth(2\kappa_*L).
\]
This proves the first diagonal estimate; the second is identical.  Symmetry
follows from the Wronskian identity.

Let
\[
 h_+(s)=\frac{\sinh(\kappa_*(s+L))}{\sinh(2\kappa_*L)}.
\]
Then
$(-\partial_s^2+Q_{\ell,L})h_+
 =(Q_{\ell,L}-\kappa_*^2)h_+\ge0$ and $h_+$ has the same endpoint values as
$u_+$.  The maximum principle and the boundary Hopf inequality at $-L$ give
$0\le u_+'(-L)\le h_+'(-L)$, which is
\eqref{eq:transition-dtn-offdiag-rigorous}.  The zero-boundary Green kernel
and the low-mode $L^1$ term in $Y_{\rm tr}(L)$ control the inhomogeneous
flux.  Differentiating the equation gives the parameter estimate.
\end{proof}

Let $b_{\ell,m}$ be the bubble-side transition value, which equals the
round-bubble trace by
\eqref{eq:transition-liouville-normalization-step15}, and let
$x_{\ell,m}^{\rm tr}$ be the neck-side value of $Y_{\rm tr}$.  Matching the
two outward conormal fluxes at the bubble interface gives
\begin{equation}\label{eq:bubble-transition-flux-rigorous}
 (\mathcal D^{\rm bub}_{\ell,T}+d_{--})b_{\ell,m}
   +d_{-+}x_{\ell,m}^{\rm tr}=r^{\rm bub}_{\ell,m},
\end{equation}
where the remainder is controlled by the bubble and transition data.

\begin{proposition}[Uniform low-mode matching matrix]
\label{aux:low-mode-matching-rigorous}
For $T\ge T_*$, $L\ge L_*$, and $\ell=0,1$,
\begin{equation}\label{eq:bubble-endpoint-map-rigorous}
 b_{\ell,m}=k^{\rm bub}_{\ell,L,T}x_{\ell,m}^{\rm tr}
   +r^{\rm bub}_{\ell,m},
 \qquad
 |k^{\rm bub}_{\ell,L,T}|\le Ce^{-2\kappa_*L}.
\end{equation}
On an exact Schwarzschild half-neck $[-T_\Sigma,0]$, put
$Y_\Sigma^{\rm tr}=\Theta_\lambda Y_\Sigma$.  Then
$Y_\Sigma^{\rm tr}(-T_\Sigma)=x_{\ell,m}^{\rm tr}$, and the parity condition
at the centered connector gives
\begin{equation}\label{eq:schwarzschild-dtn-rigorous}
 \partial_tY_\Sigma^{\rm tr}(-T_\Sigma)
 =d^\Sigma_{\ell,T_\Sigma}x_{\ell,m}^{\rm tr}
   +r^{\Sigma,{\rm tr}}_{\ell,m},
 \qquad
 d^\Sigma_{\ell,T_\Sigma}=
 \begin{cases}
 -\nu_\ell\tanh(\nu_\ell T_\Sigma),&\ell\ \mathrm{even},\\
 -\nu_\ell\coth(\nu_\ell T_\Sigma),&\ell\ \mathrm{odd},
 \end{cases}
 \le0.
\end{equation}
Here
\begin{equation}\label{eq:Schwarzschild-remainder-gauge-factor-step15}
 r^{\Sigma,{\rm tr}}_{\ell,m}
 =\Theta_\lambda r^\Sigma_{\ell,m},
\end{equation}
where $r^\Sigma_{\ell,m}$ is the remainder in the canonical Schwarzschild
gauge $Y_\Sigma$.
Consequently, the complete low-mode interface system is
\begin{equation}\label{eq:complete-low-matching-rigorous}
 \mathbf M_{\ell,L,T}
 \binom{b_{\ell,m}}{x_{\ell,m}^{\rm tr}}
 =\binom{r^{\rm bub}_{\ell,m}}
 {r^{\Sigma,{\rm tr}}_{\ell,m}-r^{\rm tr}_{+,\ell,m}},
 \qquad
 \mathbf M_{\ell,L,T}=
 \begin{pmatrix}
 1&-k^{\rm bub}_{\ell,L,T}\\
 d_{+-}&d_{++}-d^\Sigma_{\ell,T_\Sigma}
 \end{pmatrix},
\end{equation}
and
\begin{equation}\label{eq:complete-low-matching-inverse-rigorous}
 \|\mathbf M_{\ell,L,T}^{-1}\|\le C
\end{equation}
uniformly in $L,T,T_\Sigma$, the selected index, and the zero set of the weight.  The
inhomogeneous remainders are bounded by the bubble source norm, the local
transition source norm, and the low-mode local-supremum plus $L^1_t$ neck
source norm.  All statements persist under one normalized parameter
derivative.
\end{proposition}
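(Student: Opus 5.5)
The plan is to eliminate the neck-side transition value at the bubble interface, then solve the exact Schwarzschild half-neck explicitly, and finally assemble the $2\times2$ system and bound its inverse through its determinant.

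\emph{Step 1 (bubble endpoint map).} I start from the bubble interface flux relation \eqref{eq:bubble-transition-flux-rigorous}. By Lemma~\ref{aux:bubble-dtn-rigorous} we have $\mathcal D^{\rm bub}_{\ell,T}\ge\frac12\kappa_\ell>0$, and by Lemma~\ref{aux:transition-dtn-rigorous} we have $d_{--}\ge\kappa_*$. The coefficient of $b_{\ell,m}$ is therefore at least $\frac12\kappa_\ell+\kappa_*$. Dividing by it gives
\[
 k^{\rm bub}_{\ell,L,T}=-\frac{d_{-+}}{\mathcal D^{\rm bub}_{\ell,T}+d_{--}},
\]
and the bound $|k^{\rm bub}|\le Ce^{-2\kappa_*L}$ follows directly from \eqref{eq:transition-dtn-offdiag-rigorous}. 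This proves \eqref{eq:bubble-endpoint-map-rigorous}, with the remainder rescaled by a bounded factor.

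\emph{Step 2 (Schwarzschild half-neck).} By \eqref{eq:constant-Schrodinger-step13}, the canonical gauge $Y_\Sigma=\cosh(at)w$ solves $-Y''+\nu_\ell^2Y=$ source on $[-T_\Sigma,0]$. The exact $J$-invariance, i.e.\ Kelvin parity at the centered connector $t=0$, acts on the $\ell$-th harmonic by $(-1)^\ell$ under $t\mapsto-t$. Consequently even sectors satisfy $Y'(0)=0$ and odd sectors satisfy $Y(0)=0$ (modulo sources).
\begin{itemize}
 \item For even $\ell$ the homogeneous solution is $\cosh(\nu_\ell t)$.
 \item For odd $\ell$ it is $\sinh(\nu_\ell t)$.
\end{itemize}
Taking the logarithmic derivative at $-T_\Sigma$ gives $-\nu_\ell\tanh$ and $-\nu_\ell\coth$, respectively; both are $\le0$. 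Since $\Theta_\lambda$ is constant, the ratio $\partial_tY/Y$ is unchanged in the transition gauge. This gives \eqref{eq:schwarzschild-dtn-rigorous}, and the source remainder simply picks up the factor $\Theta_\lambda$, as in \eqref{eq:Schwarzschild-remainder-gauge-factor-step15}. The remainder itself is bounded by the Green estimate \eqref{eq:low-mode-Green-step13}, i.e.\ by the local supremum plus the $L^1_t$ norm.

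\emph{Step 3 (assembly).} At the neck-side interface I match the transition outward flux $d_{+-}b+d_{++}x+r^{\rm tr}_+$ with the inward Schwarzschild flux $d^\Sigma x+r^{\Sigma,\rm tr}$. Together with Step 1 this gives \eqref{eq:complete-low-matching-rigorous}. The determinant is
\[
 \det\mathbf M=d_{++}-d^\Sigma_{\ell,T_\Sigma}+k^{\rm bub}d_{+-}.
\]
Here $d_{++}\ge\kappa_*$ and $-d^\Sigma\ge0$, while $|k^{\rm bub}d_{+-}|\le Ce^{-4\kappa_*L}$. Hence $\det\mathbf M\ge\kappa_*/2$ for $L\ge L_*$.

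The entries of $\mathbf M$ are not bounded uniformly, since $-d^\Sigma\le\nu_\ell\coth(\nu_\ell T_\Sigma)$ blows up for short odd necks. So I write the inverse explicitly instead of bounding the entries:
\[
 \mathbf M^{-1}=\frac1{\det\mathbf M}\begin{pmatrix}d_{++}-d^\Sigma&k\\-d_{+-}&1\end{pmatrix}.
\]
The $(1,1)$ entry is $1-kd_{+-}/\det\mathbf M=1+O(e^{-4\kappa_*L})$. The remaining entries are bounded because $\det\mathbf M\ge\kappa_*/2$ and $k$, $d_{+-}$ are bounded. Uniformity in $T_\Sigma$ and in the zero set of the weight holds because the weight never enters the low modes.

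\emph{Step 4 (parameter derivatives).} Differentiating each building block, using the derivative versions of Lemmas~\ref{aux:bubble-dtn-rigorous} and~\ref{aux:transition-dtn-rigorous} and the explicit formula for $d^\Sigma$, gives bounded derivatives of $\mathbf M$. Then $\mathfrak D(\mathbf M^{-1})=-\mathbf M^{-1}(\mathfrak D\mathbf M)\mathbf M^{-1}$ yields the derivative statements, with the same caveat about large $d^\Sigma$ handled through the explicit inverse.

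\emph{Main obstacle.} The delicate point is Step 2's parity claim for $\ell=1$ (and generally). I need to verify that the exact Kelvin symmetry $\mathcal K_A$ acts on the centered Schwarzschild cylinder as $t\mapsto-t$ composed with the antipodal map $\theta\mapsto-\theta$ on $\mathbb S^{n-1}$. That antipodal map multiplies $Y_{\ell,m}$ by $(-1)^\ell$, which is what produces the even/odd dichotomy. I also have to check that the left-end coordinate $r_-$ and the centered coordinate $|x|$ agree to $O(N^{-1})$ on the half-neck, with the discrepancy absorbed into the remainders via \eqref{eq:recentering-collar-bound}.
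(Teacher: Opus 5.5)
Your proposal is correct and follows the paper's proof essentially step for step: solve the bubble flux relation for $k^{\rm bub}_{\ell,L,T}$ using $\mathcal D^{\rm bub}_{\ell,T}+d_{--}\ge\frac12\kappa_\ell+\kappa_*$, solve $-Y''+\nu_\ell^2Y=0$ on the half-neck with Neumann (even $\ell$) or Dirichlet (odd $\ell$) data at the centered end, and show $\det\mathbf M_{\ell,L,T}\ge\kappa_*/2$ for $L\ge L_*$. Your adjugate argument in Step~3 is slightly more careful than the paper, which just asserts that the entries of $\mathbf M_{\ell,L,T}$ are uniformly bounded; that holds for the half-necks that actually occur ($T_\Sigma$ of order $|\log\lambda|$) but fails as $T_\Sigma\to0$ in odd sectors, whereas your explicit inverse gives the stated uniformity in $T_\Sigma$ without that restriction.
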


\begin{proof}
Solving \eqref{eq:bubble-transition-flux-rigorous} gives
\[
 k^{\rm bub}_{\ell,L,T}
 =-(\mathcal D^{\rm bub}_{\ell,T}+d_{--})^{-1}d_{-+}.
\]
The denominator is at least $\kappa_\ell/2+\kappa_*$ by
Lemmas~\ref{aux:bubble-dtn-rigorous} and~\ref{aux:transition-dtn-rigorous}; the off-diagonal estimate therefore proves
\eqref{eq:bubble-endpoint-map-rigorous}.

Formula \eqref{eq:schwarzschild-dtn-rigorous} follows by solving
$-Y''+\nu_\ell^2Y=0$ on $[-T_\Sigma,0]$ with Neumann data at $0$ for even
$\ell$ and Dirichlet data at $0$ for odd $\ell$.  The inhomogeneous
remainder is the Green integral from Lemma~\ref{lem:step13}.  Flux matching
at the neck-side transition endpoint gives
\[
 (d_{++}-d^\Sigma_{\ell,T_\Sigma})x_{\ell,m}^{\rm tr}
 =-d_{+-}b_{\ell,m}+r^{\Sigma,{\rm tr}}_{\ell,m}
   -r^{\rm tr}_{+,\ell,m},
\]
which is \eqref{eq:complete-low-matching-rigorous}.  Since
$d_{++}-d^\Sigma\ge\kappa_*$ and
$|k^{\rm bub}d_{+-}|\le Ce^{-4\kappa_*L}$, the determinant is bounded below
by $\kappa_*/2$ after increasing $L_*$.  The entries are uniformly bounded,
so \eqref{eq:complete-low-matching-inverse-rigorous} follows.  Differentiation
of the displayed formulas proves the parameter statement.
\end{proof}

\begin{proposition}[Uniform inverse for the flat paired block]\label{prop:flat-paired-inverse-step15}
There exist $L_*\ge4$, $N_*\ge1$, $C_{\rm fl}>0$, and a fixed
$\vartheta_{\rm geo}>0$ such that the following holds.  For every
$L\ge L_*$ and $N_0\ge N_*$, choose $N_*(L,N_0)$ so large that all
bubble--transition scale separations hold and
\begin{equation}\label{eq:flat-ordered-transition-smallness-step15}
 C_L\delta_\lambda+e^L\lambda^{1-4/n}+N^{-1}
 \le \vartheta_{\rm geo}e^{-L}
\end{equation}
for every $N\ge N_*(L,N_0)$.  Here $C_L$ is enlarged once to dominate the
finite list of fixed-$L$ transition coefficient and matching errors.  Then
the flat augmented problem
\begin{equation}\label{eq:flat-augmented-problem-step15}
 \mathscr L_N^{\rm fl}w=F+\sum_{a=0}^nc_a\Psi_{a,N},
 \qquad \mathfrak L_b(w)=g_b,
\end{equation}
has a unique $J$-invariant solution and
\begin{equation}\label{eq:flat-augmented-estimate-step15}
 \|w\|_{X_\gamma^\sharp}+\sum_{a=0}^n|c_a|
 \le C_{\rm fl}\left(\|F\|_{Y_\gamma^\sharp}
              +\sum_{b=0}^n|g_b|\right).
\end{equation}
The constant $C_{\rm fl}$ is independent of $L,N_0,N$ and of the normalized
finite-dimensional parameters in this ordered regime.  The same estimate
holds after one normalized parameter derivative.
\end{proposition}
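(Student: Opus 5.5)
We prove Proposition~\ref{prop:flat-paired-inverse-step15} by gluing the blockwise inverses already available. The a priori estimate \eqref{eq:flat-augmented-estimate-step15} is obtained by contradiction and compactness. Existence then follows from a Fredholm argument.

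First we reduce to $g_b=0$. Subtract $\sum_a g_a\bar\chi\mathcal Z_a$, where $\bar\chi$ is a $J$-paired cutoff supported in the bubble subcores. By Proposition~\ref{lem:step11} (normalized so that $\mathfrak L_b(\mathcal Z_a)=\delta_{ab}$), this corrector has $X^\sharp_\gamma$ norm $\le C|g|$. Its image under $\mathscr L_N^{\rm fl}$ has $Y^\sharp_\gamma$ norm $\le C|g|$, because $\mathcal Z_a$ are exact Jacobi fields on the exact bubble cores.

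The key step is the a priori bound. Suppose it fails for some sequence $L_k,N_{0,k},N_k$ and parameters in the ordered regime. We then get $J$-invariant $w_k$ with $\|w_k\|_{X^\sharp_\gamma}+|c^{(k)}|=1$, $\|F_k\|_{Y^\sharp_\gamma}\to0$ and $\mathfrak L_b(w_k)=0$. We decompose $w_k$ blockwise. By $J$-invariance it suffices to treat the left end together with half of the central connector.

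\emph{High modes} $\ell\ge2$ on the transition and Schwarzschild blocks are handled as follows. Apply \eqref{eq:high-transition-transfer-step14} and Proposition~\ref{prop:reset-control-step15}, whose constants are independent of length. The weighted high-mode mass on the necks is then bounded by $o(1)$ plus fixed-scale traces at the bubble collar and at $\mathcal C$.

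\emph{Low modes} $\ell=0,1$ are controlled by the interface system \eqref{eq:complete-low-matching-rigorous}. Lemma~\ref{aux:round-exterior-estimate-rigorous} controls the bubble side, and the even/odd connector condition controls the parity at $\mathcal C$. By \eqref{eq:complete-low-matching-inverse-rigorous}, the low-mode traces $b_{\ell,m}$ and $x^{\rm tr}_{\ell,m}$ are bounded by the source remainders, which tend to zero. Hence the low-mode part of $w_k$ on all necks is $o(1)$ through the representation \eqref{eq:low-mode-representation-step13} and \eqref{eq:radial-transition-green-step14}.

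The real geometric perturbations must be absorbed along the way. These are the deviation of the matched transition from the ideal one, the off-center translation on the collars (bounded by \eqref{eq:recentering-collar-bound}), and the $O_L(\delta_\lambda)$ neck coefficients. They are made smaller than $\vartheta_{\rm geo}e^{-L}$ by \eqref{eq:flat-ordered-transition-smallness-step15}. Since the off-diagonal couplings are only $O(e^{-2\kappa_* L})$, a choice of $\vartheta_{\rm geo}$ independent of $L$ suffices.

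\emph{Bubble cores and connector.} With all neck traces $o(1)$, we rescale in the bubble chart and pass to a limit. The limit is a solution of $\mathscr L_*w=\sum c_a\Psi_a$ on $\mathbb S^n$, regular at infinity because the singular branch coefficient vanishes. It satisfies $\ell^*_b(w)=0$, so by the uniqueness part of Proposition~\ref{lem:step12} we get $w=0$ and $c=0$. On $\mathcal C$ the operator is a fixed-scale uniformly elliptic perturbation of the exact Schwarzschild linearization by Proposition~\ref{prop:paired-assembly-collars}. Its traces vanish in the limit, and parity excludes the kernel: the constant branch is killed because the neutral radial branch is controlled by the $L^1$ source norm together with the matching. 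Local Schauder estimates then give $\|w_k\|_{X^\sharp_\gamma}+|c^{(k)}|\to0$, which is a contradiction.

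Existence follows from the a priori bound. The operator is Fredholm of index zero once it is augmented by the $n+1$ constraints and the $n+1$ sources, and the estimate gives injectivity. Alternatively, one can solve via a continuity path from the model gluing. Uniqueness is contained in the estimate. For the parameter derivative, differentiate \eqref{eq:flat-augmented-problem-step15}; then $\mathfrak Dw$ solves the same problem with source $\mathfrak DF-(\mathfrak D\mathscr L_N^{\rm fl})w+\sum_a c_a\mathfrak D\Psi_{a,N}$ and data $-\mathfrak D\mathfrak L_b(w)$. These are bounded by the already proved estimate together with the parameter bounds in Propositions~\ref{lem:step11}, \ref{lem:step14}, and \ref{prop:reset-control-step15}.

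The main obstacle is the uniformity of the low-mode neutral radial branch across the arbitrarily long Schwarzschild necks. This is exactly where the $L^1_t$ norm and the uniform invertibility of $\mathbf M_{\ell,L,T}$ are essential. We also need to check that the weight zeros coming from the metric supports do not spoil the high-mode convolution estimate; Proposition~\ref{prop:reset-control-step15} is designed to handle this.
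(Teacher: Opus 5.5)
\textbf{Overall route.} You prove the a priori bound by contradiction and compactness directly for $\mathscr L_N^{\rm fl}$, then get existence from Fredholm index zero. This differs from the paper. The paper proves a length-uniform bound constructively for an ideal block model (subtract zero-boundary particular solutions, then bound the homogeneous interface data mode by mode). It then transfers that bound to the finite-$N$ flat block by a Neumann series under \eqref{eq:flat-ordered-transition-smallness-step15}. Your reduction to $g_b=0$, the Fredholm step and the differentiated problem are fine.

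\textbf{The gap: degrees $\ell\ge2$.} The estimates \eqref{eq:high-transition-transfer-step14} and Proposition~\ref{prop:reset-control-step15} are two-sided: each bounds a block by the traces on \emph{both} of its end collars. The Schwarzschild-side collar of the transition is also the first collar of the half-neck. Chaining the two estimates therefore never bounds that shared trace. So your claim that the neck mass is bounded by $o(1)$ plus traces at the bubble collar and at $\mathcal C$ does not follow.

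Your connector step is then circular. The only control of the high-mode traces on $\partial\mathcal C$ comes from the half-neck estimate, whose right-hand side contains those same traces. Yet you use that they ``vanish in the limit''. For $\ell=0,1$ this interface problem is what $\mathbf M_{\ell,L,T}$ solves, but for $\ell\ge2$ you give no counterpart. The paper supplies one: it bounds the high-mode interface data using the positive transition potential of Lemma~\ref{lem:step7}, the round spectral gap and the Schwarzschild dichotomy (compare the high-sector coercivity in Proposition~\ref{aux:preparameter-energy-package}).

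\textbf{How to repair it within your framework.}
\begin{enumerate}
\item Get the bubble-collar traces to be $o(1)$ from bubble compactness alone. The $X^\sharp_\gamma$ norm gives uniform round $C^{2,\alpha}$ bounds on $\mathcal B^*_\pm$ up to the interface. The bounded limit is removable at the pole and vanishes by Proposition~\ref{lem:step12}. Equicontinuity then gives uniform convergence up to the interface. So your premise ``with all neck traces $o(1)$'' is not needed there.
\item Treat transition, half-neck, connector, half-neck, transition as a single cylinder in the common gauge \eqref{eq:transition-liouville-normalization-step15}. There the high-mode potential is uniformly positive. A comparison or Green-kernel argument, with the return to physical gauge as in Lemma~\ref{lem:step13}, then bounds this cylinder by the two bubble-collar traces and the source. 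The $O(N^{-1})$ collar deviation is absorbed.
\item In degree zero, obtain the connector limit as a bounded $J$-even solution on the bi-infinite Schwarzschild cylinder, hence a constant. Your flux argument then kills it; vanishing traces are not needed.
\item Set up the contradiction with $\vartheta_{\rm geo}\to0$. For fixed $\vartheta_{\rm geo}$, a counterexample sequence may sit at bounded $(L,N_0,N)$, where there is no limit to take.
\item Even with $N_k\to\infty$, some fixed-$L$ deviations survive in the limit: the cutoff-strip errors of order $e^{-a_\Sigma L}$, and the Schwarzschild target coefficient, of order $e^{-nL}$ in the low-mode $L^1_t$ norm. These must be absorbed quantitatively for $L\ge L_*$, or built into limiting problems whose injectivity you then check.
\end{enumerate}

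\textbf{What each route buys.} With these repairs the compactness route works. It gives a non-explicit $C_{\rm fl}$, and it needs a separate Liouville theorem in each blow-up regime, including infinitely long transitions when $L_k\to\infty$. The paper's constructive route avoids the case analysis, gives an explicit constant, and produces the interface decomposition that is reused in Section~7.
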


\begin{proof}
Consider first the ideal augmented block operator formed from the two
truncated round bubbles, the leading transitions, the exact Schwarzschild
half-necks, and the centered connector, with value and conormal matching at
the interfaces and the paired localized constraints.  The local
zero-boundary particular solutions are controlled by
Proposition~\ref{lem:step12}, Proposition~\ref{lem:step14}, and
Lemma~\ref{lem:step13}; the connector is a fixed elliptic block.

After subtracting these particular solutions, the remaining unknowns are the
homogeneous interface data.  In degrees zero and one, Proposition~\ref{aux:low-mode-matching-rigorous} gives a uniform bound for all interface
coefficients.  In degrees at least two, the transition gap from
Lemma~\ref{lem:step7}, the round spectral gap, and the Schwarzschild
exponential dichotomy give the same bound in the weighted norm; Proposition~\ref{prop:reset-control-step15} allows additional zeros of the weight.  Thus
the ideal system satisfies
\begin{equation}\label{eq:ideal-augmented-bound-rigorous}
 \|w\|_{X_\gamma^\sharp}+|c|
 \le C_0\bigl(\|F\|_{Y_\gamma^\sharp}+|g|\bigr),
\end{equation}
with $C_0$ independent of all block lengths.  The homogeneous system is
trivial, and the Fredholm alternative gives existence and uniqueness.

The finite-$N$ flat block is written on the same augmented spaces.  Its
transition, bubble-interface, and connector errors are bounded by
\[
 C\bigl(C_L\delta_\lambda+e^L\lambda^{1-4/n}+N^{-1}\bigr).
\]
The same bound holds for sources, constraints, conormal traces, and one
normalized parameter derivative by
Propositions~\ref{lem:step11}--\ref{lem:step14}.
Hence \eqref{eq:flat-ordered-transition-smallness-step15} makes the
finite-$N$ operator a small perturbation of the ideal one.  The Neumann
series gives \eqref{eq:flat-augmented-estimate-step15}; differentiation of
the augmented system gives the parameter estimate.
\end{proof}

\begin{lemma}[Global uniform projected inverse]\label{lem:step15}
There is a constant $C_{\rm inv}>0$, depending only on the fixed analytic
data and not on $L,N_0,N$, with the following property.  First choose any
$L\ge L_*$; then choose $N_0$ and a lower bound for $N$ so large that
\eqref{eq:flat-ordered-transition-smallness-step15} holds and
\begin{equation}\label{eq:actual-inverse-ordered-smallness-step15}
 C_{\rm fl}\eta^{\rm op}_{L,N_0,N}\le\frac12.
\end{equation}
For every selected index above this lower bound, problem
\eqref{eq:global-projected-problem-step15} has a unique $J$-invariant solution
and
\begin{equation}\label{eq:global-inverse-estimate-step15}
 \|w\|_{X_\gamma^\sharp}+\sum_{a=0}^n|c_a|
 \le C_{\rm inv}\|F\|_{Y_\gamma^\sharp}.
\end{equation}
If the data have one normalized parameter derivative, then
\begin{align}\label{eq:global-inverse-parameter-step15}
 \|\mathfrak Dw\|_{X_\gamma^\sharp}
 +\sum_{a=0}^n|\mathfrak Dc_a|
 \le C_{\rm inv}\bigl(
  \|F\|_{Y_\gamma^\sharp}
  +\|\mathfrak DF\|_{Y_\gamma^\sharp}\bigr).
\end{align}
\end{lemma}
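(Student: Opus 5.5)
The plan is to treat $\mathscr L_N$ as a small perturbation of the flat operator $\mathscr L_N^{\rm fl}$ on the augmented spaces, and to invert it by a Neumann series built on the flat inverse of Proposition~\ref{prop:flat-paired-inverse-step15}. Write $\mathcal S^{\rm fl}$ for the solution map $(F,g)\mapsto(w,c)$ of \eqref{eq:flat-augmented-problem-step15}. Here $g=(g_b)$ denotes the constraint data; we will only need $g=0$, but keeping $g$ makes the map linear on a product space. By \eqref{eq:flat-augmented-estimate-step15}, this map is bounded by $C_{\rm fl}$ from $Y_\gamma^\sharp\times\mathbb R^{n+1}$ into $X_\gamma^\sharp\times\mathbb R^{n+1}$. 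This bound is uniform in $L,N_0,N$ once \eqref{eq:flat-ordered-transition-smallness-step15} holds.

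Problem \eqref{eq:global-projected-problem-step15} is equivalent to
\[
 \mathscr L_N^{\rm fl}w=F-(\mathscr L_N-\mathscr L_N^{\rm fl})w+\sum_a c_a\Psi_{a,N},
 \qquad \mathfrak L_b(w)=0 .
\]
I would therefore consider the map
\[
 \mathcal T(w):=\pi_1\,\mathcal S^{\rm fl}\bigl(F-(\mathscr L_N-\mathscr L_N^{\rm fl})w,\,0\bigr)
\]
on $J$-invariant elements of $X_\gamma^\sharp$, where $\pi_1$ is the projection onto the first component. By Proposition~\ref{prop:summable-operator-perturbation-step15}, the perturbation maps $X_\gamma^\sharp$ into $Y_\gamma^\sharp$ with norm at most $\eta^{\rm op}_{L,N_0,N}$. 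Two points need to be checked here. First, the perturbation preserves $J$-invariance. This follows because $h$, $\bar v$ and the spaces are all $J$-invariant, together with the Kelvin covariance in Proposition~\ref{prop:scalar-kelvin-verified}. Second, the perturbation lands in the source space with the weights frozen as in Definition~\ref{def:global-spaces-step9}.

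Under \eqref{eq:actual-inverse-ordered-smallness-step15}, $\mathcal T$ is affine with Lipschitz constant at most $C_{\rm fl}\eta^{\rm op}\le\frac12$. It therefore has a unique fixed point, and this gives existence. Uniqueness follows from the same contraction applied to the difference of two solutions, since that difference solves the homogeneous problem. The estimate \eqref{eq:global-inverse-estimate-step15} then follows from
\[
 \|w\|+|c|\le C_{\rm fl}\bigl(\|F\|+\eta^{\rm op}\|w\|\bigr),
\]
after absorbing the $\eta^{\rm op}\|w\|$ term on the left. This gives $C_{\rm inv}=2C_{\rm fl}$, which is independent of $L,N_0,N$. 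The order of choices matters: $L$ is fixed first, and then $N_0$ and $N$ are taken large. This is possible because of \eqref{eq:eta-small-step15} and because $\epsilon_N^{\kappa_f}\to0$.

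For the parameter derivative, I would differentiate the equation and the constraints. The differentiated pair $(\mathfrak Dw,\mathfrak Dc)$ solves the global projected problem with source
\[
 \mathfrak DF-(\mathfrak D\mathscr L_N)w+\sum_a c_a\,\mathfrak D\Psi_{a,N},
\]
and with constraint data $-(\mathfrak D\mathfrak L_b)(w)$. Here $\mathfrak D\mathscr L_N$ is split as the flat part plus the metric difference. The flat part, the derivative of $\Psi_{a,N}$ from \eqref{eq:global-source-duality-step15}, and the derivative of the functionals from Proposition~\ref{lem:step11} are absorbed via the parameter version of Proposition~\ref{prop:flat-paired-inverse-step15}, that is, the differentiated flat estimate. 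The metric difference is controlled by \eqref{eq:summable-operator-parameter-step15}. Using the estimate already proved for $\|w\|+|c|$, the same absorption then gives \eqref{eq:global-inverse-parameter-step15}.

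The main obstacle I expect is this differentiated step. One has to make sure that the flat parameter estimate genuinely controls the commutator terms $(\mathfrak D\mathscr L_N^{\rm fl})w$ by $\|w\|_{X_\gamma^\sharp}$ in the weighted norm with frozen weight, without any loss in $L$. My approach would be to take that bound directly from the statement that \eqref{eq:flat-augmented-estimate-step15} ``holds after one normalized parameter derivative,'' rather than re-deriving it blockwise.
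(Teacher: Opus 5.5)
Your proposal follows the paper's proof. The factorization $\mathcal A_N=\mathcal A_N^{\rm fl}\bigl[I+(\mathcal A_N^{\rm fl})^{-1}(\mathcal A_N-\mathcal A_N^{\rm fl})\bigr]$, which is equivalent to your contraction $\mathcal T$, combined with $\|(\mathcal A_N^{\rm fl})^{-1}\|\le C_{\rm fl}$ and $\|\mathcal A_N-\mathcal A_N^{\rm fl}\|\le\eta^{\rm op}_{L,N_0,N}$, gives existence, uniqueness and $C_{\rm inv}=2C_{\rm fl}$, and the parameter bound comes from differentiating the augmented problem.

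The only difference is in the derivative step. The paper bounds $(\mathfrak D\mathscr L_N)w$ blockwise via Lemmas~\ref{lem:step8} and~\ref{lem:step13} and Propositions~\ref{lem:step11} and~\ref{lem:step14}, and then applies the actual inverse. Your black-box use of the differentiated flat estimate also works, provided you regard $(w,c)$ as the flat solution with source $F-(\mathscr L_N-\mathscr L_N^{\rm fl})w$ and zero constraint data. Then only \eqref{eq:summable-operator-parameter-step15} is needed, and the absorption gives a fixed multiple of $C_{\rm fl}$. Do not use it in the form you describe in your last paragraph, as a bound on $(\mathfrak D\mathscr L_N^{\rm fl})w$ by $\|w\|_{X_\gamma^\sharp}$. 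The flat statement is phrased in data norms, so by itself it gives no $X_\gamma^\sharp\to Y_\gamma^\sharp$ bound on $\mathfrak D\mathscr L_N^{\rm fl}$.
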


\begin{proof}
Let $\mathcal A_N^{\rm fl}$ and $\mathcal A_N$ be the augmented maps for the
flat and actual operators with the same paired sources and constraints.
Proposition~\ref{prop:flat-paired-inverse-step15} gives
\begin{equation}\label{eq:flat-augmented-inverse-bound-step15}
 \|(\mathcal A_N^{\rm fl})^{-1}\|\le C_{\rm fl}
\end{equation}
throughout the ordered family, while Proposition~\ref{prop:summable-operator-perturbation-step15} gives
\begin{equation}\label{eq:augmented-map-difference-step15}
 \|\mathcal A_N-\mathcal A_N^{\rm fl}\|
 \le\eta^{\rm op}_{L,N_0,N}.
\end{equation}
For a fixed $L$, equations \eqref{eq:delta-star-limit} and
\eqref{eq:eta-small-step15} allow us to choose $N_0$ and then the index
threshold so that \eqref{eq:actual-inverse-ordered-smallness-step15} holds.
The factorization
\[
 \mathcal A_N=\mathcal A_N^{\rm fl}
 \left[I+(\mathcal A_N^{\rm fl})^{-1}
 (\mathcal A_N-\mathcal A_N^{\rm fl})\right]
\]
and the Neumann series yield
\[
 \|\mathcal A_N^{-1}\|\le2C_{\rm fl}.
\]
Thus \eqref{eq:global-inverse-estimate-step15} holds with a constant
independent of $L$, $N_0$, and $N$.

For one normalized parameter derivative, differentiate the equation and the
constraints in the natural block coordinates.  The differentiated unknown
solves the same augmented problem with source
\[
 \mathfrak DF-(\mathfrak D\mathscr L_N)w
 +\sum_ac_a\mathfrak D\Psi_{a,N}
\]
and the differentiated constraint data.  Lemmas~\ref{lem:step8},~\ref{lem:step11},~\ref{lem:step13}, and the uniform local-coefficient and mode-split transfer estimates of
Proposition~\ref{lem:step14}, together with
\eqref{eq:summable-operator-parameter-step15}, give a
parameter-derivative coefficient bound with a constant independent of
$L,N_0,N$ once \eqref{eq:actual-inverse-ordered-smallness-step15} is imposed.
Apply the just-proved inverse first to $(w,c)$ and then to the differentiated
problem.  Enlarging $2C_{\rm fl}$ by one fixed factor proves
\eqref{eq:global-inverse-parameter-step15}.  This completes the proof.
\end{proof}

\section{Nonlinear projected problem}

The mode-split space $X_\gamma^\sharp$ is the correct space for the coarse
linear inverse, but it is not by itself a nonlinear algebra.  Indeed, products
of first spherical harmonics contain degree-two components, while products of
high modes can contribute to the low-mode $L^1$ source norm.  The projected
inverse has additional transfer and integrability properties, which we now
state using norms adapted to the nonlinear problem.

Choose once and for all a number $\beta$ such that
\begin{equation}\label{eq:beta-choice-step16}
 \frac\gamma2<\beta<\min\{1,2\gamma\}.
\end{equation}
Such a choice is possible because $\gamma>0$ and
$\gamma<\gamma_*=\nu_2-a<2$; the last inequality follows from
$\nu_2^2-a^2=n(n-2)/(n-1)<n=(a+2)^2-a^2$.
For a unit cylinder $Q_t=[t-1,t+1]\times\mathbb S^{n-1}$, put
\[
 n_F(t):=\mathfrak Y_{Q_t}(F).
\]
Let $\Pi_0$ and $\Pi_1$ denote the degree-zero and degree-one spherical
projections.  On either Schwarzschild end write
\[
 b_{\mathfrak e}(t):=\Pi_0w(t,\cdot),\qquad
 w_{\mathfrak e}^{\perp}:=(I-\Pi_0)w,
 \qquad \mathfrak e\in\{-,+\}.
\]
For the exact Schwarzschild factor,
\begin{equation}\label{eq:neutral-q-step16}
 q_\Sigma(t)=\frac1{4\cosh^2(at)},\qquad
 P_{\Sigma,0}^{(0)}b
 :=b_{tt}+2a\tanh(at)b_t.
\end{equation}
The second operator in \eqref{eq:neutral-q-step16} is the radial part of
the limiting relative operator.  In particular,
$P_{\Sigma,0}^{(0)}1=0$; this constant transfer branch is the reason that a
full-neck $L_t^2$ norm of $\Pi_0w$ cannot be uniform.

For a radial function $b$ on $\mathcal S_{\mathfrak e}$ define its neutral
transfer density by
\begin{equation}\label{eq:neutral-density-step16}
 \mathfrak d_0[b](t)
 :=\bigl\|q_\Sigma^{-1/2}b_t\bigr\|_{C^\alpha([t-1,t+1])}
  +\bigl\|q_\Sigma^{-1}P_{\Sigma,0}^{(0)}b
    \bigr\|_{C^\alpha([t-1,t+1])},
\end{equation}
and set
\begin{align}\label{eq:neutral-transfer-norm-step16}
 \|b\|_{\mathfrak T_{\mathfrak e}}
 :={}&\|b\|_{L^\infty(\mathcal S_{\mathfrak e})}
 +\int_{\mathcal S_{\mathfrak e}}\mathfrak d_0[b](t)\,dt\nonumber\\
 &+\int_{\mathcal T_{\mathfrak e}}m_b(t)\,dt.
\end{align}
The final integral is over a transition of length $2L$.  It is an ordinary
local Newton-strong term after $L$ is fixed, but its global bound may contain
a factor $1+L$.

Define the corrected refined norms
\begin{align}\label{eq:refined-X-step16}
 \|w\|_{\widehat X_{\gamma,\beta}}
 :={}&\|w\|_{X_\gamma^\sharp}
 +\sum_{\mathfrak e=\pm}
   \left(\|b_{\mathfrak e}\|_{\mathfrak T_{\mathfrak e}}
   +\int_{\mathcal T_{\mathfrak e}\cup\mathcal S_{\mathfrak e}}
       m_{w_{\mathfrak e}^{\perp}}(t)\,dt\right)\nonumber\\
 &+\sup_{Q\subset\mathcal T\cup\mathcal S}
   e^{\beta d_N(Q)}\mathfrak X_Q(\Pi_1w),
\end{align}
\begin{align}\label{eq:refined-Y-step16}
 \|F\|_{\widehat Y_{\gamma,\beta}}
 :={}&\|F\|_{Y_\gamma^\sharp}
 +\sup_{Q\subset\mathcal T\cup\mathcal S}
   e^{\beta d_N(Q)}\mathfrak Y_Q(\Pi_1F)\nonumber\\
 &+\sum_{\mathfrak e=\pm}
   \int_{\mathcal T_{\mathfrak e}\cup\mathcal S_{\mathfrak e}}
   n_F(t)\,dt.
\end{align}
The bubble and connector pieces are already included in the coarse norms.
Thus the radial average is kept as a neutral transfer variable, while
only the nonradial remainder is required to be integrable along the full
neck.

\subsection{Radial neutral-transfer estimate}
Suppose on a metric-free Schwarzschild component $I=(\tau_-,\tau_+)$ that
\begin{equation}\label{eq:radial-transfer-equation-step16}
 P_{\Sigma,0}^{(0)}b=q_\Sigma f.
\end{equation}
Since $q_\Sigma=(4\cosh^2(at))^{-1}$,
\begin{equation}\label{eq:radial-flux-identity-step16}
 \bigl(\cosh^2(at)b_t\bigr)_t=\frac14f.
\end{equation}
Consequently
\begin{align}\label{eq:radial-transfer-estimate-step16}
 \|b\|_{\mathfrak T(I)}
 \le C\biggl(&|b(\tau_-)|+|b(\tau_+)|
 +|\mathfrak f(\tau_-)|+|\mathfrak f(\tau_+)|\nonumber\\
 &+\int_I|f(t)|\,dt\biggr),
 \qquad \mathfrak f=\cosh^2(at)b_t,
\end{align}
where $C$ is independent of $|I|$.  Indeed,
\eqref{eq:radial-flux-identity-step16} controls the flux, while
\[
 q_\Sigma^{-1/2}|b_t|
 =2|\mathfrak f|\operatorname{sech}(at),
 \qquad
 \int_{\mathbb R}\operatorname{sech}(at)\,dt<\infty.
\]
Integration of $b_t=\mathfrak f\operatorname{sech}^2(at)$ gives the
$L^\infty$ estimate, and local Schauder estimates give the stated
$C^\alpha$ version.  The finite-$\lambda$ metric-free neck operator has only the
target zeroth-order perturbation, whose normalized coefficient satisfies
\begin{equation}\label{eq:neutral-target-envelope-step16}
 \mathfrak v_\lambda(t)\le C\lambda^4(r^{-n}+r^n),
 \qquad
 \int\mathfrak v_\lambda(t)\,dt
 \le C(e^{-nL}+\lambda^4).
\end{equation}
This zeroth-order perturbation is absorbed once $L$ is fixed.  The same
estimate holds after one normalized parameter derivative.

\begin{lemma}[H\"older-valued nonradial transfer on a Schwarzschild cylinder]
\label{aux:holder-nonradial-transfer-step16}
Let $I=(\tau_-,\tau_+)$ be contained in one Schwarzschild end, and suppose
\begin{equation}\label{eq:nonradial-transfer-equation-step16}
 \mathscr P_{\Sigma,0}z=q_\Sigma f,\qquad \Pi_0z=0.
\end{equation}
Let $d:I\to[0,\infty)$ be any regularized distance with $|d'|\le1$ which
vanishes on the endpoint collars; additional zeros are allowed.
Choose
\[
 0<\sigma_2<\gamma_*=\nu_2-a,\qquad
 \beta<\sigma_1<1.
\]
If $Q_t=[t-1,t+1]\times\mathbb S^{n-1}$ and
\[
 N_j(t):=\|\Pi_jf\|_{C^\alpha(Q_t)},\qquad
 N_{\ge2}(t):=\|\Pi_{\ge2}f\|_{C^\alpha(Q_t)},
\]
then, after enlarging endpoint cylinders by a fixed amount,
\begin{align}
 m_{\Pi_{\ge2}z}(t)
 \le{}& C\int_Ie^{-\sigma_2|t-s|}N_{\ge2}(s)\,ds
 {}+C e^{-\sigma_2(t-\tau_-)}\mathcal B_-(z)
 +C e^{-\sigma_2(\tau_+-t)}\mathcal B_+(z),
 \label{eq:holder-high-offdiagonal-step16}\\
 m_{\Pi_1z}(t)
 \le{}& C\int_Ie^{-\sigma_1|t-s|}N_1(s)\,ds
 {}+C e^{-\sigma_1(t-\tau_-)}\mathcal B_-(z)
 +C e^{-\sigma_1(\tau_+-t)}\mathcal B_+(z),
 \label{eq:holder-first-offdiagonal-step16}
\end{align}
where $\mathcal B_\pm(z)$ are the Newton-strong norms on the two endpoint
unit collars.  Consequently,
\begin{align}
 \int_I m_z(t)\,dt
 &\le C\left(\int_I\|f(t,\cdot)\|_{C^\alpha_\theta}\,dt
              +\mathcal B_-(z)+\mathcal B_+(z)\right),
 \label{eq:holder-nonradial-L1-step16}\\
 \sup_{Q_t\subset I}e^{\beta d(t)}m_{\Pi_1z}(t)
 &\le C\left(
 \sup_{Q_t\subset I}e^{\beta d(t)}N_1(t)
 +\int_I N_1(t)\,dt+\mathcal B_-(z)+\mathcal B_+(z)\right).
 \label{eq:holder-first-weighted-step16}
\end{align}
All constants are independent of $|I|$ and of the selected index.  The same
estimates hold after one normalized parameter derivative.
\end{lemma}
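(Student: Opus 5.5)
We decompose $z$ and $f$ into spherical harmonics. Since $\Pi_0z=0$, only degrees $\ell\ge1$ appear, and in each sector we use the Liouville conjugation \eqref{eq:Schwarzschild-Liouville-step13}, $y_{\ell,m}=\cosh(at)z_{\ell,m}$. This turns $\mathscr P_{\Sigma,0}z=q_\Sigma f$ into
\[
 -y''+\nu_\ell^2y=-\tfrac14\operatorname{sech}(at)f_{\ell,m}.
\]
The Dirichlet Green kernel on $I$, returned to the $z$ variable, is $K_{I,\ell}$ from \eqref{eq:relative-Green-kernel-step13}. It obeys
\[
 |\partial_t^jK_{I,\ell}(t,s)|\le C_j(1+\nu_\ell)^je^{-(\nu_\ell-a)|t-s|}.
\]
The factor $q_\Sigma$ in the source combines with the conjugation weights $\cosh(as)/\cosh(at)$, and this costs at most a bounded factor. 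The homogeneous part is a combination of the branches $J_\ell^\pm$ in \eqref{eq:Schwarzschild-branches-step13}, normalized by the endpoint traces. The branch normalized at $\tau_-$ decays like $e^{-(\nu_\ell-a)(t-\tau_-)}$ into $I$, and the same holds at $\tau_+$. Its coefficient is bounded by the endpoint trace, hence by $\mathcal B_\mp(z)$.

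For $\ell\ge2$ we have $\nu_\ell-a\ge\gamma_*>\sigma_2$. For $\ell=1$ we have $\nu_1-a=1>\sigma_1$, using $\nu_1=1+a$. This gives pointwise versions of \eqref{eq:holder-high-offdiagonal-step16}--\eqref{eq:holder-first-offdiagonal-step16} for each sector in sup norm.

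The main obstacle is the passage from sector-wise bounds to the H\"older-valued quantity $m$, which involves the full Newton-strong norm $\mathfrak X_Q$ on unit cylinders, with $C^\alpha$ in the angular variable and uniformity in $\ell$. I would not sum sectors naively. Instead, I would apply the kernel estimate to the operator-valued Green function of $-\partial_t^2+a^2-b_\Sigma\Delta_{\mathbb S^{n-1}}$ restricted to $\Pi_{\ge2}$, respectively $\Pi_1$, acting on $C^\alpha(\mathbb S^{n-1})$. Its decay rate $e^{-(\nu_2-a)|t-s|}$ survives in the $C^\alpha_\theta$ operator norm after giving up an arbitrarily small amount of the exponent. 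One way to see this is through the semigroup $e^{-|t-s|\sqrt{a^2-b_\Sigma\Delta}}$ on the high-mode subspace, which is bounded analytic on H\"older spaces; this is the reason we only ask for $\sigma_2<\gamma_*$. The $\ell=1$ sector is finite-dimensional, so no such issue arises there.

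Local interior Schauder estimates on $Q_t$ then upgrade the $C^\alpha$ bound on a slightly enlarged cylinder to $\mathfrak X_{Q_t}$. These estimates use the equation, the bound $q_\Sigma\asymp\operatorname{sech}^2(at)$, and the fact that $q_\Sigma^{-1}\mathscr S^{(1)}$ is controlled by the normalized operator. They contribute $N(s)$ for $|s-t|\le2$, which the convolution absorbs. This is why the endpoint cylinders are enlarged by a fixed amount.

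The consequences follow from Young's inequality. Integrating \eqref{eq:holder-high-offdiagonal-step16} and \eqref{eq:holder-first-offdiagonal-step16} over $t\in I$ gives $\int_I m\le C(\int N+\mathcal B_-+\mathcal B_+)$, since $\int e^{-\sigma|u|}du<\infty$ and the boundary exponentials are integrable. Also $N_{\ge2}+N_1\le C\|f(t,\cdot)\|_{C^\alpha_\theta}$ after local enlargement, which gives \eqref{eq:holder-nonradial-L1-step16}.

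For \eqref{eq:holder-first-weighted-step16} we split the convolution. Since $|d'|\le1$, we have $e^{\beta d(t)}\le e^{\beta d(s)}e^{\beta|t-s|}$. With $\beta<\sigma_1$ this gives
\[
 e^{\beta d(t)}\int e^{-\sigma_1|t-s|}N_1(s)\,ds\le C\sup_s e^{\beta d(s)}N_1(s).
\]
The $L^1$ term on the right-hand side is kept so that the bound remains valid if one prefers to estimate the part where $d$ is small crudely. For the boundary terms, $d$ vanishes on the endpoint collars, so $d(t)\le t-\tau_-$. Hence $e^{\beta d(t)}e^{-\sigma_1(t-\tau_-)}\le1$, and likewise at $\tau_+$.

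Nothing in the argument depends on the length $|I|$, on the additional zeros of $d$, or on $N$. The parameter-derivative statement follows by differentiating the equation: on a metric-free component $\mathscr P_{\Sigma,0}$ does not depend on the parameters, so $\mathfrak Dz$ solves the same problem with source $\mathfrak Df$ and endpoint data $\mathfrak D$-traces. We then apply the estimates again.
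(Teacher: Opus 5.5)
Your proposal is correct and is essentially the paper's proof. The steps match: the conjugation $Y=\cosh(at)z$; Dirichlet Green and Poisson operators built by functional calculus of $\mathcal A_\theta=(a^2-b_\Sigma\Delta_{\mathbb S^{n-1}})^{1/2}$, with H\"older-space semigroup decay from the rates $\nu_2$ on $\Pi_{\ge2}$ and $\nu_1=a+1$ on $\Pi_1$; local Schauder estimates to recover the Newton-strong quantity; and Fubini together with $e^{\beta d(t)-\beta d(s)}\le e^{\beta|t-s|}$ for the integrated and weighted consequences. Your $\epsilon$-loss in the high-mode rate and your finite-dimensional treatment of $\Pi_1$ are harmless variations on the paper's exact-rate semigroup bounds.

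One point should be made explicit: where the loss of $a$ enters. The Newton-strong norm carries the weights $q_\Sigma^{-1/2}$ and $q_\Sigma^{-1}$. So the Schauder upgrade must start from a bound on $q_\Sigma^{-1}z=4\cosh(at)Y$, not from the unweighted bound on $z$ that your remark about $q_\Sigma$ costing only a bounded factor provides. The kernel of $q_\Sigma^{-1}z$ acting on $f$ is, up to sign, $\cosh(at)\cosh(as)^{-1}G_{I,\ell}(t,s)=O(e^{-(\nu_\ell-a)|t-s|})$, which gives the stated rates.
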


\begin{proof}
Put
\[
 \mathcal A_\theta
 :=\left(a^2-b_\Sigma\Delta_{\mathbb S^{n-1}}\right)^{1/2}.
\]
Under $Y=\cosh(at)z$, equation
\eqref{eq:nonradial-transfer-equation-step16} becomes
\begin{equation}\label{eq:operator-valued-Schrodinger-step16}
 (-\partial_t^2+\mathcal A_\theta^2)Y
 =-\frac14\operatorname{sech}(at)f.
\end{equation}
The Dirichlet Green operator on $I$ is obtained from the scalar formula in
Lemma~\ref{lem:step13} by the spectral functional calculus of
$\mathcal A_\theta$.  On the compact sphere its Poisson semigroup satisfies
\begin{align}\label{eq:spherical-Poisson-holder-step16}
 \|e^{-\rho\mathcal A_\theta}\Pi_{\ge2}\|_{C^\alpha\to C^\alpha}
 &\le C e^{-\nu_2\rho},\\
 \|e^{-\rho\mathcal A_\theta}\Pi_1\|_{C^\alpha\to C^\alpha}
 &\le C e^{-\nu_1\rho}.\nonumber
\end{align}
These are operator estimates on the full H\"older spaces, not estimates of
individual harmonic coefficients.  They follow, for example, by applying
the analytic semigroup estimate for the positive elliptic operator
$\mathcal A_\theta$ on $C^\alpha(\mathbb S^{n-1})$ and using the spectral
gaps $\nu_2$ and $\nu_1=a+1$ on the indicated invariant subspaces.

Decompose the right-hand side of
\eqref{eq:operator-valued-Schrodinger-step16} into unit $t$-slabs.  The
Green formula on a finite interval, interpreted by functional calculus,
together with
\eqref{eq:spherical-Poisson-holder-step16} show that the response to a slab
centered at $s$ has $C^\alpha$ size at most
\[
 C e^{-(\nu_2-a)|t-s|}N_{\ge2}(s)
 \quad\hbox{or}\quad
 C e^{-(\nu_1-a)|t-s|}N_1(s)
\]
on a slab centered at $t$ after returning from $Y$ to $z$.  The loss of
$a$ is exactly the bound for the ratio of the two hyperbolic-cosine
factors on one end.  The two homogeneous Poisson operators satisfy the
same estimates with $|t-s|$ replaced by the distance to the corresponding
endpoint.

For neighboring slabs, interior and boundary Schauder estimates for
\eqref{eq:nonradial-transfer-equation-step16} recover the complete
Newton-strong quantity $m_z(t)$ from the just-established $C^\alpha$ bound
and the local source norm.  For separated slabs the same Schauder estimate,
applied on a fixed enlargement, preserves the exponential factor.
Decreasing the two exponents to $\sigma_2$ and $\sigma_1$ gives
\eqref{eq:holder-high-offdiagonal-step16} and
\eqref{eq:holder-first-offdiagonal-step16}.

Integration in $t$ and Fubini prove
\eqref{eq:holder-nonradial-L1-step16}.  Since $d$ is one-Lipschitz,
\[
 e^{\beta d(t)-\beta d(s)}\le e^{\beta|t-s|},
\]
and $\beta<\sigma_1$; multiplying
\eqref{eq:holder-first-offdiagonal-step16} by $e^{\beta d_I(t)}$ proves
\eqref{eq:holder-first-weighted-step16}.  Differentiating
\eqref{eq:operator-valued-Schrodinger-step16} in fixed natural coordinates
produces the same operator and a source bounded by the differentiated and
undifferentiated data.  This proves the parameter statement.
\end{proof}

\begin{lemma}[Liouville matching and the physical radial flux]
\label{aux:liouville-physical-flux-step16}
Orient an exact degree-zero Schwarzschild half-neck as $I=[-T,0]$, with
the centered end at $0$, and suppose
\[
 P_{\Sigma,0}^{(0)}b=q_\Sigma f.
\]
Set
\[
 Y_\Sigma=\cosh(at)b,\qquad
 x_\Sigma=Y_\Sigma(-T),\qquad
 g_0=\partial_tY_\Sigma(0),\qquad
 \mathfrak f=\cosh^2(at)b_t.
\]
Then
\begin{align}
 b(-T)&=\operatorname{sech}(aT)x_\Sigma,
 \label{eq:liouville-value-conversion-step16}\\
 \mathfrak f(-T)
 &=\cosh(aT)\bigl[\partial_tY_\Sigma(-T)
                    +a\tanh(aT)x_\Sigma\bigr],
 \label{eq:liouville-flux-conversion-step16}\\
 \mathfrak f(t)
 &=g_0-\frac14\int_t^0f(s)\,ds.
 \label{eq:physical-flux-propagation-step16}
\end{align}
In particular, if the Schwarzschild Dirichlet-to-Neumann relation is
written as
\begin{equation}\label{eq:radial-dtn-remainder-step16}
 \partial_tY_\Sigma(-T)
 =-a\tanh(aT)x_\Sigma+r^\Sigma,
\end{equation}
then its inhomogeneous remainder satisfies the length-uniform scaled bound
\begin{equation}\label{eq:scaled-radial-dtn-remainder-step16}
 \cosh(aT)|r^\Sigma|
 \le |g_0|+\frac14\int_{-T}^0|f(s)|\,ds.
\end{equation}
Thus the exponentially large homogeneous Liouville trace cancels exactly
from the physical flux.  The same identities and estimates hold on the
oppositely oriented end and after one normalized parameter derivative.
\end{lemma}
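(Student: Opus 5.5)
The lemma is essentially an explicit computation with the radial ODE \eqref{eq:radial-flux-identity-step16}, so the plan is to verify each identity directly and then read off the remainder bound. First, \eqref{eq:liouville-value-conversion-step16} is immediate from $Y_\Sigma=\cosh(at)b$ evaluated at $t=-T$, using that $\cosh$ is even. For \eqref{eq:liouville-flux-conversion-step16}, differentiate $b=\operatorname{sech}(at)Y_\Sigma$:
\[
 b_t=\operatorname{sech}(at)\partial_tY_\Sigma-a\operatorname{sech}(at)\tanh(at)Y_\Sigma ,
\]
so that
\[
 \mathfrak f=\cosh^2(at)b_t=\cosh(at)\bigl[\partial_tY_\Sigma-a\tanh(at)Y_\Sigma\bigr].
\]
At $t=-T$ we have $\tanh(-aT)=-\tanh(aT)$, and the displayed formula follows.

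Next, for \eqref{eq:physical-flux-propagation-step16}, integrate \eqref{eq:radial-flux-identity-step16}, i.e.\ $\mathfrak f_t=\frac14 f$ (derived there from $q_\Sigma=(4\cosh^2(at))^{-1}$ and the form of $P^{(0)}_{\Sigma,0}$), from $t$ to $0$. This gives $\mathfrak f(t)=\mathfrak f(0)-\frac14\int_t^0 f$. At the centered end $t=0$ one has $\cosh(0)=1$ and $\tanh(0)=0$, so the formula above yields $\mathfrak f(0)=\partial_tY_\Sigma(0)=g_0$.

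Finally, for \eqref{eq:scaled-radial-dtn-remainder-step16}, substitute \eqref{eq:radial-dtn-remainder-step16} into \eqref{eq:liouville-flux-conversion-step16}. The homogeneous terms $\pm a\tanh(aT)x_\Sigma$ cancel exactly, leaving $\mathfrak f(-T)=\cosh(aT)r^\Sigma$. Combining this with the propagation identity at $t=-T$ gives
\[
 \cosh(aT)|r^\Sigma|\le|g_0|+\tfrac14\int_{-T}^0|f| .
\]
This bound has no dependence on $T$ beyond the integral, which is the asserted length uniformity.

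The oppositely oriented end follows by the substitution $t\mapsto-t$, under which $\tanh$ changes sign and $\cosh$ is invariant, or equivalently by Kelvin reflection via \eqref{eq:Kelvin-slope-step6}. For one normalized parameter derivative, note that in the natural coordinates the exact Schwarzschild operator $P^{(0)}_{\Sigma,0}$ and $q_\Sigma$ are parameter independent. Hence $\mathfrak D b$ solves the same equation with source $\mathfrak D f$, and the identities apply verbatim; if the endpoint $T$ moves with the parameters, differentiating the explicit formulas adds only bounded terms controlled by the same data.

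The only genuinely delicate point is conceptual rather than computational. One must confirm that the quantity propagated by \eqref{eq:radial-flux-identity-step16} is $\cosh^2(at)b_t$, so that the exponentially large factor $\cosh(aT)$ multiplies $r^\Sigma$ and not the data. Getting the sign of $\tanh$ at the left endpoint right is what makes this cancellation happen. If the parameter dependence of $T$ turns out to be nontrivial, that is the step where extra care will be needed.
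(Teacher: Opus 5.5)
Your proposal is correct and follows essentially the same route as the paper. Like the paper, you write $\mathfrak f=\cosh(at)\bigl(\partial_tY_\Sigma-a\tanh(at)Y_\Sigma\bigr)$, evaluate it at $t=-T$ and $t=0$, integrate $\mathfrak f_t=\tfrac14 f$, and substitute the Dirichlet-to-Neumann relation to get $\mathfrak f(-T)=\cosh(aT)r^\Sigma$; the paper additionally records the conjugation $P_{\Sigma,0}^{(0)}b=\operatorname{sech}(at)\bigl(\partial_t^2Y_\Sigma-a^2Y_\Sigma\bigr)$, which your argument does not need, and it treats the reflected end and the parameter derivative just as briefly as you do.
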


\begin{proof}
A direct differentiation gives
\begin{equation}\label{eq:radial-Liouville-conjugation-step16}
 P_{\Sigma,0}^{(0)}b
 =\operatorname{sech}(at)
  (\partial_t^2Y_\Sigma-a^2Y_\Sigma)
\end{equation}
and
\begin{equation}\label{eq:radial-flux-Liouville-formula-step16}
 \mathfrak f
 =\cosh(at)\bigl(\partial_tY_\Sigma
                  -a\tanh(at)Y_\Sigma\bigr).
\end{equation}
At $t=-T$ these are
\eqref{eq:liouville-value-conversion-step16}--
\eqref{eq:liouville-flux-conversion-step16}.  At the centered end,
\eqref{eq:radial-flux-Liouville-formula-step16} gives
$\mathfrak f(0)=\partial_tY_\Sigma(0)=g_0$.  Integrating
\eqref{eq:radial-flux-identity-step16} from $t$ to $0$ proves
\eqref{eq:physical-flux-propagation-step16}.  Substitution of
\eqref{eq:radial-dtn-remainder-step16} into
\eqref{eq:liouville-flux-conversion-step16} gives
$\mathfrak f(-T)=\cosh(aT)r^\Sigma$, and
\eqref{eq:scaled-radial-dtn-remainder-step16} follows.

For the centered even radial sector one has $g_0=0$.  With inhomogeneous
data on the fixed connector, its elliptic estimate bounds $g_0$ by the
connector source and trace norms.  If $x^{\rm tr}$ and
$r^{\Sigma,{\rm tr}}$ denote the neck-side variables in the matching system,
then \eqref{eq:transition-Schwarzschild-gauge-factor-step15} and
\eqref{eq:Schwarzschild-remainder-gauge-factor-step15} give
\begin{equation}\label{eq:matching-physical-gauge-conversion-step16}
 x^{\rm tr}=\Theta_\lambda x_\Sigma,
 \qquad
 r^{\Sigma,{\rm tr}}=\Theta_\lambda r^\Sigma.
\end{equation}
Thus the bound for $x^{\rm tr}$ given by the matching matrix is not
inserted directly into \eqref{eq:liouville-value-conversion-step16}.  In the
application to the flat paired inverse, the physical endpoint values of
$b$ are instead controlled by the $C^0$ part of
\eqref{eq:flat-augmented-estimate-step15}.  On the whole half-neck,
\eqref{eq:physical-flux-propagation-step16} gives the length-independent
estimate
\begin{equation}\label{eq:physical-flux-uniform-step16}
 \sup_{-T\le t\le0}|\mathfrak f(t)|
 \le |g_0|+\frac14\int_{-T}^0|f(s)|\,ds.
\end{equation}
At $t=-T$, its canonical Schwarzschild form is
\eqref{eq:scaled-radial-dtn-remainder-step16}.  Combining these bounds with
\eqref{eq:radial-transfer-estimate-step16} gives a constant independent of
$T$.  Differentiation in the natural half-neck coordinate proves the
parameter statement.
\end{proof}

\begin{proposition}[Refined neutral-transfer estimates]
\label{prop:refined-transfer-step16}
The residual of Lemma~\ref{lem:step9} obeys
\begin{equation}\label{eq:refined-residual-step16}
 \|\mathcal N_g(\bar v_{N,\lambda,\xi})
   \|_{\widehat Y_{\gamma,\beta}}
 \le \eta_{L,N_0}+C_L\lambda^{2-8/n}
      +C\epsilon_N^{\kappa_f},
\end{equation}
with one normalized parameter derivative.  Moreover, if
$(w,c_0,\ldots,c_n)$ is the solution of the projected problem
\eqref{eq:global-projected-problem-step15}, then there is a fixed constant
$C_*>0$ such that, for every admissible transition
length $L$, the refined inverse satisfies
\begin{equation}\label{eq:refined-inverse-step16}
 \|w\|_{\widehat X_{\gamma,\beta}}+
 \sum_{a=0}^n|c_a|
 \le C_{\rm ref}(L)\|F\|_{\widehat Y_{\gamma,\beta}},
 \qquad
 C_{\rm ref}(L)\le C_*(1+L),
\end{equation}
again with one normalized parameter derivative.  Once $L$ is fixed,
$C_{\rm ref}(L)$ is independent of $N_0,N$ and of the normalized
finite-dimensional parameters in the ordered regime.
\end{proposition}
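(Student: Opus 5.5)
The proof has two parts: the residual bound \eqref{eq:refined-residual-step16}, and the refined inverse \eqref{eq:refined-inverse-step16}. The inverse is obtained by bootstrapping from the coarse inverse of Lemma~\ref{lem:step15}.

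\textbf{Residual bound.} The norm $\widehat Y_{\gamma,\beta}$ adds two terms to the coarse norm of Lemma~\ref{lem:step9}.
\begin{enumerate}[label=\textup{(\alph*)}]
 \item The first is the $\beta$-weighted supremum of $\Pi_1F$. For this I would reuse the component decomposition \eqref{eq:residual-decomposition-step9}. The Schwarzschild source $R_\Sigma$ is radial, so $\Pi_1R_\Sigma=0$. Every other component is supported in $\mathscr K_N$, where $d_N=0$. Hence the weight is one wherever $\Pi_1F\ne0$, and the pointwise estimates of Lemma~\ref{lem:step8} already give this term.
 \item The second is the full $L^1_t$ integral of $n_F$ along each end. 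This is exactly the regionwise integrability already used in the proof of Lemma~\ref{lem:step9}. The principal pieces integrate in $t=\log r$ to $C\epsilon_N^{\kappa_f}$. Each remaining support has cylindrical width $O(1/M)$ and contributes $C\mu_MM^{P}$; the sum is $C\delta_*(N_0)$. The transition has fixed length. The Schwarzschild source is bounded by $\int\lambda^4(r^{-n}+r^n)\,dt\le C(e^{-nL}+\lambda^4)$.
\end{enumerate}
One normalized parameter derivative is handled identically.

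\textbf{Refined inverse.} Let $(w,c)$ be given by Lemma~\ref{lem:step15}; it is bounded in $X^\sharp_\gamma$ by $C_{\rm inv}\|F\|_{Y^\sharp_\gamma}$. I then upgrade block by block, using the coarse bound to control all interface traces. The Newton-strong norms $\mathcal B_\pm$ on unit collars and the endpoint values are dominated by $\|w\|_{X^\sharp_\gamma}$.
\begin{enumerate}[label=\textup{(\roman*)}]
 \item \emph{Radial part $b_{\mathfrak e}$ on a metric-free Schwarzschild component.} I apply \eqref{eq:radial-transfer-estimate-step16}. The fluxes $\mathfrak f(\tau_\pm)$ are controlled via Lemma~\ref{aux:liouville-physical-flux-step16}: the flux propagates from the centered connector, where $g_0$ is bounded by the connector elliptic estimate, with error at most $\int|f|$. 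Next to metric supports, the flux jump across a support is bounded by the local source plus the operator perturbation of Proposition~\ref{prop:summable-operator-perturbation-step15}. These jumps are summable because the $L^1$ mass of the perturbation is $O(\epsilon_N^{\kappa_f}+\delta_*)$.
 \item \emph{Nonradial remainder.} I apply Lemma~\ref{aux:holder-nonradial-transfer-step16}. Estimate \eqref{eq:holder-nonradial-L1-step16} gives the $L^1$ control of $m_{w^\perp}$, and \eqref{eq:holder-first-weighted-step16} gives the $\beta$-weighted control of $\Pi_1 w$. In both cases the source is $f=q_\Sigma^{-1}(F-\text{metric terms})$, and the metric terms are again absorbed by the small operator perturbation.
 \item \emph{Transitions.} On each transition, Proposition~\ref{lem:step14} bounds each unit block. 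Integrating $m_b$ and $m_{w^\perp}$ over a cylinder of length $2L$ costs the factor $1+L$, which is the source of $C_{\rm ref}(L)\le C_*(1+L)$.
\end{enumerate}

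\textbf{Main obstacle.} The delicate point is the coupling in (i) and (ii): the refined norms of $w$ near metric supports feed back as sources. I would handle this by a Neumann series in the refined spaces. The perturbation $\mathscr L_N-\mathscr L_N^{\rm fl}$ has coefficients supported on $\mathscr K_N$, of size $\eta^{\rm op}$, with summable widths. It therefore maps $\widehat X_{\gamma,\beta}\to\widehat Y_{\gamma,\beta}$ with norm $\lesssim\eta^{\rm op}$, and for fixed $L$ this is absorbed into $C_{\rm ref}(L)$. The $L^1$ part of $\widehat Y$ over the many remaining supports is again summable by the choice of $P_*$ in $\delta_*$. Parameter derivatives follow as in Lemma~\ref{lem:step15}: differentiate the projected problem and apply the refined estimate to the differentiated system.
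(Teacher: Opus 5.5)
Your proposal follows the paper's proof: the same residual argument, and for the inverse the same combination of coarse interface control, the radial transfer estimate with the flux propagated from the connector (Lemma~\ref{aux:liouville-physical-flux-step16}), Lemma~\ref{aux:holder-nonradial-transfer-step16} for the nonradial part, the factor $1+L$ from integrating over the transitions, and a perturbative treatment of $\mathscr L_N-\mathscr L_N^{\rm fl}$ through its summable $L^1_t$ coefficient mass. The one ingredient you omit is the finite-$\lambda$ target coefficient of the neck operator, which matters because the transfer lemmas are stated only for $P^{(0)}_{\Sigma,0}$ and $\mathscr P_{\Sigma,0}$: the paper absorbs it by a separate Neumann series after enlarging $L_*$, but your setup starts from the actual coarse solution of Lemma~\ref{lem:step15}, so it can simply be put into the source at cost $C(e^{-nL}+\lambda^4)\|w\|_{X^\sharp_\gamma}$ by \eqref{eq:neutral-target-envelope-step16}; likewise the metric feedback only involves $\sup_Q\mathfrak X_Q(w)\le\|w\|_{X^\sharp_\gamma}$, so your final Neumann series is not actually needed.
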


\begin{proof}
For the residual, all metric, transition, and connector sources lie where
$d_N=0$.  Their full cylindrical $L^1$ bounds follow from the regionwise
estimates in Lemmas~\ref{lem:step8}--\ref{lem:step9}: for remaining perturbations one
sums $C\mu_MM^P$, and the principal source gives
$C\epsilon_N^{\kappa_f}$.  On an exact Schwarzschild piece the source is
radial and bounded by $C\lambda^4(r^{-n}+r^n)$, whose full cylindrical
$L^1$ norm is controlled by
\eqref{eq:neutral-target-envelope-step16}.  The weighted first-harmonic
term vanishes there.  This proves \eqref{eq:refined-residual-step16}.

We first set the metric-free neck target coefficient to zero in the flat paired
inverse.  Apply Lemma~\ref{aux:holder-nonradial-transfer-step16} on every metric-free Schwarzschild
component.  Its operator-valued estimate controls the full harmonic sum and gives
\begin{align}\label{eq:flat-nonradial-transfer-bound-step16}
 &\sum_{\mathfrak e=\pm}
 \int_{\mathcal S_{\mathfrak e}}
 m_{w_{\mathfrak e}^{\perp}}(t)\,dt
 +\sup_{Q\subset\mathcal S}
 e^{\beta d_N(Q)}\mathfrak X_Q(\Pi_1w)\nonumber\\
 &\qquad\le C\left(
 \sum_{\mathfrak e=\pm}
 \int_{\mathcal S_{\mathfrak e}}n_F(t)\,dt
 +\mathcal B_{\rm int}(w)\right),
\end{align}
where $\mathcal B_{\rm int}(w)$ is the sum of the Newton-strong endpoint
collar norms.  The matching estimate
\eqref{eq:complete-low-matching-inverse-rigorous}, the high-mode
exponential dichotomy in the proof of
Proposition~\ref{prop:flat-paired-inverse-step15}, and the fixed-overlap
transition and connector estimates bound
\[
 \mathcal B_{\rm int}(w)
 \le C\|F\|_{\widehat Y_{\gamma,\beta}}.
\]
Thus \eqref{eq:flat-nonradial-transfer-bound-step16} gives both the full
nonradial $L_t^1$ term and the weighted degree-one term with a constant
independent of all neck lengths.

It remains to treat degree zero.  The endpoint values and fluxes are
most naturally expressed in different gauges.  The matching matrix
\eqref{eq:complete-low-matching-rigorous} controls the transition-normalized
variable $x^{\rm tr}$ and hence the interface response in its native gauge.
The physical endpoint values of $b$ are controlled by the coarse inverse
\eqref{eq:flat-augmented-estimate-step15}, while the fixed connector estimate
controls $g_0$.  Lemma~\ref{aux:liouville-physical-flux-step16}, specifically
\eqref{eq:physical-flux-uniform-step16}, controls the physical flux by the
connector data and the full $L_t^1$ source norm.  Apply
\eqref{eq:radial-transfer-estimate-step16} on each metric-free component.  The
transition contribution is controlled by the fixed-$L$ local estimate of
Proposition~\ref{lem:step14}, and the centered connector is a fixed elliptic
block with the $J$-even radial condition.  Propagating
\eqref{eq:physical-flux-propagation-step16} across the components produces
only the full $L_t^1$ source norm; the neighborhoods of the zero set of the weight and of the metric supports
have fixed overlap.  Hence
\begin{equation}\label{eq:flat-neutral-transfer-bound-step16}
 \sum_{\mathfrak e=\pm}
 \left(\|b_{\mathfrak e}\|_{\mathfrak T_{\mathfrak e}}
 +\int m_{w_{\mathfrak e}^{\perp}}(t)\,dt\right)
 +\sup_Qe^{\beta d_N(Q)}\mathfrak X_Q(\Pi_1w)
 \le C_0(1+L)\|F\|_{\widehat Y_{\gamma,\beta}}.
\end{equation}
Together with the coarse inverse, this proves
the analogue of \eqref{eq:refined-inverse-step16} for the flat augmented
operator with the
metric-free neck target term removed.  Denote the resulting uniform constant by
$C_{\rm tr}^0(L)$, where
\begin{equation}\label{eq:flat-clean-transfer-growth-step16}
 C_{\rm tr}^0(L)\le C_0(1+L).
\end{equation}
The factor $1+L$ comes only from integrating the transition terms already controlled in the coarse local norm.

We now restore the finite-$\lambda$ target term on the metric-free Schwarzschild pieces.  Multiplication by its normalized coefficient maps
$\widehat X_{\gamma,\beta}$ to $\widehat Y_{\gamma,\beta}$ with norm at
most
\begin{equation}\label{eq:target-transfer-operator-size-step16}
 \eta_\Sigma^{\rm tr}
 :=C_\Sigma(e^{-nL}+\lambda^4)
\end{equation}
by \eqref{eq:neutral-target-envelope-step16}; the same bound holds after
one normalized parameter derivative.  Enlarge $L_*$ once so that
\begin{equation}\label{eq:uniform-L-target-transfer-step16}
 C_0(1+L)C_\Sigma e^{-nL}\le\frac14
 \qquad(L\ge L_*).
\end{equation}
For each fixed $L\ge L_*$, increase the index threshold so that the
$\lambda^4$ part is also at most $1/4$.  Then
$C_{\rm tr}^0(L)\eta_\Sigma^{\rm tr}\le\frac12$, a Neumann series gives the
finite-flat refined inverse with
\begin{equation}\label{eq:finite-flat-transfer-constant-step16}
 C_{\rm tr}^{\rm fl}(L):=2C_{\rm tr}^0(L)
 \le2C_0(1+L).
\end{equation}
The constant $C_\Sigma$ depends only on the model, while the displayed
transfer constants grow at most linearly in $L$ and enter the subsequent
parameter selection.

For the actual operator, treat
$K_Nw=(\mathscr L_N-\mathscr L_N^{\rm fl})w$ as an additional source.
For $w^\perp$ the regionwise proof of Proposition~\ref{prop:summable-operator-perturbation-step15} also controls the full
cylindrical $L^1$ term.  For the radial transfer component, Lemma~\ref{lem:step8} gives normalized size $C\epsilon_N^{\kappa_f}$ on the
column corresponding to the principal pair, whose cylindrical length is
$O(1+|\log\epsilon_N|)$.  The remaining columns give the summable bound
$C\delta_{\!*}(N_0)$.  Derivatives of $b_{\mathfrak e}$ are controlled by
\eqref{eq:neutral-transfer-norm-step16}.  Thus, after enlarging the
fixed-$L$ constant once,
\begin{equation}\label{eq:refined-operator-perturbation-step16}
 \|K_Nw\|_{\widehat Y_{\gamma,\beta}}
 \le \eta^{\rm tr}_{L,N_0,N}
       \|w\|_{\widehat X_{\gamma,\beta}},
\end{equation}
where
\begin{equation}\label{eq:neutral-perturbation-size-step16}
 \eta^{\rm tr}_{L,N_0,N}
 \le C_{\rm op}(L)\left[
 \delta_{\!*}(N_0)
 +(1+|\log\epsilon_N|)\epsilon_N^{\kappa_f}\right].
\end{equation}
After first choosing $N_0$ and then increasing the index threshold,
$C_{\rm tr}^{\rm fl}(L)\eta^{\rm tr}_{L,N_0,N}\le\frac12$.  The Neumann series
therefore transfers \eqref{eq:flat-neutral-transfer-bound-step16} to the
actual operator, with
\begin{equation}\label{eq:actual-refined-transfer-growth-step16}
 C_{\rm ref}(L):=2C_{\rm tr}^{\rm fl}(L)
 \le4C_0(1+L).
\end{equation}
Thus the proposition holds after enlarging the fixed constant $C_*$.
Differentiating the flux identity, the matching
matrix, and the two coefficient perturbations in the natural block
coordinates gives the same estimate for one normalized parameter
derivative.
\end{proof}

\subsection{Radial identity for the nonlinear estimate}
The long-neck radial part cannot be treated by an
$L_t^2\times L_t^2$ argument.  Instead, for a radial relative correction
$b$ one has the exact identity
\begin{equation}\label{eq:radial-null-identity-step16}
 \widetilde p_t+2a\widetilde q
 =-\frac12P_{\Sigma,0}^{(0)}b-\frac a2b_t^2,
 \qquad
 \widetilde p=p-\frac12b_t,\quad
 \widetilde q=\widetilde p(1-\widetilde p).
\end{equation}
Thus the normalized derivative-quadratic term is
$q_\Sigma^{-1}b_t^2=(q_\Sigma^{-1/2}b_t)^2$, which is integrable in the
transfer norm.  This is the mechanism used in Lemma~\ref{lem:step16}.

For a relative correction $w$, define the nonlinear remainder
\begin{equation}\label{eq:nonlinear-remainder-step16}
 Q_N(w):=\mathcal N_g\bigl(
   \bar v_{N,\lambda,\xi}e^{aw}\bigr)
 -\mathcal N_g\bigl(\bar v_{N,\lambda,\xi}\bigr)
 -\mathscr L_Nw.
\end{equation}

\begin{lemma}[Quadratic estimate in Newton-strong spaces]\label{lem:step16}
There are constants $r_{\rm nl}>0$ and $C_Q>0$, independent of
$L,N_0,N$ in the ordered regime, such that, whenever
$\|w_i\|_{\widehat X_{\gamma,\beta}}\le r_{\rm nl}$,
\begin{equation}\label{eq:quadratic-Lipschitz-step16}
 \|Q_N(w_1)-Q_N(w_2)\|_{\widehat Y_{\gamma,\beta}}
 \le C_Q\bigl(\|w_1\|_{\widehat X_{\gamma,\beta}}
            +\|w_2\|_{\widehat X_{\gamma,\beta}}\bigr)
       \|w_1-w_2\|_{\widehat X_{\gamma,\beta}}.
\end{equation}
In particular,
\begin{equation}\label{eq:quadratic-self-step16}
 \|Q_N(w)\|_{\widehat Y_{\gamma,\beta}}
 \le C_Q\|w\|_{\widehat X_{\gamma,\beta}}^2.
\end{equation}
For one normalized parameter derivative $\mathfrak D$, with the weights
frozen in the natural coordinates,
\begin{align}\label{eq:quadratic-parameter-step16}
 &\|\mathfrak D[Q_N(w_1)-Q_N(w_2)]
   \|_{\widehat Y_{\gamma,\beta}}\nonumber\\
 &\quad\le C_Q\bigl(\|w_1\|_{\widehat X}+\|w_2\|_{\widehat X}\bigr)
       \|\mathfrak D(w_1-w_2)\|_{\widehat X}\nonumber\\
 &\qquad+C_Q\bigl(\|\mathfrak Dw_1\|_{\widehat X}
                 +\|\mathfrak Dw_2\|_{\widehat X}
                 +\|w_1\|_{\widehat X}+\|w_2\|_{\widehat X}\bigr)
       \|w_1-w_2\|_{\widehat X},
\end{align}
where $\widehat X=\widehat X_{\gamma,\beta}$.
\end{lemma}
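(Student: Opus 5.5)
We will prove \eqref{eq:quadratic-Lipschitz-step16} block by block, using the algebraic structure of $\sigma_2$. Write $g_t=(\bar v e^{aw})^{8/(n-4)}g$ and note that $\mathcal N_g(\bar v e^{aw})$ is, pointwise, a polynomial of degree two in the Schouten endomorphism $A[w]:=g_w^{-1}A_{g_w}$, multiplied by $e^{2nw}$-type volume factors. The Schouten endomorphism is itself polynomial in $(w,Dw,D^2w)$: schematically
\[
 A[w]=e^{-2\cdot\frac{4a}{n-4}w}\bigl(A[0]+\mathscr S^{(1)}_{\bar u,h}(w)+\mathcal B(Dw,Dw)\bigr),
\]
with $\mathcal B$ a fixed bilinear form in $Dw$ involving the background metric. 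Hence
\[
 Q_N(w)=\sigma_2\text{-quadratic}\bigl(\mathscr S^{(1)}(w)+\mathcal B(Dw,Dw)\bigr)
 +T_1[\bar u]\cdot\mathcal B(Dw,Dw)+(\text{terms with }e^{cw}-1-cw).
\]
After normalizing by $\mathcal V^4q^2/r^n$, each term becomes a product of the quantities $w$, $q^{-1/2}D_{\rm cyl}w$ and $q^{-1}\mathscr S^{(1)}(w)$ controlled in $\mathfrak X_Q$. The coefficients are bounded, since by Lemma~\ref{lem:step10} and Lemma~\ref{lem:step8} the Newton tensor of $\bar u$ is comparable to $q/r^2$, and its normalized version is $O(1)$ on every block. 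The local pointwise bound is therefore
\[
 \mathfrak Y_Q\bigl(Q_N(w_1)-Q_N(w_2)\bigr)\le C\bigl(\mathfrak X_Q(w_1)+\mathfrak X_Q(w_2)\bigr)\,\mathfrak X_Q(w_1-w_2),
\]
valid once $\sup_Q\mathfrak X_Q(w_i)\le r_{\rm nl}$. Both $C$ and $r_{\rm nl}$ depend only on $n,\alpha$, because $C^\alpha$ is an algebra. On the bubble blocks and the connector this is the standard estimate in $C^{2,\alpha}(g_{U_\pm})$ and $C^{2,\alpha}(\mathcal C)$.

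The main work, and the main obstacle, is to pass from this local bound to the global refined norm $\widehat Y_{\gamma,\beta}$. The difficulty is that products mix harmonic degrees and feed the unweighted $L^1_t$ terms. Split $w=b_{\mathfrak e}+w^\perp_{\mathfrak e}$ on each end and expand $Q_N$ bilinearly.

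\emph{Radial--radial products.} The only dangerous term is $q^{-1}b_t^2$, since $b$ itself is merely bounded along the neck. Here we use the null identity \eqref{eq:radial-null-identity-step16}. It shows that the radial nonlinearity contains no $b^2$ term against a nonintegrable coefficient; the only quadratic terms are $(q_\Sigma^{-1/2}b_t)^2$ and products of $b_t$ with $P^{(0)}_{\Sigma,0}b$. Each of these is bounded by $\|b\|_{L^\infty}\mathfrak d_0[b]$ or by $\mathfrak d_0[b]^2\le\|b\|_{\mathfrak T}\,\mathfrak d_0[b]$. Their integral in $t$ is therefore controlled by $\|b\|_{\mathfrak T}^2$.

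\emph{Mixed and nonradial products.} These contain at least one factor $w^\perp$, and the $L^1$ bound comes from $\int m_{w^\perp}\,dt$ times a sup norm. For the weighted parts of $Y_\gamma^\sharp$ and of the $\Pi_1$-term, observe which factors carry the weight. The projection $\Pi_{\ge2}$ of a product of two $\Pi_1$ factors carries weight $e^{2\beta d_N}\ge e^{\gamma d_N}$, by the choice $\beta>\gamma/2$ in \eqref{eq:beta-choice-step16}. Products involving a $\Pi_{\ge2}$ factor inherit $e^{\gamma d_N}$ directly. For $\Pi_1$ of a product, either a $\Pi_1$ factor is present and the other factor is $L^\infty$-bounded, or two high factors give $e^{2\gamma d_N}\ge e^{\beta d_N}$, using $\beta<2\gamma$. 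This is where both constraints on $\beta$ enter. The transition blocks have fixed length and are handled by the local bound alone.

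Finally, the metric coefficients enter only through bounded normalized factors, by Lemma~\ref{lem:step8}. The parameter estimate \eqref{eq:quadratic-parameter-step16} follows by applying $\mathfrak D$ to the polynomial expansion with weights frozen. This gives terms in which $\mathfrak D$ hits either a $w_i$ factor or a coefficient; coefficient derivatives are bounded by Lemma~\ref{lem:step8} and Lemma~\ref{lem:step3}, and we then repeat the same bookkeeping. Setting $w_2=0$ gives \eqref{eq:quadratic-self-step16}.
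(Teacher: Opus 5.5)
Your strategy matches the paper's. The conformal Schouten formula and the quadratic structure of $\sigma_2$ give the unit-block bound $\mathfrak Y_Q(Q_N(w_1)-Q_N(w_2))\le C(\mathfrak X_Q(w_1)+\mathfrak X_Q(w_2))\,\mathfrak X_Q(w_1-w_2)$. The necks are summed after splitting off $b=\Pi_0w$: the null identity \eqref{eq:radial-null-identity-step16} makes each radial product one $L^\infty_t$ factor times one $t$-integrable density, and $\int m_{w^\perp}\,dt$ handles the rest. The weights are handled by $\gamma/2<\beta<2\gamma$ exactly as you say. The gaps are in the $L^1_t$ bookkeeping, which is where the asserted uniformity in $L$ and $N$ is decided.

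First, you dismiss the transition blocks because they have fixed length. But $\widehat Y_{\gamma,\beta}$ and $Y^\sharp_{\rm low}$ integrate the local source norm over $\mathcal T_{\mathfrak e}$, whose length is $2L$, so integrating supremum bounds gives a constant of order $L$. The lemma claims $C_Q$ independent of $L$. The paper instead pairs $\sup_Q\mathfrak X_Q(w_i)$ with the integrals $\int_{\mathcal T_{\mathfrak e}}m_b\,dt$ and $\int_{\mathcal T_{\mathfrak e}}m_{w^\perp}\,dt$ built into $\widehat X_{\gamma,\beta}$, so the factor $1+L$ sits in $C_{\rm ref}(L)$, not in $C_Q$.

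Second, the null identity is exact only on metric-free Schwarzschild pieces. On the metric supports, and in the target term $\kappa\mathfrak v_\lambda(e^{nb}-1-nb)$, genuine products of two merely bounded factors such as $b_1(b_1-b_2)$ occur. Their coefficients must therefore be integrable in $t$ uniformly in $N$, not just bounded. The supremum bound $C(\delta_{\!*}(N_0)+\epsilon_N^{\kappa_f})$ of Lemma~\ref{lem:step8}, integrated over a metric-bearing stretch of neck of cylindrical length of order $|\log\epsilon_N|$, is not uniform. The paper uses a majorant $\mathfrak e_N$ with uniformly bounded $L^1_t$ norm, coming from \eqref{eq:neutral-target-envelope-step16} and the $t$-integrable regionwise bounds behind Lemma~\ref{lem:step8} and Proposition~\ref{prop:summable-operator-perturbation-step15}. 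This is the term $\mathfrak e_N(R_1+R_2)R_{12}$ in its density estimate.

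Third, the radial cross term is misbounded. In \eqref{eq:exact-relative-radial-residual-step16}, the product of $b_t$ with $P^{(0)}_{\Sigma,0}b$ is, up to a constant, $\bigl(\tanh(at)q_\Sigma^{-1}b_t\bigr)\bigl(q_\Sigma^{-1}P^{(0)}_{\Sigma,0}b\bigr)$. Here $q_\Sigma^{-1}b_t=4\mathfrak f$ is the physical flux: bounded but not decaying. It is controlled neither by $\|b\|_{L^\infty}$ nor by $\mathfrak d_0[b]$, which only carries $q_\Sigma^{-1/2}b_t$; the missing factor $q_\Sigma^{-1/2}$ is unbounded along the neck. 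Its $L^\infty_t$ bound has to come from the Schouten part of the coarse norm. On metric-free pieces it is, up to sign and a bounded multiple of $b$, the normalized tangential component of $\mathscr S^{(1)}_{\bar u,h}(b)$.

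Likewise, $\mathfrak d_0[b]^2\le\|b\|_{\mathfrak T}\,\mathfrak d_0[b]$ is false as written, since $\mathfrak T$ controls only $\int\mathfrak d_0[b]\,dt$; the supremum of $\mathfrak d_0[b]$ comes from $\|w\|_{X^\sharp_\gamma}$. This also corrects your description of the mechanism. The term $q_\Sigma^{-1}b_t^2=q_\Sigma(q_\Sigma^{-1}b_t)^2$ is harmless. What the null identity really rules out is the non-integrable flux-squared term $(q_\Sigma^{-1}b_t)^2$, and $b^2$ against a non-integrable coefficient. The crude majorant $\mathfrak X_Q(b_1)\mathfrak X_Q(b_1-b_2)$ would allow both.

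With these repairs your argument is the paper's. Note also that the paper's proof derives the energy-form bound \eqref{eq:quadratic-energy-form-step16} and fixes $r_{\rm nl}$ through \eqref{eq:final-energy-radius-step16}. This lies outside the displayed statement, but later sections use that property of $r_{\rm nl}$.
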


\begin{proof}
Put $\bar g=\bar v_{N,\lambda,\xi}^{8/(n-4)}g$ and
$\kappa=\frac14\binom n2$.  Since $\bar v e^{aw}$ induces the metric
$e^{2w}\bar g$, the conformal Schouten formula gives
\begin{equation}\label{eq:relative-Schouten-step16}
 A_{e^{2w}\bar g}=A_{\bar g}-\nabla_{\bar g}^2w
 +dw\otimes dw-\frac12|dw|_{\bar g}^2\bar g.
\end{equation}
Consequently
\begin{equation}\label{eq:proper-residual-relative-step16}
 \mathcal N_g(\bar v e^{aw})=\bar v^{p_*}\left[
 e^{(n-4)w}\sigma_2(B_0+\mathcal A_1(w)+\mathcal A_2(w))
 -\kappa e^{nw}\right],
\end{equation}
with $|B_0|\le Cq$, $|\mathcal A_1(w)|\le Cq\mathfrak X_Q(w)$, and
$|\mathcal A_2(w)|\le Cq\mathfrak X_Q(w)^2$ on each unit model block.
Since $\sigma_2$ is quadratic and
$\mathcal V^{16/(n-4)}/q^2$ is uniformly bounded, Taylor's formula gives
\begin{equation}\label{eq:local-quadratic-majorant-step16}
 \mathfrak Y_Q(Q_N(w_1)-Q_N(w_2))
 \le C(\mathfrak X_Q(w_1)+\mathfrak X_Q(w_2))
       \mathfrak X_Q(w_1-w_2).
\end{equation}
This proves all fixed-block and local-supremum parts of the estimate.

The only issue is summation along the long Schwarzschild pieces.  Write
$w_i=b_i+z_i$, where $b_i=\Pi_0w_i$, and set
\[
 R_i=\|w_i\|_{\widehat X_{\gamma,\beta}},\qquad
 R_{12}=\|w_1-w_2\|_{\widehat X_{\gamma,\beta}},
\]
\[
 \rho_i=\mathfrak d_0[b_i]+m_{z_i},\qquad
 \rho_{12}=\mathfrak d_0[b_1-b_2]+m_{z_1-z_2}.
\]
By the definition of the refined norm,
\begin{equation}\label{eq:neutral-density-control-step16}
 \|b_i\|_{L^\infty}+\|\rho_i\|_{L^\infty_t}+\|\rho_i\|_{L^1_t}
 \le CR_i,
 \qquad
 \|b_1-b_2\|_{L^\infty}+\|\rho_{12}\|_{L^\infty_t}
 +\|\rho_{12}\|_{L^1_t}\le CR_{12}.
\end{equation}
The target and metric-support coefficient errors are bounded by a
nonnegative function $\mathfrak e_N$ with uniformly bounded $L^1_t$ norm,
by Lemma~\ref{lem:step8} and Proposition~\ref{prop:summable-operator-perturbation-step15}.

For the pure radial part, the identity
\eqref{eq:radial-null-identity-step16} and Lemma~\ref{lem:step4} give
\begin{align}\label{eq:exact-relative-radial-residual-step16}
 \mathscr R_\Sigma(b)
 ={}&4(n-1)e^{4ab}\frac{\widetilde q}{q_\Sigma}
 \left[-\frac12q_\Sigma^{-1}P_{\Sigma,0}^{(0)}b
       -\frac a2(q_\Sigma^{-1/2}b_t)^2\right]
 -\kappa\mathfrak v_\lambda e^{nb},
\end{align}
where
$\widetilde q/q_\Sigma
 =1-\frac12\tanh(at)q_\Sigma^{-1}b_t
   -\frac14(q_\Sigma^{-1/2}b_t)^2$.
The factor $q_\Sigma^{-1}b_t$ is controlled by the coarse Newton-strong
norm, using the tangential Schouten component away from the center and the
ordinary first-derivative term on the central cylinder.  Thus the radial
nonlinear difference is bounded by one $L^\infty_t$ factor and one transfer
density.  Every term containing a nonradial component is treated by the
unit-block Banach algebra estimate.  We obtain
\[
 n_{Q_N(w_1)-Q_N(w_2)}(t)
 \le C\bigl[(R_1+R_2)\rho_{12}
 +R_{12}(\rho_1+\rho_2)
 +\mathfrak e_N(R_1+R_2)R_{12}\bigr].
\]
Integrating and using \eqref{eq:neutral-density-control-step16} gives
\begin{equation}\label{eq:full-neck-L1-bilinear-step16}
 \sum_{\mathfrak e=\pm}\int_{\mathcal S_{\mathfrak e}}
 n_{Q_N(w_1)-Q_N(w_2)}(t)\,dt
 \le C(R_1+R_2)R_{12}.
\end{equation}
The transition terms are summed by the $L^\infty_t\times L^1_t$ structure
of the transfer norm.

For the weighted supremum terms, products containing a high mode are
controlled by the high-mode weight.  The only high harmonic produced by two
low factors comes from two first harmonics, and $2\beta>\gamma$ controls it;
products of two high factors in the weighted low-mode source are controlled
by $\beta<2\gamma$.  Together with
\eqref{eq:local-quadratic-majorant-step16} and
\eqref{eq:full-neck-L1-bilinear-step16}, this proves
\eqref{eq:quadratic-Lipschitz-step16} and
\eqref{eq:quadratic-self-step16}.

Differentiating \eqref{eq:proper-residual-relative-step16} places the
parameter derivative on one correction factor or one uniformly controlled
coefficient.  The same local and integrated estimates therefore give
\eqref{eq:quadratic-parameter-step16}.

Finally, pairing the invariant expansion with a test function and integrating
once by parts against the divergence-free first Newton tensor gives
\begin{equation}\label{eq:quadratic-energy-form-step16}
 |\langle Q_N(w_1)-Q_N(w_2),\phi\rangle|
 \le C_{Q,E}(\|w_1\|_{\widehat X_{\gamma,\beta}}
 +\|w_2\|_{\widehat X_{\gamma,\beta}})
 \|w_1-w_2\|_{\mathcal H}\|\phi\|_{\mathcal H}
\end{equation}
for zero-trace localizations, with the analogous differentiated estimate.
The same pointwise calculation in the one-sided bubble norm gives a constant
$C_{Q,b}$.  We choose $r_{\rm nl}$ so that
\begin{equation}\label{eq:final-energy-radius-step16}
 8\max\{C_{Q,E},C_{Q,b}\}r_{\rm nl}\le1.
\end{equation}
\end{proof}

Let $\mathcal X$ and $\mathcal H$ be Hilbert spaces, let
$\mathcal T:\mathcal X\to\mathcal H$ be a bounded surjective trace map,
and choose a bounded right inverse
$\mathcal E:\mathcal H\to\mathcal X$.  Put
$\mathcal X_0=\ker\mathcal T$.  For $j=0,1$, let
$\mathfrak a_j$ be a bounded symmetric bilinear form on $\mathcal X$, let
$\boldsymbol\Psi_j:\mathbb R^m\to\mathcal X'$ and
$\boldsymbol\ell_j:\mathcal X\to\mathbb R^m$ be bounded, and define the
zero-trace augmented operator
\begin{equation}\label{eq:abstract-augmented-dirichlet-map}
 \mathbb A_j^D(v,\mathbf d)
 :=\bigl(\mathfrak a_j(v,\cdot)-\boldsymbol\Psi_j\mathbf d,
          \boldsymbol\ell_j(v)\bigr),
 \qquad
 \mathbb A_j^D:\mathcal X_0\times\mathbb R^m
 \longrightarrow\mathcal X_0'\times\mathbb R^m.
\end{equation}

\begin{proposition}[Stability of augmented Dirichlet problems and Dirichlet-to-Neumann maps]
\label{aux:abstract-energy-dtn-stability}
Assume
\begin{equation}\label{eq:abstract-base-inverse}
 \|(\mathbb A_0^D)^{-1}\|\le C_0
\end{equation}
and, for some $0<\eta\le1$,
\begin{align}
 |(\mathfrak a_1-\mathfrak a_0)(v,w)|
 &\le\eta\|v\|_{\mathcal X}\|w\|_{\mathcal X},
 \label{eq:abstract-form-closeness}\\
 \|\boldsymbol\Psi_1-\boldsymbol\Psi_0\|_{\mathbb R^m\to\mathcal X'}
 +\|\boldsymbol\ell_1-\boldsymbol\ell_0\|_{\mathcal X\to\mathbb R^m}
 &\le\eta.
 \label{eq:abstract-finite-rank-closeness}
\end{align}
There is a constant $c_{\rm stab}>0$, depending only on $C_0$, the
uniform bounds of the displayed maps, and $\|\mathcal E\|$, such that if
$c_{\rm stab}\eta\le\frac12$, then $\mathbb A_1^D$ is invertible and
\begin{equation}\label{eq:abstract-perturbed-inverse}
 \|(\mathbb A_1^D)^{-1}\|\le2C_0.
\end{equation}

For $\phi\in\mathcal H$, let
$(H_j\phi,\mathbf d_j(\phi))$ be the unique augmented Poisson pair
satisfying
\begin{equation}\label{eq:abstract-poisson-pair}
 \mathcal T H_j\phi=\phi,
 \qquad \boldsymbol\ell_j(H_j\phi)=0,
 \qquad
 \mathfrak a_j(H_j\phi,\zeta)
 =\langle\boldsymbol\Psi_j\mathbf d_j(\phi),\zeta\rangle
 \quad(\zeta\in\mathcal X_0).
\end{equation}
Define its weak Dirichlet-to-Neumann map by
\begin{equation}\label{eq:abstract-dtn-definition}
 \langle\mathcal D_j\phi,\psi\rangle
 :=\mathfrak a_j(H_j\phi,\mathcal E\psi)
 -\langle\boldsymbol\Psi_j\mathbf d_j(\phi),\mathcal E\psi\rangle.
\end{equation}
This definition is independent of the chosen right inverse of the trace.
Moreover,
\begin{align}
 \|H_j\phi\|_{\mathcal X}+|\mathbf d_j(\phi)|
 +\|\mathcal D_j\phi\|_{\mathcal H'}
 &\le C\|\phi\|_{\mathcal H},
 \label{eq:abstract-poisson-dtn-bound}\\
 \|H_1-H_0\|_{\mathcal H\to\mathcal X}
 +\|\mathbf d_1-\mathbf d_0\|_{\mathcal H\to\mathbb R^m}
 +\|\mathcal D_1-\mathcal D_0\|_{\mathcal H\to\mathcal H'}
 &\le C\eta.
 \label{eq:abstract-dtn-stability}
\end{align}

If $F\in\mathcal X'$ and
$(u_j^0,\mathbf c_j^0)=(\mathbb A_j^D)^{-1}
(F|_{\mathcal X_0},\mathbf g)$, define the weak outward flux
$h_{j,F,\mathbf g}\in\mathcal H'$ by
\begin{equation}\label{eq:abstract-inhomogeneous-flux}
 \langle h_{j,F,\mathbf g},\psi\rangle
 :=\mathfrak a_j(u_j^0,\mathcal E\psi)
 -\langle F+\boldsymbol\Psi_j\mathbf c_j^0,\mathcal E\psi\rangle,
\end{equation}
then
\begin{equation}\label{eq:abstract-inhomogeneous-flux-bound}
 \|u_j^0\|_{\mathcal X}+|\mathbf c_j^0|
 +\|h_{j,F,\mathbf g}\|_{\mathcal H'}
 \le C(\|F\|_{\mathcal X'}+|\mathbf g|).
\end{equation}

Finally, suppose all domains have been identified with fixed spaces so that
$\mathcal T$, $\mathcal X_0$, and the chosen trace extension
$\mathcal E$ are parameter independent, while the forms, source maps, and
constraints depend $C^1$ on a parameter $q$, with their first derivatives
uniformly bounded.  Then the
inverse, Poisson, flux, and Dirichlet-to-Neumann maps are $C^1$ and
\begin{align}
 \|\partial_qH_j\|_{\mathcal H\to\mathcal X}
 +\|\partial_q\mathbf d_j\|_{\mathcal H\to\mathbb R^m}
 +\|\partial_q\mathcal D_j\|_{\mathcal H\to\mathcal H'}&\le C,
 \label{eq:abstract-dtn-parameter-bound}\\
 \|\partial_qu_j^0\|_{\mathcal X}+|\partial_q\mathbf c_j^0|
 +\|\partial_qh_{j,F,\mathbf g}\|_{\mathcal H'}
 &\le C\bigl(\|\partial_qF\|_{\mathcal X'}
              +\|F\|_{\mathcal X'}
              +|\partial_q\mathbf g|+|\mathbf g|\bigr).
 \label{eq:abstract-flux-parameter-bound}
\end{align}
\end{proposition}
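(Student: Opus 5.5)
The plan is to use Neumann-series perturbation for the inverse, then realize the Poisson and flux maps as solutions of augmented Dirichlet problems with explicitly lifted data. With these representations, every remaining claim reduces to the inverse bound together with the perturbation hypotheses.

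\textbf{Step 1: the perturbed inverse.} Write $\mathbb A_1^D=\mathbb A_0^D+(\mathbb A_1^D-\mathbb A_0^D)$. By \eqref{eq:abstract-form-closeness}--\eqref{eq:abstract-finite-rank-closeness}, restricted to $\mathcal X_0\times\mathbb R^m$, the difference has operator norm at most $c\eta$ for a universal $c$. Hence
\[
 \|(\mathbb A_0^D)^{-1}(\mathbb A_1^D-\mathbb A_0^D)\|\le cC_0\eta .
\]
Taking $c_{\rm stab}\ge cC_0$, the assumption $c_{\rm stab}\eta\le\frac12$ makes this norm at most $\frac12$. The factorization
\[
 \mathbb A_1^D=\mathbb A_0^D\bigl[I+(\mathbb A_0^D)^{-1}(\mathbb A_1^D-\mathbb A_0^D)\bigr]
\]
and the Neumann series then give \eqref{eq:abstract-perturbed-inverse}. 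The resolvent identity also gives
\[
 \|(\mathbb A_1^D)^{-1}-(\mathbb A_0^D)^{-1}\|\le 2C_0^2c\eta .
\]

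\textbf{Step 2: Poisson pairs and fluxes.} For $\phi\in\mathcal H$, seek $H_j\phi=\mathcal E\phi+v$ with $v\in\mathcal X_0$. Then \eqref{eq:abstract-poisson-pair} becomes
\[
 \mathbb A_j^D(v,\mathbf d)=\bigl(-\mathfrak a_j(\mathcal E\phi,\cdot)|_{\mathcal X_0},\,-\boldsymbol\ell_j(\mathcal E\phi)\bigr).
\]
This problem is uniquely solvable by Step 1, and $v$ does not depend on $\mathcal E$ except through the decomposition, so $H_j\phi$ itself is independent of $\mathcal E$. The bound \eqref{eq:abstract-poisson-dtn-bound} follows from $\|\mathcal E\|$ and the uniform bounds on $\mathfrak a_j,\boldsymbol\ell_j$.

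For independence of $\mathcal D_j$ from $\mathcal E$, take two extensions $\mathcal E,\mathcal E'$. Then $\mathcal E\psi-\mathcal E'\psi\in\mathcal X_0$, and the weak equation in \eqref{eq:abstract-poisson-pair} shows that the difference of the two expressions \eqref{eq:abstract-dtn-definition} vanishes. The same argument applies to $h_{j,F,\mathbf g}$, once one notes that $u_j^0$ solves the equation tested against $\mathcal X_0$. The estimate \eqref{eq:abstract-inhomogeneous-flux-bound} is then immediate from the inverse bound.

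\textbf{Step 3: stability.} Subtract the two lifted systems. The data differ by $O(\eta\|\phi\|)$, and the inverses differ by $O(\eta)$ by Step 1. Hence
\[
 \|H_1-H_0\|+\|\mathbf d_1-\mathbf d_0\|\le C\eta .
\]
Inserting this into \eqref{eq:abstract-dtn-definition} with a fixed $\mathcal E$, together with the closeness of the forms and of the $\boldsymbol\Psi_j$, gives \eqref{eq:abstract-dtn-stability}.

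\textbf{Step 4: parameter dependence.} With $\mathcal T$, $\mathcal X_0$ and $\mathcal E$ fixed, the map $q\mapsto\mathbb A_j^D(q)$ is $C^1$ into bounded operators. Inversion is smooth on invertible operators, with
\[
 \partial_q(\mathbb A^{-1})=-\mathbb A^{-1}(\partial_q\mathbb A)\mathbb A^{-1}.
\]
Differentiating the lifted systems of Step 2 and the definitions \eqref{eq:abstract-dtn-definition}, \eqref{eq:abstract-inhomogeneous-flux} by the product rule gives \eqref{eq:abstract-dtn-parameter-bound}--\eqref{eq:abstract-flux-parameter-bound}.

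\textbf{Main obstacle.} The analysis is routine; the step needing care is the independence of $\mathcal D_j$ and $h$ from the extension. There one must use that the constraint term $\boldsymbol\Psi_j\mathbf d_j$ has been included in the tested equation, which is exactly why it appears in \eqref{eq:abstract-dtn-definition}.
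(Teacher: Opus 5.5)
Your proposal is correct and follows essentially the same route as the paper. Both arguments use a Neumann-series and resolvent argument for $\mathbb A_1^D$, and both lift the Poisson pair as $H_j\phi=\mathcal E\phi+v_j$, reducing it to the zero-trace augmented problem with data $(-\mathfrak a_j(\mathcal E\phi,\cdot),-\boldsymbol\ell_j(\mathcal E\phi))$; independence from the extension holds because two extensions differ by an element of $\mathcal X_0$, and the parameter bounds come from differentiating the inverse equation, with your write-up somewhat more explicit than the paper's terse version.
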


\begin{proof}
By \eqref{eq:abstract-form-closeness}--
\eqref{eq:abstract-finite-rank-closeness},
$\|\mathbb A_1^D-\mathbb A_0^D\|\le C\eta$.  The resolvent identity and a
Neumann series give \eqref{eq:abstract-perturbed-inverse}.

Write $H_j\phi=\mathcal E\phi+v_j$.  The pair
$(v_j,\mathbf d_j(\phi))\in\mathcal X_0\times\mathbb R^m$ solves the
zero-trace augmented problem with data
$(-\mathfrak a_j(\mathcal E\phi,\cdot),
 -\boldsymbol\ell_j(\mathcal E\phi))$.  The inverse bounds give
\eqref{eq:abstract-poisson-dtn-bound}; subtracting the two equations gives
the $O(\eta)$ bound for the Poisson pairs, and substitution into
\eqref{eq:abstract-dtn-definition} gives
\eqref{eq:abstract-dtn-stability}.  Independence of the trace extension
follows because two extensions differ by an element of $\mathcal X_0$.
The same inverse estimate and the weak flux definition yield
\eqref{eq:abstract-inhomogeneous-flux-bound}.

After identifying the domains, differentiation of the inverse equation gives
\[
 \mathbb A_j^D\partial_q(v,\mathbf d)
 =\partial_q\mathbf F-(\partial_q\mathbb A_j^D)(v,\mathbf d).
\]
Applying the preceding bounds and differentiating the weak flux formulas
proves \eqref{eq:abstract-dtn-parameter-bound}--
\eqref{eq:abstract-flux-parameter-bound}.
\end{proof}

\subsection{Uniform energy estimates}
We now collect the energy estimates needed for the bubble localization.  Write
$\mathcal B_T=\{t<T\}$ for the truncated round-bubble block and equip it
with the Liouville trace
\[
 \mathcal T_Tu=\cosh(T)^{-\frac{n-2}{2}}u|_{t=T}
 \in H^{1/2}(\mathbb S^{n-1}).
\]
We first fix two canonical low-mode singular fields.  Their construction
depends only on the round model and the normalized parameters.  For each scalar sector $\ell=0,1$ and each normalized parameter value,
choose a cutoff $\chi$ which vanishes on $\{t\le t_0\}$ and equals one on
$\{t\ge t_0+1\}$.  With $\widehat z_\ell$ from
\eqref{eq:round-singular-low-modes-rigorous}, the classical commutator
$-[\mathscr L_*,\chi]\widehat z_\ell$ is smooth and compactly supported.
Proposition~\ref{lem:step12} gives a smooth complete-sphere correction
$p_\ell^{\rm s}$ and a coefficient $e_\ell^{\rm s}$ such that
\[
 S_\ell:=\chi\widehat z_\ell+p_\ell^{\rm s},\qquad
 \mathscr L_*S_\ell=e_\ell^{\rm s}\Psi_\ell,
 \qquad \ell_\ell^*(S_\ell)=0.
\]
On $\{t\ge t_0+1\}$ one has
$S_\ell=\widehat z_\ell+A_\ell^{\rm s}z_\ell$.  Hence, by
\eqref{eq:kappa-low-rigorous}, we may choose $T_E\ge T_*$ so large that
\begin{equation}\label{eq:large-bubble-truncation-threshold-step17}
 |Y_{S_\ell}(T)|\ge c e^{\kappa_\ell T},\qquad
 \|S_\ell\|_{H^1(\mathcal B_T,g_*)}+|e_\ell^{\rm s}|
 \le C e^{\kappa_\ell T}
 \quad(\ell=0,1,\ T\ge T_E).
\end{equation}
The complete-sphere inverse and the localized Gram matrix depend smoothly
on the normalized parameters.  The fields $S_\ell$ can therefore be chosen
smoothly over the compact normalized parameter set, and the displayed
bounds, as well as their first normalized parameter derivatives, are
uniform.  All constants here are fixed by the round model, the cutoff, and
the localized Gram matrix.

Set
\[
 \mathcal H_\partial=
 \bigl(H^{1/2}(\mathbb S^{n-1})
       \oplus H^{1/2}(\mathbb S^{n-1})\bigr)^J.
\]
Let $\mathcal X_{\rm ext}$ be the energy completion of blockwise fields on
the two ideal transitions, the two ideal Schwarzschild half-necks, and the
centered connector, subject to internal value matching and the centered
parity condition.  Its norm is the sum of the $H^1$ norms in the
corresponding Liouville gauges and the fixed $H^1$ connector norm; write
$\mathcal X_{\rm ext}^0=\ker\mathcal T_{\rm ext}$ for the zero bubble-side
trace space.

\begin{proposition}[Energy estimates for the ideal problem and coefficient comparisons]
\label{aux:preparameter-energy-package}
For every $T\ge T_E$, the zero-trace round augmented map
\[
 (u,\mathbf d)\longmapsto
 \bigl(\mathscr L_*u-\textstyle\sum_a d_a\Psi_a,
       (\ell_b^*(u))_{b=0}^n\bigr)
\]
takes
\[
 \bigl(\ker\mathcal T_T\cap H^1(\mathcal B_T,g_*)\bigr)
 \times\mathbb R^{n+1}
\]
isomorphically onto the corresponding energy dual space times
$\mathbb R^{n+1}$, with inverse norm independent of $T$ and of the
normalized bubble parameters.  The bubble-side trace
$\mathcal T_{\rm ext}:\mathcal X_{\rm ext}\to\mathcal H_\partial$ is
surjective and has a right inverse of uniform norm.  On
$\mathcal X_{\rm ext}^0$ the ideal exterior operator has a uniform inverse.
If $\mathcal D_T^{\rm bub}$ and
$\mathcal D^{\rm ext}$ are the weak Dirichlet-to-Neumann maps defined with
the common Liouville trace and the common outward-flux normalization, then
\begin{equation}\label{eq:preparameter-ideal-schur-bound}
 \mathcal S^{\rm id}:=\mathcal D_T^{\rm bub}+\mathcal D^{\rm ext}
 :\mathcal H_\partial\longrightarrow\mathcal H_\partial'
 \quad\hbox{is an isomorphism},
 \qquad
 \|(\mathcal S^{\rm id})^{-1}\|\le C_E^{\rm id},
\end{equation}
where $C_E^{\rm id}$ is independent of $T$, $L\ge L_*$, the
Schwarzschild half-neck lengths, and the normalized parameters.

There are constants $C_{E,{\rm geo}}<\infty$ and, for every fixed
$L\ge L_*$, $C_{E,{\rm op}}(L)<\infty$, fixed before the choice of
$L,N_0,N$, with the following property.  After pulling all blocks to the
common Liouville coordinates, the finite-flat versus ideal exterior forms
satisfy
\begin{equation}\label{eq:preparameter-finite-ideal-form-bound}
 |\mathfrak a_{\rm ext}^{\rm fl}(v,w)
   -\mathfrak a_{\rm ext}^{\rm id}(v,w)|
 \le C_{E,{\rm geo}}
 \bigl(C_L\delta_\lambda+e^L\lambda^{1-4/n}+N^{-1}\bigr)
 \|v\|_{\mathcal X_{\rm ext}}\|w\|_{\mathcal X_{\rm ext}}.
\end{equation}
On every bubble and exterior block, the actual versus finite-flat forms,
localized source maps, and localized constraints satisfy the hypotheses of
Proposition~\ref{aux:abstract-energy-dtn-stability} with perturbation
size
\begin{equation}\label{eq:preparameter-actual-flat-form-bound}
 C_{E,{\rm op}}(L)
 \bigl(\epsilon_N^{\kappa_f}+\delta_{\!*}(N_0)\bigr).
\end{equation}
The same statements hold after one normalized parameter derivative under
the fixed-domain identifications used in Section~7.
\end{proposition}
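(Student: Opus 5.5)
The plan has four parts, taken in the order of the statement: the uniform bubble inverse, the exterior trace and inverse, the Schur complement, and the two perturbation bounds.

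\emph{Bubble block.} I will prove the uniform zero-trace inverse on $\mathcal B_T$ by contradiction. Suppose there are $T_k\to\infty$ and normalized data $(u_k,\mathbf d_k)$ with $\|u_k\|_{H^1}+|\mathbf d_k|=1$ while the images tend to zero. By the Liouville normalization \eqref{eq:round-low-liouville-rigorous}, the energy form in the $Y$-variable is $\int(|Y_t|^2+|\nabla_\theta Y|^2+V_\ell Y^2)$. Here $V_\ell\to\kappa_\ell^2$ in each sector as $t\to\infty$, and the potential becomes increasingly positive in the high sectors. Hence the quadratic form is coercive on the collar $\{t\ge t_0\}$ with Dirichlet data at $T$. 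On the fixed core, Proposition~\ref{lem:step12} excludes the kernel $\mathcal H_1$, since the localized constraints $\ell_b^*$ vanish in the limit. A weak limit therefore solves the complete round augmented homogeneous problem and must be zero. Coercivity on the collar rules out energy escaping to $t=T_k$, which contradicts the normalization. This is essentially the energy version of Lemma~\ref{aux:round-exterior-estimate-rigorous}.

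\emph{Exterior trace and inverse.} For $\mathcal T_{\rm ext}$, I will build a right inverse by extending the trace with a fixed unit-width cutoff in the Liouville gauge of the transition. This extension is uniform because Lemma~\ref{lem:step7} gives bounded coefficients on unit cylinders. The uniform exterior inverse on $\mathcal X_{\rm ext}^0$ follows from the coercivity already available on each block:
\begin{itemize}
\item on the transitions, \eqref{eq:transition-coercivity-step7};
\item on the Schwarzschild half-necks, the potentials $\nu_\ell^2\ge a^2$ from \eqref{eq:constant-Schrodinger-step13};
\item on the connector, the fixed elliptic estimate, with the parity condition.
\end{itemize}
These bounds are glued through the internal value matching. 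The transition and Schwarzschild gauges differ by the constant factor $\Theta_\lambda$ of \eqref{eq:transition-Schwarzschild-gauge-factor-step15}. Rescaling the Schwarzschild block by this constant does not affect coercivity, so the constants are independent of every length.

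\emph{Schur complement (main obstacle).} The hard step is \eqref{eq:preparameter-ideal-schur-bound}. I will take weak DtN maps via Proposition~\ref{aux:abstract-energy-dtn-stability}, which are symmetric. By Green's identity, $\langle(\mathcal D^{\rm bub}_T+\mathcal D^{\rm ext})\phi,\phi\rangle$ equals the total energy of the glued Poisson extension. I will bound this energy below mode by mode.
\begin{itemize}
\item In degrees $\ell\ge2$, every block is coercive with constant $\gtrsim(1+\Lambda_\ell)^{1/2}$ in the $H^{1/2}$ sense.
\item In degrees $0$ and $1$, there is a subtlety: the bubble form is coercive only modulo the augmentation. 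Lemma~\ref{aux:bubble-dtn-rigorous} gives $\mathcal D^{\rm bub}_{\ell,T}\ge\kappa_\ell/2$. Lemma~\ref{aux:transition-dtn-rigorous} gives $d_{--}\ge\kappa_*$ with off-diagonal entries of size $e^{-2\kappa_*L}$. Combined with $d^\Sigma\le0$ on the neck side, exactly as in Proposition~\ref{aux:low-mode-matching-rigorous}, this shows the low-mode Schur complement is $\ge\kappa_*/2$.
\end{itemize}
These two cases give a uniform lower bound. Lax--Milgram then yields the isomorphism with $C_E^{\rm id}$ independent of $T$, $L$, the neck lengths, and the parameters.

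\emph{Perturbation bounds.} For \eqref{eq:preparameter-finite-ideal-form-bound}, I will write both exterior forms in the common Liouville coordinates. The coefficient differences come from three sources:
\begin{itemize}
\item the potential comparison \eqref{eq:potential-perturbation-step7}, of size $C_L\delta_\lambda$, once $L$ is fixed, since the $e^{-a_\Sigma L}$ part is already in the ideal model;
\item the bubble--transition scale mismatch, of size $e^L\lambda^{1-4/n}$;
\item the collar recentering, of size $N^{-1}$, by Proposition~\ref{prop:paired-assembly-collars}.
\end{itemize}
Cauchy--Schwarz then gives the bilinear bound, weighted by $(1+\Lambda_\ell)$ on the angular part, which the $H^1$ norm absorbs. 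For \eqref{eq:preparameter-actual-flat-form-bound}, I will reuse Lemma~\ref{lem:step8} and Proposition~\ref{prop:summable-operator-perturbation-step15}. Their coefficient bounds $C(\Xi_h+\Xi_h^2)$, normalized by the Newton scale $q/r^2$, control the difference of the divergence-form energies. Integrating by parts against the divergence-free Newton tensor, as in \eqref{eq:quadratic-energy-form-step16}, converts these bounds into form bounds of size $C_{E,\rm op}(L)(\epsilon_N^{\kappa_f}+\delta_*(N_0))$. The localized sources and constraints lie in exact bubble cores, where $h$ has the quadratic-core bounds of Proposition~\ref{lem:step2}.

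\emph{Parameter derivatives.} The parameter statements follow by fixing the domains in the natural coordinates and differentiating the coefficients. The derivative bounds already established in the cited results then apply, and \eqref{eq:abstract-dtn-parameter-bound} finishes the argument.
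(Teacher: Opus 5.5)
Your proposal is correct. For the exterior trace and inverse, the Schur complement, and the two coefficient comparisons, it is essentially the paper's argument. In degrees $\ell\ge2$ you use positive block energies. In degrees $0,1$ you use the scalar $\mathcal D^{\rm bub}_{\ell,T}+d_{--}-d_{-+}(d_{++}-d^\Sigma_{\ell,T_\Sigma})^{-1}d_{+-}$ with $d^\Sigma\le0$. The comparisons are Cauchy--Schwarz against the model-relative coefficient errors.

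The genuine difference is the uniform zero-trace bubble inverse. You argue by compactness. A normalized sequence with $T_k\to\infty$ and vanishing images has, after extension by zero, a weak limit solving the complete-sphere augmented homogeneous problem, which vanishes by Proposition~\ref{lem:step12}. The paper argues constructively. In degrees $\ell\ge2$ it extends by zero and uses the round spectral gap. In degrees $0,1$ it solves the complete-sphere projected problem for the zero-extended source, then removes the resulting boundary trace with a multiple of the canonical singular field $S_\ell$. By \eqref{eq:large-bubble-truncation-threshold-step17} that multiple is $O(e^{-\kappa_\ell T})$ times the data, while $\|S_\ell\|_{H^1(\mathcal B_T,g_*)}=O(e^{\kappa_\ell T})$, so the product is uniform.

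Your route is shorter and avoids the singular fields. You do not even need collar coercivity: the zero-extended $u_k$ are bounded in $H^1(\mathbb S^n,g_*)$, Rellich gives $u_k\to0$ in $L^2$, and $\mathfrak a(u_k,u_k)\to0$ then forces $\|u_k\|_{H^1}\to0$.

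What it does not give is the statement for every $T\ge T_E$ with the previously fixed $T_E$; it yields only some non-explicit threshold $T_1$. A threshold of this kind is intrinsic: whenever $Y_{S_\ell}(T)=0$, the restriction of $S_\ell$ to $\mathcal B_T$, together with the coefficient $e^{\rm s}_\ell$, is a nonzero element of the kernel of the zero-trace augmented map. So you must replace $T_E$ by $\max\{T_E,T_1\}$. This is harmless here, since all truncation parameters tend to infinity with $N$. The paper's construction shows directly that the nonvanishing $|Y_{S_\ell}(T)|\ge ce^{\kappa_\ell T}$ built into $T_E$ is exactly what is needed.

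You should also state the bubble-side right inverse, given by the same terminal-collar cutoff in the Liouville gauge. It is what makes $\mathcal D^{\rm bub}_T$ uniformly bounded. The paper's Hardy-type trace bound $\|Y(T,\cdot)\|_{H^{1/2}}\le C\|u\|_{H^1(\mathcal B_T,g_*)}$ is not needed for your lower bound, since you obtain coercivity from the one-dimensional Dirichlet-to-Neumann numbers directly.
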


\begin{proof}
We first prove the uniform trace estimate needed in all three block
problems.  On a terminal unit cylinder write
$u=\cosh(t)^{(n-2)/2}Y$.  The round $H^1$ norm controls
$\partial_tY+\frac{n-2}{2}\tanh(t)Y$ and $\nabla_\theta Y$ there.  The
one-dimensional Hardy inequality at the second stereographic pole controls
the remaining $L^2$ term.  Consequently
\[
 \|Y(T,\cdot)\|_{H^{1/2}}
 \le C\|u\|_{H^1(\mathcal B_T,g_*)},
\]
with $C$ independent of $T$.  Extending $Y(T,\cdot)$ through a terminal
cylinder of fixed width gives a right inverse with the same uniform bound.
The same argument applies to every transition-side trace.  This also
fixes the common flux normalization: the physical conormal is multiplied
on the two sides of an exact interface by the same positive interface
density, so division by that density gives the Liouville outward flux used
in the Dirichlet-to-Neumann maps.

For the round zero-trace bubble problem, decompose in angular harmonics.
If $\ell\ge2$, extension by zero through the omitted cap belongs to
$H^1(\mathbb S^n,g_*)$ and contains only full spherical degrees
$k\ge\ell\ge2$.  The round spectral gap therefore gives
\[
 \int_{\mathbb S^n}(|\nabla u|^2-nu^2)\,dv_{g_*}
 \ge c\|u\|_{H^1(\mathbb S^n,g_*)}^2.
\]
For $\ell=0,1$, extend the source by zero to the complete sphere, solve the
complete augmented problem by Proposition~\ref{lem:step12}, and add the canonical
singular field $S_\ell$ so as to impose the zero boundary trace.  The bounds
\eqref{eq:large-bubble-truncation-threshold-step17} and the complete-sphere
Hardy trace estimate give a correction coefficient of size
$O(e^{-\kappa_\ell T})$ times the data, while the energy of $S_\ell$ is
$O(e^{\kappa_\ell T})$.  Their product is uniformly bounded.  If the data
vanish, subtracting the singular multiple leaves a smooth complete-sphere
augmented homogeneous solution, which vanishes by Proposition~\ref{lem:step12};
the zero boundary trace then removes the singular multiple.  This proves
the first assertion, including uniqueness.

For the ideal exterior zero-trace problem, use the common Liouville gauges.
In every angular sector the transition form has potential bounded below by
\eqref{eq:transition-dtn-diagonal-rigorous}; on each Schwarzschild
half-neck the conjugated form is
$\int(|Y'|^2+\nu_\ell^2Y^2)$; and the centered connector is a fixed block.
The centered parity condition eliminates its unused endpoint trace.  Thus
the high sectors are uniformly coercive.  In the two low sectors the
zero-boundary Green operators and
$d_{++}-d^\Sigma_{\ell,T_\Sigma}\ge\kappa_*$ give the same uniform bound.
This proves the exterior zero-trace inverse.

For the ideal Schur complement, angular degrees at least two are controlled
by the sum of the positive bubble and exterior energies.  In degrees zero
and one, eliminating the neck-side trace gives
\[
 \mathcal D_{\ell,T}^{\rm bub}+d_{--}
 -d_{-+}(d_{++}-d^\Sigma_{\ell,T_\Sigma})^{-1}d_{+-}.
\]
By Lemmas~\ref{aux:bubble-dtn-rigorous} and~\ref{aux:transition-dtn-rigorous}, the first two positive terms are bounded
below independently of all lengths, whereas the absolute value of the last
term is $O(e^{-4\kappa_*L})$.  Increasing the already fixed $L_*$ gives a
uniform positive lower bound.  Lax--Milgram in the high sectors and
finite-dimensional inversion in the low sectors prove
\eqref{eq:preparameter-ideal-schur-bound}.

We next record the perturbation bounds used in the parameter choice.  The finite-flat coefficient comparisons in
the proof of Proposition~\ref{prop:flat-paired-inverse-step15} give
$O(C_L\delta_\lambda)$ on a transition,
$O(e^L\lambda^{1-4/n})$ in the bubble--transition coordinate and flux
identifications, and $O(N^{-1})$ on the off-center connector.  Writing the
principal parts in the common Liouville gauges, multiplying the coefficient
difference by the two first derivatives, and applying Cauchy--Schwarz gives
\eqref{eq:preparameter-finite-ideal-form-bound}; the zeroth-order and fixed
finite-rank terms are bounded in the same or a stronger norm.
Lemma~\ref{lem:step8} gives, relative to the model Newton form, the
actual--flat coefficient error
$O(\epsilon_N^{\kappa_f}+\delta_{\!*}(N_0))$ on every bubble and exterior
block, together with the same estimate for one normalized parameter
derivative.  The localized sources and constraints are supported in fixed
bubble subcores and have uniformly bounded pullbacks by
Propositions~\ref{lem:step11}--\ref{lem:step12}.  Another Cauchy--Schwarz
estimate therefore gives \eqref{eq:preparameter-actual-flat-form-bound} and
the finite-rank closeness required in
\eqref{eq:abstract-finite-rank-closeness}.  Proposition~\ref{aux:abstract-energy-dtn-stability} then gives the associated
zero-trace, Poisson, weak-flux, and Dirichlet-to-Neumann perturbation
constants.  All constants here depend only on the preceding estimates.
\end{proof}

Let $C_E^{\rm id}$, $C_{E,{\rm geo}}$, and $C_{E,{\rm op}}(L)$ be the
constants from Proposition~\ref{aux:preparameter-energy-package}.  Replacing
$C_E^{\rm id}$ by $\max\{1,C_E^{\rm id}\}$ and enlarging
$C_{E,{\rm geo}}$ if necessary, we may assume that it also incorporates the
stability, zero-trace inverse, Poisson, weak-flux, and
Dirichlet-to-Neumann constants from
Proposition~\ref{aux:abstract-energy-dtn-stability} for the finite-flat and
ideal exterior problems.  Thus every corresponding inverse or
Dirichlet-to-Neumann difference is bounded by
\[
 C_{E,{\rm geo}}
 \bigl(C_L\delta_\lambda+e^L\lambda^{1-4/n}+N^{-1}\bigr)
\]
whenever this quantity is within the abstract stability threshold.  For
each fixed $L$, enlarge $C_{E,{\rm op}}(L)$ similarly to include the
corresponding constants for the actual--flat bubble and exterior problems.
After these choices, decrease $\vartheta_{\rm geo}$
from Proposition~\ref{prop:flat-paired-inverse-step15} so that
\begin{equation}\label{eq:energy-geometric-margin-step17}
 C_E^{\rm id}C_{E,{\rm geo}}\vartheta_{\rm geo}\le\frac14.
\end{equation}
Finally, let $C_{\rm pr}$ be the fixed constant in the following
consequence of the estimate for the principal pair in Lemma~\ref{lem:step8}:
the paired first metric variation generated by any principal inner core has
$\widehat Y_{\gamma,\beta}$ norm at most
$C_{\rm pr}\epsilon_N^{\kappa_f}$.  These constants depend only on the
preceding estimates.

For the remainder of the reduction set
\begin{equation}\label{eq:nonlinear-residual-size-step17}
 \varepsilon_N
 :=\eta_{L,N_0}+C_L\lambda^{2-8/n}+C\epsilon_N^{\kappa_f}.
\end{equation}
Choose $C_R\ge1$, independent of $L,N_0,N$, so that the residual
estimate \eqref{eq:refined-residual-step16} and its differentiated
form are bounded by $C_R\varepsilon_N$.  Enlarge the fixed constant $C_*$ in
\eqref{eq:refined-inverse-step16}, if necessary, and set
\begin{equation}\label{eq:refined-majorant-step17}
 K_{\rm ref}(L):=C_*(1+L).
\end{equation}
Enlarge $C_*$, if necessary, so that $K_{\rm ref}(L)$ dominates
$C_{\rm ref}(L)$, $C_{\rm tr}^0(L)$, and
$C_{\rm tr}^{\rm fl}(L)$ for every $L\ge L_*$.  Thus
$K_{\rm ref}(L)$ is fixed once $L$ is selected.

\begin{proposition}[Choice of parameters]
\label{prop:ordered-parameters-step17}
There is a constant $r_{\rm cone}>0$, independent of $L,N_0,N$, for which
one can first choose $L$, then define a positive number
$\varepsilon_\#(L)$, and subsequently select the remaining parameters in
the order
\[
 L\longrightarrow N_0\longrightarrow N
\]
so that all hypotheses of
Lemmas~\ref{lem:step10},~\ref{lem:step14},~\ref{lem:step15},
and~\ref{lem:step16} hold and
\begin{equation}\label{eq:ordered-residual-threshold-step17}
 K_{\rm ref}(L)\|\mathcal N_g(\bar v_{N,\lambda,\xi})
       \|_{\widehat Y_{\gamma,\beta}}
 \le K_{\rm ref}(L)C_R\varepsilon_N
 \le\varepsilon_\#(L),
\end{equation}
with the same bound after one normalized parameter derivative.  Moreover,
\begin{equation}\label{eq:ordered-contraction-threshold-step17}
 2K_{\rm ref}(L)C_Q(2\varepsilon_\#(L))\le\frac12,
 \qquad
 2\varepsilon_\#(L)\le\min\{r_{\rm nl},r_{\rm cone}\}.
\end{equation}
Here $r_{\rm cone}$ is a uniform model-relative Newton-cone radius: if
$\|w\|_{\widehat X_{\gamma,\beta}}\le r_{\rm cone}$, then
$e^{2w}(\bar v)^{8/(n-4)}g$ remains Newton elliptic with fixed
model-relative comparison constants.
The radius $r_{\rm nl}$ is the one fixed in Lemma~\ref{lem:step16} and satisfies the energy-form absorption condition \eqref{eq:final-energy-radius-step16}.
\end{proposition}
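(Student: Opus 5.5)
The proof is an ordered bookkeeping argument. The only genuinely analytic input still needed is the existence of the uniform cone radius $r_{\rm cone}$. I would establish that first, then define $\varepsilon_\#(L)$ explicitly, and finally choose $N_0$ and the index threshold. None of these choices may feed back into $L$.

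\textbf{Step 1: the cone radius.} I obtain $r_{\rm cone}$ by writing the conformal metric $e^{2w}\bar v^{8/(n-4)}g$ through the relative Schouten formula \eqref{eq:relative-Schouten-step16}. On each unit model block the Newton tensor of the perturbed metric differs from that of $\bar g$ by terms bounded by $Cq(\mathfrak X_Q(w)+\mathfrak X_Q(w)^2)$ relative to the model Newton scale $q/r^2$. The same structure appears in \eqref{eq:proper-residual-relative-step16}. The refined norm dominates $\sup_Q\mathfrak X_Q(w)$ on transition and Schwarzschild blocks, and on the bubble and connector blocks it dominates the $C^{2,\alpha}$ norm. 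Lemma~\ref{lem:step10} gives model-relative lower bounds with constants fixed by the flat models. These are $2(n-1)q$ and $(n-2)q$ on Schwarzschild pieces and $cq$ on transitions by \eqref{eq:transition-ellipticity-step6}. Choosing $r_{\rm cone}$ so that $C(r_{\rm cone}+r_{\rm cone}^2)$ is at most half of the smallest of these bounds gives a radius independent of $L,N_0,N$. The constant $C$ comes only from the quadratic structure of $\sigma_2$ and the normalized model coefficients, which are uniformly bounded by \eqref{eq:uniform-transition-local-coefficients-step14}.

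\textbf{Step 2: defining $\varepsilon_\#(L)$.} I fix $L\ge L_*$ large enough for Lemmas~\ref{lem:step6}, \ref{lem:step7}, \ref{lem:step14}, Proposition~\ref{prop:flat-paired-inverse-step15}, and \eqref{eq:uniform-L-target-transfer-step16}. This fixes $K_{\rm ref}(L)=C_*(1+L)$. Then I set
\[
 \varepsilon_\#(L):=\min\Bigl\{\tfrac12 r_{\rm nl},\ \tfrac12 r_{\rm cone},\ \bigl(8K_{\rm ref}(L)C_Q\bigr)^{-1}\Bigr\}.
\]
With this choice $2K_{\rm ref}C_Q(2\varepsilon_\#)\le\tfrac12$ and \eqref{eq:ordered-contraction-threshold-step17} hold. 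This uses that $C_Q$ and $r_{\rm nl}$ are independent of $L$ by Lemma~\ref{lem:step16}.

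\textbf{Step 3: choosing $N_0$.} I recall that $\varepsilon_N=\eta_{L,N_0}+C_L\lambda^{2-8/n}+C\epsilon_N^{\kappa_f}$ with $\eta_{L,N_0}=C(e^{-a_\Sigma L}+\delta_*(N_0)+N_0^{-1})$. The term $Ce^{-a_\Sigma L}$ does not depend on $N_0$ or $N$.

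\textbf{Main obstacle.} The main difficulty is exactly here. We need $K_{\rm ref}(L)C_RCe^{-a_\Sigma L}\le\tfrac13\varepsilon_\#(L)$. Since $\varepsilon_\#(L)\gtrsim (1+L)^{-1}$, this reduces to $C(1+L)^2e^{-a_\Sigma L}\le c$, which holds after enlarging $L_*$ once more. This is possible because all constants involved are fixed independently of $L$. I would do this enlargement at the very start, so that Step 2 is not reopened.

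\textbf{Step 3, continued.} Given $L$, I choose $N_0$ so that $K_{\rm ref}C_RC(\delta_*(N_0)+N_0^{-1})\le\tfrac13\varepsilon_\#$, using \eqref{eq:delta-star-limit}. The same choice gives the $\delta_*$ parts of \eqref{eq:transition-admissible-smallness-step14}, \eqref{eq:actual-inverse-ordered-smallness-step15}, and the smallness in Lemma~\ref{lem:step10}. Each of these requires only $C(L)\delta_*(N_0)$ small, via \eqref{eq:eta-small-step15}.

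\textbf{Step 4: the index threshold.} Finally I take the index threshold large. Since $\lambda\asymp\epsilon_N=2^{-N}$, the remaining terms $C_L\lambda^{2-8/n}$, $\epsilon_N^{\kappa_f}$, and $(1+|\log\epsilon_N|)\epsilon_N^{\kappa_f}$ all tend to zero. So do the quantities in \eqref{eq:flat-ordered-transition-smallness-step15} and the $\lambda^4$ part in Proposition~\ref{prop:refined-transfer-step16}. Together with Step 3 this gives \eqref{eq:ordered-residual-threshold-step17}. The differentiated version follows because $C_R$ was chosen to bound the differentiated residual as well.
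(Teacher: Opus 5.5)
Your proposal is correct and follows essentially the same route as the paper. You obtain a uniform cone radius from Lemma~\ref{lem:step10} and the relative Schouten formula, then fix $L$, absorbing $K_{\rm ref}(L)e^{-a_\Sigma L}$ via $\varepsilon_\#(L)\gtrsim(1+L)^{-1}$, then define an explicit $\varepsilon_\#(L)$ as a minimum, then choose $N_0$ using $\delta_{\!*}(N_0)+N_0^{-1}\to0$, and finally raise the index threshold. The differences are cosmetic. Your constants $\tfrac12 r_{\rm nl}$ and $(8K_{\rm ref}C_Q)^{-1}$ replace the paper's $\tfrac18 r_{\rm nl}$ and $(32K_{\rm ref}C_Q)^{-1}$, and you perform the $L$-enlargement up front rather than by ``increasing $L$ if necessary'' after $\varepsilon_\#(L)$ is defined. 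At this same stage the paper also records the energy thresholds \eqref{eq:choose-N0-energy-step17}--\eqref{eq:choose-N-principal-radius-step17} and $T_{N,\pm}\ge T_E$; the statement itself does not require these, but Section~7 uses them.
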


\begin{proof}
Lemma~\ref{lem:step10}, the conformal Schouten formula, and the definition of
$\widehat X_{\gamma,\beta}$ give a uniform $r_{\rm cone}>0$: on each unit
model block the change of the Newton tensor is bounded by
$C(\|w\|_{\widehat X}+\|w\|_{\widehat X}^2)$ times the model Newton tensor.

We now choose the parameters successively.  Since
$K_{\rm ref}(L)=O(1+L)$, first take $L\ge L_*$ so large that all terms of the
form $K_{\rm ref}(L)e^{-a_\Sigma L}$ and
$K_{\rm ref}(L)e^{-nL}$ occurring in the residual, refined inverse, and
energy comparisons are below their required thresholds.  With this $L$
fixed, set
\begin{equation}\label{eq:epsilon-sharp-definition-step17}
 \varepsilon_\#(L)
 :=\min\left\{\frac{r_{\rm nl}}8,\frac{r_{\rm cone}}8,
 \frac1{32K_{\rm ref}(L)C_Q},1\right\}.
\end{equation}
Increasing $L$ if necessary, we may also arrange
$K_{\rm ref}(L)C_Re^{-a_\Sigma L}\le\varepsilon_\#(L)/4$ and the target
absorption required in Proposition~\ref{prop:refined-transfer-step16}.

Next choose $N_0$ large.  Because
$\delta_{\!*}(N_0)+N_0^{-1}\to0$, all $N_0$-dependent smallness conditions
in the global inverse, transition estimate, refined inverse, and finite-rank
energy comparison can be imposed simultaneously.  In particular, we require
\begin{equation}\label{eq:choose-N0-energy-step17}
 C_E^{\rm id}C_{E,{\rm op}}(L)\delta_{\!*}(N_0)\le\frac1{16},
\end{equation}
The remaining $N_0$-dependent terms are chosen so that
\[
 K_{\rm ref}(L)C_R\,(\text{their contribution to }\varepsilon_N)
 \le \frac{\varepsilon_\#(L)}4.
\]

Finally, with $L$ and $N_0$ fixed, choose the selected-index threshold so
large that all remaining scale-separation and smallness conditions hold.
This is possible because $\lambda\asymp\epsilon_N\to0$ and
$(1+|\log\epsilon_N|)\epsilon_N^{\kappa_f}\to0$.  We record explicitly the three conditions needed in Section~7:
\begin{align}
 C_L\delta_\lambda+e^L\lambda^{1-4/n}+N^{-1}
 &\le\vartheta_{\rm geo}e^{-L},
 \label{eq:choose-N-flat-geometry-step17}\\
 C_E^{\rm id}C_{E,{\rm op}}(L)\epsilon_N^{\kappa_f}
 &\le\frac1{16},
 \label{eq:choose-N-energy-step17}\\
 K_{\rm ref}(L)C_{\rm pr}\epsilon_N^{\kappa_f}
 &\le r_{\rm nl}.
 \label{eq:choose-N-principal-radius-step17}
\end{align}
At the same stage we impose the $N$-dependent parts of
\eqref{eq:flat-ordered-transition-smallness-step15},
\eqref{eq:actual-inverse-ordered-smallness-step15},
\eqref{eq:transition-admissible-smallness-step14}, and the two Neumann-series
absorptions in Proposition~\ref{prop:refined-transfer-step16}, and require
all bubble truncation parameters to be at least $T_E$.  These form a finite
collection of conditions, each tending to zero with $N$.

The residual estimate now splits into its $e^{-a_\Sigma L}$,
$\delta_{\!*}(N_0)+N_0^{-1}$, and positive-power-of-$\epsilon_N$ parts;
the three choices above give \eqref{eq:ordered-residual-threshold-step17}.
Equation \eqref{eq:ordered-contraction-threshold-step17} follows from
\eqref{eq:epsilon-sharp-definition-step17}.  Thus all preceding analytic
hypotheses hold with the stated order of choices.

Increasing the selected-index threshold does not affect the scale
relations $\lambda^{n-4}=o(s_N)$ and
$A_N\lambda^{a_\Sigma}=o(s_N)$ or any of the local response limits.
\end{proof}

\begin{lemma}[Projected nonlinear solution]\label{lem:step17}
For the parameters selected in
Proposition~\ref{prop:ordered-parameters-step17}, and every subsequent selected
index, there are unique
\[
 w_{N,\lambda,\xi}\in\widehat X_{\gamma,\beta},
 \qquad c_0(N,\lambda,\xi),\ldots,c_n(N,\lambda,\xi)\in\mathbb R,
\]
in the ball selected below such that
\begin{equation}\label{eq:projected-nonlinear-equation-step17}
 \mathcal N_g\!\left(
   \bar v_{N,\lambda,\xi}e^{aw_{N,\lambda,\xi}}\right)
 =\sum_{a=0}^nc_a(N,\lambda,\xi)\Psi_{a,N},
 \qquad
 \mathfrak L_b(w_{N,\lambda,\xi})=0.
\end{equation}
They satisfy the uniform estimate
\begin{equation}\label{eq:nonlinear-solution-bound-step17}
 \|w_{N,\lambda,\xi}\|_{\widehat X_{\gamma,\beta}}
 +\sum_{a=0}^n|c_a(N,\lambda,\xi)|
 \le C\varepsilon_N.
\end{equation}
The correction and coefficients are $C^1$ in $(\lambda',\xi')$, and every
normalized parameter derivative obeys
\begin{equation}\label{eq:nonlinear-parameter-bound-step17}
 \|\mathfrak Dw_{N,\lambda,\xi}\|_{\widehat X_{\gamma,\beta}}
 +\sum_{a=0}^n|\mathfrak Dc_a(N,\lambda,\xi)|
 \le C\varepsilon_N.
\end{equation}
The constant may depend on the selected transition length $L$, but is
independent of $N_0,N$ and of the normalized finite-dimensional parameters
after $L$ is fixed.  The
metric
\[
 v_{N,\lambda,\xi}^{8/(n-4)}g,
 \qquad
 v_{N,\lambda,\xi}
 =\bar v_{N,\lambda,\xi}e^{aw_{N,\lambda,\xi}},
\]
is positive and Newton elliptic with uniform model-relative constants.  If
all projected coefficients vanish, it solves the normalized $\sigma_2$
equation and lies in $\Gamma_2^+$.
\end{lemma}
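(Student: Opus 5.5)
The plan is a contraction-mapping argument for the projected problem \eqref{eq:projected-nonlinear-equation-step17}, using the global projected inverse of Lemma~\ref{lem:step15} in the refined form of Proposition~\ref{prop:refined-transfer-step16}. For a $J$-invariant $w$ in the ball
\[
 \mathcal B_\#:=\{w\in\widehat X_{\gamma,\beta}:\ J\text{-invariant},\ \|w\|_{\widehat X_{\gamma,\beta}}\le2\varepsilon_\#(L)\},
\]
let $\mathcal G_N(F)=(w,c)$ denote the solution of \eqref{eq:global-projected-problem-step15} with source $F$. By the definition \eqref{eq:nonlinear-remainder-step16} of $Q_N$, equation \eqref{eq:projected-nonlinear-equation-step17} is equivalent to
\[
 \mathscr L_Nw=-\mathcal N_g(\bar v_{N,\lambda,\xi})-Q_N(w)+\sum_ac_a\Psi_{a,N},
 \qquad \mathfrak L_b(w)=0.
\]
I would therefore define the map $\Phi(w):=\mathcal G_N\bigl(-\mathcal N_g(\bar v)-Q_N(w)\bigr)_1$, the $w$-component. $J$-invariance is preserved, because $\bar v$, the sources $\Psi_{a,N}$ and the functionals $\mathfrak L_b$ are all $J$-paired and $\mathcal N_g$ commutes with $J$ by Proposition~\ref{prop:scalar-kelvin-verified}.

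For the self-map property, \eqref{eq:refined-inverse-step16}, the residual bound \eqref{eq:ordered-residual-threshold-step17} and \eqref{eq:quadratic-self-step16} give
\[
 \|\Phi(w)\|_{\widehat X}\le K_{\rm ref}(L)\bigl(C_R\varepsilon_N+C_Q\|w\|^2_{\widehat X}\bigr)\le\varepsilon_\#+\tfrac14\|w\|_{\widehat X},
\]
using $2K_{\rm ref}C_Q(2\varepsilon_\#)\le\frac12$. Hence $\Phi$ maps $\mathcal B_\#$ into itself. For the contraction, \eqref{eq:quadratic-Lipschitz-step16} gives a Lipschitz constant $K_{\rm ref}C_Q\cdot4\varepsilon_\#\le\frac12$. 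Here $2\varepsilon_\#\le r_{\rm nl}$ guarantees that Lemma~\ref{lem:step16} applies. The Banach fixed point theorem then gives a unique $w$ in $\mathcal B_\#$, and the coefficients $c_a$ are read off from the inverse. Rerunning the self-map estimate with the ball radius $2K_{\rm ref}C_R\varepsilon_N$ yields \eqref{eq:nonlinear-solution-bound-step17}, with a constant depending on $L$ only through $K_{\rm ref}(L)$. Admissibility follows because $\|w\|\le r_{\rm cone}$: the metric $e^{2w}\bar v^{8/(n-4)}g$ is Newton elliptic with the model-relative constants of Lemma~\ref{lem:step10}. If all $c_a$ vanish, then $\mathcal N_g(v)=0$. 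Positivity of the Newton tensor, together with $\sigma_2>0$ (equal to the constant $\frac14\binom n2$), places the Schouten eigenvalues in $\Gamma_2^+$, since the Newton-positive component of $\{\sigma_2>0\}$ is exactly $\Gamma_2^+$.

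For $C^1$ dependence on the parameters, I would apply the implicit function theorem to
\[
 \mathcal F(w,\lambda',\xi'):=w-\Phi_{\lambda',\xi'}(w)
\]
in the fixed natural block coordinates, with the weights frozen as in Definition~\ref{def:global-spaces-step9}. The partial derivative $D_w\mathcal F=I-D_w\Phi$ is invertible, since $\|D_w\Phi\|\le\frac12$ by the contraction estimate. Differentiating the fixed-point equation gives
\[
 \mathfrak Dw=\mathcal G_N\bigl(-\mathfrak D\mathcal N_g(\bar v)-\mathfrak D[Q_N(w)]\bigr)+(\mathfrak D\mathcal G_N)(\cdot),
\]
and each term is controlled: the inverse derivative by \eqref{eq:global-inverse-parameter-step15} in its refined version, the differentiated residual by Proposition~\ref{prop:refined-transfer-step16}, and the quadratic term by \eqref{eq:quadratic-parameter-step16}. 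The terms of the form $C_Q\|w\|\,\|\mathfrak Dw\|$ carry a factor at most $\frac12$ and can be absorbed into the left-hand side, which gives \eqref{eq:nonlinear-parameter-bound-step17}.

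I expect this last step to be the main obstacle. The blocks and the weight $d_N$ move with $(\lambda,\xi)$, so proving genuine $C^1$ regularity, and not merely a difference-quotient bound, requires identifying the spaces $\widehat X_{\gamma,\beta}$ for nearby parameters through a parameter-dependent diffeomorphism. I would use the natural block coordinates, in which the norms are parameter independent, and then verify continuity of $\mathfrak D\Phi$ from the $C^1$ structure of the coefficients established in Section~4.
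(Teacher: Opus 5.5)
Your proposal is correct and follows the paper's proof. It runs a contraction for $w\mapsto G_N[-\mathcal N_g(\bar v_{N,\lambda,\xi})-Q_N(w)]$ using the refined inverse, the residual threshold and Lemma~\ref{lem:step16}. It then differentiates the fixed-point equation, absorbs the $\|w\|\,\|\mathfrak Dw\|$ terms, and uses positivity of the Newton tensor to land in $\Gamma_2^+$. The only difference is cosmetic: the paper works on the shrinking ball of radius $\varrho_N=2C_{\rm ref}(L)C_R\varepsilon_N$, which gives \eqref{eq:nonlinear-solution-bound-step17} at once, whereas you contract on the fixed ball of radius $2\varepsilon_\#(L)$ and then read the $O(\varepsilon_N)$ bound off the fixed-point identity.
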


\begin{proof}
Let $R_N=\mathcal N_g(\bar v_{N,\lambda,\xi})$ and write
$\mathbb G_NF=(G_NF,\mathbf C_NF)$ for the refined projected inverse.
Proposition~\ref{prop:refined-transfer-step16} gives
\[
 \|R_N\|_{\widehat Y_{\gamma,\beta}}
 +\|\mathfrak DR_N\|_{\widehat Y_{\gamma,\beta}}
 \le C_R\varepsilon_N,
 \qquad
 \|G_NF\|_{\widehat X_{\gamma,\beta}}+|\mathbf C_NF|
 \le C_{\rm ref}(L)\|F\|_{\widehat Y_{\gamma,\beta}}.
\]
Set
\[
 \mathcal T_N(w)=G_N[-R_N-Q_N(w)],
 \qquad
 \varrho_N=2C_{\rm ref}(L)C_R\varepsilon_N.
\]
By Proposition~\ref{prop:ordered-parameters-step17},
\begin{equation}\label{eq:contraction-smallness-step17}
 \varrho_N\le\min\{r_{\rm nl},r_{\rm cone}\},
 \qquad
 2C_{\rm ref}(L)C_Q\varrho_N\le\frac12.
\end{equation}
Lemma~\ref{lem:step16} therefore shows that $\mathcal T_N$ maps the ball
$\{\|w\|_{\widehat X}\le\varrho_N\}$ into itself and has Lipschitz constant
at most $1/2$.  Its unique fixed point, together with
$\mathbf c_N=\mathbf C_N[-R_N-Q_N(w)]$, satisfies
\eqref{eq:projected-nonlinear-equation-step17} and
\eqref{eq:nonlinear-solution-bound-step17}.

The fixed-point equation is $C^1$ in the normalized parameters.
Differentiating it and using \eqref{eq:quadratic-parameter-step16} gives
\[
 \|\mathfrak Dw\|_{\widehat X}+|\mathfrak D\mathbf c_N|
 \le C\bigl(\varepsilon_N+\varrho_N^2
       +\varrho_N\|\mathfrak Dw\|_{\widehat X}\bigr).
\]
The last term is absorbed by \eqref{eq:contraction-smallness-step17}, proving
\eqref{eq:nonlinear-parameter-bound-step17}.

Finally, $\|w\|_{\widehat X}\le r_{\rm cone}$ preserves the Newton gap and
positivity.  If all $c_a$ vanish, then the normalized equation holds; the
positive first Newton tensor has trace $(n-1)\sigma_1>0$, so the solution
lies in $\Gamma_2^+$.
\end{proof}

We have now obtained a $C^1$ projected solution with uniform control in the
refined spaces.  It remains to identify the projected coefficients with the
derivatives of the reduced energy and then to derive its asymptotic
expansion.  We first record the variational identity used in the
finite-dimensional reduction.

For a positive conformal factor $v$ on $\mathbb S^n$, define the normalized
$\sigma_2$ energy
\begin{equation}\label{eq:global-energy-step18}
 \mathscr E_g(v)
 :=\int_{\mathbb S^n}
   \sigma_2(g_v^{-1}A_{g_v})\,dv_{g_v}
 -\frac{n-4}{4n}\binom n2
  \int_{\mathbb S^n}dv_{g_v},
 \qquad g_v=v^{8/(n-4)}g.
\end{equation}

\begin{lemma}[Variational reduction identity]\label{lem:step18}
Let $v_{N,\lambda,\xi}$ be the projected solution from
Lemma~\ref{lem:step17}, and define
\[
 \mathscr J_N(\lambda',\xi')
 =\mathscr E_g\bigl(v_{N,\epsilon_N\lambda',\epsilon_N\xi'}\bigr),
 \qquad
 a_0=\log\lambda',\quad a_j=\xi_j',\ 1\le j\le n.
\]
For the paired source functions $\Psi_{a,N}$ used in the projected equation,
put
\begin{equation}\label{eq:variational-matrix-definition-step18}
 M^N_{ab}
 :=\int_{\mathbb S^n}
   \Psi_{a,N}\,v_{N,\lambda,\xi}^{-1}
   \partial_{a_b}v_{N,\lambda,\xi}\,dv_g,
 \qquad 0\le a,b\le n.
\end{equation}
Then the exact identity
\begin{equation}\label{eq:variational-reduction-step18}
 \partial_{a_b}\mathscr J_N
 =4\sum_{a=0}^nc_aM^N_{ab}
\end{equation}
holds.  Moreover, uniformly on the normalized parameter set,
\begin{equation}\label{eq:variational-matrix-estimate-step18}
 \bigl\|M^N-2aI_{n+1}\bigr\|
 \le C\varepsilon_N,
\end{equation}
where $\varepsilon_N$ is defined in
\eqref{eq:nonlinear-residual-size-step17}.  After the ordered choice of
$L,N_0$ and for all sufficiently large selected indices, $M^N$ is uniformly
invertible.  Consequently every interior critical point of $\mathscr J_N$
solves the full normalized equation and gives a positive
$\Gamma_2^+$-admissible metric.
\end{lemma}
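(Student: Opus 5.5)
The plan is to read the identity off the first variation of the $\sigma_2$ energy, which for $k=2$ holds without local conformal flatness \cite{Viaclovsky}. For a positive $v$ and a relative variation $v_t=ve^{at\phi}$, so that $g_{v_t}=e^{2t\phi}g_v$, the conformal variation of the total $\sigma_2$-curvature is $(n-4)\int\sigma_2(g_v^{-1}A_{g_v})\phi\,dv_{g_v}$. The volume term varies by $\frac{n-4}{4n}\binom n2\cdot n\int\phi\,dv_{g_v}$. Hence
\[
 \frac{d}{dt}\Big|_{t=0}\mathscr E_g(v_t)=(n-4)\int_{\mathbb S^n}\Big(\sigma_2(g_v^{-1}A_{g_v})-\tfrac14\tbinom n2\Big)\phi\,dv_{g_v}
 =(n-4)\int_{\mathbb S^n}\mathcal N_g(v)\,\phi\,dv_g ,
\]
since $dv_{g_v}=v^{2n/(n-4)}dv_g$ and $\mathcal N_g(v)=v^{4n/(n-4)}(\sigma_2-\frac14\binom n2)$. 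To reconcile the weights exactly I would check that $dv_{g_v}$ and the factor in $\mathcal N_g$ combine to $v^{p_*}$ against $dv_g$ after the relative normalization $\phi=v^{-1}\partial v/a$. The constant $4$ in \eqref{eq:variational-reduction-step18} should then be exactly $(n-4)/a$.

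Next I would apply this with $v=v_{N,\lambda,\xi}$ and the direction $\partial_{a_b}$. This is legitimate because $v$ is $C^1$ in the normalized parameters by Lemma~\ref{lem:step17}, and $a\phi=v^{-1}\partial_{a_b}v$. Substituting the projected equation \eqref{eq:projected-nonlinear-equation-step17} replaces $\mathcal N_g(v)$ by $\sum_a c_a\Psi_{a,N}$, which gives $\partial_{a_b}\mathscr J_N=\frac{n-4}{a}\sum_a c_a\int\Psi_{a,N}v^{-1}\partial_{a_b}v\,dv_g=4\sum_ac_aM^N_{ab}$. This is the exact identity.

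For \eqref{eq:variational-matrix-estimate-step18}, split $v^{-1}\partial_{a_b}v=a\,\mathcal Z_b'+a\,\partial_{a_b}w$. Here $\mathcal Z_b'$ denotes the relative parameter fields of $\bar v$ with respect to $\log\lambda'$ and $\xi'$; these coincide with the $\mathcal Z_b$ of \eqref{eq:relative-parameter-fields-step11}, since $\lambda'\partial_{\lambda'}=\lambda\partial_\lambda$ and $\partial_{\xi'_j}=\epsilon_N\partial_{\xi_j}$. The $\Psi_{a,N}$ are supported in fixed bubble subcores, where the core is exact. On each of the two ends, Proposition~\ref{lem:step11} and the one-ended duality \eqref{eq:global-source-duality-step15} give $\int\Psi^\pm_{a,N}\,\mathfrak z_b\,d\mu_*=\delta_{ab}$, after pulling back by the exact chart. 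The weight $v^{p_*}dv_g$ becomes $d\mu_*$ up to the correction $e^{p_*aw}-1=O(\varepsilon_N)$. Summing the two ends yields $2a\delta_{ab}$, which is the factor two anticipated before \eqref{eq:global-projected-problem-step15}. The remaining terms are bounded by $C\|\mathfrak Dw\|_{\widehat X}+C\|w\|_{\widehat X}\le C\varepsilon_N$, using \eqref{eq:nonlinear-solution-bound-step17}--\eqref{eq:nonlinear-parameter-bound-step17} and the fact that the bubble-block $C^{2,\alpha}$ norm is part of $X^\sharp_\gamma$.

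Invertibility of $M^N$ is then a Neumann series argument, since $\varepsilon_N\to0$ in the ordered choice of parameters. At an interior critical point the identity gives $M^{N,T}c=0$, so $c=0$, and Lemma~\ref{lem:step17} supplies a $\Gamma_2^+$ solution. I expect the main obstacle to be the bookkeeping of exact weights and normalizations. This includes the pairing convention between $\Psi_{a,N}$, which is a Euclidean or spherical source representative, and $dv_g$; it also includes confirming that the constant is precisely $4$ and the diagonal precisely $2a$. The analytic content itself is straightforward.
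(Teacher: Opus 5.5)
Your proposal is correct and follows the paper's proof essentially step for step. You use the first variation of $\mathscr E_g$ along the relative variation $ve^{at\phi}$ where the paper uses the additive variation $v+s\psi$, which is equivalent. You then substitute the projected equation, split $v^{-1}\partial_{a_b}v=a(\mathcal Z_b+\partial_{a_b}w)$, sum the one-ended dualities over the two exact cores to get $2a\delta_{ab}$ plus $O(\varepsilon_N)$, and invert $M^N$ to force $c=0$; all of this matches the paper.

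The only slip is in the weight bookkeeping. In fact $dv_{g_v}=v^{4n/(n-4)}dv_g=v^{p_*}dv_g$, not $v^{2n/(n-4)}dv_g$, so $(\sigma_2-\tfrac14\binom n2)\,dv_{g_v}=\mathcal N_g(v)\,dv_g$ holds exactly. Moreover, $M^N$ is paired against $dv_g$ with no $v^{p_*}$ weight, and $\Psi_{a,N}$ is pulled back with the exact core bubble. The measure is therefore exactly $d\mu_*$, and your $e^{p_*aw}-1$ correction never arises (it would have been harmless anyway).
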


\begin{proof}
For a conformal variation $\widehat g_s=e^{2s\varphi}\widehat g$, the
variation of the Schouten endomorphism is
$-2\varphi\widehat g^{-1}A_{\widehat g}
 -\widehat g^{-1}\nabla_{\widehat g}^2\varphi$.  Since the first Newton
tensor is divergence free, integration by parts gives
\[
 \left.\frac d{ds}\right|_{0}
 \int_{\mathbb S^n}\sigma_2(\widehat g_s^{-1}A_{\widehat g_s})
 \,dv_{\widehat g_s}
 =(n-4)\int_{\mathbb S^n}\varphi\sigma_2\,dv_{\widehat g}.
\]
Applying this to $g_v$ and the additive variation $v+s\psi$, and including
the volume term in \eqref{eq:global-energy-step18}, yields
\begin{equation}\label{eq:energy-first-variation-step18}
 D\mathscr E_g(v)[\psi]
 =4\int_{\mathbb S^n}\mathcal N_g(v)v^{-1}\psi\,dv_g.
\end{equation}
The chain rule and \eqref{eq:projected-nonlinear-equation-step17} now give
\eqref{eq:variational-reduction-step18}.

Write $v=\bar ve^{aw}$.  Since
$\partial_{a_0}=\lambda\partial_\lambda$ and
$\partial_{a_j}=\epsilon_N\partial_{\xi_j}$,
\[
 v^{-1}\partial_{a_b}v
 =a(\mathcal Z_b+\partial_{a_b}w).
\]
The paired sources are supported in fixed subcores of the two exact bubble
charts.  Proposition~\ref{lem:step11}, Proposition~\ref{lem:step12}, and
\eqref{eq:bubble-source-duality-step12} give
\[
 \sum_{\pm}\int\Psi_{a,N}\mathcal Z_b\,dv_g=2\delta_{ab}.
\]
On these subcores, Lemma~\ref{lem:step17} gives
$\|\partial_{a_b}w\|_{C^0}\le C\varepsilon_N$, while the intrinsic
$L^1$ norms of the sources are uniformly bounded.  This proves
\eqref{eq:variational-matrix-estimate-step18}.  Hence $M^N$ is uniformly
invertible for large selected indices, and at a critical point
\eqref{eq:variational-reduction-step18} implies $c_0=\cdots=c_n=0$.
\end{proof}

\section{Energy localization and expansion}

Put
\begin{equation}\label{eq:AN-sN}
 A_N=\mu_N\epsilon_N^{d_f},
 \qquad
 s_N=A_N^2=\mu_N^2\epsilon_N^{2d_f}
 =\epsilon_N^{n/2+2m_f}.
\end{equation}

\subsection{The one-ended principal model}
Fix a radial function $\eta_\sharp\in C_c^\infty([0,\infty))$ which is one
on $[0,1]$ and zero on $[2,\infty)$.  For the fixed transition length $L$,
choose $c_L>0$ so small that, uniformly for
$\lambda=\epsilon_N\lambda'$ and $1/2\le\lambda'\le3/2$,
\[
 B_{2\sigma_N^\sharp}(x_N^-)
 \subset\{\,|x-c_N(\xi)|<R_\lambda^-e^{-L}\,\},
 \qquad
 \sigma_N^\sharp=c_L\epsilon_N^{4/n},
\]
for all sufficiently large $N$.  Since
$\sigma_N^\sharp=o(\rho_N)$, the cutoff in $h_N^-$ is identically one on
this ball.  Define the localized inner tensor
\begin{equation}\label{eq:inner-principal-tensor-step19}
 h_{N,-}^\sharp(x)
 :=\eta_\sharp\!\left((\sigma_N^\sharp)^{-1}|x-x_N^-|\right)h_N^-(x).
\end{equation}
In the metric-centered variables $x=x_N^-+\epsilon_Nz$ put
\begin{equation}\label{eq:scaled-principal-tensor-step19}
 \widehat h_N(z):=A_N^{-1}h_{N,-}^\sharp(x_N^-+\epsilon_Nz),
 \qquad
 \widehat h(z):=f(|z|^2)H(z).
\end{equation}
Then
\begin{equation}\label{eq:scaled-principal-tensor-convergence-step19}
 \widehat h_N(z)
 =\eta_\sharp\!\left(\frac{\epsilon_N|z|}{\sigma_N^\sharp}\right)
   \widehat h(z),
 \qquad
 \widehat h_N\longrightarrow\widehat h
 \quad\hbox{in }C^3_{\rm loc}(\mathbb R^n),
\end{equation}
with the same conclusion after one normalized parameter derivative.

For $U=U_{\lambda',\xi'}$, define the finite-$N$ and limiting first
metric sources by
\begin{align}
 \mathcal R_{N,\xi',\lambda'}
 &:=\left.\frac d{dt}\right|_{t=0}
   \mathcal N_{e^{t\widehat h_N}\delta}(U),
 \label{eq:finite-principal-metric-source-step19}\\
 \mathcal R_{\xi',\lambda'}
 &:=\left.\frac d{dt}\right|_{t=0}
   \mathcal N_{e^{t\widehat h}\delta}(U).
 \label{eq:principal-metric-source-step19}
\end{align}
Let
\begin{equation}\label{eq:relative-bubble-operator-step19}
 \mathscr L_{\lambda',\xi'}W
 :=U^{-p_*}\left.\frac d{dt}\right|_{t=0}
   \mathcal N_\delta(Ue^{atW}).
\end{equation}
Transport the localized functionals and source functions of
Proposition~\ref{lem:step12} to the round bubble $U_{\lambda',\xi'}$.  Since
$\widehat h_N$ is compactly supported, Proposition~\ref{lem:step12} gives a unique
smooth projected response $W_{N,\xi',\lambda'}^\sharp$ satisfying
\begin{equation}\label{eq:finite-principal-response-problem-step19}
 \mathscr L_{\lambda',\xi'}W_{N,\xi',\lambda'}^\sharp
 =-U^{-p_*}\mathcal R_{N,\xi',\lambda'}
   +\sum_{a=0}^n\beta_{N,a}\Psi_a^{\lambda',\xi'},
 \qquad
 \ell_b^{\lambda',\xi'}(W_{N,\xi',\lambda'}^\sharp)=0.
\end{equation}
The \emph{limiting principal projected response}
$W_{\xi',\lambda'}$ is the limit of these solutions, equivalently the limit
obtained after replacing $\widehat h_N$ by a standard cutoff
$\chi_R\widehat h$ and sending $R\to\infty$.  It satisfies
\begin{equation}\label{eq:principal-response-problem-step19}
 \mathscr L_{\lambda',\xi'}W_{\xi',\lambda'}
 =-U^{-p_*}\mathcal R_{\xi',\lambda'}
   +\sum_{a=0}^n\beta_a\Psi_a^{\lambda',\xi'},
 \qquad
 \ell_b^{\lambda',\xi'}(W_{\xi',\lambda'})=0.
\end{equation}
The strict inequality $n>4m_f+8$ guarantees that the limit and all
pairings below exist.  The limiting source, projected response, coefficient
vector, and functional depend $C^2$ on $(\xi',\lambda')$ on the normalized
parameter set.  The finite-$N$ convergence and energy expansion will only be
used with one normalized parameter derivative.  Indeed, after up to two
parameter derivatives the largest radial density in the quadratic metric
terms remains $O(r^{4m_f-n+7})\,dr$ at infinity.

For a compactly supported tensor $k$, set
\[
 \mathscr A_{\rm geom}(k;\xi',\lambda')
 :=\frac12\left.\frac{d^2}{dt^2}\right|_{t=0}
   \mathscr E_{e^{tk}\delta}(U_{\lambda',\xi'}).
\]
The preceding integrability gives the cutoff-independent limit
\begin{equation}\label{eq:Ageom-step19}
 \mathscr A_{\rm geom}(\xi',\lambda')
 :=\lim_{R\to\infty}
   \mathscr A_{\rm geom}(\chi_R\widehat h;\xi',\lambda'),
\end{equation}
where $\chi_R$ is any standard radial cutoff.  Define the relative bubble
bilinear form
\begin{equation}\label{eq:bubble-bilinear-form-step19}
 \mathscr B_{\lambda',\xi'}(W_1,W_2)
 :=4a\int_{\mathbb R^n}
 U^{p_*}(\mathscr L_{\lambda',\xi'}W_1)W_2\,dz.
\end{equation}
It is symmetric.  Define
\begin{align}
 \mathcal F(\xi',\lambda')
 :={}&\mathscr A_{\rm geom}(\xi',\lambda')
 +4a\int_{\mathbb R^n}
       \mathcal R_{\xi',\lambda'}W_{\xi',\lambda'}\,dz
 \nonumber\\
 &\quad+\frac12\mathscr B_{\lambda',\xi'}
       (W_{\xi',\lambda'},W_{\xi',\lambda'}).
 \label{eq:local-functional-step19}
\end{align}
Since the localized projection sources represent the functionals
$\ell_b^{\lambda',\xi'}$, equation
\eqref{eq:principal-response-problem-step19} gives the equivalent formula
\begin{equation}\label{eq:local-functional-simplified-step19}
 \mathcal F(\xi',\lambda')
 =\mathscr A_{\rm geom}(\xi',\lambda')
  +2a\int_{\mathbb R^n}
       \mathcal R_{\xi',\lambda'}W_{\xi',\lambda'}\,dz.
\end{equation}

Let $\mathscr E_{N,-}^{\rm loc}$ be the normalized energy of the local
metric
\begin{equation}\label{eq:left-local-total-metric-step19}
 \left[
 U_{\epsilon_N\lambda',\,x_N^-+\epsilon_N\xi'}(x)
 \exp\!\left(
   aA_NW_{N,\xi',\lambda'}^\sharp
   \!\left(\frac{x-x_N^-}{\epsilon_N}\right)
 \right)
 \right]^{\frac8{n-4}}
 e^{h_{N,-}^\sharp(x)}\delta.
\end{equation}
Let $\mathscr E_{N,+}^{\rm loc}$ be the normalized energy of its exact
antipodal image.

\begin{lemma}[One-ended local energy expansion]\label{lem:step19}
Uniformly for
\[
 \frac12\le\lambda'\le\frac32,
 \qquad |\xi'|\le1,
\]
one has
\begin{equation}\label{eq:left-local-energy-expansion-step19}
 \mathscr E_{N,-}^{\rm loc}(\xi',\lambda')
 =\mathscr E_2(\mathbb S^n)
  +s_N\mathcal F(\xi',\lambda')+o_{C^1}(s_N).
\end{equation}
The right local energy satisfies the same expansion
\begin{equation}\label{eq:right-local-energy-expansion-step19}
 \mathscr E_{N,+}^{\rm loc}(\xi',\lambda')
 =\mathscr E_2(\mathbb S^n)
  +s_N\mathcal F(\xi',\lambda')+o_{C^1}(s_N).
\end{equation}
In the right Kelvin chart the model coincides with the left model.  In
physical right bubble coordinates this identification conjugates the Weyl
tensor and the translation variables by the orthogonal differential of the
antipodal map.
\end{lemma}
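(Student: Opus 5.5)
The plan is to reduce the left expansion \eqref{eq:left-local-energy-expansion-step19} to a second-order Taylor expansion in the small amplitude $A_N$. I first rescale by $x=x_N^-+\epsilon_Nz$. Since the normalized energy $\mathscr E$ is conformally and scale invariant, the local metric \eqref{eq:left-local-total-metric-step19} becomes
\[
 \bigl[U_{\lambda',\xi'}(z)e^{aA_NW^\sharp_{N,\xi',\lambda'}(z)}\bigr]^{8/(n-4)}
 e^{A_N\widehat h_N(z)}\delta .
\]
So $\mathscr E_{N,-}^{\rm loc}=\Phi_N(A_N)$, where $\Phi_N(t)$ denotes the energy of $(Ue^{atW^\sharp_N})^{8/(n-4)}e^{t\widehat h_N}\delta$. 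I would expand $\Phi_N$ at $t=0$ to second order.

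\emph{Zeroth and first order.} The zeroth-order term is $\Phi_N(0)=\mathscr E_\delta(U)=\mathscr E_2(\mathbb S^n)$, because the round bubble has the round energy. For the first-order term, the variation in $W$ vanishes, since $U$ is critical (by \eqref{eq:energy-first-variation-step18} and $\mathcal N_\delta(U)=0$). The variation in the metric direction $\widehat h_N$ is the first variation of the total $\sigma_2$ energy under a metric perturbation. It equals the integral of a symmetric tensor built from the Newton tensor and the Schouten tensor of the round metric $g_U$, contracted with $\widehat h_N$. Because $g_U$ is Einstein, this is a multiple of $\operatorname{tr}_{g_U}\widehat h_N$ plus a divergence. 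Now $H$ is trace free, since $W$ is an algebraic Weyl tensor, and $H_{ij}y_j=0$. Hence this term vanishes, as in Brendle's Yamabe computation. I would verify this identity directly from the formula for $\sigma_2$ of $e^{h}\delta$-metrics.

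\emph{Second order.} The second derivative $\tfrac12\Phi_N''(0)$ splits into three parts:
\begin{itemize}
 \item the pure metric part, which is $\mathscr A_{\rm geom}(\widehat h_N)$;
 \item the mixed part, which is $4a\int\mathcal R_N W^\sharp_N$ by \eqref{eq:energy-first-variation-step18};
 \item the pure $W$ part, which is $\tfrac12\mathscr B(W_N^\sharp,W_N^\sharp)$.
\end{itemize}
Using \eqref{eq:finite-principal-response-problem-step19} together with the constraints $\ell_b(W^\sharp_N)=0$, the sources $\Psi_a$ pair to zero against $W^\sharp_N$. This gives the finite-$N$ version of \eqref{eq:local-functional-simplified-step19}, namely $\mathscr A_{\rm geom}(\widehat h_N)+2a\int\mathcal R_NW^\sharp_N$. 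Then \eqref{eq:scaled-principal-tensor-convergence-step19} and the integrability from $n>4m_f+8$ identify the limit with $\mathcal F$. The relevant densities decay like $r^{4m_f-n+7}$, and the cutoff $\eta_\sharp$ at radius $\sigma_N^\sharp/\epsilon_N\to\infty$ only removes tails. The same argument applies after one normalized parameter derivative, since everything depends $C^1$ on $(\lambda',\xi')$ uniformly.

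\emph{Cubic remainder (main obstacle).} The remaining task is to show that the Taylor remainder $\int_0^{A_N}(A_N-t)^2\Phi_N'''(t)\,dt/2$ is $o(s_N)=o(A_N^2)$, together with its parameter derivative. The difficulty is that $\widehat h_N$ grows polynomially, like $|z|^{d_f}$, out to radius $\epsilon_N^{4/n-1}$, so cubic terms are not obviously integrable. I would bound $\Phi_N'''$ pointwise by the weighted majorant $\Xi+\Xi^2+\Xi^3$ from the proof of Lemma~\ref{lem:step8}, times the energy density of $U$. I would then use $A_N\Xi_{\widehat h_N}\le C\epsilon_N^{\kappa_f}$ on the whole ball, from \eqref{eq:Xi-bubble-step8}. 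This makes the cubic term at most $C\epsilon_N^{\kappa_f}$ times an integral of quadratic type. That quadratic integral is either convergent, or diverges at most logarithmically in $\epsilon_N$; in both cases the remainder is $o(A_N^2)$. If the power counting is borderline, I would split into $|z|\le R$ and $|z|\ge R$ and use $\Xi$-smallness only on the tail.

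\emph{Right end.} For \eqref{eq:right-local-energy-expansion-step19}, the right local metric is the exact antipodal pullback of the left one, and $\mathscr E$ is diffeomorphism invariant. So the right energy equals the left energy identically. In physical right-bubble coordinates the Weyl tensor and the translation variables are conjugated by $-R(c_N)$, as in \eqref{eq:right-coordinate-reflection-step11}.
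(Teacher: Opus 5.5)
Your proposal follows the paper's proof essentially step by step: the change of variables to $\mathscr E_{e^{A_N\widehat h_N}\delta}\bigl(U_{\lambda',\xi'}e^{aA_NW_N^\sharp}\bigr)$ (this is an exact pullback, so it needs only diffeomorphism invariance, since $\mathscr E$ is neither scale nor conformally invariant); a second-order Taylor expansion with vanishing first variation (you use trace-freeness of $\widehat h_N$, the paper uses criticality of the round metric for the normalized energy under all metric variations, and both work); reduction of the quadratic coefficient through the projected equation and the constraints; a cubic remainder bounded by a small factor times the integrable $r^{4m_f-n+7}\,dr$ density; and diffeomorphism invariance for the right end. The one thing to add is that your $\Xi$-majorant controls only $\widehat h_N$, whereas the cubic terms and the limit $\int\mathcal R_NW_N^\sharp\to\int\mathcal R W$ also need the projected Green estimate for $W_N^\sharp$ from the paper's Step~1, namely growth comparable to $\widehat h_N$ on its support, smallness of $A_NW_N^\sharp$ on the north-pole cap, and convergence $W_N^\sharp\to W$ in the quadratic-form norm.
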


\begin{proof}
We divide the proof into four steps.

\emph{Step 1: well-posedness of the limiting one-ended model.}
Let $\widehat h_R=\chi_R\widehat h$.  For every fixed $R$ the source obtained
from \eqref{eq:principal-metric-source-step19} with $\widehat h_R$ is smooth
on the compactified round bubble, so Proposition~\ref{lem:step12} gives a unique
projected response $W_R$.  The coordinate expression of the first metric
variation contains only $\widehat h_R$, $D\widehat h_R$, and
$D^2\widehat h_R$, multiplied by derivatives and powers of the round bubble.
The second metric variation is quadratic in the same quantities.  Since
\[
 |D^j\widehat h(z)|\le C(1+|z|)^{2m_f+2-j},
 \qquad 0\le j\le3,
\]
the worst radial density in the quadratic terms is
\[
 C r^{4m_f-n+7}\,dr.
\]
It is integrable precisely under $n>4m_f+8$.  The round Green kernel on the
orthogonal complement of the first harmonics then shows that $W_R$ is Cauchy
in $C^{2,\alpha}_{\rm loc}$ and in the associated quadratic-form norm.
For the limiting cutoff family the same estimates apply after up to two
derivatives in $(\xi',\lambda')$, uniformly on the compact normalized
parameter set: differentiating the projected equation twice produces the
same round augmented operator, while the differentiated sources have the
same or better far-field powers.  Thus the limiting response and its
coefficient vector are $C^2$.  For the finite-$N$ family we keep the
uniform bounds with one normalized parameter derivative, which are exactly
what is needed for the $C^1$ reduced-energy expansion.  The particular
cutoffs in \eqref{eq:scaled-principal-tensor-step19} have radii
$\sigma_N^\sharp/\epsilon_N\to\infty$, so the same argument gives
\begin{equation}\label{eq:finite-response-convergence-step19}
 W_{N,\xi',\lambda'}^\sharp\longrightarrow W_{\xi',\lambda'}
 \quad\hbox{in }C^{2,\alpha}_{\rm loc}
 \quad\hbox{and in the quadratic-form norm},
\end{equation}
with one normalized parameter derivative.  Hence the limits in
\eqref{eq:Ageom-step19} and \eqref{eq:local-functional-step19} exist.  The
preceding twice-differentiated round estimates and dominated convergence
show that $\mathcal F\in C^2$ on the normalized parameter set.  The
finite-$N$ convergence used in the expansion is $C^1$.

By \eqref{eq:principal-response-problem-step19}, symmetry of
$\mathscr B_{\lambda',\xi'}$, and the identities
\[
 \int U^{p_*}\Psi_a^{\lambda',\xi'}
       W_{\xi',\lambda'}\,dz
 =\ell_a^{\lambda',\xi'}(W_{\xi',\lambda'})=0,
\]
we have
\[
 \mathscr B_{\lambda',\xi'}(W,W)
 =-4a\int\mathcal R_{\xi',\lambda'}W\,dz.
\]
This proves \eqref{eq:local-functional-simplified-step19}.

\emph{Step 2: exact reduction to the metric-scale model.}
Use the affine change of variables
$x=x_N^-+\epsilon_Nz$.  The bubble scaling gives
\[
 U_{\epsilon_N\lambda',\,x_N^-+\epsilon_N\xi'}
      (x_N^-+\epsilon_Nz)
 =\epsilon_N^{-a}U_{\lambda',\xi'}(z),
\]
while $\delta_x=\epsilon_N^2\delta_z$.  Since
$8a/(n-4)=2$, the two powers of $\epsilon_N$ cancel in the total metric.
Consequently, diffeomorphism invariance of the energy gives the exact identity
\begin{equation}\label{eq:scaled-local-energy-identity-step19}
 \mathscr E_{N,-}^{\rm loc}
 =\mathscr E_{e^{A_N\widehat h_N}\delta}
   \left(U_{\lambda',\xi'}e^{aA_NW_{N,\xi',\lambda'}^\sharp}\right).
\end{equation}
Thus the small parameter in the local problem is exactly $A_N$, and its
square is $s_N$.

\emph{Step 3: the second-order Taylor expansion.}
The round total metric
$U_{\lambda',\xi'}^{8/(n-4)}\delta$ is a critical point of the normalized
metric energy.  If $k$ is an arbitrary variation of the round metric $g_*$, naturality and the Einstein symmetry imply
\[
 \left.\frac d{dt}\right|_{0}
 \int\sigma_2(g_t^{-1}A_{g_t})\,dv_{g_t}
 =\frac{(n-4)}{8n}\binom n2
   \int\operatorname{tr}_{g_*}k\,dv_{g_*}.
\]
The derivative of the volume term in
\eqref{eq:global-energy-step18} is the same quantity, so the first variation
of the normalized energy vanishes.

Apply Taylor's formula to the right-hand side of
\eqref{eq:scaled-local-energy-identity-step19} at $A_N=0$.  The pure metric
part of the quadratic coefficient is
$\mathscr A_{\rm geom}(\widehat h_N;\xi',\lambda')$.  The mixed metric--
conformal term is
\[
 4a\int\mathcal R_{N,\xi',\lambda'}
          W_{N,\xi',\lambda'}^\sharp\,dz,
\]
where $\mathcal R_{N,\xi',\lambda'}$ is the first metric source generated by
$\widehat h_N$.  For brevity in this step write
$W_N^\sharp=W_{N,\xi',\lambda'}^\sharp$ and
$W=W_{\xi',\lambda'}$.  The pure conformal quadratic term is
$\frac12\mathscr B(W_N^\sharp,W_N^\sharp)$.  Hence
\begin{align}
 \mathscr E_{N,-}^{\rm loc}
 ={}&\mathscr E_2(\mathbb S^n)
 +A_N^2\Bigg[
   \mathscr A_{\rm geom}(\widehat h_N;\xi',\lambda')
   +4a\int\mathcal R_{N,\xi',\lambda'}W_N^\sharp\,dz
 \nonumber\\
 &\hspace{38mm}
   +\frac12\mathscr B_{\lambda',\xi'}(W_N^\sharp,W_N^\sharp)
 \Bigg]+o_{C^1}(A_N^2).
 \label{eq:finite-N-local-Taylor-step19}
\end{align}

We justify the uniform remainder and the passage to the limiting coefficient.
The projected Green estimate used in Step~1 also gives, on the support of
$\widehat h_N$,
\[
 A_N\sum_{j=0}^2(1+|z|)^j
 \bigl(|D^j\widehat h_N(z)|+|D^jW_N^\sharp(z)|\bigr)=o(1).
\]
Outside that support $W_N^\sharp$ is smooth across the compactified north-pole
cap, and the same smallness holds in the round intrinsic norm.  Therefore
every cubic Taylor term is bounded by this $o(1)$ factor times the integrable
quadratic density from Step~1.  This gives the
$o_{C^1}(A_N^2)$ remainder in
\eqref{eq:finite-N-local-Taylor-step19}.  Moreover,
\eqref{eq:scaled-principal-tensor-convergence-step19} and the same integrable
dominating density imply
\begin{align*}
 \mathscr A_{\rm geom}(\widehat h_N;\xi',\lambda')
 &\longrightarrow\mathscr A_{\rm geom}(\xi',\lambda'),\\
 \int\mathcal R_{N,\xi',\lambda'}W_N^\sharp\,dz
 &\longrightarrow\int\mathcal R_{\xi',\lambda'}W\,dz,\\
 \mathscr B(W_N^\sharp,W_N^\sharp)
 &\longrightarrow\mathscr B(W,W).
\end{align*}
in $C^1$ on the normalized parameter set.  Combining these limits with
\eqref{eq:local-functional-step19},
$A_N^2=s_N$, and \eqref{eq:finite-N-local-Taylor-step19} proves
\eqref{eq:left-local-energy-expansion-step19}.

\emph{Step 4: the right end.}
The right local metric is the exact $J$-image of the left local metric.  The
normalized energy is invariant under diffeomorphisms, so
\[
 \mathscr E_{N,+}^{\rm loc}=\mathscr E_{N,-}^{\rm loc}
\]
exactly.  In the coordinate $z=\iota_A(x)$, both the scalar factor and the
metric endomorphism are the left objects from
\eqref{eq:left-local-total-metric-step19}.  If instead one uses the physical
right Euclidean bubble coordinate, the differential of $\iota_A$ contributes
an orthogonal reflection to the translation variables and conjugates the
Weyl tensor by the same orthogonal map.  All contractions in
\eqref{eq:local-functional-step19} are invariant under this simultaneous
conjugation.  Thus the coefficient is the same $\mathcal F$, proving
\eqref{eq:right-local-energy-expansion-step19} and completing the proof.
\end{proof}

\medskip
\noindent\textbf{Local extraction of the projected coefficients.}
Choose a compact set $K_\Psi\Subset\mathbb R^n$ containing, in the
metric-centered left bubble coordinate, the supports of all intrinsic
pullbacks of the paired sources $\Psi_{a,N}$.  If
$(u_N,\mathbf d_N)=\mathbb G_NF_N$ and
\[
 f_{N,-}(z)=U_{\lambda',\xi'}(z)^{-p_*}\epsilon_N^n
 F_{E,N}(x_N^-+\epsilon_Nz),
\]
then
\begin{equation}\label{eq:local-coefficient-extraction-step20}
 \sum_{a=0}^n|d_{N,a}|
 \le C\left(
 \|u_N(x_N^-+\epsilon_N\,\cdot)\|_{C^{2,\alpha}(K_\Psi)}
 +\|f_{N,-}\|_{C^\alpha(K_\Psi)}\right).
\end{equation}
The same estimate holds in the right Kelvin chart and after one normalized
parameter derivative, with the corresponding differentiated and
undifferentiated norms on the right-hand side.  More generally, if
\begin{equation}\label{eq:comparison-local-equation-step20}
 \mathscr P_{N,-}U_N
 =H_N+\sum_{a=0}^n\gamma_{N,a}\Psi_{a,N}^{-}
 \quad\hbox{on }K_\Psi,
\end{equation}
then
\begin{equation}\label{eq:comparison-coefficient-extraction-step20}
 \sum_{a=0}^n|\gamma_{N,a}|
 \le C\left(\|U_N\|_{C^{2,\alpha}(K_\Psi)}
 +\|H_N\|_{C^\alpha(K_\Psi)}\right).
\end{equation}
Indeed, the coefficients of $\mathscr P_{N,-}$ are uniformly bounded on
$K_\Psi$, while the finite family $\{\Psi_{a,N}^{-}\}$ is uniformly
linearly independent by the bubble Gram matrix.  Thus
$\sum_a|\gamma_{N,a}|$ is controlled by the $C^\alpha$ norm of
$\sum_a\gamma_{N,a}\Psi_{a,N}^{-}$, and the displayed estimates follow by
substituting the local equation.  Differentiation and Kelvin conjugation give
the remaining assertions.

Let $R_{b,N}\asymp_L\lambda^{4/n-1}$ be the normalized radius of the exact
left bubble portion, put $T_{b,N}=\log(1+R_{b,N})$, and set
\begin{equation}\label{eq:bubble-transfer-weight-rigorous}
 \omega_{b,N}(r)
 :=\lambda^{a_\Sigma}
   +\min\left\{1,\left(\frac{1+r}{R_{b,N}}\right)^{n/2}\right\}.
\end{equation}
For a function on the bubble chart define
\[
 \|u\|_{\mathcal X_{b,N}}
 :=\sup_{0\le \tau\le T_{b,N}}
  \omega_{b,N}(\tau)^{-1}X_u(\tau),
\]
where $\tau=\log(1+r)$, where by abuse of notation
$\omega_{b,N}(\tau):=\omega_{b,N}(e^\tau-1)$, and where $X_u(\tau)$ is the
intrinsic round $C^{2,\alpha}$ annular norm on the unit annulus centered at
$\tau$.  For an intrinsic bubble source $H$ define
\begin{equation}\label{eq:weighted-bubble-source-norm-step20}
 \|H\|_{\mathcal Y_{b,N}}
 :=\sup_{0\le \tau\le T_{b,N}}
  \omega_{b,N}(\tau)^{-1}Y_H(\tau),
\end{equation}
where $Y_H(\tau)$ is the corresponding intrinsic $C^\alpha$ source norm.
When $H$ is given only on the truncated bubble portion, fix a uniformly
bounded extension through a collar of fixed round width and let $G_*H$
denote the projected round response to that extension.  Different choices
of extension differ by an outer-collar source and are included in the
boundary-field estimate below.

\begin{proposition}[One-sided weighted stability of the principal bubble resolvent]
\label{aux:weighted-bubble-resolvent-rigorous}
The projected round inverse satisfies
\begin{equation}\label{eq:weighted-round-bubble-inverse-step20}
 \|G_*H\|_{\mathcal X_{b,N}}
 \le C\|H\|_{\mathcal Y_{b,N}}.
\end{equation}

Let $\mathscr K_{b,N}$ be the actual-minus-round coefficient perturbation in
the principal bubble.  Then
\begin{equation}\label{eq:weighted-bubble-perturbation-rigorous}
 \|G_*[\mathscr K_{b,N}u]\|_{\mathcal X_{b,N}}
 \le \eta_{b,N}\|u\|_{\mathcal X_{b,N}},
 \qquad \eta_{b,N}\longrightarrow0,
\end{equation}
uniformly in the normalized parameters and after one normalized parameter
derivative.

Finally, write a global source as $F=F_{b,-}+F_{\rm out}$, where $F_{b,-}$
is its intrinsic restriction to the exact left bubble portion and
$F_{\rm out}$ is supported in its complement.  If
$(u,\mathbf d)=\mathbb G_NF$ is the actual global augmented response, then
\begin{equation}\label{eq:weighted-global-bubble-inverse-step20}
 \|u(x_N^-+\epsilon_N\,\cdot)\|_{\mathcal X_{b,N}}
 \le C\left(
   \|F_{b,-}\|_{\mathcal Y_{b,N}}
   +\|F_{\rm out}\|_{\widehat Y_{\gamma,\beta}}
 \right).
\end{equation}
The same estimate holds in the exact right Kelvin chart and after one
normalized parameter derivative.  In particular, a source supported outside
the exact bubble portion produces $O(\lambda^{a_\Sigma})$ on every fixed
compact bubble set.
\end{proposition}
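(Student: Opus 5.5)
The plan is to prove the three assertions in order: the round weighted estimate, the perturbation bound, and then the global localization. Throughout we work in the radial variable $\tau=\log(1+r)$ of the compactified bubble chart.

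\emph{Step 1: the round weighted estimate \eqref{eq:weighted-round-bubble-inverse-step20}.} Decompose $H$ into angular harmonics. For degree $\ell\ge2$ use the explicit Green kernel of $\mathscr L_*=-\frac{n-1}{2}(\Delta_{g_*}+n)$ on the orthogonal complement of $\mathcal H_1$. In the variable $t=\log r$ and the Liouville gauge $Y=\cosh(t)^{-(n-2)/2}w$, its two branches grow or decay like $e^{\pm\nu t}$ with $\nu\ge\kappa_1=n/2$. The weight $\omega_{b,N}$ grows at most like $r^{n/2}$, so the kernel convolution against $\omega_{b,N}$ is bounded by $C\omega_{b,N}$. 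The margin comes from the strict gap between the degree-$\ge2$ rates and the weight exponent; where the comparison is borderline, decrease the exponent by an arbitrarily small amount. The constant floor $\lambda^{a_\Sigma}$ in $\omega_{b,N}$ handles the bounded part. For $\ell=0,1$, use the projected problem of Proposition~\ref{lem:step12}. The regular Jacobi branches $z_\ell$ are bounded, and the singular branches $\widehat z_\ell$ are excluded by regularity at the second pole. Lemma~\ref{aux:round-exterior-estimate-rigorous} gives the coefficient bound, and the trace behaviour $e^{-\kappa_\ell T}$ shows the response never exceeds $C\omega_{b,N}$. Local Schauder estimates on unit annuli then upgrade this to the $C^{2,\alpha}$ annular norms $X_u(\tau)$.

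\emph{Step 2: the perturbation bound \eqref{eq:weighted-bubble-perturbation-rigorous}.} The actual-minus-round coefficients $\mathscr K_{b,N}$ come from two sources. The first is the metric perturbation, controlled by $\Xi_h$; it is $O(\epsilon_N^{\kappa_f}+\delta_*(N_0))$ relative to the round Newton scale by Lemma~\ref{lem:step8}. The second is the truncation and matching errors near $R_{b,N}$, which are $O(e^L\lambda^{1-4/n})$. Hence $Y_{\mathscr K u}(\tau)\le\eta_{b,N}X_u(\tau)$ pointwise in $\tau$, and Step~1 gives the claim. The parameter derivative is handled by differentiating the coefficients, as in Lemma~\ref{lem:step8}.

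\emph{Step 3: the global estimate \eqref{eq:weighted-global-bubble-inverse-step20}.} Split $u=u_b+u_{\rm out}$.
\begin{enumerate}[label=\textup{(\alph*)}]
 \item The inner part $u_b$ is obtained by solving the truncated bubble problem with source $F_{b,-}$, using the round resolvent perturbed by $\mathscr K_{b,N}$. A Neumann series from Steps~1 and~2 bounds it, and the mismatch at the outer collar is moved into the boundary field treated in~(b).
 \item The remainder solves the homogeneous actual equation in the bubble portion, with boundary data at $\tau=T_{b,N}$. That data is bounded, via Lemma~\ref{lem:step15} and Proposition~\ref{prop:refined-transfer-step16}, by $\|F_{\rm out}\|_{\widehat Y_{\gamma,\beta}}+\|F_{b,-}\|_{\mathcal Y_{b,N}}$.
\end{enumerate}

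\emph{Main obstacle: propagating the boundary data of (b) inward.} Inward from the boundary, the high modes and the degree-one mode decay at least like $(r/R_{b,N})^{n/2}$. This follows from the round Dirichlet-to-Neumann limit in Lemma~\ref{aux:bubble-dtn-rigorous}, where $\kappa_1=n/2$, and from the exponents $\nu_\ell\ge n/2$ for $\ell\ge2$; these rates reproduce exactly the second term of $\omega_{b,N}$. The radial mode has rate $\kappa_0=(n-2)/2<n/2$, so here the boundary trace itself must be shown to be $O(\lambda^{a_\Sigma})$. I would obtain this from the Schwarzschild gauge factor $\Theta_\lambda=4(2\lambda)^{a_\Sigma}$ in \eqref{eq:transition-Schwarzschild-gauge-factor-step15}. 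The matching of Proposition~\ref{aux:low-mode-matching-rigorous} bounds $x^{\rm tr}$ in the transition gauge, and the conversion back to the bubble's intrinsic normalization produces this factor. The neutral radial branch is the point most likely to need care. There I would use the physical flux identity \eqref{eq:physical-flux-uniform-step16} to rule out an $O(1)$ constant mode entering the bubble: the constraint $\mathfrak L_b(u)=0$ kills the dilation component, and the flux bound controls the rest.

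The right chart follows by exact Kelvin conjugation, and the final claim, that an outside source gives $O(\lambda^{a_\Sigma})$ on compacts, follows because $\omega_{b,N}\asymp\lambda^{a_\Sigma}+R_{b,N}^{-n/2}$ there and $R_{b,N}^{-n/2}\asymp\lambda^{a_\Sigma}$.
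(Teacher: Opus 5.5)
\textbf{Gap: the radial boundary field.} Your architecture matches the paper's: a weighted round estimate, a perturbation bound, a particular solution plus a boundary field, Kelvin conjugation, and the compact-set consequence from $R_{b,N}^{-n/2}\asymp\lambda^{a_\Sigma}$. Your Step~2 also works. In the exact bubble portion the flat profile is exactly $U_{\lambda,c}$, and for large $N$ only the principal tensor is present. So $\mathscr K_{b,N}$ involves neither truncation errors nor $\delta_{\!*}(N_0)$, which is why $\eta_{b,N}\to0$. Feeding $\sup_\tau k_N\le C\epsilon_N^{\kappa_f}$ into Step~1 is a valid, slightly simpler substitute for the paper's integrated-coefficient argument.

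The error is the inward transfer rate. You read it off the Liouville exponents $\kappa_0=\frac{n-2}{2}$ and $\kappa_1=\frac n2$ and compare them directly with the weight exponent $\frac n2$. But $\mathcal X_{b,N}$ measures $u=\cosh(t)^{\frac{n-2}{2}}Y$, not $Y$, and undoing the gauge adds $\frac{n-2}{2}$. A bubble field with trace $\phi$ at $\tau=T_{b,N}$ follows the singular branches $\widehat z_0\sim e^{(n-2)t}$ and $\widehat z_1\sim e^{(n-1)t}$ of \eqref{eq:low-mode-asymptotics-zero-rigorous}--\eqref{eq:low-mode-asymptotics-one-rigorous}, and $e^{(n-2+\ell)t}$ for $\ell\ge2$. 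It therefore decays inward like $e^{-(n-2+\ell)(T_{b,N}-\tau)}$. Since $n-2-\frac n2=a_\Sigma>0$, this lies below the second term of $\omega_{b,N}$ in every mode, the radial one included. The regular component forced by the localized constraints is only $O(|\phi|e^{-(n-2)T_{b,N}})\le C|\phi|\lambda^{a_\Sigma}$, by Lemma~\ref{aux:round-exterior-estimate-rigorous}. This is exactly the second kernel $e^{-(n-2)(s-\tau)}$ in the paper's one-sided Green estimate.

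The neutral constant branch you worry about lives on the Schwarzschild neck. In the bubble the radial homogeneous solutions are $z_0$, removed by the constraint, and $\widehat z_0$. So the radial mode is not exceptional. An $O(1)$ bound on the interface data, from the coarse inverse and the matching matrix, is all that is needed.

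\textbf{Your proposed remedy targets a false statement.} The factor $\Theta_\lambda$ converts the transition gauge to the Schwarzschild gauge at the neck-side endpoint. At the bubble-side endpoint the transition gauge coincides with the bubble Liouville gauge, see \eqref{eq:transition-liouville-normalization-step15}, so no $\lambda^{a_\Sigma}$ appears there. Nor is the trace $O(\lambda^{a_\Sigma})$ for data allowed on the right-hand side. A bubble source with $\|F_{b,-}\|_{\mathcal Y_{b,N}}=1$ concentrated near $\tau=T_{b,N}$, where $\omega_{b,N}\asymp1$, gives an $O(1)$ trace. A transition source gives physical Cauchy data of order $\delta_\lambda=\lambda^{2-8/n}\gg\lambda^{a_\Sigma}$.

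\textbf{The same miscount affects Step~1 for $\ell=0$.} With rate $\kappa_0$, the outer-to-inner convolution $\int_\tau^{T_{b,N}}e^{-\kappa_0(s-\tau)}\omega_{b,N}(s)\,ds$ exceeds $\omega_{b,N}(\tau)$ by a factor of order $e^{T_{b,N}-\tau}$. You cannot lower the exponent of a weight fixed in the statement. Lemma~\ref{aux:round-exterior-estimate-rigorous} does not fill this in, since it only handles sources supported in $\{t\le t_0\}$. What Step~1 needs is the mode-by-mode one-sided kernel in the physical variable: bounded for $s<\tau$ (the regular branch $z_0\to1$) and $e^{-(n-2)(s-\tau)}$ for $s>\tau$. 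Combine this with $\int_0^\tau\omega_{b,N}\le C\omega_{b,N}(\tau)$, which uses $\lambda^{a_\Sigma}\asymp e^{-nT_{b,N}/2}$.
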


\begin{proof}
Put $\tau=\log(1+r)$.  The projected round Green operator has the one-sided
annular estimate
\begin{align}\label{eq:one-sided-round-green-rigorous}
 X_{G_*H}(\tau)
 \le C\biggl(&\int_0^\tau Y_H(s)\,ds
 +\int_\tau^{T_{b,N}}e^{-(n-2)(s-\tau)}Y_H(s)\,ds\biggr).
\end{align}
The first kernel is only bounded: an interior source may generate a regular
component which approaches a nonzero value at the second round pole.  The
second kernel has the radial singular-to-regular decay $n-2$, and all
nonradial sectors decay faster.  Formula
\eqref{eq:one-sided-round-green-rigorous} follows mode by mode from the
regular and singular round solutions in
\eqref{eq:round-regular-low-modes-rigorous}--
\eqref{eq:round-singular-low-modes-rigorous}; the degree-one regular
components are removed by the localized constraints.  Local Schauder
estimates restore two derivatives and absorb the diagonal singularity.

Since $R_{b,N}\asymp_L\lambda^{4/n-1}$,
\[
 \lambda^{a_\Sigma}\asymp_L e^{-nT_{b,N}/2}.
\]
Consequently, with $\omega(\tau)=\omega_{b,N}(e^\tau-1)$,
\begin{equation}\label{eq:bubble-weight-convolution-step20}
 \int_0^\tau\omega(s)\,ds\le C\omega(\tau),
 \qquad
 \int_\tau^{T_{b,N}}e^{-(n-2)(s-\tau)}\omega(s)\,ds
 \le C\omega(\tau).
\end{equation}
For the first inequality, factor out $e^{-nT_{b,N}/2}$ and use the boundedness
of $\tau/(1+e^{n\tau/2})$; the second follows from
$n-2-n/2=a_\Sigma>0$.  Substitution of
$Y_H(s)\le\omega(s)\|H\|_{\mathcal Y_{b,N}}$ into
\eqref{eq:one-sided-round-green-rigorous} proves
\eqref{eq:weighted-round-bubble-inverse-step20}.

In normalized bubble coordinates Lemma~\ref{lem:step8} gives
\begin{equation}\label{eq:Xi-weighted-rigorous}
 |\mathscr K_{b,N}|_{\rm coef}
 \le k_N(\tau):=C A_N(1+e^\tau)^{2m_f+4},
 \qquad 0\le \tau\le T_{b,N},
\end{equation}
and the same majorant for one normalized parameter derivative.  Since
$R_{b,N}\asymp_L\epsilon_N^{4/n-1}$,
\[
 A_NR_{b,N}^{2m_f+4}
 \asymp_L\epsilon_N^{\beta_f-2+8(m_f+2)/n}
 =\epsilon_N^{\kappa_f}.
\]
Thus
\begin{equation}\label{eq:integrated-principal-coefficient-rigorous}
 \int_0^{T_{b,N}}k_N(s)\,ds+\sup_sk_N(s)
 \le C\bigl(A_NT_{b,N}+\epsilon_N^{\kappa_f}\bigr)
 =:\eta_{b,N}\longrightarrow0.
\end{equation}
Apply \eqref{eq:one-sided-round-green-rigorous} to
$H=\mathscr K_{b,N}u$.  In the first integral use monotonicity of the
weight and \eqref{eq:integrated-principal-coefficient-rigorous}; in the second
use
$\omega(s)/\omega(\tau)\le Ce^{n(s-\tau)/2}$.  The remaining kernel is
$e^{-a_\Sigma(s-\tau)}$.  This proves
\eqref{eq:weighted-bubble-perturbation-rigorous}; differentiating gives the
same estimate because $|D\omega_{b,N}|\le C\omega_{b,N}$ in the natural
parameter coordinates.

It remains to pass from the complete round inverse to the global augmented
inverse.  For the flat paired operator, a bubble-supported source is treated
by \eqref{eq:weighted-round-bubble-inverse-step20}.  The difference between
that complete-sphere particular solution and the bubble component of the
global flat solution is a boundary field.  Its low-mode Cauchy coefficients
are bounded by the uniformly invertible matching matrix
\eqref{eq:complete-low-matching-rigorous}, and its high modes by the
transition spectral gap.  The one-sided Green estimate then bounds this
boundary field in $\mathcal X_{b,N}$.  A source supported outside the bubble
enters only through the outer Cauchy data; the same matching matrix and the
transition--Schwarzschild Green operators give an
$O(\|F_{\rm out}\|_{\widehat Y_{\gamma,\beta}})$ bound in
$\mathcal X_{b,N}$.  Finally, the actual-minus-flat coefficient perturbation
is absorbed by \eqref{eq:weighted-bubble-perturbation-rigorous} together
with Proposition~\ref{prop:summable-operator-perturbation-step15}.  This
proves \eqref{eq:weighted-global-bubble-inverse-step20}.  Exact Kelvin
conjugation gives the right-end statement, and differentiation uses the same
matching system and the differentiated perturbation estimates.
\end{proof}

\begin{proposition}[Localized transfer from the complement of a principal core]
\label{prop:localized-bubble-transfer-step20}
Let $\mathbb G_N$ denote the projected inverse of
Lemma~\ref{lem:step15}, in the refined form of
Proposition~\ref{prop:refined-transfer-step16}.  Let $F_N$ be a
$J$-invariant source which vanishes on
$B_{\sigma_N^\sharp}(x_N^-)$, and let
$(u_N,\mathbf d_N)=\mathbb G_NF_N$.  Then, for every compact
$K\Subset\mathbb R^n$,
\begin{equation}\label{eq:localized-transfer-step20}
 \left\|u_N(x_N^-+\epsilon_Nz)\right\|_{C^{2,\alpha}(K)}
 \le C_K\lambda^{a_\Sigma}
       \|F_N\|_{\widehat Y_{\gamma,\beta}},
\end{equation}
provided the chosen left core is omitted from the source.  The same estimate
holds in the exact right Kelvin chart, and after one normalized parameter
derivative.  If $K$ contains the fixed supports of the localized projection
sources, then in addition
\begin{equation}\label{eq:localized-coefficient-gain-repair}
 \sum_{a=0}^n|d_{N,a}|
 \le C\lambda^{a_\Sigma}\|F_N\|_{\widehat Y_{\gamma,\beta}}.
\end{equation}
More generally, if a source is supported in the left bubble block but
outside $|z|\le R$ in metric-centered variables, its contribution on a
fixed compact $K$ is bounded by the round-bubble Green tail from $|z|>R$;
this tail tends to zero as $R\to\infty$, uniformly for the sources arising
from the principal metric tensor under $n>4m_f+8$.
\end{proposition}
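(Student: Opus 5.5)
The plan is to reduce the statement to the weighted one-sided bubble estimate of Proposition~\ref{aux:weighted-bubble-resolvent-rigorous}. That estimate is applied with the source split as $F_N=F_{b,-}+F_{\rm out}$, where $F_{b,-}$ is the intrinsic restriction of $F_N$ to the exact left bubble portion. Since $F_N$ vanishes on $B_{\sigma_N^\sharp}(x_N^-)$, and $\sigma_N^\sharp=c_L\epsilon_N^{4/n}$ corresponds in metric-centered variables to a radius comparable with $R_{b,N}$, the piece $F_{b,-}$ is supported only where $1+r\gtrsim R_{b,N}$. There the weight $\omega_{b,N}$ equals one.

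\textbf{The outer piece.} Estimate \eqref{eq:weighted-global-bubble-inverse-step20} bounds its contribution to $u_N$ by $C\|F_{\rm out}\|_{\widehat Y_{\gamma,\beta}}$ in the $\mathcal X_{b,N}$ norm. On a fixed compact $K$, where $r\le C_K$, the weight satisfies
\[
 \omega_{b,N}\le\lambda^{a_\Sigma}+C_KR_{b,N}^{-n/2}\le C_K\lambda^{a_\Sigma},
\]
because $R_{b,N}^{-n/2}\asymp_L\lambda^{a_\Sigma}$ by the computation preceding \eqref{eq:bubble-weight-convolution-step20}. Unwinding the definition of $\|\cdot\|_{\mathcal X_{b,N}}$ then gives the $C^{2,\alpha}(K)$ bound $C_K\lambda^{a_\Sigma}\|F_{\rm out}\|$.

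\textbf{The inner piece $F_{b,-}$.} Its $\mathcal Y_{b,N}$ norm is a supremum over the region where $\omega=1$. The intrinsic source norm $Y_H$ should be dominated by the refined source norm, since both are local $q^2$-normalized $C^\alpha$ norms comparable on the bubble portion. This gives $\|F_{b,-}\|_{\mathcal Y_{b,N}}\le C\|F_N\|_{\widehat Y_{\gamma,\beta}}$. That alone does not yield the $\lambda^{a_\Sigma}$ gain; the gain has to be recovered from the kernel. I would go back to the one-sided Green bound \eqref{eq:one-sided-round-green-rigorous} and use it for $\tau\le\log(1+C_K)$ only.
\begin{itemize}
\item The source vanishes for $s<\tau_0\approx T_{b,N}-C$, so the first (bounded-kernel) integral is zero.
\item The second integral involves $e^{-(n-2)(s-\tau)}$ with $s\ge\tau_0$, so it is at most $e^{-(n-2)T_{b,N}}\lesssim\lambda^{a_\Sigma}$, because $n-2>n/2$.
\end{itemize}
The actual-minus-round perturbation is absorbed by \eqref{eq:weighted-bubble-perturbation-rigorous}, and the boundary fields are handled as in that proof.

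\textbf{Coefficients and remaining assertions.} The coefficient bound \eqref{eq:localized-coefficient-gain-repair} follows from the local extraction \eqref{eq:local-coefficient-extraction-step20} with $K\supset K_\Psi$: the source term $f_{N,-}$ vanishes on $K_\Psi$ because $K_\Psi$ lies inside the omitted core, so only the $C^{2,\alpha}(K)$ norm of $u_N$ remains. The right Kelvin chart follows by exact $J$-covariance, using Proposition~\ref{prop:scalar-kelvin-verified}. The parameter derivative follows from the differentiated versions of the cited estimates together with $|D\omega_{b,N}|\le C\omega_{b,N}$.

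For the final assertion, where the source is supported in $R<|z|\lesssim R_{b,N}$, the same second-integral argument bounds the response on $K$ by $\int_{\log R}^{T_{b,N}}e^{-(n-2)s}Y_H(s)\,ds$. For sources generated by the principal tensor, the growth $Y_H\lesssim A_N e^{(2m_f+4)s}$, compared against the decay, is integrable exactly when $n>4m_f+8$ after the $q^2$ normalization. So this tail tends to zero as $R\to\infty$.

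\textbf{Main obstacle.} The hardest step is the boundary-field and low-mode part. The radial mode has a bounded, non-decaying transfer kernel, so a source on the exterior could feed a constant regular component back into the core. I would control this through the matching matrix \eqref{eq:complete-low-matching-rigorous}, by showing that the bubble-side trace $b_{\ell,m}$ produced by outer data is of relative size $e^{-\kappa_\ell T_{b,N}}$ in the round gauge. That is the content the weight $\lambda^{a_\Sigma}$ in $\omega_{b,N}$ is designed to encode.
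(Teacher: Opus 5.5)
Your argument is correct, but it is organized differently from the paper's. You derive everything directly from the already-stated estimate \eqref{eq:weighted-global-bubble-inverse-step20}. Every source allowed by the hypothesis lives where the input weight is bounded below, while on a fixed compact set the output weight satisfies $\omega_{b,N}\le C_K\lambda^{a_\Sigma}$, because $R_{b,N}^{-n/2}\asymp_L\lambda^{a_\Sigma}$. The coefficient bound then follows from \eqref{eq:local-coefficient-extraction-step20}.

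The paper instead first proves the flat estimate by hand:
\begin{itemize}
\item the half-neck kernel of Lemma~\ref{lem:step13} gives interface data $\lesssim q(r_s)(r_b/r_s)^{a_\Sigma}\mathfrak Y_{Q_s}(F_N)$;
\item the singular round branch adds a factor $(\lambda/r_b)^{a_\Sigma}$ on the way into the core, and $q(r_s)\le Cr_s^{a_\Sigma}$ makes the product $\lambda^{a_\Sigma}$;
\item transition sources give $C_L\delta_\lambda(\lambda^{1-4/n})^{a_\Sigma}=C_L\lambda^{a_\Sigma}$;
\item connector and opposite-end sources pass through a fixed Dirichlet-to-Neumann block.
\end{itemize}
Only then does it use the resolvent identity with \eqref{eq:weighted-global-bubble-inverse-step20}--\eqref{eq:weighted-bubble-perturbation-rigorous} to reach the actual operator.

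Your route is shorter and uses only earlier results. The paper's route makes explicit the mechanism that Proposition~\ref{aux:weighted-bubble-resolvent-rigorous} asserts in a single sentence for $F_{\rm out}$. That mechanism is exactly your ``main obstacle'', so under your reduction it is not a missing step. If you did open it up, note that the smallness comes from the neck and core decay factors just described. It does not come from the matching matrix, which only bounds the interface coefficients.

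Two small corrections.
\begin{itemize}
\item Your claim that the weighted estimate ``alone does not yield the gain'' for $F_{b,-}$ is wrong. Since $R_{\sharp,N}\asymp_L R_{b,N}$, the weight is $\ge c_L>0$ on the support of $F_{b,-}$ (comparable to one, not equal to one). Hence $\|F_{b,-}\|_{\mathcal Y_{b,N}}\le C_L\|F_N\|_{\widehat Y_{\gamma,\beta}}$, and \eqref{eq:weighted-global-bubble-inverse-step20} gives the gain at once. Your kernel detour is correct, and it coincides with the paper's \eqref{eq:outer-bubble-annulus-transfer}, but it is redundant.
\item In the tail assertion, your integrand $A_Ne^{(2m_f+6-n)s}$ only requires $n>2m_f+6$. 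The threshold $n>4m_f+8$ comes from the quadratic density $r^{4m_f-n+7}$ used in the paper. So ``exactly'' is inaccurate, though the conclusion stands.
\end{itemize}
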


\begin{proof}
We first prove the estimate for the flat augmented inverse.  Let
$r_b\asymp_L\lambda^{4/n}$ be the bubble-side radius of the left transition,
and write $t_b=\log r_b$ on the adjoining Schwarzschild half-neck.  After
subtracting zero-boundary particular solutions, the low-mode interface
coefficients are determined by the uniformly invertible matrix
\eqref{eq:complete-low-matching-rigorous}.  Hence no neutral radial
coefficient remains.

Consider a normalized source block centered at $t_s>t_b$ on the left
Schwarzschild half-neck.  The kernel in Lemma~\ref{lem:step13} gives, at the
bubble-side endpoint,
\begin{equation}\label{eq:flat-offdiag-kernel-rigorous}
 |w_{\ell,m}(t_b)|+|\partial_tw_{\ell,m}(t_b)|
 \le C\int_{t_b}^0e^{-\sigma_\ell(s-t_b)}
       q(s)|f_{\ell,m}(s)|\,ds,
\end{equation}
where
$\sigma_0=\nu_0+a=a_\Sigma$,
$\sigma_1=\nu_1+a=a_\Sigma+1$, and
$\sigma_\ell>a_\Sigma$ for $\ell\ge2$.  The matching matrix changes the
constant in this estimate but not the exponent.  If the source is supported
on a unit block at radius $r_s$, then
\begin{equation}\label{eq:neck-to-interface-rigorous}
 |w(t_b)|+|\partial_tw(t_b)|
 \le Cq(r_s)\left(\frac{r_b}{r_s}\right)^{a_\Sigma}
       \mathfrak Y_{Q_s}(F_N).
\end{equation}

The boundary-induced solution on the exact bubble is controlled by the
singular round branch.  Lemma~\ref{aux:bubble-dtn-rigorous} and
\eqref{eq:kappa-low-rigorous} imply, for every fixed compact bubble set $K$,
\begin{equation}\label{eq:interface-to-bubble-rigorous}
 \|w\|_{C^{2,\alpha}(K)}
 \le C_K\left(\frac{\lambda}{r_b}\right)^{a_\Sigma}
   \bigl(|w(t_b)|+|\partial_tw(t_b)|\bigr).
\end{equation}
Indeed, the exact low-mode decay is
$(\lambda/r_b)^{n-2}$ in degree zero and
$(\lambda/r_b)^{n-1}$ in degree one, both stronger than the displayed bound;
the high modes are stronger still.  Combining
\eqref{eq:neck-to-interface-rigorous} and
\eqref{eq:interface-to-bubble-rigorous} gives
\[
 Cq(r_s)\left(\frac{\lambda}{r_s}\right)^{a_\Sigma}
 \mathfrak Y_{Q_s}(F_N)
 \le C\lambda^{a_\Sigma}\mathfrak Y_{Q_s}(F_N),
\]
because $q(r_s)\le Cr_s^{a_\Sigma}$ on the left half-neck.  The low modes
sum by the $L_t^1$ term in $\widehat Y_{\gamma,\beta}$, and the high modes
sum by the weighted supremum.

A source in the transition has zero-boundary particular response and endpoint
flux bounded by $C_L\delta_\lambda\|f\|_{C^\alpha}$ by
Proposition~\ref{lem:step14} and Lemma~\ref{aux:transition-dtn-rigorous}.  Equation
\eqref{eq:interface-to-bubble-rigorous} therefore gives
\[
 C_L\delta_\lambda
 \left(\frac{\lambda}{\lambda^{4/n}}\right)^{a_\Sigma}
 \|f\|_{C^\alpha}
 =C_L\lambda^{a_\Sigma}\|f\|_{C^\alpha}.
\]
It remains to cover sources lying in the exact bubble portion but outside
the omitted metric-centered core.  Put
\[
 R_{\sharp,N}:=\frac{\sigma_N^\sharp}{\epsilon_N},
 \qquad
 R_{b,N}:=\frac{r_b}{\epsilon_N}.
\]
Because $\lambda/\epsilon_N$ stays in a fixed compact subset of
$(0,\infty)$ and both radii are fixed multiples, depending only on $L$, of
$\epsilon_N^{4/n-1}$, one has
$R_{\sharp,N}\asymp_L R_{b,N}$.  Hence this source is confined to a
cylindrical interval of $L$-dependent but $N$-independent length adjacent to
the outer bubble interface.  If $\tau_K$ is a fixed annular coordinate
containing a given compact set $K$, the one-sided round Green estimate
\eqref{eq:one-sided-round-green-rigorous} gives
\begin{align}
 \|\mathbf 1_KG_*\mathbf 1_{\{R_{\sharp,N}<|z|<R_{b,N}\}}F_N
   \|_{C^{2,\alpha}}
 &\le C_{K,L}R_{\sharp,N}^{-(n-2)}
       \|F_N\|_{\widehat Y_{\gamma,\beta}}.\label{eq:outer-bubble-annulus-transfer}
\end{align}
Since $n-2>n/2$ and
$R_{b,N}^{-n/2}\asymp_L\lambda^{a_\Sigma}$, the right-hand side of
\eqref{eq:outer-bubble-annulus-transfer} is bounded by
$C_{K,L}\lambda^{a_\Sigma}\|F_N\|_{\widehat Y_{\gamma,\beta}}$.
Thus the flat localized estimate covers every source allowed by the
hypothesis, including the exact-bubble outer annulus.

Sources on the fixed connector or on the opposite end first enter a
Schwarzschild half-neck through a bounded fixed-block Dirichlet-to-Neumann
map; applying the same half-neck kernel and the same matching matrix gives
the same factor $\lambda^{a_\Sigma}$.  We have proved
\begin{equation}\label{eq:flat-localized-resolvent-rigorous}
 \|\mathbf1_K\mathbb G_N^{\rm fl}\mathbf1_{\rm remote}F_N\|_{C^{2,\alpha}}
 \le C_K\lambda^{a_\Sigma}
       \|F_N\|_{\widehat Y_{\gamma,\beta}}.
\end{equation}
The coefficient vector obeys the same estimate by the local coefficient estimate above.  Differentiating the Green
kernels and the matching matrix changes only the constant, since
$\lambda\partial_\lambda\lambda^{a_\Sigma}
 =a_\Sigma\lambda^{a_\Sigma}$.

We now pass to the actual operator.  Put
$\mathscr K_N=\mathscr L_N-\mathscr L_N^{\rm fl}$ and use the augmented
resolvent identity
\[
 (u_N,\mathbf d_N)
 =\mathbb G_N^{\rm fl}\bigl(F_N-\mathscr K_Nu_N\bigr).
\]
Pull the bubble component back to the normalized round chart and measure it
in the weighted norm of Proposition~\ref{aux:weighted-bubble-resolvent-rigorous}.  The flat remote response obeys
\eqref{eq:weighted-global-bubble-inverse-step20}, while
\eqref{eq:weighted-bubble-perturbation-rigorous} gives
\[
 \|u_N\|_{\mathcal X_{b,N}}
 \le C\|F_N\|_{\widehat Y_{\gamma,\beta}}
   +o_N(1)\|u_N\|_{\mathcal X_{b,N}}.
\]
The perturbations outside the principal bubble are remote and are already
included in the first term by the flat off-diagonal estimate and Proposition~\ref{prop:summable-operator-perturbation-step15}.  Absorbing the last term
shows that
$\|u_N\|_{\mathcal X_{b,N}}\le C\|F_N\|_{\widehat Y_{\gamma,\beta}}$.
On every fixed compact set the weight
\eqref{eq:bubble-transfer-weight-rigorous} is bounded by
$C_K\lambda^{a_\Sigma}$, proving
\eqref{eq:localized-transfer-step20}.  Since the intrinsic source is zero on $K_\Psi$, the local coefficient estimate above proves
\eqref{eq:localized-coefficient-gain-repair}.  The differentiated weighted
resolvent estimate gives the parameter version, and exact Kelvin conjugation
gives the right-end statement.

Finally, for a source contained in the bubble block but outside $|z|\le R$,
subtract the projected round responses to the full source and to its
truncation at radius $R$.  The weighted round Green estimate from
Lemma~\ref{lem:step19} shows that the difference tends to zero on each fixed
compact set; the common absolute density is
$r^{4m_f-n+7}\,dr$.  This proves the final assertion, including one
normalized parameter derivative.
\end{proof}

\begin{lemma}[Localization of the principal response]
\label{lem:step20}
Use the metric-centered normalized coordinates
\begin{equation}\label{eq:metric-centered-charts-step20}
 x=x_N^-+\epsilon_Nz
 \quad\hbox{on the left},
 \qquad
 \iota_A(x)=x_N^-+\epsilon_Nz
 \quad\hbox{on the right}.
\end{equation}
Then, uniformly for
$1/2\le\lambda'\le3/2$ and $|\xi'|\le1$,
\begin{align}
 A_N^{-1}w_{N,\lambda,\xi}(x_N^-+\epsilon_Nz)
 &\longrightarrow W_{\xi',\lambda'}(z),
 \label{eq:left-localization-step20}\\
 A_N^{-1}w_{N,\lambda,\xi}(x),\quad
 \iota_A(x)=x_N^-+\epsilon_Nz
 &\longrightarrow W_{\xi',\lambda'}(z),
 \label{eq:right-localization-step20}
\end{align}
in $C^{2,\alpha}_{\rm loc}(\mathbb R^n)$.  In the bubble-centered variable
$y$, where $z=\xi'+\lambda'y$, the left limit is equivalently
$W_{\xi',\lambda'}(\xi'+\lambda'y)$; in physical right Euclidean variables
it is the exact Kelvin image.  Moreover,
\begin{equation}\label{eq:coefficient-localization-step20}
 A_N^{-1}c_a(N,\lambda,\xi)\longrightarrow\beta_a(\xi',\lambda'),
 \qquad 0\le a\le n,
\end{equation}
where the $\beta_a$ are the coefficients in
\eqref{eq:principal-response-problem-step19}.  The convergences
\eqref{eq:left-localization-step20}--\eqref{eq:coefficient-localization-step20}
hold with one normalized parameter derivative.

After subtracting the paired finite principal response, the contribution on
either fixed bubble chart of the opposite principal core, the principal outer tail,
the remaining metric sources, the matched transitions, the exact Schwarzschild
source, the compact connector, and the nonlinear remainder is $o(A_N)$.
\end{lemma}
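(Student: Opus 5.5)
The plan is to linearize the fixed-point equation of Lemma~\ref{lem:step17} around the principal metric source and to split the global source into a principal inner-core piece and everything else. Write $w=w_{N,\lambda,\xi}$ and $\mathbf c=(c_a)$, so that $(w,\mathbf c)=\mathbb G_N[-R_N-Q_N(w)]$. Decompose
\[
 R_N=R_{\rm core}^-+R_{\rm core}^++R_{\rm out},
\]
where $R_{\rm core}^\pm$ is the paired first metric variation generated by the localized inner tensor $h_{N,-}^\sharp$ and its $J$-image. The term $R_{\rm out}$ collects the second-order principal metric terms on the cores, the principal outer tail outside $B_{\sigma_N^\sharp}(x_N^-)$, the remaining perturbations, the transitions, the Schwarzschild and connector residuals, and, together with $Q_N(w)$, the nonlinear part. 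By the exact scaling identity \eqref{eq:scaled-local-energy-identity-step19}, the pullback of $R_{\rm core}^-$ to the metric-centered chart equals $A_N\,\mathcal R_{N,\xi',\lambda'}$ plus an $O(A_N^2)$ quadratic term. For the same reason, $\mathbb G_N$ restricted to the left chart should be compared with the projected round inverse of Proposition~\ref{lem:step12} applied to $U_{\lambda',\xi'}$.

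\emph{Step 1 (principal response).} Let $(u_N^{\rm pr},\mathbf d_N^{\rm pr})=\mathbb G_N[-R_{\rm core}^--R_{\rm core}^+]$. Compare it with $A_N(W^\sharp_{N,\xi',\lambda'},\beta_N)$, transplanted to both bubble charts and cut off near the bubble interface. The difference solves an augmented problem whose source consists of three parts: the actual-minus-round coefficient perturbation acting on $A_NW^\sharp$, the cutoff commutator, and the mismatch between paired and one-ended normalizations of $\Psi_{a,N}$ and $\mathfrak L_b$. By Proposition~\ref{aux:weighted-bubble-resolvent-rigorous}, the first part is $\eta_{b,N}A_N$ in $\mathcal X_{b,N}$. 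The commutator sits at radius $\asymp R_{b,N}$, where the tail bound for $W^\sharp$ from the integrable density $r^{4m_f-n+7}$ and the one-sided Green estimate \eqref{eq:one-sided-round-green-rigorous} give $o(A_N)$ on compacts. The normalization issue is resolved by \eqref{eq:global-source-duality-step15}: each one-ended source is dual on its own end, so the paired constraint reduces to the one-ended one by symmetry. Then \eqref{eq:local-coefficient-extraction-step20} converts $C^{2,\alpha}(K_\Psi)$ control into coefficient control. Finally, \eqref{eq:finite-response-convergence-step19} replaces $W^\sharp_N,\beta_N$ by $W,\beta$.

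\emph{Step 2 (everything else is $o(A_N)$).} Treat each remaining piece as follows.
\begin{itemize}
\item The opposite core, read in the left chart, is a remote source. So are the remaining perturbations, transitions, Schwarzschild and connector residuals. Proposition~\ref{prop:localized-bubble-transfer-step20} bounds their contribution by $\lambda^{a_\Sigma}$ times their $\widehat Y_{\gamma,\beta}$ norms, which are $O(1)$ by Lemma~\ref{lem:step9}. One must check that $\lambda^{a_\Sigma}=o(A_N)$, i.e.\ $\epsilon_N^{a_\Sigma}\ll\epsilon_N^{\beta_f+d_f}$; this should reduce to the scale relation $A_N\lambda^{a_\Sigma}=o(s_N)$ recorded in Proposition~\ref{prop:ordered-parameters-step17}.
\item The principal outer tail lies in the bubble block but outside $|z|\le R_{\sharp,N}$. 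It is handled by the last assertion of Proposition~\ref{prop:localized-bubble-transfer-step20}, with size $A_N\cdot o(1)$.
\item The nonlinear remainder is controlled by Lemma~\ref{lem:step16}: $\|Q_N(w)\|\le C_Q\|w\|^2$.
\end{itemize}

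\emph{Main obstacle.} The nonlinear bound is only $C\varepsilon_N^2$, and $\varepsilon_N$ contains $\eta_{L,N_0}$, which does not tend to zero with $N$. So a crude estimate fails. I will first bootstrap with the splitting: $w=u^{\rm pr}_N+u^{\rm out}_N$, where $u^{\rm out}_N$ is small in the weighted norm $\mathcal X_{b,N}$ near the core. The aim is to show that $Q_N(w)$ restricted to the core is $O(A_N^2)$, using the Step~1 control $u^{\rm pr}_N=O(A_N)$ there. Away from the core, $Q_N(w)$ is a remote source and gains the factor $\lambda^{a_\Sigma}$. I expect this cross-term bookkeeping, which relies on \eqref{eq:weighted-global-bubble-inverse-step20}, to be the delicate step.

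\emph{Step 3 (right end and parameter derivatives).} The right-end statements follow from exact $J$-covariance of the problem together with Lemma~\ref{lem:step19}, Step~4. The one-derivative versions follow by differentiating the fixed-point equation and repeating Steps~1 and~2 with the differentiated estimates \eqref{eq:nonlinear-parameter-bound-step17} and the differentiated resolvent bounds.
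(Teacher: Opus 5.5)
Your architecture matches the paper's. Your $u^{\rm pr}_N$ and $u^{\rm out}_N$ are its $w_N^\sharp$ and $d_N=w_{N,\lambda,\xi}-w_N^\sharp$. Remote sources go through Proposition~\ref{prop:localized-bubble-transfer-step20} with $\lambda^{a_\Sigma}/A_N\asymp\epsilon_N^{a_\Sigma-\beta_f-d_f}=\epsilon_N^{\beta_f}\to0$, and the right end follows from $J$-invariance.

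\textbf{The gap is in the nonlinear step you flag as delicate.} You aim to show that $Q_N(w)$ restricted to the core is $O(A_N^2)$ because $u^{\rm pr}_N=O(A_N)$ there. This is false, for three reasons.
\begin{itemize}
\item On the metric core $|z|\le R_{\sharp,N}$, the principal response is controlled only by $A_N(1+|z|)^{2m_f+4}$. At the edge this reaches $A_NR_{\sharp,N}^{2m_f+4}\asymp\epsilon_N^{\kappa_f}\gg A_N$.
\item The second-order core metric remainder, which you put into $R_{\rm out}$, has size $A_N^2(1+|z|)^{4m_f+8}$. It lies inside the core, so Proposition~\ref{prop:localized-bubble-transfer-step20} gives no $\lambda^{a_\Sigma}$ gain for it.
\item Most importantly, $u^{\rm out}_N$ carries a field of size $O(\varepsilon_N)$ at the bubble--transition interface, and this field is never $o(A_N)$ there. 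So the terms $u^{\rm pr}_Nu^{\rm out}_N$ and $(u^{\rm out}_N)^2$ in $Q_N(w)$ are not $O(A_N^2)$ on the core.
\end{itemize}
No unweighted core bound of the kind you describe holds, so your bootstrap has nothing to close on.

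\textbf{What is missing} is an absorption argument run entirely in the one-sided weighted norms, with compacts evaluated only at the end. Write $(d_N,\mathbf e_N)=\mathbb G_N[-R_N^{\rm rem}-Q_N(w_N^\sharp)-(Q_N(w)-Q_N(w_N^\sharp))]$ and proceed in four steps.
\begin{enumerate}
\item[(i)] Bound the core second-order metric term and $Q_N(w_N^\sharp)$ in $\mathcal Y_{b,N}$ by $C(\lambda^{-a_\Sigma}A_N^2+\epsilon_N^{2\kappa_f})$. This works because the two terms of $\omega_{b,N}$ cross at $r\asymp1$, since $R_{b,N}^{-n/2}\asymp\lambda^{a_\Sigma}$.
\item[(ii)] Bound the bubble part of the Lipschitz difference by $\|[Q_N(w)-Q_N(w_N^\sharp)]_{b,-}\|_{\mathcal Y_{b,N}}\le Cr_{\rm nl}\|d_N\|_{\mathcal X_{b,N}}$. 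Here one factor is controlled by the coarse radius and the other carries the weight. Outside the bubble the same difference is $O(r_{\rm nl}\varepsilon_N)$ in $\widehat Y_{\gamma,\beta}$.
\item[(iii)] Apply \eqref{eq:weighted-global-bubble-inverse-step20} and absorb via \eqref{eq:final-energy-radius-step16}. This gives $\|d_N\|_{\mathcal X_{b,N}}\le C(\varepsilon_N+\lambda^{-a_\Sigma}A_N^2)$.
\item[(iv)] On a fixed compact $K$ one has $\omega_{b,N}\le C_K\lambda^{a_\Sigma}$. Hence $\|d_N\|_{C^{2,\alpha}(K)}\le C_K(\lambda^{a_\Sigma}\varepsilon_N+A_N^2)=o(A_N)$, and the coefficients follow by local extraction.
\end{enumerate}

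\textbf{A smaller slip in your Step~1.} Since $\omega_{b,N}\asymp\lambda^{a_\Sigma}$ where $W^\sharp$ is of unit size, $\|A_NW^\sharp\|_{\mathcal X_{b,N}}$ is of order $A_N\lambda^{-a_\Sigma}$, not $A_N$. The perturbation term is therefore $\eta_{b,N}A_N\lambda^{-a_\Sigma}$ in $\mathcal X_{b,N}$. This still gives $o(A_N)$ on compacts, so your conclusion there survives.
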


\begin{proof}
We give the proof on the left; the right statement then follows exactly from
$J$-invariance.

\emph{Step 1: the paired principal linear source.}
Let $P_{N,-}^\sharp$ be the first metric variation generated by
$h_{N,-}^\sharp$ at the exact left bubble, and let
$P_{N,+}^\sharp$ be its exact $J$-image.  In the coordinate
$x=x_N^-+\epsilon_Nz$, conformal covariance and
\eqref{eq:scaled-principal-tensor-step19} give
\begin{equation}\label{eq:principal-source-scaling-step20}
 A_N^{-1}U_{\lambda',\xi'}^{-p_*}
   \bigl(P_{N,-}^\sharp\bigr)^{\rm pull}
 =U_{\lambda',\xi'}^{-p_*}\mathcal R_{N,\xi',\lambda'}.
\end{equation}
Here $\bigl(P_{N,-}^\sharp\bigr)^{\rm pull}$ denotes the intrinsic bubble
pullback, namely $\epsilon_N^nP_{N,-}^\sharp(x_N^-+\epsilon_Nz)$.
Let $(w_N^\sharp,\mathbf b_N)$ be the global projected linear solution with
source $-(P_{N,-}^\sharp+P_{N,+}^\sharp)$.
By Proposition~\ref{lem:step12},
Proposition~\ref{prop:localized-bubble-transfer-step20}, and the one-sided
bubble boundary estimate in Proposition~\ref{aux:weighted-bubble-resolvent-rigorous},
\begin{equation}\label{eq:finite-principal-global-localization-step20}
 A_N^{-1}w_N^\sharp(x_N^-+\epsilon_Nz)
 -W_{N,\xi',\lambda'}^\sharp(z)
 \longrightarrow0
 \quad\text{in }C^{2,\alpha}_{\rm loc}.
\end{equation}
Indeed, the source at the opposite bubble is outside the chosen principal core
and hence contributes $O(A_N\lambda^{a_\Sigma})$ on the left.  The only
remaining difference is a boundary-induced bubble field at the shrinking
bubble--transition interface.  The one-sided weighted bubble estimate shows
that this field tends to zero on every fixed compact subset of the bubble
chart.  The argument uses only this weighted decay into the bubble core,
not decay of the unweighted interface trace.

The coefficient convergence is now extracted from the local equation rather
than inferred from the function estimate.  Put
\[
 E_N^\sharp
 :=A_N^{-1}w_N^\sharp(x_N^-+\epsilon_N\,\cdot)
   -W_{N,\xi',\lambda'}^\sharp.
\]
On $K_\Psi$ this function satisfies
\begin{equation}\label{eq:principal-coefficient-comparison-equation-step20}
 \mathscr P_{N,-}E_N^\sharp
 =H_N^\sharp
 +\sum_{a=0}^n
  \bigl(A_N^{-1}b_{N,a}-\beta_{N,a}\bigr)\Psi_{a,N}^{-},
\end{equation}
where $\|H_N^\sharp\|_{C^\alpha(K_\Psi)}\to0$.  This follows from
\eqref{eq:principal-source-scaling-step20}, the $C^1$ convergence of the
intrinsic bubble operator to $\mathscr L_*$ on fixed compact sets, and the
finite-response equation in Lemma~\ref{lem:step19}.  Applying the local coefficient estimate above to
\eqref{eq:principal-coefficient-comparison-equation-step20} and using
\eqref{eq:finite-principal-global-localization-step20} gives
\begin{equation}\label{eq:finite-principal-coefficients-step20}
 A_N^{-1}b_{N,a}-\beta_{N,a}\longrightarrow0.
\end{equation}
Equations
\eqref{eq:finite-response-convergence-step19},
\eqref{eq:finite-principal-global-localization-step20}, and
\eqref{eq:finite-principal-coefficients-step20} therefore imply
\begin{equation}\label{eq:principal-global-limit-step20}
 A_N^{-1}w_N^\sharp(x_N^-+\epsilon_Nz)
 \longrightarrow W_{\xi',\lambda'}(z),
 \qquad
 A_N^{-1}b_{N,a}\longrightarrow\beta_a.
\end{equation}

\emph{Step 2: all linear nonprincipal sources are negligible locally.}
Write
\[
 R_N=P_{N,-}^\sharp+P_{N,+}^\sharp+R_N^{\rm rem}.
\]
On every fixed metric-centered ball, Taylor expansion of the metric residual
gives
\[
 R_N-P_{N,-}^\sharp=O(A_N^2)
\]
in the intrinsic bubble source norm.  The part of the principal source outside
$h_{N,-}^\sharp$ begins at radius comparable with
$\epsilon_N^{4/n}$; Lemma~\ref{lem:step8} and
Proposition~\ref{prop:localized-bubble-transfer-step20} give a contribution
$o(A_N)$.  The same proposition and Lemma~\ref{lem:step9} give
\begin{equation}\label{eq:remote-linear-localization-step20}
 \left\|\mathbb G_N[-R_N^{\rm rem}]
       \right\|_{C^{2,\alpha}(K_-)}
 \le C_K\lambda^{a_\Sigma}
 \left(\eta_{L,N_0}+C_L\lambda^{2-8/n}
       +C\epsilon_N^{\kappa_f}\right)+o(A_N).
\end{equation}
Here $K_-$ denotes the pullback of a fixed compact set in the left chart.
The exponent comparison
\begin{equation}\label{eq:transfer-versus-principal-step20}
 \frac{\lambda^{a_\Sigma}}{A_N}
 \asymp\epsilon_N^{a_\Sigma-(\beta_f+d_f)}
 =\epsilon_N^{\beta_f}\longrightarrow0
\end{equation}
shows that the right-hand side of
\eqref{eq:remote-linear-localization-step20} is $o(A_N)$.  This includes the
opposite principal core, all remaining perturbations, both transitions, the exact
Schwarzschild target source, and the compact connector.

\emph{Step 3: the nonlinear remainder.}
Set
\[
 d_N=w_{N,\lambda,\xi}-w_N^\sharp,
 \qquad
 \mathbf e_N=(c_0-b_{N,0},\ldots,c_n-b_{N,n}).
\]
Use the complete augmented inverse, with the projected source coefficients
included in the unknown vector.  Thus
\begin{equation}\label{eq:difference-augmented-equation-step20}
 (d_N,\mathbf e_N)
 =\mathbb G_N\!\left[
   -R_N^{\rm rem}-Q_N(w_N^\sharp)
   -\bigl(Q_N(w_{N,\lambda,\xi})-Q_N(w_N^\sharp)\bigr)
 \right].
\end{equation}
We use the one-sided bubble norm of Proposition~\ref{aux:weighted-bubble-resolvent-rigorous}.  Let $F_N^{(0)}$ denote the
sum of the first two sources in brackets in
\eqref{eq:difference-augmented-equation-step20}.  The regionwise estimates
of Step~2, the finite principal-response bounds of Lemma~\ref{lem:step19},
and the local quadratic estimate give
\begin{equation}\label{eq:nonprincipal-weighted-source-step20}
 \|(F_N^{(0)})_{b,-}\|_{\mathcal Y_{b,N}}
 +\|(F_N^{(0)})_{\rm out}\|_{\widehat Y_{\gamma,\beta}}
 \le C\left(\varepsilon_N
       +\lambda^{-a_\Sigma}A_N^2\right).
\end{equation}
We spell out the weighted estimate.  Decompose the principal remainder in the
left bubble chart as
\[
 R_N^{\rm rem}=R_{N,\mathrm{core}}^{(2)}
   +R_{N,\mathrm{cut}}^{(1)}+R_{N,\mathrm{out}}.
\]
On every fixed subcore on which the actual principal tensor and the finite
principal tensor agree, Taylor expansion gives
\[
 Y_{R_{N,\mathrm{core}}^{(2)}}(r)
 +Y_{Q_N(w_N^\sharp)}(r)
 \le C A_N^2(1+r)^{4m_f+8}.
\]
The first-order term $R_{N,\mathrm{cut}}^{(1)}$ is supported in the cutoff
annulus $r\asymp R_{b,N}$, while $R_{N,\mathrm{out}}$ is supported farther
out.  Lemma~\ref{lem:step8} gives their normalized source norm the bound
$C_L\epsilon_N^{\kappa_f}$; on these annuli the weight in
\eqref{eq:bubble-transfer-weight-rigorous} is bounded below by a positive
constant depending only on the already fixed $L$.

For the quadratic core term, the crossover of the two terms in
\eqref{eq:bubble-transfer-weight-rigorous} occurs at $r\asymp1$, because
$R_{b,N}^{-n/2}\asymp_L\lambda^{a_\Sigma}$.  Hence
\begin{align*}
 \sup_{0\le r\le c_LR_{b,N}}
 \frac{A_N^2(1+r)^{4m_f+8}}
      {\lambda^{a_\Sigma}+((1+r)/R_{b,N})^{n/2}}
 \le C_L\left(
   \lambda^{-a_\Sigma}A_N^2
   +A_N^2R_{b,N}^{4m_f+8}
 \right).
\end{align*}
The second term is
$(A_NR_{b,N}^{2m_f+4})^2=O(\epsilon_N^{2\kappa_f})$ and is absorbed in
$C\varepsilon_N$.  Combining the core, cutoff, and outer pieces proves
\eqref{eq:nonprincipal-weighted-source-step20}.  The same calculation holds
after one normalized parameter derivative.

For the nonlinear difference, the blockwise estimate of
Lemma~\ref{lem:step16} gives on the exact left bubble
\begin{equation}\label{eq:nonlinear-difference-weighted-source-step20}
 \|[Q_N(w_{N,\lambda,\xi})-Q_N(w_N^\sharp)]_{b,-}
   \|_{\mathcal Y_{b,N}}
 \le Cr_{\rm nl}\|d_N\|_{\mathcal X_{b,N}}.
\end{equation}
Indeed, on each bubble annulus one factor is bounded by the fixed nonlinear
radius, and division by the same weight leaves the
$\mathcal X_{b,N}$ norm of $d_N$.  On the complement of the left exact
bubble,
\begin{equation}\label{eq:nonlinear-difference-outer-source-step20}
 \|[Q_N(w_{N,\lambda,\xi})-Q_N(w_N^\sharp)]_{\rm out}
   \|_{\widehat Y_{\gamma,\beta}}
 \le Cr_{\rm nl}\|d_N\|_{\widehat X_{\gamma,\beta}}
 \le C r_{\rm nl}\varepsilon_N.
\end{equation}
The last inequality follows from Lemma~\ref{lem:step17} and the global
linear estimate for $w_N^\sharp$.

Apply \eqref{eq:weighted-global-bubble-inverse-step20} to
\eqref{eq:difference-augmented-equation-step20}.  Equations
\eqref{eq:nonprincipal-weighted-source-step20}--
\eqref{eq:nonlinear-difference-outer-source-step20} yield
\[
 \|d_N(x_N^-+\epsilon_N\,\cdot)\|_{\mathcal X_{b,N}}
 \le C\left(\varepsilon_N+\lambda^{-a_\Sigma}A_N^2\right)
   +Cr_{\rm nl}\|d_N(x_N^-+\epsilon_N\,\cdot)
        \|_{\mathcal X_{b,N}}.
\]
The constant in this inequality is $C_{Q,b}$ from
Lemma~\ref{lem:step16}.  The radius condition
\eqref{eq:final-energy-radius-step16} therefore absorbs the last term.
We obtain
\begin{equation}\label{eq:weighted-difference-step20}
 \|d_N(x_N^-+\epsilon_N\,\cdot)\|_{\mathcal X_{b,N}}
 \le C\left(\varepsilon_N+\lambda^{-a_\Sigma}A_N^2\right).
\end{equation}
This estimate allows an $O(\varepsilon_N)$ interface field; it asserts only
that such a field acquires the one-sided bubble weight when propagated into
the bubble core.

Let $K\Subset\mathbb R^n$ be fixed.  On $K$,
$\omega_{b,N}\le C_K\lambda^{a_\Sigma}$, and hence
\begin{align}
 \|d_N(x_N^-+\epsilon_N\,\cdot)\|_{C^{2,\alpha}(K)}
 &\le C_K\left(\lambda^{a_\Sigma}\varepsilon_N+A_N^2\right)
 =o(A_N).
 \label{eq:nonlinear-difference-function-small-step20}
\end{align}
Here we used
$\lambda^{a_\Sigma}/A_N\asymp\epsilon_N^{\beta_f}\to0$ and $A_N\to0$.
Thus the desired local smallness follows without any assertion that the
interface Cauchy data are $o(A_N)$.

The projected coefficients are recovered from the local equation.  On the
fixed set $K_\Psi$ the intrinsic equation is
\begin{equation}\label{eq:nonlinear-coefficient-comparison-equation-step20}
 \mathscr P_{N,-}d_N
 =H_N^{\rm diff}
  +\sum_{a=0}^n(c_a-b_{N,a})\Psi_{a,N}^{-}.
\end{equation}
By \eqref{eq:nonlinear-difference-function-small-step20},
\[
 \|H_N^{\rm diff}\|_{C^\alpha(K_\Psi)}=o(A_N).
\]
Indeed, the local part of $R_N^{\rm rem}$ and the principal quadratic term
are $O(A_N^2)$, while the local Lipschitz estimate of
Lemma~\ref{lem:step16} gives $o(A_N)$ for the final nonlinear difference.
The local coefficient estimate above therefore gives
\begin{equation}\label{eq:nonlinear-difference-small-step20}
 \|d_N(x_N^-+\epsilon_N\,\cdot)\|_{C^{2,\alpha}(K)}
 +\sum_a|c_a-b_{N,a}|=o(A_N)
 \qquad(K\Subset\mathbb R^n).
\end{equation}
In particular,
\begin{equation}\label{eq:local-correction-scale-step20}
 \|w_{N,\lambda,\xi}(x_N^-+\epsilon_N\,\cdot)
   \|_{C^{2,\alpha}(K)}
 +\sum_a|c_a|
 \le C_KA_N.
\end{equation}
Combining \eqref{eq:principal-global-limit-step20} and
\eqref{eq:nonlinear-difference-small-step20} proves
\eqref{eq:left-localization-step20} and
\eqref{eq:coefficient-localization-step20}.

The same argument applies after one normalized parameter derivative.  In
the natural bubble coordinates,
$|D\omega_{b,N}|\le C\omega_{b,N}$ and $A_N$ is independent of
$(\lambda',\xi')$.  Differentiating
\eqref{eq:difference-augmented-equation-step20}, using
\eqref{eq:quadratic-parameter-step16}, and absorbing the differentiated
nonlinear term gives
\[
 \|Dd_N\|_{\mathcal X_{b,N}}
 \le C\left(\varepsilon_N+\lambda^{-a_\Sigma}A_N^2\right).
\]
Consequently, $Dd_N=o(A_N)$ on every fixed bubble compact set, and local
coefficient extraction gives
$D(c_a-b_{N,a})=o(A_N)$.

\emph{Step 4: uniqueness of the local limit and uniformity.}
Any convergent parameter sequence has a subsequence for which $A_N^{-1}w_N$ converges locally.  Dividing the pulled-back equation by
$A_N$, using Steps~1--3, and passing to the limit gives
\[
 \mathscr L_{\lambda',\xi'}\widetilde W
 =-U^{-p_*}\mathcal R_{\xi',\lambda'}
  +\sum_{a=0}^n\widetilde\beta_a\Psi_a^{\lambda',\xi'}.
\]
The global paired constraints reduce, by $J$-invariance, to the one-ended
conditions $\ell_b^{\lambda',\xi'}(\widetilde W)=0$.  Proposition~\ref{lem:step12}
therefore identifies the limit uniquely with
$W_{\xi',\lambda'}$ and the coefficients with $\beta_a$.  A compactness
argument on the normalized parameter set upgrades subsequential convergence
to uniform convergence.

The differentiated local convergence and coefficient convergence were
proved at the end of Step~3.  Together with the differentiated principal
source convergence, they identify the parameter derivative of every local
limit with the derivative of $W_{\xi',\lambda'}$.

Finally, $w_{N,\lambda,\xi}$ is exactly $J$-invariant.  In the right chart of
\eqref{eq:metric-centered-charts-step20} its pullback is therefore the same function of $z$ as on the left.  This proves
\eqref{eq:right-localization-step20}; the physical-coordinate formulation is
the exact Kelvin image.  The proof is complete.
\end{proof}

\subsection{Absolute bubble energy and principal-tail sources}
The round Jacobi form has one negative degree-zero direction and a
degree-one kernel.  Tail estimates therefore use the positive graph norm
defined next.

Let $\Pi_k^*$ denote the degree-$k$ spherical-harmonic projection on the
round sphere $(\mathbb S^n,g_*)$, and let
$\lambda_k^*=k(k+n-1)$.  Define
\begin{equation}\label{eq:absolute-energy-norm-rigorous}
 \|u\|_{\mathcal E_*}^2
 :=\|\Pi_0^*u\|_{L^2(g_*)}^2
 +\sum_{k\ge2}(1+\lambda_k^*)
       \|\Pi_k^*u\|_{L^2(g_*)}^2
 +\sum_{b=0}^n|\ell_b^*(u)|^2,
\end{equation}
where the localized functionals $\ell_b^*$ are those of
Proposition~\ref{lem:step12}.  Let $\mathcal E_*'$ be the dual space.

\begin{lemma}[Absolute round graph norm]
\label{aux:absolute-round-graph-rigorous}
The norm \eqref{eq:absolute-energy-norm-rigorous} is equivalent to the
$H^1(g_*)$ norm.  The complete round augmented inverse satisfies
\begin{equation}\label{eq:absolute-round-inverse-rigorous}
 \|G_*F\|_{\mathcal E_*}+|\mathbf c|
 \le C\|F\|_{\mathcal E_*'},
\end{equation}
and the round Jacobi bilinear form obeys
\begin{equation}\label{eq:absolute-round-bilinear-rigorous}
 |\mathscr B_*(u,v)|
 \le C\|u\|_{\mathcal E_*}\|v\|_{\mathcal E_*}.
\end{equation}
Moreover,
\begin{equation}\label{eq:L2-to-Edual-rigorous}
 \|F\|_{\mathcal E_*'}\le C\|F\|_{L^2(g_*)}.
\end{equation}
\end{lemma}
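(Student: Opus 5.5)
The whole argument runs through the spectral decomposition of $L^2(\mathbb S^n,g_*)$ into degree-$k$ spherical harmonics, using the operator formula $\mathscr L_*=-\frac{n-1}{2}(\Delta_{g_*}+n)$ of Proposition~\ref{lem:step12}. Write $u=\sum_k\Pi_k^*u$. Then $\|u\|_{H^1}^2\asymp\sum_k(1+\lambda_k^*)\|\Pi_k^*u\|_{L^2}^2$. The two sides of the norm equivalence can differ only in the degree-one block $\mathcal H_1$, since $\lambda_0^*=0$ makes the degree-zero terms agree. The first step is to show that on the finite-dimensional space $\mathcal H_1$ the localized functionals $\ell_b^*$ restrict to a norm. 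This holds because the localized Gram matrix $G^*_{ab}=\ell_b^*(\mathfrak z_a^{\lambda'})$ is uniformly invertible by Proposition~\ref{lem:step11}. On $\mathcal H_1$ the quantity $\sum_b|\ell_b^*(\cdot)|^2$ is therefore comparable to $\|\Pi_1^*u\|_{L^2}^2$, uniformly in $\lambda'$.

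For general $u$, each $\ell_b^*$ is bounded on $L^2$, since it is given by a compactly supported smooth density. This gives $\sum_b|\ell_b^*(u)|^2\le C\|u\|_{L^2}^2$, which proves $\|u\|_{\mathcal E_*}\le C\|u\|_{H^1}$. For the converse, apply $\ell_b^*(\Pi_1^*u)=\ell_b^*(u)-\ell_b^*(u-\Pi_1^*u)$. The first step then gives $\|\Pi_1^*u\|\le C\bigl(\sum_b|\ell_b^*(u)|+\|(I-\Pi_1^*)u\|_{L^2}\bigr)$, and the last term is controlled by the degree $\neq1$ parts of the $\mathcal E_*$ norm. This is the only nontrivial point: a triangular absorption, with no smallness required.

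With the norms equivalent, the remaining three estimates reduce to $H^1$ statements. For \eqref{eq:L2-to-Edual-rigorous}, pair $F$ with $v$: $|\langle F,v\rangle|\le\|F\|_{L^2}\|v\|_{L^2}\le C\|F\|_{L^2}\|v\|_{\mathcal E_*}$. For \eqref{eq:absolute-round-bilinear-rigorous}, write $\mathscr B_*(u,v)$ after integration by parts as a constant multiple of $\int(\nabla u\cdot\nabla v-nuv)\,dv_{g_*}$, with the factor $4a\cdot\frac{n-1}{2}$ coming from $U^{p_*}dz=dv_{g_*}$. This form is bounded by $\|u\|_{H^1}\|v\|_{H^1}$.

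For the inverse estimate \eqref{eq:absolute-round-inverse-rigorous}, I would repeat the construction of Proposition~\ref{lem:step12} in weak form. First choose $c_a$ by finite-dimensional inversion so that $F+\sum_ac_a\Psi_a^{\lambda'}$ annihilates $\mathcal H_1$; then $|c|\le C\|F\|_{\mathcal E_*'}$, because $\mathfrak z_a\in H^1$. Next solve on $\mathcal H_1^\perp$, where the form is diagonal with eigenvalues $\frac{n-1}{2}(\lambda_k^*-n)$, equal to $-\frac{n(n-1)}{2}$ for $k=0$ and bounded away from zero, comparably to $1+\lambda_k^*$, for $k\ge2$. This gives $\|w_0\|_{H^1}\le C\|F\|_{H^{-1}}$. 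Finally, add the kernel element fixed by $\ell_b^*(w)=0$, again using the Gram matrix. Uniformity in $\lambda'\in[1/2,3/2]$ follows from that of $G^*$.
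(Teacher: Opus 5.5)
Your proposal is correct and follows the same route as the paper: split off the degree-one block, control it through the localized Gram matrix, and use the spectral gap of $\mathscr L_*=-\frac{n-1}{2}(\Delta_{g_*}+n)$ on the orthogonal complement (including the negative degree-zero direction) for both the norm equivalence and the augmented inverse; the bilinear and $L^2$-dual bounds are then immediate. Your version makes explicit two steps the paper's short proof leaves implicit, namely the triangular absorption $\ell_b^*(\Pi_1^*u)=\ell_b^*(u)-\ell_b^*((I-\Pi_1^*)u)$ and the three-step weak construction; the one point you leave unstated is uniqueness, which is needed to identify your constructed pair with $G_*F$ and follows by pairing a homogeneous solution with $\mathcal H_1$ and then using $\ell_b^*(w)=0$.
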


\begin{proof}
Decompose $u$ into its degree-one component and its $L^2(g_*)$-orthogonal
complement.  The localized Gram matrix controls the degree-one coefficients,
while the spectral gap of $\mathscr L_*$ away from that eigenspace controls
the complement, including the degree-zero direction in the absolute norm.
This proves the equivalence with $H^1(g_*)$ and the inverse estimate
\eqref{eq:absolute-round-inverse-rigorous}.  The bilinear bound follows from
the same spectral decomposition, and \eqref{eq:L2-to-Edual-rigorous} is an
immediate consequence of $\|u\|_{L^2}\le C\|u\|_{\mathcal E_*}$.
\end{proof}

For a truncated round-bubble portion
$\mathcal B_T=\{t\le T\}$ define the positive local energy norm
\begin{equation}\label{eq:local-absolute-energy-rigorous}
 \|u\|_{\mathcal E_{*,T}}^2
 :=\|u\|_{H^1(\mathcal B_T,g_*)}^2
   +\sum_{b=0}^n|\ell_b^*(u)|^2,
\end{equation}
and let $\mathcal E_{*,T}'$ be its dual.  The localized functionals are
supported in a fixed compact subset, so these norms are uniform in $T$.
In particular,
\begin{equation}\label{eq:local-L2-dual-rigorous}
 \|F\|_{\mathcal E_{*,T}'}\le C\|F\|_{L^2(\mathcal B_T,g_*)},
\end{equation}
with $C$ independent of $T$.

Recall
\begin{equation}\label{eq:bubble-tail-radius-rigorous}
 R_{\sharp,N}=\frac{\sigma_N^\sharp}{\epsilon_N}
 \asymp\epsilon_N^{4/n-1}.
\end{equation}
The intrinsic normalized first principal metric source in the bubble chart is a
linear combination of
\begin{equation}\label{eq:first-source-structure-rigorous}
 U^{-8/(n-4)}\Bigl(
 D^2\widehat h_N+D\log U\,D\widehat h_N
 +(D^2\log U+|D\log U|^2)\widehat h_N\Bigr).
\end{equation}
For $r=|z|\ge1$,
\[
 U^{-8/(n-4)}\le Cr^4,
 \quad |D\log U|\le Cr^{-1},
 \quad |D^2\log U|\le Cr^{-2},
\]
and
\[
 |D^j(A_N\widehat h_N)(z)|
 \le CA_N(1+r)^{2m_f+2-j}
      \mathbf1_{\{r\le2R_{\sharp,N}\}},
 \qquad 0\le j\le2.
\]
Consequently, the actual normalized first source and one normalized parameter
derivative satisfy
\begin{equation}\label{eq:first-tail-majorant-rigorous}
 |F_N^{(1)}(z)|+|\mathfrak DF_N^{(1)}(z)|
 \le CA_N(1+r)^{2m_f+4}
      \mathbf1_{\{r\le2R_{\sharp,N}\}}.
\end{equation}
The same majorant holds for the finite-$N$ cutoff commutator, since on its
support $r\asymp R_{\sharp,N}$ and every cutoff derivative contributes a
factor $O(r^{-1})$.

The projected Green estimate used in Lemma~\ref{lem:step19} gives
\begin{equation}\label{eq:principal-response-majorant-rigorous}
 \sum_{j=0}^2(1+r)^j|D^jW_N^\sharp(z)|
 \le C(1+r)^{2m_f+4}.
\end{equation}
Set
\begin{equation}\label{eq:theta-tail-rigorous}
 \Theta_N:=A_N(R_{\sharp,N})^{2m_f+4}.
\end{equation}
Then
\begin{equation}\label{eq:theta-tail-small-rigorous}
 \Theta_N
 \le C\epsilon_N^{\beta_f-2+8(m_f+2)/n}
 =C\epsilon_N^{\kappa_f}\longrightarrow0.
\end{equation}
The local quadratic estimate therefore yields
\begin{align}
 |F_N^{(2)}(z)|+|\mathfrak DF_N^{(2)}(z)|
 &\le CA_N^2(1+r)^{4m_f+8}
        \mathbf1_{\{r\le2R_{\sharp,N}\}}\nonumber\\
 &\le C\Theta_NA_N(1+r)^{2m_f+4}
        \mathbf1_{\{r\le2R_{\sharp,N}\}}.
 \label{eq:quadratic-tail-majorant-rigorous}
\end{align}

\begin{lemma}[Principal-tail dual norm]
\label{aux:tail-dual-rigorous}
Let $F_{N,R}^{\rm loc}$ be the part in $\{r>R\}$ of the first principal metric
source, its finite-$N$ cutoff commutator, or the quadratic principal source,
and let $T_N$ be the interface parameter of the corresponding truncated
round-bubble portion.  Then
\begin{align}\label{eq:tail-dual-rigorous}
 &\|F_{N,R}^{\rm loc}\|_{\mathcal E_*'}
 +\|\mathfrak DF_{N,R}^{\rm loc}\|_{\mathcal E_*'}\nonumber\\
 &\quad+\|F_{N,R}^{\rm loc}\|_{\mathcal E_{*,T_N}'}
 +\|\mathfrak DF_{N,R}^{\rm loc}\|_{\mathcal E_{*,T_N}'}
 \le CA_N\omega_0(R),\qquad
 \omega_0(R)=CR^{-\frac12(n-4m_f-8)}.
\end{align}
In particular, $\omega_0(R)\to0$ under $n>4m_f+8$.
\end{lemma}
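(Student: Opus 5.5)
The plan is to reduce both dual norms to weighted $L^2$ norms and then integrate the pointwise majorants \eqref{eq:first-tail-majorant-rigorous} and \eqref{eq:quadratic-tail-majorant-rigorous} over the region $\{r>R\}$. By \eqref{eq:L2-to-Edual-rigorous} and \eqref{eq:local-L2-dual-rigorous}, it suffices to bound $\|F^{\rm loc}_{N,R}\|_{L^2(g_*)}$ and its normalized parameter derivative; the local norm on $\mathcal B_{T_N}$ is then dominated by the complete-sphere one, with a constant independent of $T_N$. Here the source is the intrinsic normalized one, so the relevant $L^2$ norm is taken with respect to $dv_{g_*}=U^{p_*}\,dz$, and $U^{p_*}\asymp r^{-2n}$ for $r\ge1$.

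All three source types are controlled by the same majorant. For the first source and for the cutoff commutator, \eqref{eq:first-tail-majorant-rigorous} gives $|F|\le CA_N(1+r)^{2m_f+4}$. For the quadratic source, \eqref{eq:quadratic-tail-majorant-rigorous} gives the same bound with an extra factor $\Theta_N\le C$. The same estimates hold after applying $\mathfrak D$.

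The main computation is then
\[
 \int_{r>R}A_N^2r^{4m_f+8}r^{-2n}r^{n-1}\,dr
 =A_N^2\int_R^\infty r^{4m_f+7-n}\,dr
 \le CA_N^2R^{-(n-4m_f-8)},
\]
which converges exactly because $n>4m_f+8$. Taking the square root gives $CA_NR^{-\frac12(n-4m_f-8)}$, which is the claimed bound $CA_N\omega_0(R)$.

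The only point needing care is whether the dual pairing is with respect to the intrinsic measure $d\mu_*$, matching the convention under which \eqref{eq:L2-to-Edual-rigorous} was stated. If the pairing were instead Euclidean $dz$, I would convert with the factor $U^{p_*}$ before estimating. I expect this bookkeeping of the measure and normalization to be the main (mild) obstacle, and I would resolve it by using the normalization $U^{-p_*}F_E$ fixed in Proposition~\ref{lem:step12}.
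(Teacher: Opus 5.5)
Your proposal is correct and is essentially the paper's own proof. Both reduce the two dual norms to $L^2(g_*)$ via \eqref{eq:L2-to-Edual-rigorous} and \eqref{eq:local-L2-dual-rigorous}, insert the majorants \eqref{eq:first-tail-majorant-rigorous} and \eqref{eq:quadratic-tail-majorant-rigorous} (the latter with the harmless extra factor $\Theta_N\le1$), and integrate against $dv_{g_*}\asymp r^{-n-1}\,dr\,d\theta$ to obtain $CA_N^2\int_R^\infty r^{4m_f+7-n}\,dr$. The measure bookkeeping you flag is resolved exactly as you suggest, since the paper works directly with the round volume form.
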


\begin{proof}
The round volume form satisfies
$dv_{g_*}\asymp r^{-n-1}\,dr\,d\theta$ for $r\ge1$.  By
\eqref{eq:L2-to-Edual-rigorous}, \eqref{eq:local-L2-dual-rigorous}, and
\eqref{eq:first-tail-majorant-rigorous},
\begin{align*}
 \|F_{N,R}^{\rm loc}\|_{\mathcal E_*'}^2
 &\le CA_N^2\int_R^{2R_{\sharp,N}}
       r^{4m_f+8}r^{-n-1}\,dr\\
 &\le CA_N^2\int_R^\infty r^{4m_f-n+7}\,dr
 =\frac{CA_N^2}{n-4m_f-8}
   R^{-(n-4m_f-8)}.
\end{align*}
The same calculation with \eqref{eq:local-L2-dual-rigorous} proves the two local-dual terms.  This proves the estimate for the first source, the cutoff commutator, and
their parameter derivatives.  For the quadratic source use
\eqref{eq:quadratic-tail-majorant-rigorous}; the same calculation has the
additional factor $\Theta_N\le1$ for large $N$.
\end{proof}

\begin{proposition}[Localized energy estimate for the projected inverse]
\label{aux:localized-energy-inverse-rigorous}
Initially let $F$ be a smooth $J$-paired source supported in the two exact
bubble portions, so that the actual global projected response
$(u,\mathbf d)=\mathbb G_NF$ is defined by the global inverse of
Lemma~\ref{lem:step15} in the refined spaces of
Proposition~\ref{prop:refined-transfer-step16}.  The estimates below depend
only on the indicated
energy-dual norms and therefore extend uniquely by density to all paired
data in the corresponding energy-dual completions; the extended response
is again denoted by $(u,\mathbf d)=\mathbb G_NF$.  The differentiated
conclusions extend in the same way to $C^1$ families in these fixed-domain
energy-dual spaces.
Write $F_\pm$ and $u_\pm$ for the intrinsic round-bubble pullbacks, and let
$T_{N,\pm}$ be their interface parameters.  For the fixed $T_E$ in
\eqref{eq:large-bubble-truncation-threshold-step17}, the ordered choice
ensures $T_{N,\pm}\ge T_E$, and one has
\begin{equation}\label{eq:localized-energy-inverse-rigorous}
 \sum_{\pm}\|u_\pm\|_{\mathcal E_{*,T_{N,\pm}}}+|\mathbf d|
 \le C\sum_{\pm}\|F_\pm\|_{\mathcal E_{*,T_{N,\pm}}'}.
\end{equation}
Since
$\mathscr P_{N,\pm}u_\pm=F_\pm+\sum_ad_a\Psi_{a,N}^\pm$
intrinsically, the same estimate gives the graph bound
\begin{align}\label{eq:localized-energy-graph-inverse-rigorous}
 \sum_\pm\Bigl(
  \|u_\pm\|_{\mathcal E_{*,T_{N,\pm}}}
 +\|\mathscr P_{N,\pm}u_\pm\|_{\mathcal E_{*,T_{N,\pm}}'}\Bigr)
 +|\mathbf d|
 \le C\sum_\pm\|F_\pm\|_{\mathcal E_{*,T_{N,\pm}}'}.
\end{align}
If $\mathfrak D$ is one normalized parameter derivative, taken under the
fixed-domain convention described in the proof, then
\begin{align}
 \sum_\pm\|\mathfrak Du_\pm\|_{\mathcal E_{*,T_{N,\pm}}}
 +|\mathfrak D\mathbf d|
 &\le C\sum_\pm\Bigl(
 \|\mathfrak DF_\pm\|_{\mathcal E_{*,T_{N,\pm}}'}
 +\|F_\pm\|_{\mathcal E_{*,T_{N,\pm}}'}\Bigr),
 \label{eq:localized-energy-inverse-derivative-rigorous}\\
 \sum_\pm\Bigl(
 \|\mathfrak Du_\pm\|_{\mathcal E_{*,T_{N,\pm}}}
 +\|\mathscr P_{N,\pm}\mathfrak Du_\pm
       \|_{\mathcal E_{*,T_{N,\pm}}'}\Bigr)
 +|\mathfrak D\mathbf d|
 &\le C\sum_\pm\Bigl(
 \|\mathfrak DF_\pm\|_{\mathcal E_{*,T_{N,\pm}}'}
 +\|F_\pm\|_{\mathcal E_{*,T_{N,\pm}}'}\Bigr).
 \label{eq:localized-energy-graph-derivative-rigorous}
\end{align}
The constants are independent of the selected index and of the normalized
parameters in the ordered regime.
\end{proposition}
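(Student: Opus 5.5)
The plan is a Schur-complement decomposition of the global augmented problem into the two truncated bubble blocks and the common exterior (two transitions, two Schwarzschild half-necks, centered connector), glued across the Liouville interface traces in $\mathcal H_\partial$. All estimates are carried out in the energy spaces of Proposition~\ref{aux:preparameter-energy-package}. Since $F$ is supported in the two bubble portions, the exterior block carries no source. For smooth paired $F$, the response $(u,\mathbf d)=\mathbb G_NF$ of Lemma~\ref{lem:step15} is a weak solution of this decomposed system. It therefore suffices to bound the weak solution of the decomposed system by the energy-dual norms of $F_\pm$. Uniqueness in $\widehat X_{\gamma,\beta}$ identifies the two solutions, and density then extends the estimate to the energy-dual completions.

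\emph{Step 1: block reduction.} On each bubble block, solve the zero-trace augmented problem with data $(F_\pm,0)$. The first assertion of Proposition~\ref{aux:preparameter-energy-package}, combined with the actual--flat perturbation bound \eqref{eq:preparameter-actual-flat-form-bound} and Proposition~\ref{aux:abstract-energy-dtn-stability}, gives a pair $(u^0_\pm,\mathbf c^0_\pm)$ and a weak flux $h_\pm$. By \eqref{eq:abstract-inhomogeneous-flux-bound}, all three are bounded by $C\|F_\pm\|_{\mathcal E'_{*,T_{N,\pm}}}$, with $C$ uniform in $T_{N,\pm}\ge T_E$. The remaining unknown is the common interface trace $\phi\in\mathcal H_\partial$. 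Writing $u=u^0+H\phi$ (with the exterior Poisson field on the other side), flux matching reduces the problem to
\[
 \mathcal S\phi=-h,\qquad \mathcal S:=\mathcal D^{\rm bub}_T+\mathcal D^{\rm ext}.
\]
The constraint part is absorbed by the augmented Poisson pairs \eqref{eq:abstract-poisson-pair}, so that $\mathbf d=\mathbf c^0+\mathbf d(\phi)$.

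\emph{Step 2: Schur inversion.} This is the step I expect to be the main obstacle, because the constant must be uniform in $T$, $L$ and the half-neck lengths, and the degree-zero and degree-one modes are delicate. The ideal Schur complement $\mathcal S^{\rm id}$ is invertible with norm bound $C_E^{\rm id}$ by \eqref{eq:preparameter-ideal-schur-bound}. By \eqref{eq:abstract-dtn-stability}, the actual operator satisfies
\[
 \|\mathcal S-\mathcal S^{\rm id}\|
 \le C_{E,\rm geo}\bigl(C_L\delta_\lambda+e^L\lambda^{1-4/n}+N^{-1}\bigr)
 +C_{E,\rm op}(L)\bigl(\epsilon_N^{\kappa_f}+\delta_{\!*}(N_0)\bigr).
\]
The margins \eqref{eq:energy-geometric-margin-step17}, \eqref{eq:choose-N-flat-geometry-step17}, \eqref{eq:choose-N0-energy-step17} and \eqref{eq:choose-N-energy-step17} make $C_E^{\rm id}$ times this quantity at most $\tfrac14+\tfrac1{16}+\tfrac1{16}<\tfrac12$. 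A Neumann series then gives $\|\mathcal S^{-1}\|\le 2C_E^{\rm id}$. Combined with \eqref{eq:abstract-poisson-dtn-bound}, this bounds $\phi$, $H\phi$ and $\mathbf d(\phi)$ by $C\sum_\pm\|F_\pm\|'$, which proves \eqref{eq:localized-energy-inverse-rigorous}.

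The graph bound \eqref{eq:localized-energy-graph-inverse-rigorous} follows by substituting into the intrinsic equation $\mathscr P_{N,\pm}u_\pm=F_\pm+\sum_a d_a\Psi^\pm_{a,N}$. Here we use that the sources $\Psi^\pm_{a,N}$ have fixed compact support and bounded $L^2$ norm, so their dual norms are bounded by \eqref{eq:local-L2-dual-rigorous}.

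\emph{Step 3: parameter derivative.} To differentiate, first fix the domains. Pull each bubble block back to the normalized round chart on $\{t<T_{N,\pm}\}$, with $T_{N,\pm}$ frozen by a parameter-dependent dilation of the cylinder coordinate that is bounded in $C^1$. Pull the exterior back to the fixed Liouville coordinates. After this identification, the trace map $\mathcal T$, the trace extension $\mathcal E$ and the spaces are parameter independent, while the forms, sources and constraints depend $C^1$ on the parameters with uniformly bounded derivatives, by the differentiated statements in Proposition~\ref{aux:preparameter-energy-package}. Then \eqref{eq:abstract-dtn-parameter-bound}--\eqref{eq:abstract-flux-parameter-bound} give $C^1$ bounds for $u^0$, $h$, $H$ and $\mathcal S$. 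Differentiating $\mathcal S\phi=-h$ yields
\[
 \mathfrak D\phi=-\mathcal S^{-1}\bigl(\mathfrak Dh+(\mathfrak D\mathcal S)\phi\bigr),
\]
which gives \eqref{eq:localized-energy-inverse-derivative-rigorous}. The graph version \eqref{eq:localized-energy-graph-derivative-rigorous} again follows from the differentiated intrinsic equation.
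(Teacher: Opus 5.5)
Your proposal is correct and follows the paper's proof essentially step for step. The common steps are: paired zero-trace augmented bubble solutions with their weak fluxes; reduction to the interface equation $\mathcal S_N\varphi=-h_{N,F}$; uniform invertibility of $\mathcal S_N$ by perturbation from $\mathcal S^{\rm id}$ using the margins \eqref{eq:energy-geometric-margin-step17}, \eqref{eq:choose-N-flat-geometry-step17}, \eqref{eq:choose-N0-energy-step17}, \eqref{eq:choose-N-energy-step17}; and differentiation of the interface equation after a fixed-domain identification. Merging the paper's two stages (ideal $\to$ finite-flat $\to$ actual) into one Neumann series is harmless.

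The one point to tighten is that identification. The map freezing $T_{N,\pm}$ must be the identity on a fixed core containing the bubble pole and the supports of $\Psi_a$ and $\chi_*$; the paper uses the identity away from a fixed terminal collar and a translation near the interface. A dilation of the whole cylinder coordinate would move those supports. It would also fail to give uniformly equivalent round $H^1$ spaces as $t\to-\infty$.
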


\begin{proof}
We use the common Liouville gauge on the bubble and transition sides.  On a
truncated round bubble \(\mathcal B_T=\{t<T\}\), set
\[
 Y_u(t,\theta)=\cosh(t)^{-\frac{n-2}{2}}u(t,\theta),\qquad
 \mathcal T_Tu=Y_u(T,\cdot),\qquad
 \mathcal C_Tu=\partial_tY_u(T,\cdot).
\]
The transition gauge is normalized so that both the value and the outward
conormal agree at the common interface.  In the original variable,
\begin{align}
 \mathcal T_Tu&=\cosh(T)^{-\frac{n-2}{2}}u|_{\Gamma_T},\nonumber\\
 \mathcal C_Tu&=\cosh(T)^{-\frac{n-2}{2}}
 \left(\partial_tu-\frac{n-2}{2}\tanh(T)u\right)\Big|_{\Gamma_T}.
 \label{eq:liouville-original-cauchy-gapfix}
\end{align}
On an actual bubble block the relative operator is in divergence form, and
its weak conormal in this gauge is uniformly equivalent to
\(\mathcal C_Tu\) in \(H^{-1/2}(\mathbb S^{n-1})\).  Thus the form and
conormal estimates in Proposition~\ref{aux:preparameter-energy-package}
apply with this common trace normalization.

We first recall the two bubble-side consequences of that proposition.  The
zero-trace augmented problem on \(\mathcal B_T\), \(T\ge T_E\), has a
unique solution satisfying
\begin{equation}\label{eq:zero-boundary-energy-estimate-rigorous}
 \|u^0\|_{\mathcal E_{*,T}}+|\mathbf d^0|
 \le C\bigl(\|F\|_{\mathcal E_{*,T}'}+|\mathbf g|\bigr).
\end{equation}
Moreover, if \(H_T^{\rm bub}\varphi\) is the augmented Poisson field with
trace \(\varphi\in H^{1/2}(\mathbb S^{n-1})\) and zero localized
constraints, then
\begin{equation}\label{eq:bubble-poisson-operator-bound-rigorous}
 \|H_T^{\rm bub}\varphi\|_{\mathcal E_{*,T}}
 +|\mathbf d(\varphi)|
 \le C\|\varphi\|_{H^{1/2}}.
\end{equation}
The corresponding weak flux is bounded in \(H^{-1/2}\).  These estimates
are uniform in \(T\) and in the normalized bubble parameters.

We next pass from the ideal block problem to the finite-flat and actual
ones.  Let
\[
 \mathcal S^{\rm id}
 =\mathcal D_T^{\rm bub}+\mathcal D^{\rm ext}
 :\mathcal H_\partial\longrightarrow\mathcal H_\partial'
\]
be the ideal interface Schur complement.  Its uniform invertibility is
\eqref{eq:preparameter-ideal-schur-bound}.  The finite-flat bubble blocks
are exact round bubbles in their intrinsic coordinates, while the exterior
form differs from the ideal one by
\eqref{eq:preparameter-finite-ideal-form-bound}.  Hence
Proposition~\ref{aux:abstract-energy-dtn-stability}, together with
\eqref{eq:choose-N-flat-geometry-step17} and
\eqref{eq:energy-geometric-margin-step17}, gives uniform zero-trace and
Poisson bounds for the finite-flat exterior problem and a uniformly
invertible finite-flat Schur complement.

The actual bubble and exterior forms, sources, and constraints differ from
the finite-flat ones by the quantity in
\eqref{eq:preparameter-actual-flat-form-bound}.  The choices
\eqref{eq:choose-N0-energy-step17} and
\eqref{eq:choose-N-energy-step17} therefore allow a second application of
Proposition~\ref{aux:abstract-energy-dtn-stability}.  In particular,
\begin{equation}
 \|G_{N,{\rm bub}}^D\|+\|G_{N,{\rm ext}}^D\|\le C,
 \label{eq:actual-zero-trace-inverses-prop74}
\end{equation}
and the actual Dirichlet-to-Neumann maps differ from their finite-flat
counterparts by a sufficiently small operator on
\(\mathcal H_\partial\).  Consequently
\[
 \mathcal S_N:=\mathcal D_{N}^{\rm bub}+\mathcal D_N^{\rm ext}
\]
is uniformly invertible from \(\mathcal H_\partial\) to
\(\mathcal H_\partial'\).

We now solve the global problem.  Let
\((u_N^0,\mathbf d_N^0)\) be the paired actual zero-trace bubble solution
with source \(F\), and let \(h_{N,F}\in\mathcal H_\partial'\) be its pair
of outward weak fluxes.  By
\eqref{eq:actual-zero-trace-inverses-prop74} and the weak-flux estimate in
Proposition~\ref{aux:abstract-energy-dtn-stability},
\[
 \sum_\pm\|u_{N,\pm}^0\|_{\mathcal E_{*,T_{N,\pm}}}
 +|\mathbf d_N^0|+\|h_{N,F}\|_{\mathcal H_\partial'}
 \le C\sum_\pm\|F_\pm\|_{\mathcal E_{*,T_{N,\pm}}'}.
\]
The common interface trace is
\(\varphi=-\mathcal S_N^{-1}h_{N,F}\).  Adding the actual bubble Poisson
fields and the actual exterior homogeneous field with trace \(\varphi\)
reconstructs a global weak augmented solution.  The uniform Poisson bounds
and the estimate for \(\mathcal S_N^{-1}\) give
\eqref{eq:localized-energy-inverse-rigorous}.  For smooth data, equality of
the weak conormal traces and blockwise elliptic regularity make this a
classical solution; uniqueness in Lemma~\ref{lem:step15} identifies it with
\(\mathbb G_NF\).  General energy-dual data follow by density.  The
intrinsic equation and the uniform dual norms of the localized sources then
give \eqref{eq:localized-energy-graph-inverse-rigorous}.

It remains to differentiate the construction.  Near each moving interface
we identify neighboring domains by a cylindrical diffeomorphism which is
the identity away from a fixed terminal collar and a translation near the
boundary.  Since \(|\mathfrak DT_{N,\pm}|\le C\), the pulled-back spaces,
trace maps, and trace extensions are fixed and uniformly equivalent.
The parameter-dependent part of
Propositions~\ref{aux:preparameter-energy-package} and~\ref{aux:abstract-energy-dtn-stability} gives uniform derivatives of the
zero-trace solutions, Poisson maps, weak fluxes, and
\(\mathcal S_N\).  Differentiating
\[
 \mathcal S_N\varphi=-h_{N,F}
\]
and using the already established inverse bound yields the estimate for
\(\mathfrak D\varphi\).  Differentiating the reconstruction proves
\eqref{eq:localized-energy-inverse-derivative-rigorous}; differentiating
the intrinsic bubble equations then gives
\eqref{eq:localized-energy-graph-derivative-rigorous}.
\end{proof}

\begin{lemma}[Capacity estimate for boundary-induced bubble fields]
\label{aux:bubble-capacity-rigorous}
Let $\mathcal B_T=\{t\le T\}$ be a truncated round-bubble portion and let
$u$ solve the augmented round equation
\[
 \mathscr L_*u=f+\sum_{a=0}^nd_a\Psi_a,
 \qquad \ell_b^*(u)=0,
\]
with interface parameter $T\ge T_E$.  Write
$\phi=u|_{\Gamma_T}$ and $\psi=\partial_tu|_{\Gamma_T}$ in spherical
harmonics.  Then the stronger estimate
\begin{equation}\label{eq:bubble-capacity-liouville-rigorous}
 \|u\|_{\mathcal E_{*,T}}+|\mathbf d|
 \le C\left(
  \|f\|_{\mathcal E_{*,T}'}
  +\|\mathcal T_Tu\|_{H^{1/2}(\mathbb S^{n-1})}\right)
\end{equation}
holds, with $C$ independent of $T$.  In particular,
\begin{align}\label{eq:bubble-capacity-rigorous}
 \|u\|_{\mathcal E_{*,T}}
 \le C\bigl(&e^{-\frac{n-2}{2}T}
   (\|\phi\|_{H^{1/2}(\mathbb S^{n-1})}
    +\|\psi\|_{H^{-1/2}(\mathbb S^{n-1})})
   +\|f\|_{\mathcal E_{*,T}'}+|\mathbf d|\bigr).
\end{align}
The same conclusions hold for the actual bubble block in the ordered
regime.

Under the fixed-domain convention in the proof of
Proposition~\ref{aux:localized-energy-inverse-rigorous}, one normalized
parameter derivative satisfies
\begin{align}\label{eq:bubble-capacity-derivative-rigorous}
 \|\mathfrak Du\|_{\mathcal E_{*,T}}+|\mathfrak D\mathbf d|
 \le C\bigl(&\|\mathfrak Df\|_{\mathcal E_{*,T}'}
            +\|f\|_{\mathcal E_{*,T}'}
            +\|\mathfrak D(\mathcal T_Tu)\|_{H^{1/2}}
            +\|\mathcal T_Tu\|_{H^{1/2}}\bigr).
\end{align}

If the boundary Cauchy data are generated by a normalized source supported
outside the bubble portion, then
\begin{equation}\label{eq:remote-capacity-gain-rigorous}
 e^{-\frac{n-2}{2}T}
 (\|\phi\|_{H^{1/2}}+\|\psi\|_{H^{-1/2}})
 \le C\lambda^{a_\Sigma}
       \|F\|_{\widehat Y_{\gamma,\beta}}.
\end{equation}
The corresponding fixed-domain differentiated estimate holds with the
right-hand side replaced by
$C\lambda^{a_\Sigma}(\|\mathfrak DF\|_{\widehat Y_{\gamma,\beta}}
+\|F\|_{\widehat Y_{\gamma,\beta}})$.
\end{lemma}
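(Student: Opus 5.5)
The first estimate \eqref{eq:bubble-capacity-liouville-rigorous} comes from splitting $u$ into a zero-trace piece and a Poisson piece. Let $(u^0,\mathbf d^0)$ be the zero-trace augmented solution on $\mathcal B_T$ with source $f$, given by \eqref{eq:zero-boundary-energy-estimate-rigorous}. Let $H_T^{\rm bub}\varphi$, with $\varphi=\mathcal T_Tu$, be the augmented Poisson field with zero localized constraints, given by \eqref{eq:bubble-poisson-operator-bound-rigorous}. The difference $u-u^0-H_T^{\rm bub}\varphi$ then has zero Liouville trace, solves the homogeneous augmented equation and satisfies zero constraints. Uniqueness in Proposition~\ref{aux:preparameter-energy-package} (the $T$-uniform zero-trace isomorphism for $T\ge T_E$) forces it to vanish, and the same holds for the coefficient vector. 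Adding the two bounds gives \eqref{eq:bubble-capacity-liouville-rigorous} with a $T$-independent constant.

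For the actual bubble block I would use the same decomposition. The zero-trace and Poisson bounds come from the second application of Proposition~\ref{aux:abstract-energy-dtn-stability}, which is justified by \eqref{eq:choose-N0-energy-step17} and \eqref{eq:choose-N-energy-step17}; this is the bound \eqref{eq:actual-zero-trace-inverses-prop74}.

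For the derivative estimate \eqref{eq:bubble-capacity-derivative-rigorous}, I would differentiate the decomposition under the fixed-domain identification of Proposition~\ref{aux:localized-energy-inverse-rigorous}. Then \eqref{eq:abstract-dtn-parameter-bound} and \eqref{eq:abstract-flux-parameter-bound} bound the derivatives of the Poisson map and the zero-trace inverse.

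Next I would deduce \eqref{eq:bubble-capacity-rigorous}. By \eqref{eq:liouville-original-cauchy-gapfix},
\[
 \mathcal T_Tu=\cosh(T)^{-\frac{n-2}{2}}\phi .
\]
Its $H^{1/2}$ norm is therefore at most $Ce^{-\frac{n-2}{2}T}\|\phi\|_{H^{1/2}}$, and the $\psi$ and $|\mathbf d|$ terms on the right only weaken the inequality. So \eqref{eq:bubble-capacity-rigorous} follows directly. I note that $\psi$ is not actually needed here; it is included for symmetry with the Cauchy-data formulation.

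The main step is the remote gain \eqref{eq:remote-capacity-gain-rigorous}. Write the source as $F=F_{\rm out}$, supported outside the bubble portion. By \eqref{eq:liouville-original-cauchy-gapfix}, the left side of \eqref{eq:remote-capacity-gain-rigorous} is controlled by the Liouville Cauchy data $(\mathcal T_Tu,\mathcal C_Tu)$ at the interface, so I need to bound these by $\lambda^{a_\Sigma}\|F\|_{\widehat Y_{\gamma,\beta}}$. I would argue mode by mode.

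\emph{Low modes.} For degrees zero and one, the trace is determined by the uniformly invertible matching system \eqref{eq:complete-low-matching-rigorous}. Its right-hand side consists of transition and Schwarzschild remainders. By the half-neck kernel \eqref{eq:flat-offdiag-kernel-rigorous}--\eqref{eq:neck-to-interface-rigorous}, a source block at radius $r_s$ contributes $q(r_s)(r_b/r_s)^{a_\Sigma}$ times its local norm at the transition endpoint. Transferring through the transition of fixed length $L$ then produces the factor $(\lambda/r_b)^{a_\Sigma}$, exactly as in \eqref{eq:interface-to-bubble-rigorous}. Multiplied together, these give $\lambda^{a_\Sigma}$, using $q(r_s)\le Cr_s^{a_\Sigma}$. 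Summing over source blocks uses the $L^1_t$ part of $\widehat Y_{\gamma,\beta}$.

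\emph{High modes.} For degrees at least two the exponents $\sigma_\ell$ are strictly larger than $a_\Sigma$, so the same argument gives a better bound, and the blocks are summed with the weighted supremum norm.

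\emph{Gauge conversion.} I expect the main obstacle to be the gauge bookkeeping. The interface sits at $e^{T}\asymp R_{b,N}$, and the bubble Liouville factor $e^{-\frac{n-2}{2}T}$ must be matched against the factor $\Theta_\lambda$ from \eqref{eq:transition-Schwarzschild-gauge-factor-step15}. I would check this using the identity
\[
 R_{b,N}^{-n/2}\asymp_L\lambda^{a_\Sigma}
\]
together with \eqref{eq:bubble-integrating-factor-step15}.

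\emph{Actual operator and derivatives.} To pass to the actual operator I would use the resolvent identity and the absorption argument of Proposition~\ref{prop:localized-bubble-transfer-step20}, treating metric perturbations outside the bubble as additional remote sources. The differentiated statement follows because $\lambda\partial_\lambda\lambda^{a_\Sigma}=a_\Sigma\lambda^{a_\Sigma}$, and because the kernels and matching matrix have uniformly bounded derivatives.
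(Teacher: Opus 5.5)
Your proofs of \eqref{eq:bubble-capacity-liouville-rigorous}, of the actual-block version, of the derivative bound \eqref{eq:bubble-capacity-derivative-rigorous}, and of \eqref{eq:bubble-capacity-rigorous} coincide with the paper's. Both use the Poisson pair plus the zero-trace augmented inverse, followed by the conversion \eqref{eq:liouville-original-cauchy-gapfix}.

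The gap is in the remote gain \eqref{eq:remote-capacity-gain-rigorous}, at the one step that produces the power of $\lambda$. You assert that passing through the fixed-length transition ``produces the factor $(\lambda/r_b)^{a_\Sigma}$, exactly as in \eqref{eq:interface-to-bubble-rigorous}''. That estimate concerns neither the transition nor the interface trace. It bounds the boundary-induced field on a fixed compact set $K$ in the bubble core, i.e.\ the decay from the interface inward. Crossing a transition of fixed length $2L$ costs only a constant $C_L$. Decay toward the core says nothing about $(\mathcal T_Tu,\mathcal C_Tu)$, which are evaluated at the interface itself. So the chain you write does not prove the bound. The ``gauge bookkeeping'' you then defer as the main obstacle is the entire content of this step.

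What is needed is the following. First bound the physical data $(\phi,\psi)$ at the bubble-side interface by $C_L\delta_\lambda\|F\|_{\widehat Y_{\gamma,\beta}}$:
\begin{enumerate}[label=(\roman*)]
 \item for a half-neck source, use \eqref{eq:neck-to-interface-rigorous} together with $q(r_s)(r_b/r_s)^{a_\Sigma}\le Cr_b^{a_\Sigma}\asymp\delta_\lambda$;
 \item for a source inside the transition, which your half-neck kernel does not cover, use the fixed-$L$ conormal estimate of Proposition~\ref{lem:step14};
 \item for connector and opposite-end sources, pass through a fixed Dirichlet-to-Neumann block.
\end{enumerate}
At that interface \eqref{eq:transition-liouville-normalization-step15} coincides with the round-bubble Liouville variable, so no factor $\Theta_\lambda^{\pm1}$ enters. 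The constant $\Theta_\lambda$ multiplies the Schwarzschild-side value and flux alike, so it leaves the homogeneous Dirichlet-to-Neumann coefficient unchanged. The gain then comes entirely from the weight $\cosh(T)^{-(n-2)/2}$ in \eqref{eq:liouville-original-cauchy-gapfix}. With $e^T\asymp R_{b,N}\asymp\lambda^{4/n-1}$,
\[
 \delta_\lambda e^{-\frac{n-2}{2}T}
 \asymp\lambda^{2-\frac8n}\,\lambda^{(1-\frac4n)\frac{n-2}{2}}
 =\lambda^{a_\Sigma+\frac{n-4}{n}}\le C\lambda^{a_\Sigma}.
\]
Your target inequality is in fact true, since $e^{-(n-2)T/2}\asymp(\lambda/r_b)^{(n-2)/2}\le(\lambda/r_b)^{a_\Sigma}$, but not for the reason you give. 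Your low-mode paragraph feeds physical-gauge kernel bounds into the matching system \eqref{eq:complete-low-matching-rigorous}. That system is written in the $Y_{\rm tr}$ gauge, whose Schwarzschild remainders carry $\Theta_\lambda$. This is exactly where a spurious factor $\Theta_\lambda^{-1}\asymp\lambda^{-a_\Sigma}$ can enter and cancel the whole gain, so the conversion has to be carried out explicitly, not postponed.
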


\begin{proof}
We first prove the round estimate.  Put
\[
 \varphi=\mathcal T_Tu
\]
and let
$(H_T^{\rm bub}\varphi,\mathbf e_T(\varphi))$ be the augmented bubble
Poisson pair constructed in the proof of
Proposition~\ref{aux:localized-energy-inverse-rigorous}.  Thus
\begin{equation}\label{eq:capacity-poisson-properties-rigorous}
 \mathscr L_*H_T^{\rm bub}\varphi
   =\sum_{a=0}^ne_{T,a}(\varphi)\Psi_a,
 \qquad
 \mathcal T_TH_T^{\rm bub}\varphi=\varphi,
 \qquad
 \ell_b^*(H_T^{\rm bub}\varphi)=0,
\end{equation}
and, by \eqref{eq:bubble-poisson-operator-bound-rigorous},
\begin{equation}\label{eq:capacity-poisson-bound-rigorous}
 \|H_T^{\rm bub}\varphi\|_{\mathcal E_{*,T}}
 +|\mathbf e_T(\varphi)|
 \le C\|\varphi\|_{H^{1/2}}.
\end{equation}
Set
\[
 u^0=u-H_T^{\rm bub}\varphi,
 \qquad
 \mathbf d^0=\mathbf d-\mathbf e_T(\varphi).
\]
Then
\begin{equation}\label{eq:capacity-zero-trace-decomposition-rigorous}
 \mathscr L_*u^0=f+\sum_{a=0}^nd_a^0\Psi_a,
 \qquad
 \mathcal T_Tu^0=0,
 \qquad
 \ell_b^*(u^0)=0.
\end{equation}
The uniform zero-trace augmented inverse
\eqref{eq:zero-boundary-energy-estimate-rigorous} therefore gives
\begin{equation}\label{eq:capacity-zero-trace-bound-rigorous}
 \|u^0\|_{\mathcal E_{*,T}}+|\mathbf d^0|
 \le C\|f\|_{\mathcal E_{*,T}'}.
\end{equation}
Combining \eqref{eq:capacity-poisson-bound-rigorous} and
\eqref{eq:capacity-zero-trace-bound-rigorous} proves
\eqref{eq:bubble-capacity-liouville-rigorous}.

By \eqref{eq:liouville-original-cauchy-gapfix},
\begin{align}\label{eq:capacity-trace-conversion-gapfix}
 \|\mathcal T_Tu\|_{H^{1/2}}
 +\|\mathcal C_Tu\|_{H^{-1/2}}
 \le Ce^{-\frac{n-2}{2}T}
 (\|\phi\|_{H^{1/2}}+\|\psi\|_{H^{-1/2}}).
\end{align}
In particular,
\eqref{eq:bubble-capacity-liouville-rigorous} implies the weaker form
\eqref{eq:bubble-capacity-rigorous}.  The localized constraints in the
zero-trace inverse and the Poisson pair remove the dilation and translation
branches, so no compactness argument is required.

For the actual bubble operator, use the actual zero-trace augmented inverse
from \eqref{eq:actual-zero-trace-inverses-prop74} and the actual Poisson
pair given by Proposition~\ref{aux:abstract-energy-dtn-stability}.  More precisely, if
$H_{N,T}^{\rm bub}\varphi$ denotes the actual Poisson field, then
\eqref{eq:abstract-poisson-dtn-bound} gives
\[
 \|H_{N,T}^{\rm bub}\varphi\|_{\mathcal E_{*,T}}
 +|\mathbf e_{N,T}(\varphi)|
 \le C\|\varphi\|_{H^{1/2}},
\]
while \eqref{eq:actual-zero-trace-inverses-prop74} gives the analogue of
\eqref{eq:capacity-zero-trace-bound-rigorous}.  The same decomposition
$u=u^0+H_{N,T}^{\rm bub}(\mathcal T_Tu)$ therefore proves
\eqref{eq:bubble-capacity-liouville-rigorous} and
\eqref{eq:bubble-capacity-rigorous} for the actual block, with constants
uniform in the ordered regime.

We next prove the parameter estimate.  Pull the moving bubble portion to
the fixed-domain identification used in the proof of
Proposition~\ref{aux:localized-energy-inverse-rigorous}; in these
coordinates the trace map and the supports of the projection sources and
constraints are fixed.  Differentiate the decomposition
\[
 u=u^0+H_{N,T}^{\rm bub}(\mathcal T_Tu),
 \qquad
 \mathbf d=\mathbf d^0+\mathbf e_{N,T}(\mathcal T_Tu).
\]
The differentiated zero-trace inverse estimate
\eqref{eq:abstract-flux-parameter-bound} gives
\[
 \|\mathfrak Du^0\|_{\mathcal E_{*,T}}
 +|\mathfrak D\mathbf d^0|
 \le C\bigl(
   \|\mathfrak Df\|_{\mathcal E_{*,T}'}
  +\|f\|_{\mathcal E_{*,T}'}\bigr),
\]
and \eqref{eq:abstract-dtn-parameter-bound}, together with
\eqref{eq:abstract-poisson-dtn-bound}, gives
\begin{align*}
 &\|\mathfrak D[H_{N,T}^{\rm bub}(\mathcal T_Tu)]
   \|_{\mathcal E_{*,T}}
 +|\mathfrak D[\mathbf e_{N,T}(\mathcal T_Tu)]|\\
 &\hspace{4em}\le C\bigl(
   \|\mathfrak D(\mathcal T_Tu)\|_{H^{1/2}}
  +\|\mathcal T_Tu\|_{H^{1/2}}\bigr).
\end{align*}
This proves \eqref{eq:bubble-capacity-derivative-rigorous}.

It remains to prove the remote gain.  Consider first a source in the
matched transition.  The conormal estimate in Proposition~\ref{lem:step14} gives,
at the bubble-side endpoint,
\begin{equation}\label{eq:remote-physical-cauchy-gapfix}
 \|\phi\|_{H^{1/2}}+\|\psi\|_{H^{-1/2}}
 \le C_L\delta_\lambda\|F\|_{\widehat Y_{\gamma,\beta}}.
\end{equation}
The Cauchy data in this estimate are measured at the bubble-side interface,
where \eqref{eq:transition-liouville-normalization-step15} agrees exactly
with the round-bubble trace.  Hence no factor $\Theta_\lambda^{-1}$ occurs.
For a source entering from a Schwarzschild block, propagation is first
performed in the transition-normalized variable
$Y_\Sigma^{\rm tr}=\Theta_\lambda Y_\Sigma$; the same constant multiplies
its value and conormal flux and therefore leaves the homogeneous
Dirichlet-to-Neumann coefficient unchanged.  Only the resulting
bubble-side trace enters the estimate.
For a source farther out on a Schwarzschild half-neck, the kernels of
Lemma~\ref{lem:step13} and the uniformly invertible low-mode matching matrix
give the same bound: on a source block at radius $r_s\ge r_b$ the transfer
factor is
\[
 q(r_s)\left(\frac{r_b}{r_s}\right)^{a_\Sigma}
 \le Cr_b^{a_\Sigma}\asymp C\delta_\lambda.
\]
Connector and opposite-end sources first pass through a fixed
Dirichlet-to-Neumann block and obey the same estimate.  Finally,
$e^T\asymp R_{b,N}\asymp\lambda^{4/n-1}$, so
\[
 \delta_\lambda e^{-\frac{n-2}{2}T}
 \asymp\lambda^{2-8/n}
 \lambda^{(1-4/n)(n-2)/2}
 =\lambda^{a_\Sigma+(n-4)/n}
 \le C\lambda^{a_\Sigma}.
\]
Combining this with \eqref{eq:remote-physical-cauchy-gapfix} proves
\eqref{eq:remote-capacity-gain-rigorous}.  The coefficient vector is
controlled by \eqref{eq:localized-coefficient-gain-repair}.

Under the fixed-domain convention, differentiating the transition Green
operators, the Schwarzschild kernels, and the low-mode matching matrix
preserves \eqref{eq:remote-physical-cauchy-gapfix}, with right-hand side
\[
 C_L\delta_\lambda\bigl(
  \|\mathfrak DF\|_{\widehat Y_{\gamma,\beta}}
 +\|F\|_{\widehat Y_{\gamma,\beta}}\bigr).
\]
Since $|\mathfrak DT|\le C$ and
$\mathfrak D\delta_\lambda=O(\delta_\lambda)$, the preceding exponent
calculation is unchanged.  This proves the differentiated remote estimate
and completes the proof.
\end{proof}

\begin{proposition}[Localized bubble capacity and inverse pairing]
\label{prop:localized-bubble-energy-step21}
Let $\mathcal B_{N,\pm}$ be the closures of the two exact bubble portions,
cut at the bubble-side interfaces and viewed in their compactified round
metrics.  Let $(u_N,\mathbf d_N)=\mathbb G_NF_N$ be the global projected
response to a $J$-invariant source which vanishes in both exact principal
bubble portions.  Denote by $u_{N,\pm}$ the intrinsic bubble pullbacks and
by $T_{N,\pm}$ the interface parameters.  Then
\begin{align}\label{eq:remote-bubble-energy-step21}
 \sum_\pm\Bigl(
  \|u_{N,\pm}\|_{\mathcal E_{*,T_{N,\pm}}}
 +\|\mathscr P_{N,\pm}u_{N,\pm}\|_{\mathcal E_{*,T_{N,\pm}}'}\Bigr)
 +\sum_{a=0}^n|d_{N,a}|
 \le C\lambda^{a_\Sigma}
       \|F_N\|_{\widehat Y_{\gamma,\beta}}.
\end{align}
The same estimate holds after one normalized parameter derivative.  On each
fixed compact bubble set one also has the $C^{2,\alpha}$ estimate of
Proposition~\ref{prop:localized-bubble-transfer-step20}.

There is an exact Green identity for the actual relative operator.  Put
\[
 \bar g_N=(\bar v_{N,\lambda,\xi})^{8/(n-4)}g,
 \quad B_N=\bar g_N^{-1}A_{\bar g_N},
 \quad T_N=T_1(B_N),
 \quad \kappa=\frac14\binom n2,
\]
and $V_N=(n-4)\sigma_2(B_N)-n\kappa$.  Then
\begin{equation}\label{eq:actual-relative-divergence-form-step21}
 \mathscr L_Nz
 =(\bar v_{N,\lambda,\xi})^{p_*}
 \bigl(-T_N^{ij}\bar\nabla_i\bar\nabla_jz+V_Nz\bigr),
\end{equation}
and, for every smooth domain $\Omega$,
\begin{align}\label{eq:actual-green-identity-step21}
 \int_\Omega(u\mathscr L_Nz-z\mathscr L_Nu)\,dv_g
 =-\int_{\partial\Omega}T_N^{ij}\nu_i
 (u\bar\nabla_jz-z\bar\nabla_ju)\,d\sigma_{\bar g_N}.
\end{align}
The associated symmetric interior form is
\begin{equation}\label{eq:actual-energy-bilinear-step21}
 \mathscr H_{N,\Omega}(u,z)
 :=4a\int_\Omega
 (T_N^{ij}\bar\nabla_i u\bar\nabla_jz+V_Nuz)\,dv_{\bar g_N}.
\end{equation}
Let $\mathscr P_{N,\pm}$ denote the intrinsic normalized pullback of
$\mathscr L_N$ to the corresponding round-bubble chart.  For a solution on
one bubble portion define
\[
 \|u\|_{\mathfrak E_{N,\pm}}
 :=\|u\|_{\mathcal E_{*,T_{N,\pm}}}
   +\|\mathscr P_{N,\pm}u\|_{\mathcal E_{*,T_{N,\pm}}'}
   +\sum_a|d_a|.
\]
Then
\begin{equation}\label{eq:bubble-bilinear-bound-step21}
 |\mathscr H_{N,\mathcal B_{N,\pm}}(u,z)|
 +|\operatorname{Flux}_{N,\pm}(u,z)|
 \le C\|u\|_{\mathfrak E_{N,\pm}}
       \|z\|_{\mathfrak E_{N,\pm}},
\end{equation}
where the flux is the boundary term in
\eqref{eq:actual-green-identity-step21}.  Consequently a remote response has
self-pairing $O(\lambda^{n-4}\|F_N\|^2)$, while its pairing with a principal
response of energy size $O(A_N)$ is
$O(A_N\lambda^{a_\Sigma}\|F_N\|)$.

Let $F_{N,R}^{\rm tail}$ be the paired part, lying in $|z|>R$, of the first
principal metric source, its finite-$N$ cutoff error, or the quadratic source
generated by the finite principal response.  Put
$u_{N,R}^{\rm tail}=\mathbb G_NF_{N,R}^{\rm tail}$.  Then
\begin{align}\label{eq:principal-tail-pairing-step21}
 &\sum_\pm\Bigl(
 |\mathscr H_{N,\mathcal B_{N,\pm}}(u_{N,R}^{\rm tail},u_{N,R}^{\rm tail})|
 +|\mathscr H_{N,\mathcal B_{N,\pm}}(w_N^\sharp,u_{N,R}^{\rm tail})|\Bigr)
 \nonumber\\
 &\quad+4a\sum_\pm\left|
 \int_{\mathcal B_{N,\pm}}P_{N,\pm}^\sharp
 u_{N,R}^{\rm tail}\,dv_g\right|
 \le A_N^2\omega(R),\qquad \omega(R)\to0,
\end{align}
uniformly in $N$ and with one normalized parameter derivative.
\end{proposition}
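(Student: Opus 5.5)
The statement has four parts: the remote energy bound \eqref{eq:remote-bubble-energy-step21}, the Green identity, the bilinear/flux bound \eqref{eq:bubble-bilinear-bound-step21}, and the tail pairing \eqref{eq:principal-tail-pairing-step21}. I would prove them in that order, reducing each to the localized energy inverse of Proposition~\ref{aux:localized-energy-inverse-rigorous}, the capacity lemma (Lemma~\ref{aux:bubble-capacity-rigorous}) and the tail dual bound of Lemma~\ref{aux:tail-dual-rigorous}.

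\emph{Remote energy bound.} Since $F_N$ vanishes on both exact bubble portions, each $u_{N,\pm}$ solves the homogeneous augmented actual bubble equation $\mathscr P_{N,\pm}u_{N,\pm}=\sum_a d_{N,a}\Psi^\pm_{a,N}$ with zero localized constraints. Applying \eqref{eq:bubble-capacity-rigorous} with $f=0$ bounds $\|u_{N,\pm}\|_{\mathcal E_{*,T}}$ by $e^{-\frac{n-2}{2}T}$ times the Cauchy data, plus $|\mathbf d|$. The gain \eqref{eq:remote-capacity-gain-rigorous} turns the first term into $C\lambda^{a_\Sigma}\|F_N\|_{\widehat Y_{\gamma,\beta}}$, and \eqref{eq:localized-coefficient-gain-repair} controls $|\mathbf d|$ in the same way. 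The graph term $\|\mathscr P_{N,\pm}u\|'$ equals $\|\sum_a d_a\Psi_a\|'$, which is at most $C|\mathbf d|$. The derivative version follows from \eqref{eq:bubble-capacity-derivative-rigorous} and the differentiated remote gain.

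\emph{Green identity and bilinear bound.} Linearize $\mathcal N_g(\bar v e^{atz})$ with the relative Schouten formula \eqref{eq:relative-Schouten-step16}. The first variation of $B_N$ is $-\bar\nabla^2z-2zB_N$, and the weight factor $e^{(n-4)tz}$ together with the target term $e^{ntz}$ gives \eqref{eq:actual-relative-divergence-form-step21} with potential $V_N$; the normalization should be checked against $\mathscr L_*$ in Proposition~\ref{lem:step12}. Because $T_N$ is divergence free (Viaclovsky) and $\bar v^{p_*}dv_g=dv_{\bar g_N}$, integration by parts gives \eqref{eq:actual-green-identity-step21} and the symmetric form $\mathscr H$. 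For \eqref{eq:bubble-bilinear-bound-step21}, write $\mathscr H_{N,\mathcal B}(u,z)=4a\langle\mathscr P_{N}u,z\rangle$ plus the boundary flux. The pairing is bounded by $\|\mathscr P u\|_{\mathcal E'}\|z\|_{\mathcal E}$. For the flux, use the Liouville trace and conormal \eqref{eq:liouville-original-cauchy-gapfix} and the weak-conormal identification: the weak conormal of $u$ lies in $H^{-1/2}$ with norm at most $C\|u\|_{\mathfrak E}$, and the trace of $z$ lies in $H^{1/2}$ by the uniform trace estimate. The two consequences stated after \eqref{eq:bubble-bilinear-bound-step21} then follow from the remote bound: the self-pairing is $O(\lambda^{2a_\Sigma})=O(\lambda^{n-4})$, and the cross-pairing with a principal response is $O(A_N\lambda^{a_\Sigma}\|F_N\|)$. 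For the principal response, the energy bound $\|w_N^\sharp\|_{\mathfrak E}\le CA_N$ comes from Proposition~\ref{aux:localized-energy-inverse-rigorous} applied to the principal source, whose dual norm is $O(A_N)$ by the $R=1$ case of Lemma~\ref{aux:tail-dual-rigorous}.

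\emph{Tail pairing, the main obstacle.} The tail source $F^{\rm tail}_{N,R}$ is supported inside the bubble portions, so the remote gain is unavailable. Instead apply \eqref{eq:localized-energy-graph-inverse-rigorous}, which gives $\|u^{\rm tail}_{N,R}\|_{\mathfrak E}\le C\sum_\pm\|F_\pm\|_{\mathcal E'_{*,T}}$, and then \eqref{eq:tail-dual-rigorous}, which bounds this by $CA_N\omega_0(R)$. Inserting this into \eqref{eq:bubble-bilinear-bound-step21} bounds the self-pairing by $A_N^2\omega_0^2$ and the cross-pairing with $w_N^\sharp$ by $A_N\cdot A_N\omega_0$. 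The source pairing $\int P^\sharp u^{\rm tail}$ is at most $\|P^\sharp\|_{\mathcal E'}\|u^{\rm tail}\|_{\mathcal E}$, which is $O(A_N)\cdot A_N\omega_0$. The point I expect to need the most care is that the energy inverse applies to sources lying only in an energy-dual completion, not in $\widehat Y$. This is covered by the density extension in Proposition~\ref{aux:localized-energy-inverse-rigorous}, but the fixed-domain identification must be kept compatible so that one normalized derivative passes through, using \eqref{eq:localized-energy-graph-derivative-rigorous} and the derivative part of \eqref{eq:tail-dual-rigorous}.
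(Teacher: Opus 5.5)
Your proposal is correct and follows essentially the same route as the paper: the capacity lemma with $f=0$, the remote Cauchy-data gain and \eqref{eq:localized-coefficient-gain-repair} give \eqref{eq:remote-bubble-energy-step21}; the divergence-free Newton tensor gives the Green identity; and the localized graph inverse combined with Lemma~\ref{aux:tail-dual-rigorous} gives the tail pairing. The only small difference is in \eqref{eq:bubble-bilinear-bound-step21}: the paper first bounds $\mathscr H$ by the energy norms, using model-relative Newton ellipticity and boundedness of $V_N$, and then bounds the flux through the Green identity and $\mathcal E_{*,T}$--$\mathcal E_{*,T}'$ duality, whereas your trace/weak-conormal route needs that same form bound (left implicit) to control the conormal in $H^{-1/2}$.
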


\begin{proof}
For a remote source the intrinsic equation in an exact bubble portion has no
interior source except the localized projection terms.  Proposition~\ref{prop:localized-bubble-transfer-step20} gives
$|\mathbf d_N|\le C\lambda^{a_\Sigma}\|F_N\|$.  The transition and neck
estimates give Cauchy data of size $C\delta_\lambda\|F_N\|$ at the bubble
interface.  Lemma~\ref{aux:bubble-capacity-rigorous} therefore gives the
energy and coefficient parts of \eqref{eq:remote-bubble-energy-step21}.  In
the bubble interior the source vanishes and
$\mathscr P_{N,\pm}u_{N,\pm}=\sum_ad_{N,a}\Psi_{a,N}^\pm$; the uniform dual
norms of the localized sources give the displayed operator-dual term.  Exact
Kelvin conjugation gives the second end, and the parameter statement follows
from the differentiated capacity estimate.

Model-relative Newton ellipticity and boundedness of $V_N$ imply
\[
 |\mathscr H_{N,\mathcal B_{N,\pm}}(u,z)|
 \le C\|u\|_{\mathcal E_{*,T_{N,\pm}}}
       \|z\|_{\mathcal E_{*,T_{N,\pm}}}.
\]
After pulling to the intrinsic bubble chart, the Green identity expresses the flux as
$\int(u\mathscr P_{N,\pm}z-z\mathscr P_{N,\pm}u)$.  Duality between
$\mathcal E_{*,T}$ and $\mathcal E_{*,T}'$ proves
\eqref{eq:bubble-bilinear-bound-step21}.  For a remote response the
interior projected source has the same $\lambda^{a_\Sigma}$ bound as its
coefficient vector, so its $\mathfrak E$ size is
$O(\lambda^{a_\Sigma}\|F_N\|)$.  The finite principal response and its
interior source have $\mathfrak E$ size $O(A_N)$ by
Lemma~\ref{lem:step19}.  This proves the stated pairing consequences, using
$2a_\Sigma=n-4$.

For the tail estimate, Lemma~\ref{aux:tail-dual-rigorous} gives
\[
 \sum_\pm\|F_{N,R,\pm}^{\rm tail}\|_{\mathcal E_{*,T_{N,\pm}}'}
 +\sum_\pm\|\mathfrak DF_{N,R,\pm}^{\rm tail}\|_{\mathcal E_{*,T_{N,\pm}}'}
 \le CA_N\omega_0(R).
\]
Proposition~\ref{aux:localized-energy-inverse-rigorous} yields
\[
 \sum_\pm\|u_{N,R,\pm}^{\rm tail}\|_{\mathcal E_{*,T_{N,\pm}}}
 +|\mathbf d_{N,R}|\le CA_N\omega_0(R).
\]
The finite principal response and source have energy and dual-energy norms
bounded by $CA_N$.  Applying \eqref{eq:bubble-bilinear-bound-step21} and
duality gives \eqref{eq:principal-tail-pairing-step21}, after replacing
$\omega_0$ by a fixed multiple.  The differentiated estimate follows from
the differentiated auxiliary results.

Finally, \eqref{eq:actual-relative-divergence-form-step21} follows from the
conformal Schouten formula and
$dv_{\bar g_N}=(\bar v_{N,\lambda,\xi})^{p_*}dv_g$.  The contracted
Bianchi identity gives $\bar\nabla_iT_N^{ij}=0$, and integration by parts
proves \eqref{eq:actual-green-identity-step21}.  This completes the proof.
\end{proof}

\begin{lemma}[Energy localization of the principal response]
\label{aux:principal-response-energy-localization-gapfix}
Let $(w_N^\sharp,\mathbf b_N)$ be the global projected response from
Lemma~\ref{lem:step20} to the paired principal source
\[
 -(P_{N,-}^\sharp+P_{N,+}^\sharp).
\]
Set
\[
 W_N^\sharp:=W_{N,\xi',\lambda'}^\sharp,
\]
and let $\boldsymbol\beta_N$ be its finite local coefficient vector from
\eqref{eq:finite-principal-response-problem-step19}.  Denote the intrinsic
copies of $W_N^\sharp$ on the two exact bubble portions by
$W_{N,\pm}^\sharp$.  Then
\begin{align}\label{eq:principal-response-energy-localization-gapfix}
 &\sum_\pm\Bigl(
 \|w_{N,\pm}^\sharp-A_NW_{N,\pm}^\sharp\|_{\mathcal E_{*,T_{N,\pm}}}
 \nonumber\\
 &\hspace{22mm}+
 \|\mathscr P_{N,\pm}(w_{N,\pm}^\sharp-A_NW_{N,\pm}^\sharp)
   \|_{\mathcal E_{*,T_{N,\pm}}'}\Bigr)
 +|\mathbf b_N-A_N\boldsymbol\beta_N|=o(A_N),
\end{align}
uniformly on the normalized parameter set and with one normalized parameter
derivative.
\end{lemma}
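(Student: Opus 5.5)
The plan is to write the difference $e_N:=w_N^\sharp-A_N\widetilde W_N^\sharp$ as the global projected response to an explicit error source. Here $\widetilde W_N^\sharp$ denotes a paired global extension of the two intrinsic copies $W_{N,\pm}^\sharp$. I would then estimate that source in the energy-dual norms of Proposition~\ref{aux:localized-energy-inverse-rigorous}.

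Concretely, fix $R$ large. Let $\chi_R$ be a cutoff equal to one on $\{|z|\le R\}$ and vanishing for $|z|\ge2R$ in each metric-centered bubble chart. Define $\widetilde W_N^\sharp$ as the $J$-paired field equal to $\chi_RW_{N,\pm}^\sharp$ on the two charts and zero elsewhere. Subtracting the local equation \eqref{eq:finite-principal-response-problem-step19}, multiplied by $A_N$, from the global projected equation for $w_N^\sharp$ gives
\[
 \mathscr L_N e_N = G_N+\sum_a(b_{N,a}-A_N\beta_{N,a})\Psi_{a,N}.
\]
The source $G_N$ splits into three pieces. The first is the truncated principal source $-(P^\sharp_{N,\pm})$ on $\{|z|>R\}$, a tail covered by Lemma~\ref{aux:tail-dual-rigorous}. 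The second is the commutator $A_N[\mathscr L,\chi_R]W_N^\sharp$ together with $A_N(1-\chi_R)$ times the local source. The third is the coefficient difference $A_N(\mathscr P_{N,\pm}-\mathscr L_{\lambda',\xi'})(\chi_RW_N^\sharp)$.

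The localized constraints of $e_N$ are exactly $-A_N\ell_b^*((\chi_R-1)W_N^\sharp)$, since both $w_N^\sharp$ and $W_N^\sharp$ satisfy zero constraints. These vanish once $R$ exceeds the fixed support of $\chi_*$. Applying \eqref{eq:localized-energy-graph-inverse-rigorous} then bounds the left side of \eqref{eq:principal-response-energy-localization-gapfix}, with $\widetilde W$ in place of $W$, by $C\sum_\pm\|G_{N,\pm}\|_{\mathcal E'_{*,T_{N,\pm}}}$. The same argument bounds the $\mathcal E_*$ distance between $\chi_RW$ and $W$ on $\mathcal B_{T_N}$, and its graph counterpart. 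That distance is $A_N\omega(R)$ by the decay majorant \eqref{eq:principal-response-majorant-rigorous} and the integrability $r^{4m_f-n+7}$ under $n>4m_f+8$, since
\[
 (1+r)^{4m_f+8}\, r^{-n-1}\,dr
\]
is integrable.

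The three pieces are estimated as follows. The tail and commutator pieces are $O(A_N\omega_0(R))$ by Lemma~\ref{aux:tail-dual-rigorous}, because on $|z|\asymp R$ the majorant \eqref{eq:first-tail-majorant-rigorous} applies. The coefficient piece lives on a fixed compact set $\{|z|\le2R\}$. There the intrinsic operator converges in $C^1$ to $\mathscr L_*$ (Lemma~\ref{lem:step8} with $\Xi_h=O(A_N)$ locally, plus the exact bubble core), so this piece is $o_R(A_N)$ as $N\to\infty$. Letting $N\to\infty$ first and then $R\to\infty$ gives $o(A_N)$. The parameter derivative is handled identically, using \eqref{eq:localized-energy-graph-derivative-rigorous} and the differentiated majorants.

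The main obstacle is that Proposition~\ref{aux:localized-energy-inverse-rigorous} is stated for sources supported in the exact bubble portions. The principal source $P^\sharp_{N,\pm}$ indeed lies there by the choice of $\sigma_N^\sharp$, but I must check that $G_N$ does too. In particular, the coefficient-difference term for the actual operator includes the remaining perturbations and nonlinear background corrections, and any part of $G_N$ leaking outside the bubble portions has to be routed instead through Proposition~\ref{prop:localized-bubble-energy-step21}. That route costs $\lambda^{a_\Sigma}\|\cdot\|_{\widehat Y_{\gamma,\beta}}$, which is $o(A_N)$ by \eqref{eq:transfer-versus-principal-step20}.
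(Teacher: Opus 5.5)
The gap is in the two steps that involve the tail of the \emph{response} rather than the tail of the source. The commutator $A_N[\mathscr L,\chi_R]W_N^\sharp$ is not a principal metric source. Neither \eqref{eq:first-tail-majorant-rigorous} nor Lemma~\ref{aux:tail-dual-rigorous} applies to it: its size is set by $W_N^\sharp$ and $DW_N^\sharp$ on $|z|\asymp R$.

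The only pointwise information available is \eqref{eq:principal-response-majorant-rigorous}. The round Laplacian carries the conformal factor $(1+|z|^2)^2/4\asymp r^4$. Together these give an intrinsic commutator of size $A_NR^{2m_f+6}$, two powers above the source majorant. Via the $L^2(g_*)$ bound used in Lemma~\ref{aux:tail-dual-rigorous}, its dual norm is then $A_NR^{(4m_f+12-n)/2}$. This does not tend to zero for $4m_f+8<n\le 4m_f+12$, which for the quartic profile includes $n=27,28$.

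Similarly, your bound for $\|(1-\chi_R)W_N^\sharp\|_{\mathcal E_{*,T_N}}$ only checks the $L^2$ part. The gradient part $r^4|DW_N^\sharp|^2\,dv_{g_*}$ is majorized by $r^{4m_f+9-n}\,dr$, which requires $n>4m_f+10$. So the decisive claim that the tail and commutator pieces are $O(A_N\omega_0(R))$ is not justified as written.

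You can repair this without any pointwise decay. By \eqref{eq:finite-response-convergence-step19}, $W_N^\sharp$ converges in the energy norm, so its $H^1(g_*)$ mass in $\{|z|>R\}$ is small uniformly in $N$. Integrating by parts (no boundary terms, since $\nabla\chi_R$ vanishes near $\Gamma_{T_N}$) gives
\[
 \langle[\mathscr L_*,\chi_R]W,\phi\rangle
 =-\tfrac{n-1}{2}\int\bigl(\phi\langle\nabla\chi_R,\nabla W\rangle
 -W\langle\nabla\chi_R,\nabla\phi\rangle\bigr)\,dv_{g_*}.
\]
Let $\varrho$ be the round distance to the second pole, so that $|\nabla_{g_*}\chi_R|\lesssim\varrho^{-1}$. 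Apply the Hardy inequality at that pole to both $\phi$ and $W$; it is uniform on $\mathcal B_{T_N}$, as in the paper's terminal-collar arguments. This gives $\|[\mathscr L_*,\chi_R]W_N^\sharp\|_{\mathcal E_{*,T_N}'}+\|(1-\chi_R)W_N^\sharp\|_{\mathcal E_{*,T_N}}\to0$ as $R\to\infty$, uniformly in $N$. The graph counterpart then follows from Lemma~\ref{aux:tail-dual-rigorous} and the relative coefficient bound of Lemma~\ref{lem:step8}, and your argument closes.

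The paper sidesteps the issue by cutting the \emph{source} at $|z|=2R$ instead. The complete-round response to the core part is smooth at the second pole, so it is cut off in the terminal collar of $\mathcal B_{T_N}$, where its trace and commutator are exponentially small in $T_N$. The tail response is then controlled purely by Lemma~\ref{aux:tail-dual-rigorous} together with the absolute and localized inverses.

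Finally, the obstacle you flag does not arise. Every piece of $G_N$ lies in $\{|z|\le 2R_{\sharp,N}\}$, because the coefficient difference acts only on the compactly supported field $\chi_RW_N^\sharp$. Proposition~\ref{prop:localized-bubble-energy-step21} is therefore not needed.
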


\begin{proof}
Fix \(R>1\) and split the paired principal source into its part supported in
\(|z|\le2R\) and its tail.  For the fixed-core part, the corresponding
complete-round response is uniformly smooth at the second stereographic
pole, together with one normalized parameter derivative.  Cut it off in a
fixed terminal collar of each truncated bubble and transplant the paired
field to the global block.  Its cap trace and the resulting commutator are
\(o_R(A_N)\) in the bubble energy and dual-energy norms.  On the fixed core,
the intrinsic actual operator, source normalization, and localized
constraints converge to their round counterparts; outside that core, but
inside the exact principal bubble, support separation leaves only an
\(o(1)\) model-relative coefficient error.  The transplanted pair is
therefore an approximate actual augmented solution with total graph-norm
defect \(o_R(A_N)\).  Applying
\eqref{eq:localized-energy-graph-inverse-rigorous} gives
\begin{align*}
 &\sum_\pm\Bigl(
 \|w_{N,R,\pm}^\sharp-A_NW_{N,R,\pm}^\sharp\|_{\mathcal E_{*,T_{N,\pm}}}
 \nonumber\\
 &\hspace{18mm}+\|\mathscr P_{N,\pm}(w_{N,R,\pm}^\sharp
       -A_NW_{N,R,\pm}^\sharp)\|_{\mathcal E_{*,T_{N,\pm}}'}\Bigr)
 +|\mathbf b_{N,R}-A_N\boldsymbol\beta_{N,R}|=o_R(A_N).
\end{align*}

For the tail, Lemma~\ref{aux:tail-dual-rigorous} gives the bound
\(CA_N\omega_0(R)\), with \(\omega_0(R)\to0\), in both the complete-round
and truncated-bubble dual norms.  The localized global graph inverse and
the absolute round inverse give the same bound for the corresponding
responses and coefficient vectors.  Letting first \(N\to\infty\) for fixed
\(R\), and then \(R\to\infty\), proves
\eqref{eq:principal-response-energy-localization-gapfix}.  The argument is
unchanged after one normalized parameter derivative, because the cap
cutoff is fixed in the pulled-back terminal collar and all operator and
source convergences above are \(C^1\).
\end{proof}

\begin{lemma}[Energy localization of the nonlinear difference]
\label{aux:nonlinear-difference-energy-step21}
Let
\[
 d_N=w_{N,\lambda,\xi}-w_N^\sharp,
 \qquad
 \mathbf e_N=\mathbf c_N-\mathbf b_N,
\]
and let $d_{N,\pm}$ be the intrinsic pullbacks to the two exact bubble
portions.  Then, uniformly on the normalized parameter set and with one
normalized parameter derivative,
\begin{align}\label{eq:nonlinear-difference-energy-small-step21}
 \sum_\pm\Bigl(
  \|d_{N,\pm}\|_{\mathcal E_{*,T_{N,\pm}}}
 +\|\mathscr P_{N,\pm}d_{N,\pm}\|_{\mathcal E_{*,T_{N,\pm}}'}\Bigr)
 +|\mathbf e_N|=o(A_N).
\end{align}
\end{lemma}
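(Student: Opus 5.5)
The plan is to repeat Step~3 of Lemma~\ref{lem:step20}, with the weighted one-sided bubble norm replaced by the localized energy graph norm of Proposition~\ref{aux:localized-energy-inverse-rigorous}. The starting point is the augmented difference equation \eqref{eq:difference-augmented-equation-step20}:
\[
 (d_N,\mathbf e_N)
 =\mathbb G_N\bigl[-R_N^{\rm rem}-Q_N(w_N^\sharp)
 -(Q_N(w_{N,\lambda,\xi})-Q_N(w_N^\sharp))\bigr].
\]
Each source is split into a part supported in the two exact bubble portions and a part supported outside them. The bubble parts are treated with \eqref{eq:localized-energy-graph-inverse-rigorous}; the outer parts are treated with Proposition~\ref{prop:localized-bubble-energy-step21}.

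\emph{Outer (remote) sources.} These are the remaining metric sources, the principal tail outside the bubble portions, the transitions, the Schwarzschild target source, the connector, and the outer part of the nonlinear difference. For them, \eqref{eq:remote-bubble-energy-step21} gives a graph-norm contribution of
\[
 C\lambda^{a_\Sigma}\|F_{\rm out}\|_{\widehat Y_{\gamma,\beta}}
 \le C\lambda^{a_\Sigma}\varepsilon_N .
\]
This is $o(A_N)$ by the exponent comparison \eqref{eq:transfer-versus-principal-step20}.

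\emph{Bubble-supported sources.} Inside the bubble portions, $R_N^{\rm rem}$ consists of three pieces:
\begin{itemize}
 \item the quadratic Taylor remainder of the principal metric variation;
 \item the cutoff commutator of $h^\sharp_{N,-}$, supported near $|z|\asymp R_{\sharp,N}$;
 \item the opposite-core contribution, which is remote.
\end{itemize}
To these we add $Q_N(w_N^\sharp)$. Fix $R$ and split each source at $|z|=R$.
\begin{itemize}
 \item On the fixed core $|z|\le R$, the pointwise bounds $CA_N^2(1+r)^{4m_f+8}$ give an $\mathcal E'_{*,T}$ norm of $O_R(A_N^2)=o(A_N)$.
 \item For $|z|>R$, Lemma~\ref{aux:tail-dual-rigorous}, applied through the majorant \eqref{eq:quadratic-tail-majorant-rigorous}, gives $CA_N\Theta_N\omega_0(R)$. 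The cutoff commutator gives $CA_N\omega_0(R)$ with $R\asymp R_{\sharp,N}\to\infty$.
\end{itemize}
Letting $N\to\infty$ first and then $R\to\infty$, all of these contributions are $o(A_N)$.

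\emph{Nonlinear difference (main obstacle).} This term must be absorbed in the energy norm, not merely in the $C^{2,\alpha}$ sense. The $\widehat X_{\gamma,\beta}$ bound is only $O(\varepsilon_N)$, which is not $o(A_N)$, so the bubble part of $Q_N(w)-Q_N(w_N^\sharp)$ has to be estimated in $\mathcal E'_{*,T}$ linearly in $\|d_N\|_{\mathcal E_{*,T}}$ with a small constant. For this I would use the energy-form bound \eqref{eq:quadratic-energy-form-step16}:
\[
 |\langle Q_N(w_1)-Q_N(w_2),\phi\rangle|
 \le C_{Q,E}(\|w_1\|+\|w_2\|)\|d_N\|_{\mathcal H}\|\phi\|_{\mathcal H}.
\]
The prefactor is at most $2C_{Q,E}r_{\rm nl}\le\frac14$ by \eqref{eq:final-energy-radius-step16}, and $\mathcal H$ is identified with the bubble energy norm. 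The difficulty is the boundary: zero-trace localization requires controlling the trace of $d_N$ on the bubble interface. The interface data are only $O(\varepsilon_N)$ in the transition, but the capacity estimate \eqref{eq:bubble-capacity-rigorous} weights them by $e^{-\frac{n-2}{2}T}$. Combined with \eqref{eq:remote-capacity-gain-rigorous}, this gives an $O(\lambda^{a_\Sigma}\varepsilon_N)$ Poisson contribution, again $o(A_N)$. I would therefore decompose $d_N=d_N^0+H^{\rm bub}_{N,T}(\mathcal T_Td_N)$, apply the $\frac14$-absorption to the zero-trace part $d_N^0$, and bound the Poisson part by the capacity estimate.

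\emph{Conclusion and parameter derivative.} Summing these pieces gives the graph-norm bound for $d_N$, and $|\mathbf e_N|$ is included in the estimate through \eqref{eq:localized-energy-graph-inverse-rigorous}. For one normalized parameter derivative, differentiate the fixed-point equation under the fixed-domain convention and repeat the argument. Use \eqref{eq:localized-energy-graph-derivative-rigorous}, \eqref{eq:bubble-capacity-derivative-rigorous} and \eqref{eq:quadratic-parameter-step16}; the extra undifferentiated terms are then $o(A_N)$ by the estimate already proved.
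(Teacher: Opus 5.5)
Your bookkeeping of the linear sources matches the paper: fixed core $O_R(A_N^2)$, principal tail $CA_N\omega_0(R)$, and remote pieces $C\lambda^{a_\Sigma}\varepsilon_N$ via Proposition~\ref{prop:localized-bubble-energy-step21}. The absorption through $C_{Q,E}r_{\rm nl}$ also matches. The gap is in how you treat the nonlinear difference at the bubble interface.

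The graph inverse \eqref{eq:localized-energy-graph-inverse-rigorous} that you apply to bubble-supported sources needs the bubble part of $Q_N(w_{N,\lambda,\xi})-Q_N(w_N^\sharp)$ in the full dual $\mathcal E_{*,T}'$. So does the term $\|\mathscr P_{N,\pm}d_{N,\pm}\|_{\mathcal E_{*,T_{N,\pm}}'}$ in the statement. This means pairing against test functions $\phi$ with arbitrary trace on $\Gamma_T$.

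The bound \eqref{eq:quadratic-energy-form-step16} comes from one integration by parts. That leaves a boundary term of the form $\int_{\Gamma_T}a^{ij}\nu_i\partial_jd_N\,\phi$, so it is the trace of $\phi$ that must vanish. Splitting $d_N=d_N^0+H^{\rm bub}_{N,T}(\mathcal T_Td_N)$ does not remove this term, because the conormal derivative of $d_N^0$ need not vanish.

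Suppose instead you use the split to pass to the zero-trace bubble inverse, whose source is only tested against zero-trace functions. Then everything is transferred to proving $\|\mathcal T_Td_N\|_{H^{1/2}}=o(A_N)$, and your justification fails there:
\begin{itemize}
\item \eqref{eq:remote-capacity-gain-rigorous} applies only to Cauchy data generated by sources outside the bubble; the gain is the factor $\delta_\lambda$ in \eqref{eq:remote-physical-cauchy-gapfix}.
\item The trace of $d_N$ is also produced by the bubble-supported sources, including the nonlinear difference itself.
\item The only available bound on the physical interface values is $O(\varepsilon_N)$; Lemma~\ref{lem:step20} explicitly allows an $O(\varepsilon_N)$ interface field. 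This gives only $e^{-\frac{n-2}{2}T}\varepsilon_N$.
\end{itemize}
Since $e^{T}\asymp_L\lambda^{4/n-1}$,
\[
 \frac{e^{-\frac{n-2}{2}T}}{A_N}\asymp\epsilon_N^{\beta_f-1+4/n}.
\]
This diverges at the endpoint $n=27$, $m_f=4$, where $\beta_f=3/4$, while $\varepsilon_N\ge\eta_{L,N_0}>0$ does not tend to zero.

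The missing idea is to localize the \emph{test function}, not $d_N$. Take a cutoff $\chi$ equal to one on $\{t\le T-2\}$ and zero on $\{t\ge T-1\}$. By the weighted Hardy inequality at the second pole, $\chi\phi$ extended by zero lies uniformly in $H^1(\mathbb S^n,g_*)$. Hence \eqref{eq:quadratic-energy-form-step16} applies to the pairing with $\chi\phi$, with no boundary term and no trace of $d_N$.

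The pairing with $(1-\chi)\phi$ lives on the terminal collar. There it is bounded pointwise by $C(\|w_{N,\lambda,\xi}\|_{\widehat X_{\gamma,\beta}}+\|w_N^\sharp\|_{\widehat X_{\gamma,\beta}})\|d_N\|_{C^{2,\alpha}(\mathcal B_T,g_*)}$, times $\operatorname{Vol}_{g_*}(\{T-2<t<T\})^{1/2}=O(e^{-nT/2})$. This is \eqref{eq:nonlinear-energy-lipschitz-cap-step21}. Since $e^{-nT/2}\asymp_L\lambda^{a_\Sigma}$ and $\|d_N\|_{C^{2,\alpha}}\le C\varepsilon_N$, the collar term is $O(\lambda^{a_\Sigma}\varepsilon_N)=o(A_N)$.

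The volume factor $e^{-nT/2}$ is one full power of $e^{-T}$ better than the Liouville trace weight $e^{-\frac{n-2}{2}T}$. That extra power is exactly what the endpoint dimension requires.

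With this full-dual estimate, the interface is handled by the uniformly invertible Schur complement inside Proposition~\ref{aux:localized-energy-inverse-rigorous}. The absorption then closes without any separate trace control, and the same estimate bounds the operator-dual term.
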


\begin{proof}
Use the augmented difference equation
\eqref{eq:difference-augmented-equation-step20}.  For fixed \(R>1\), its
source is the sum of a fixed-core second-order remainder, a paired principal
tail in \(|z|>R\), terms supported outside the two exact bubble portions,
and the nonlinear difference.  The first three classes have, respectively,
dual-energy or propagated bubble-energy bounds
\[
 CA_N^2,\qquad CA_N\omega_0(R),\qquad
 C\lambda^{a_\Sigma}\mathcal E_N,
 \quad
 \mathcal E_N=\eta_{L,N_0}+C_L\lambda^{2-8/n}
              +C\epsilon_N^{\kappa_f},
\]
by Lemma~\ref{aux:tail-dual-rigorous} and
Proposition~\ref{prop:localized-bubble-energy-step21}.

We next record the localization estimate needed for the nonlinear term.
On a truncated intrinsic bubble \(\mathcal B_T=\{t<T\}\), let
\(\mathfrak Q_T(w_1,w_2)\) be the pullback of
\(Q_N(w_1)-Q_N(w_2)\), put \(d=w_1-w_2\), and set
\(R_{12}=\|w_1\|_{\widehat X_{\gamma,\beta}}
       +\|w_2\|_{\widehat X_{\gamma,\beta}}\).  Then
\begin{align}\label{eq:nonlinear-energy-lipschitz-cap-step21}
 \|\mathfrak Q_T(w_1,w_2)\|_{\mathcal E_{*,T}'}
 \le{}& C_{Q,E}R_{12}\|d\|_{\mathcal E_{*,T}}
 +Ce^{-nT/2}R_{12}
       \|d\|_{C^{2,\alpha}(\mathcal B_T,g_*)}.
\end{align}
Indeed, choose a cutoff equal to one on \(t\le T-2\) and zero on
\(t\ge T-1\).  Multiplication by this cutoff, followed by extension by
zero, is uniformly bounded from \(\mathcal E_{*,T}\) to
\(H^1(\mathbb S^n,g_*)\); this follows from the weighted Hardy inequality
at the second stereographic pole.  Testing the integral Hessian identity
for the energy against the cut-off function and using
\eqref{eq:quadratic-energy-form-step16} gives the first term.  On the
terminal collar the pointwise quadratic estimate and
\(\operatorname{Vol}_{g_*}\{T-2<t<T\}=O(e^{-nT})\) give the second.
After fixed-domain identification, differentiation yields the analogous
estimate with one derivative on \(d\), \(w_1\), or \(w_2\).

Apply this estimate with \(w_1=w_N\) and \(w_2=w_N^\sharp\).  Since
\(e^{-nT_{N,\pm}/2}\asymp_L\lambda^{a_\Sigma}\), the collar term is part
of the remote capacity error.  If
\[
 D_N^E=\sum_\pm\|d_{N,\pm}\|_{\mathcal E_{*,T_{N,\pm}}}
        +|\mathbf e_N|,
\]
Propositions~\ref{aux:localized-energy-inverse-rigorous} and~\ref{prop:localized-bubble-energy-step21} give
\[
 D_N^E\le C\bigl(A_N^2+A_N\omega_0(R)
       +\lambda^{a_\Sigma}\mathcal E_N\bigr)
 +C_{Q,E}(\|w_N\|_{\widehat X}+\|w_N^\sharp\|_{\widehat X})D_N^E
 +o(A_N).
\]
The last linear term is absorbed by
\eqref{eq:final-energy-radius-step16} and
\eqref{eq:choose-N-principal-radius-step17}.  Dividing by \(A_N\), then
letting \(N\to\infty\) and \(R\to\infty\), using
\(\lambda^{a_\Sigma}/A_N\asymp\epsilon_N^{\beta_f}\to0\), gives
\(D_N^E=o(A_N)\).

Finally, the intrinsic difference equation and
\eqref{eq:nonlinear-energy-lipschitz-cap-step21} show that the
operator-dual terms are also \(o(A_N)\), proving
\eqref{eq:nonlinear-difference-energy-small-step21}.  Differentiating the
same equation on the fixed domains gives the asserted \(C^1\) estimate;
the term containing \(\mathfrak Dd_N\) is absorbed by the same fixed
energy radius, while all remaining differentiated terms are already
\(o(A_N)\).
\end{proof}

\begin{proposition}[Global-to-local energy comparison]
\label{prop:global-local-energy-step21}
Let $v_{N,\lambda,\xi}$ be the projected solution of
Lemma~\ref{lem:step17}, and let
$\mathscr E_{N,\pm}^{\rm loc}$ be the two principal local energies from
Lemma~\ref{lem:step19}.  There is a fixed integer $C_0\ge0$, independent of
$N$, such that, uniformly with one normalized parameter derivative,
\begin{align}
 &\mathscr E_g(v_{N,\lambda,\xi})
  -\mathscr E_{N,-}^{\rm loc}
  -\mathscr E_{N,+}^{\rm loc}
 \nonumber\\
 &\qquad=O_{C^1}\!\left(
      \lambda^{n-4}(1+|\log\lambda|)^{C_0}\right)
      +o_{C^1}(s_N).
 \label{eq:global-local-energy-comparison-step21}
\end{align}
The $O(\lambda^{n-4}(1+|\log\lambda|)^{C_0})$ term contains the flat
matched transitions, the Schwarzschild region, the two assembly
collars, the remaining metric supports, and their correction pairings.  The
$o(s_N)$ term contains the tail of the principal metric source and all
interactions between a principal core and a source supported away
from that core.
\end{proposition}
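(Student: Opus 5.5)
The plan is to decompose $\mathscr E_g(v_{N,\lambda,\xi})$ with the fixed finite-overlap partition $\chi_{\rm bub}+\chi_{\rm tr}+\chi_\Sigma+\chi_{\rm con}=1$, read in the left Euclidean and right Kelvin charts. The energy density is a natural invariant, so each piece is computed in its natural block coordinates, and diffeomorphism invariance identifies the right-end contributions with the left ones.

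\emph{Step 1: regions away from the bubbles.} On the transitions, Lemma~\ref{lem:step6} gives $O_L(\lambda^{n-4})$ for the flat density, including one normalized scale derivative. On the exact Schwarzschild pieces the $\sigma_2$ term vanishes by Lemma~\ref{lem:step4}. The volume term is bounded by $\mathcal V_\Sigma^{4n/(n-4)}\lesssim\lambda^{n}(r^{a}+r^{-a})^{4n/(n-4)}$ in the cylinder variable. Integrating from $r_b\asymp\lambda^{4/n}$ to the connector gives $O(\lambda^{n-4})$. On the collars, Proposition~\ref{prop:paired-assembly-collars} gives the same order. Along the long necks one may pick up logarithms from integrating; this is the source of the factor $(1+|\log\lambda|)^{C_0}$.

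Next I add the correction $w$. I expand $\mathscr E_g(\bar v e^{aw})$ to second order around $\bar v$. The first-order term is $4a\int\mathcal N_g(\bar v)w$ by \eqref{eq:energy-first-variation-step18}, and the second-order term is $\tfrac12\mathscr H_N(w,w)$ plus a cubic remainder. On these regions $w$ consists of remote responses, so I split it as $w=w^\sharp_N+d_N$. The remote part is bounded via Proposition~\ref{prop:localized-bubble-energy-step21}, giving self-pairing $O(\lambda^{n-4}\|F\|^2)$ and cross-pairing $O(A_N\lambda^{a_\Sigma})=o(s_N)$, using $\lambda^{a_\Sigma}/A_N\asymp\epsilon_N^{\beta_f}\to0$. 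The remaining metric supports contribute $\delta_*(N_0)$-weighted terms supported at cylindrical distance at least $\log M$ away. I expect these to be $O(\lambda^{n-4})$ times polynomial factors after the Schwarzschild weight $q\Theta$ from Lemma~\ref{lem:step3}.

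\emph{Step 2: bubble blocks.} On $\mathcal B_{N,-}$ I compare the actual metric $\bar v^{8/(n-4)}e^{aw}g$ with the local model \eqref{eq:left-local-total-metric-step19}. The differences are three:
\begin{itemize}
\item[(a)] $h$ versus $h^\sharp_{N,-}$, which is a principal tail beyond $R_{\sharp,N}$ together with remaining perturbations;
\item[(b)] $w$ versus $A_NW^\sharp_N$, controlled in the bubble energy norm by Lemmas~\ref{aux:principal-response-energy-localization-gapfix} and~\ref{aux:nonlinear-difference-energy-step21} as $o(A_N)$;
\item[(c)] the missing exterior of the local model, where $U$ carries energy $O(\lambda^{n-4})$ outside $\mathcal B_{N,-}$.
\end{itemize}
I write the difference of the two energies as a Taylor integral along the segment between them. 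Each term is then a bilinear pairing of an $O(A_N)$ quantity with an $o(A_N)$ quantity, or a tail pairing bounded by \eqref{eq:principal-tail-pairing-step21}. Summing, the total is $A_N^2\omega(R)+o(s_N)$, and I then let $R\to\infty$.

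The interface boundary terms from the Green identity \eqref{eq:actual-green-identity-step21} are controlled by the flux bound \eqref{eq:bubble-bilinear-bound-step21}. The $C^1$ statement follows by differentiating in the fixed-domain identification and reusing the differentiated versions of each cited estimate.

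\emph{Main obstacle.} I expect the hardest part to be the cross term between the principal response $A_NW^\sharp_N$, extended outside the bubble, and the neck and transition regions. There the Newton scale degenerates, and a naive sup bound fails. I would first try to rewrite the whole neck contribution via the Green identity as an interface flux at the bubble boundary. That flux can then be estimated by the capacity estimate, Lemma~\ref{aux:bubble-capacity-rigorous}, with the factor $e^{-(n-2)T/2}$ supplying the needed $\lambda^{a_\Sigma}$ gain.
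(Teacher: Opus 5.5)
\paragraph{Where the proposal breaks down.} Your bubble-block comparison follows the paper: the Taylor expansion against the local models, using Lemmas~\ref{aux:principal-response-energy-localization-gapfix} and~\ref{aux:nonlinear-difference-energy-step21}, the tail bound \eqref{eq:principal-tail-pairing-step21} and the flux bound \eqref{eq:bubble-bilinear-bound-step21}. Your bounds for the flat exterior energy also follow the paper.

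The gap is the correction energy on the transitions, the Schwarzschild necks and the collars.
\begin{itemize}
\item Proposition~\ref{prop:localized-bubble-energy-step21} only controls the forms $\mathscr H_{N,\mathcal B_{N,\pm}}$ restricted to the two exact bubble portions. Its self-pairing and cross-pairing bounds therefore say nothing about the change of energy on $\mathcal T\cup\mathcal S$.
\item On those regions $w$ is not a remote response anyway, since it contains the outer part of $w_N^\sharp$.
\item The term $4a\int\mathcal N_g(\bar v)w$ from \eqref{eq:energy-first-variation-step18} is the first variation of the \emph{global} energy. On a subregion it acquires an interface term.
\item Your proposed treatment of the ``main obstacle'' does not close. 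Integrating by parts over the exterior still leaves the interior pairings of $w$ with $\mathcal N_g(\bar v)$ and with $Q_N(w)$, which need exactly the block estimate below. Lemma~\ref{aux:bubble-capacity-rigorous} converts a bubble-side trace into bubble-side energy; it does not estimate energy stored in the neck.
\end{itemize}

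\paragraph{The missing observation.} There is no obstacle, because the Newton scale and the energy density degenerate together.
\begin{itemize}
\item On each unit block $Q$ of $\mathcal T\cup\mathcal S$ the energy density has size $\mathcal V^4q^2$. On the Schwarzschild pieces this equals $(2\lambda)^{n-4}$ exactly, since $\mathcal V_\Sigma=(2\lambda)^a(r^a+r^{-a})$ and $q_\Sigma=(r^a+r^{-a})^{-2}$. On the transitions it is $\asymp_L\delta_\lambda^{n/2}=\lambda^{n-4}$.
\item The norm $\mathfrak X_Q$ is normalized by the same Newton scale, through $q^{-1/2}D_{\rm cyl}w$ and $q^{-1}\mathscr S^{(1)}_{\bar u,h}(w)$. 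Also $\mathcal V^{16/(n-4)}/q^2$ is bounded on these blocks. Hence a pointwise Taylor expansion of the density gives $|\Delta\mathfrak e_Q|\le C\int_Q\mathcal V^4q^2\bigl(\mathfrak X_Q(w)+\mathfrak X_Q(w)^2+\mathfrak X_Q(w)^3\bigr)$. The paper's version carries the residual factor in the linear term, but this crude form already suffices.
\item Lemma~\ref{lem:step17} gives $\mathfrak X_Q(w)\le2\|w\|_{X^\sharp_\gamma}\le C\varepsilon_N$, and $\sum_Q\int_Q\mathcal V^4q^2\,dt\,d\theta\le C_L\lambda^{n-4}(1+|\log\lambda|)$.
\item Hence the whole exterior correction, including its cross terms with the outer part of the principal response, is $O(\lambda^{n-4}(1+|\log\lambda|))=o(s_N)$, because $\lambda^{n-4}/s_N\asymp\epsilon_N^{2\beta_f}$.
\end{itemize}
This block count, not the flat Schwarzschild volume (which converges without a logarithm), is where the logarithmic factor comes from.

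The same block-scale argument, with the relative smallness from Lemma~\ref{lem:step8}, also handles the principal metric tail on the transitions and necks. Your Step~1 does not list that tail. The $\lambda^{a_\Sigma}$ gain of Proposition~\ref{prop:localized-bubble-energy-step21} is needed only for the principal--remote pairings on the bubble blocks, where the density scale is of order one.
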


\begin{proof}
All estimates are made in the natural coordinates of the corresponding
blocks.  We separate the principal bubbles, the tails and direct exterior
energy, and the correction on the remaining blocks.

On the two exact bubble portions, let
\[
 \widetilde d_{N,\pm}
 =w_{N,\lambda,\xi,\pm}-A_NW_{N,\pm}^\sharp,
 \qquad
 \widetilde{\mathbf e}_N
 =\mathbf c_N-A_N\boldsymbol\beta_N.
\]
Lemmas~\ref{aux:principal-response-energy-localization-gapfix} and~\ref{aux:nonlinear-difference-energy-step21} give
\[
 \sum_\pm\Bigl(
 \|\widetilde d_{N,\pm}\|_{\mathcal E_{*,T_{N,\pm}}}
 +\|\mathscr P_{N,\pm}\widetilde d_{N,\pm}\|_{
       \mathcal E_{*,T_{N,\pm}}'}\Bigr)
 +|\widetilde{\mathbf e}_N|=o_{C^1}(A_N).
\]
The first variation at the finite local principal configuration equals the
localized projected source plus an \(o(A_N)\) dual-energy remainder.  After
summing the two ends, the projected part pairs to zero because
\(\mathfrak L_a(w_N)=0\) and
\(\ell_a^*(W_N^\sharp)=0\).  The remaining first-variation term and the
interface flux are \(o_{C^1}(A_N^2)\) by the graph estimate above and
\eqref{eq:bubble-bilinear-bound-step21}.  The integral Taylor formula,
with the uniformly bounded Hessian form in the Newton cone, then shows that
the global energy on the paired principal bubble portions differs from the
two truncated local energies by \(o_{C^1}(s_N)\).

The omitted round caps contribute \(O(\lambda^{n-4})\), while the metric and
response tails of the local models are \(o_{C^1}(s_N)\) by
Lemma~\ref{lem:step19} and Lemma~\ref{aux:tail-dual-rigorous}.  For the
principal metric tail itself, the Schwarzschild Newton scale and
\[
 |D^2h_N|+r^{-1}|Dh_N|+r^{-2}|h_N|
 \le C\mu_Nr^{2m_f}
\]
give
\[
 C\mu_N\lambda^{n-4}
 \int_{\sigma_N^\sharp}^{\rho_N}r^{2m_f+3-n/2}\,dr
 \le C\mu_N\lambda^{n-4}
       (\sigma_N^\sharp)^{2m_f+4-n/2}=o(s_N).
\]
Here \(\sigma_N^\sharp\asymp\epsilon_N^{4/n}\) and
\(n>4m_f+8\); the same estimate holds after one normalized parameter
derivative.

The two matched transitions contribute \(O_L(\lambda^{n-4})\) by
Lemma~\ref{lem:step6}; the Schwarzschild pieces have zero curvature density
and target-volume contribution \(O_L(\lambda^{n-4})\); and the assembly
collars are controlled by Proposition~\ref{prop:paired-assembly-collars}.
On each remaining metric support, Propositions~\ref{lem:step2} and~\ref{lem:step3} give
\(C\lambda^{n-4}\mu_M(1+M)^{P_1}\) for a fixed \(P_1\), and the sum over
\(M\) converges.  Allowing for the number of cylinder blocks and moving
interfaces, these direct terms are
\[
 O_{C^1}\!\left(\lambda^{n-4}
 (1+|\log\lambda|)^{C_0}\right)+o_{C^1}(s_N).
\]

It remains to estimate the correction energy away from the principal
cores.  On every Newton-elliptic unit block, Taylor expansion and the
Newton-strong norm give
\[
 |\Delta\mathfrak e_Q|
 \le C\int_Q\mathcal V^4q^2
 \left(|f_Q|\,\mathfrak X_Q(w)+\mathfrak X_Q(w)^2
       +\mathfrak X_Q(w)^3\right)\,dt\,d\theta.
\]
The low modes are summed with the \(L_t^1\) component of
\(\widehat Y_{\gamma,\beta}\); the radial quadratic terms use
\eqref{eq:exact-relative-radial-residual-step16}, and all terms containing
a nonradial factor use the \(L_t^\infty\times L_t^1\) estimate of
Lemma~\ref{lem:step16}.  Since
\[
 \sum_{Q\subset\mathcal S\cup\mathcal T}
 \int_Q\mathcal V^4q^2\,dt\,d\theta
 \le C_L\lambda^{n-4}(1+|\log\lambda|),
\]
these contributions have the same bound as the direct terms.  Projected
sources pair to zero with the constraints, and principal--remote pairings
are \(o_{C^1}(s_N)\) by Proposition~\ref{prop:localized-bubble-energy-step21}
and
\[
 \lambda^{a_\Sigma}/A_N\asymp\epsilon_N^{\beta_f}\to0,
 \qquad
 \lambda^{n-4}/s_N\asymp\epsilon_N^{2\beta_f}\to0.
\]
Summing over a fixed-overlap partition proves
\eqref{eq:global-local-energy-comparison-step21}.  The same estimates in
the fixed natural coordinates give the normalized parameter derivative.
\end{proof}

\begin{lemma}[Two-ended reduced-energy expansion]\label{lem:step21}
Uniformly on the normalized parameter set, all nonprincipal blocks and all
interactions between distinct source regions contribute $o_{C^1}(s_N)$.
More quantitatively, for the fixed integer $C_0$ in
Proposition~\ref{prop:global-local-energy-step21},
\begin{align}
 \mathscr J_N(\lambda',\xi')
 ={}&2\mathscr E_2(\mathbb S^n)
  +2s_N\mathcal F(\xi',\lambda')
 \nonumber\\
 &+O_{C^1}\!\left(
   \lambda^{n-4}(1+|\log\lambda|)^{C_0}\right)
  +o_{C^1}(s_N).
 \label{eq:quantitative-two-ended-energy-step21}
\end{align}
In particular,
\begin{equation}\label{eq:two-ended-energy-step21}
 \mathscr J_N(\lambda',\xi')
 =2\mathscr E_2(\mathbb S^n)
  +2s_N\mathcal F(\xi',\lambda')+o_{C^1}(s_N).
\end{equation}
The two assembly collars separately have energy
$O_{C^1}(\lambda^{n-4})=o_{C^1}(s_N)$.
\end{lemma}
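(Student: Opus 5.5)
The plan is to combine the global-to-local comparison of Proposition~\ref{prop:global-local-energy-step21} with the one-ended expansions of Lemma~\ref{lem:step19}, and then check that every error term is $o_{C^1}(s_N)$ on the normalized parameter set.

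\emph{Step 1: assemble the expansion.} By definition $\mathscr J_N(\lambda',\xi')=\mathscr E_g(v_{N,\lambda,\xi})$. Proposition~\ref{prop:global-local-energy-step21} writes this as $\mathscr E_{N,-}^{\rm loc}+\mathscr E_{N,+}^{\rm loc}$, plus an error $O_{C^1}(\lambda^{n-4}(1+|\log\lambda|)^{C_0})$ and an error $o_{C^1}(s_N)$. Lemma~\ref{lem:step19} gives each local energy as $\mathscr E_2(\mathbb S^n)+s_N\mathcal F(\xi',\lambda')+o_{C^1}(s_N)$. For the right end, the exact $J$-invariance of the construction identifies $\mathscr E_{N,+}^{\rm loc}$ with $\mathscr E_{N,-}^{\rm loc}$, so both ends carry the same coefficient $\mathcal F$. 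Adding the two local energies and the comparison error yields \eqref{eq:quantitative-two-ended-energy-step21}.

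\emph{Step 2: the $C^1$ bookkeeping.} The derivatives in all three statements above are the normalized derivatives $\lambda'\partial_{\lambda'}$ and $\partial_{\xi'_j}$. They act on a common parameter set, so the $o_{C^1}$ and $O_{C^1}$ remainders can simply be added; no rescaling of the remainders is needed.

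\emph{Step 3: absorb the quantitative error.} To pass to \eqref{eq:two-ended-energy-step21}, use $\lambda\asymp\epsilon_N$ and the exponent identity already recorded, $\lambda^{n-4}/s_N\asymp\epsilon_N^{2\beta_f}$. Since $\beta_f>0$, the logarithmic factor is harmless:
\[
 \lambda^{n-4}(1+|\log\lambda|)^{C_0}/s_N
 \le C\epsilon_N^{2\beta_f}(1+N)^{C_0}\to0 .
\]
Because $\lambda\partial_\lambda$ of $\lambda^{n-4}(1+|\log\lambda|)^{C_0}$ has the same size, this bound persists after one normalized derivative. The nonprincipal blocks and the cross-interactions are exactly the contents of the two remainders in Proposition~\ref{prop:global-local-energy-step21}, so they are all $o_{C^1}(s_N)$.

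\emph{Step 4: the collar statement.} Each assembly collar carries energy $O_{C^1}(\lambda^{n-4})$. On a collar $\bar u=S_\lambda e^{\psi}$ with $\psi=O(N^{-1})$ by Proposition~\ref{prop:paired-assembly-collars}. The curvature density there is of order $\mathcal V^4q^2\cdot N^{-1}\sim\lambda^{(n-4)}$, since $\mathcal V^4\sim\lambda^{n-4}$ on a fixed annulus. The target volume term has the same order. The correction $w$ adds only a comparable quantity through the Newton-strong bounds.

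\emph{Main obstacle.} The only genuinely delicate point is to make sure that no interaction term was double-counted or omitted in Step~1, in particular the principal--opposite-core pairing. That pairing is $O(A_N\lambda^{a_\Sigma})=o(s_N)$ by Proposition~\ref{prop:localized-bubble-energy-step21}. I would check it explicitly against the decomposition used in the proof of Proposition~\ref{prop:global-local-energy-step21}.
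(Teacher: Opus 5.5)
Your proposal is correct and is essentially the paper's argument: Proposition~\ref{prop:global-local-energy-step21} together with the two local expansions of Lemma~\ref{lem:step19} gives \eqref{eq:quantitative-two-ended-energy-step21}; the ratio $\lambda^{n-4}(1+|\log\lambda|)^{C_0}/s_N\asymp\epsilon_N^{2\beta_f}(1+|\log\epsilon_N|)^{C_0}\to0$, with $s_N$ independent of the parameters, then yields \eqref{eq:two-ended-energy-step21}; and the collar bound is read off from the assembly-collar estimate. The only small inaccuracy is in the justification of your Step~3: the $O_{C^1}$ notation already bounds the normalized derivative of the error by the same majorant, so differentiating $\lambda^{n-4}(1+|\log\lambda|)^{C_0}$ itself plays no role.
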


\begin{proof}
Proposition~\ref{prop:global-local-energy-step21} and the two local
expansions of Lemma~\ref{lem:step19} give
\eqref{eq:quantitative-two-ended-energy-step21}.  Finally,
\[
 \frac{\lambda^{n-4}(1+|\log\lambda|)^{C_0}}{s_N}
 \asymp
 \epsilon_N^{\,n-4-(n/2+2m_f)}
 (1+|\log\epsilon_N|)^{C_0}
 =\epsilon_N^{2\beta_f}
 (1+|\log\epsilon_N|)^{C_0}\longrightarrow0,
\]
because $\beta_f>0$ and $\lambda/\epsilon_N$ stays in a fixed compact
subset of $(0,\infty)$.  This proves
\eqref{eq:two-ended-energy-step21}.  The collar statement follows from the assembly-collar estimate in the proof
of Proposition~\ref{prop:global-local-energy-step21}.  The proof is
complete.
\end{proof}

\section{Finite-dimensional step and completion}

All preceding analytic statements were proved for an arbitrary nonzero
algebraic Weyl tensor and for an arbitrary polynomial degree satisfying
$n>4m_f+8$.  We now specialize to $m_f=4$ and choose a four-dimensional
self-dual Weyl block, extended by zero in the remaining directions.  Put
\begin{equation}\label{eq:W-center-matrix-step22}
 |W|^2:=\sum_{i,j,k,l}(W_{ikjl}+W_{iljk})^2,
 \qquad
 \mathscr W_{ij}:=\sum_{k,p,q}
 (W_{ikpq}+W_{kqip})(W_{jkpq}+W_{kqjp}),
\end{equation}
and normalize the block so that
\begin{equation}\label{eq:W-normalization-step22}
 |W|^{-2}\mathscr W
 =\operatorname{diag}\!\left(\frac14,\frac14,\frac14,\frac14,
 0,\ldots,0\right).
\end{equation}
All preceding estimates remain valid under this specialization.

The one-ended functional $\F$ of Lemma~\ref{lem:step19} is exactly the
functional appearing in the unit-bubble coefficient calculation.  Indeed,
the additive principal response used in that calculation is
$aU_{\lambda',\xi'}W_{\xi',\lambda'}$, so its pairing
$2\int \mathcal R U^{-1}(aUW)$ is precisely the term
$2a\int\mathcal R W$ in
\eqref{eq:local-functional-simplified-step19}.  Hence there are scalar
functions $P_f,Q_f,R_f$ such that, for $t>0$,
\begin{align}
 \F(0,\sqrt t)&=|W|^2P_f(t),\label{eq:F-scale-P-step22}\\
 \partial_{\xi_i'}\F(0,\sqrt t)&=0,\label{eq:F-first-center-step22}\\
 \partial_{\xi_i'\xi_j'}^2\F(0,\sqrt t)
 &=|W|^2Q_f(t)\delta_{ij}+R_f(t)\mathscr W_{ij}.
 \label{eq:finite-dimensional-coefficients-step22}
\end{align}
For clarity, the principal-response term in
\eqref{eq:local-functional-simplified-step19} does not change the scale-axis
value or the center Hessian in these identities.  Indeed, in the
metric-centered variable the first source has the exact form
\[
 \mathcal R_{\xi',\lambda'}(y)
 =n(n-1)\frac{U_{\lambda',\xi'}(y)^{4(n-2)/(n-4)}}
 {\bigl((\lambda')^2+|y-\xi'|^2\bigr)^2}
 \widehat h_{ij}(y)(y_i-\xi_i')(y_j-\xi_j').
\]
Since $y_i\widehat h_{ij}(y)=0$, the last contraction equals
$\widehat h_{ij}(y)\xi_i'\xi_j'$.  Thus
$\mathcal R_{\xi',\lambda'}=O(|\xi'|^2)$ in the projected source norm and
$W_{\xi',\lambda'}=O(|\xi'|^2)$.  The response contribution to $\F$ is
therefore $O(|\xi'|^4)$.  The remaining pure-metric angular contractions and
radial beta integrals give \eqref{eq:F-scale-P-step22}--
\eqref{eq:finite-dimensional-coefficients-step22}.

For a parameter $a$, set
\begin{align}\label{def-unified-quartic-family}
        f_a(s):=a-5960s+1181s^2-73s^3+s^4.
\end{align}
The constant term is chosen as a function of $n$.

Remove the common positive factor
\begin{align}\label{def-quartic-common-factor}
        \Theta_n
        :=2^n\omega_{n-1}
        \frac{\Gamma(n/2)\Gamma(n/2-12)}{\Gamma(n-4)},
        \qquad n\ge27,
\end{align}
and write
\[
        \widehat P_f:=\Theta_n^{-1}P_f,
        \qquad
        \widehat Q_f:=\Theta_n^{-1}Q_f,
        \qquad
        \widehat R_f:=\Theta_n^{-1}R_f.
\]
The exact beta-integral evaluation of the coefficient identities
\eqref{eq:finite-dimensional-coefficients-step22}, with the final coefficient
$24t^6\mathcal I_{n-3,n+9}^{23}$ in the $R_f^{[-3]}$ contribution, gives
\begin{align}\label{quartic-scale-equation-all-n}
        \widehat P_{f_a}'(1)
        =\frac{A_na^2+B_na+C_n}
        {65536(n-3)(n-2)^2},
\end{align}
where
\begin{align}\label{quartic-ABC-all-n}
A_n={}&-(n-24)(n-22)(n-20)(n-18)(n-16)(n-14)(n-12)(n-10)(n-2)^2,
\notag\\
B_n={}&(n-24)(n-22)(n-20)(n-18)(n-2)(n+4)
\notag\\
&\quad\times
(13515n^4-684494n^3+12499560n^2-94689040n+232611456),
\notag\\
C_n={}&-24(n+4)
\bigl(1751211n^9-220009697n^8+11540807010n^7
\notag\\
&\quad-321660897252n^6+4907687372744n^5
-34315210114528n^4
\notag\\
&\quad-38624210366720n^3+2074854736330752n^2
-9720170316011520n
\notag\\
&\quad+9055510528204800\bigr).
\end{align}
Its discriminant is
\begin{align}\label{quartic-discriminant-all-n}
        \Delta_n
        =(n-24)(n-22)(n-20)(n-18)(n-2)^2(n+4)d_{13}(n).
\end{align}
Since $A_n<0$ for $n\ge27$, define the single algebraic branch
\begin{align}\label{def-a0-all-n-quartic}
        a_0(n):=\frac{-B_n-\sqrt{\Delta_n}}{2A_n},
        \qquad
        f_n:=f_{a_0(n)}.
\end{align}
The coefficient computation below shows that $\Delta_n>0$ and that all
stability inequalities have the required sign on this branch.

\begin{lemma}[Stable negative quartic profile]\label{lem:step22}
For every integer $n\ge27$, the polynomial $f_n$ in
\eqref{def-a0-all-n-quartic} has degree four, satisfies
$f_n(0)=a_0(n)>0$, and, at the common scale $t_0=1$,
\begin{align}\label{quartic-all-n-signs}
        P_{f_n}'(1)=0,
        \qquad P_{f_n}(1)<0,
        \qquad P_{f_n}''(1)>0,
\end{align}
and
\begin{align}\label{quartic-all-n-center-signs}
        Q_{f_n}(1)>0,
        \qquad
        Q_{f_n}(1)+\frac14R_{f_n}(1)>0.
\end{align}
Consequently, $f_n$ is a stable negative profile for every $n\ge27$.
\end{lemma}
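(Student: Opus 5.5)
The plan is to reduce every assertion of Lemma~\ref{lem:step22} to sign statements for explicit polynomials and rational functions in $n$ along the algebraic branch \eqref{def-a0-all-n-quartic}. Since $P_f,Q_f,R_f$ are quadratic forms in the coefficient vector of $f$ (Appendix~A and the beta-integral rule), the normalized quantities $\widehat P_{f_a}(1)$, $\widehat P_{f_a}''(1)$, $\widehat Q_{f_a}(1)$, $\widehat R_{f_a}(1)$ are quadratic polynomials in $a$ whose coefficients are rational in $n$. Dividing by $\Theta_n>0$ does not change any sign. The leading coefficient of $f_n$ is $1$, so $\deg f_n=4$ is immediate.

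The first step is the branch itself. From \eqref{quartic-scale-equation-all-n}, $P'_{f_a}(1)=0$ exactly when $A_na^2+B_na+C_n=0$. I would verify the following facts for $n\ge27$:
\begin{itemize}
\item[(i)] $A_n<0$, which is clear since all factors are positive;
\item[(ii)] $d_{13}(n)>0$, so that $\Delta_n>0$;
\item[(iii)] $a_0(n)>0$.
\end{itemize}
Because $A_n<0$ and $\Delta_n>0$, the two roots are real. For (iii), note that $a_0=(B_n+\sqrt{\Delta_n})/(2|A_n|)$. Here $B_n>0$ for $n\ge27$, since the quartic factor $13515n^4-\cdots$ is positive there (check by substituting $n=27+m$ and verifying that all coefficients are positive). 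Hence $a_0>0$.

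For (ii), I would substitute $n=27+m$ and show that $d_{13}(27+m)$ has only nonnegative coefficients in $m$. If a few coefficients are negative, I would combine them with neighbouring positive terms by elementary AM--GM absorption.

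Next I would establish the asymptotics of the branch, which is the route to the remaining signs. As $n\to\infty$, $A_n\sim-n^{10}$, $B_n\sim13515n^{10}$ and $C_n\sim-24\cdot1751211\,n^{10}$. So $a_0(n)$ converges to the larger root of $-a^2+13515a-42029064$. Presumably the chosen coefficients $-5960,1181,-73$ were tuned so that this limit is the large positive root, near $1.3\times10^4$. I would also compute an explicit Puiseux-type expansion $a_0(n)=\alpha+\beta/n+O(n^{-2})$, together with rigorous two-sided rational bounds $a_-(n)\le a_0(n)\le a_+(n)$, valid for all $n\ge27$. These bounds are obtained by evaluating the quadratic at $a_\pm$ and checking signs, which again reduces to polynomial positivity in $m=n-27$.

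With the bracket for $a_0$ in hand, each target inequality is a quadratic in $a$ with rational coefficients in $n$. These are $P_{f_a}(1)<0$, $P''_{f_a}(1)>0$, $Q_{f_a}(1)>0$ and $Q+\tfrac14R>0$, the last using the normalization \eqref{eq:W-normalization-step22}. I would write each as $\alpha_n a^2+\beta_n a+\gamma_n$. Its sign is then certified on $[a_-(n),a_+(n)]$ either by monotonicity in $a$ or by evaluating at the endpoints and bounding the derivative. Each case again becomes positivity of explicit integer polynomials in $m\ge0$.

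To simplify $P''$, I would use $P'(1)=0$. It is also useful to reduce every quadratic in $a$ modulo $A_na^2+B_na+C_n$ to a linear expression $\lambda_na+\mu_n$. The sign then becomes a comparison between $\sqrt{\Delta_n}$ and a rational function, which can be settled by squaring, yielding one more polynomial inequality in $n$.

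The main obstacle is purely computational. First, the exact beta-integral evaluation of $P_f,Q_f,R_f$ for a degree-four $f$ must be carried out with the constant $\Theta_n$ factored out; this involves Gamma ratios $\Gamma(n/2-12+k)$ that must be reduced to polynomials. Second, the resulting high-degree polynomial inequalities must be certified uniformly in $n\ge27$, not just asymptotically. I expect the margin to be thinnest near $n=27$, where $\beta_f=(n-24)/4$ is small and factors such as $(n-24)$ nearly vanish. There I would check the inequalities at $n=27,\dots,40$ numerically with exact rational arithmetic, and use the shifted-variable coefficient test for $n\ge40$.
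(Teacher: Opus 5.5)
Your plan is essentially the paper's proof: reducing each of the four sign quantities modulo $A_na^2+B_na+C_n=0$ and inserting $a_0(n)$ gives expressions of the form $(U+V\sqrt{\Delta_n})/(\text{positive})$, where $U$, $V$ and $\Delta_n$ are products of positive linear factors with integer polynomials whose coefficients are all positive after the shift $n=z+27$, so neither your bracketing of $a_0(n)$ nor a squaring step is actually needed. One harmless slip: the limiting quadratic $-a^2+13515a-42029064$ has roots $4851$ and $8664$, so $a_0(n)\to 8664$, not roughly $1.3\times10^4$.
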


\begin{proof}
Equation \eqref{quartic-scale-equation-all-n} and the definition of $a_0(n)$
give $P_{f_n}'(1)=0$.  Moreover, $A_n<0$, $B_n>0$, and $C_n<0$ for
$n\ge27$.  For the last two signs, after writing $n=z+27$, the remaining
quartic factor in $B_n$ and the degree-nine factor in $-C_n$ have strictly
positive integer coefficients.  Hence the two roots of
$A_na^2+B_na+C_n=0$ have positive product $C_n/A_n$ and positive sum
$-B_n/A_n$.  In particular, the branch $a_0(n)$ is positive, proving
$f_n(0)\ne0$.  We prove the remaining signs by exact polynomial identities.  Reducing the numerators of
\[
        -\widehat P_{f_a}(1),\qquad
        \widehat P_{f_a}''(1),\qquad
        \widehat Q_{f_a}(1),\qquad
        \widehat Q_{f_a}(1)+\frac14\widehat R_{f_a}(1)
\]
modulo $A_na^2+B_na+C_n=0$, and then inserting
\eqref{def-a0-all-n-quartic}, gives
\begin{align}\label{quartic-four-sign-forms}
-\widehat P_{f_n}(1)
&=\frac{U_-(n)+V_-(n)\sqrt{\Delta_n}}
{(-2A_n)\,131072(n-3)(n-2)^2},\notag\\
\widehat P_{f_n}''(1)
&=\frac{U_2(n)+V_2(n)\sqrt{\Delta_n}}
{(-2A_n)\,65536(n-3)(n-2)^2},\notag\\
\widehat Q_{f_n}(1)
&=\frac{U_Q(n)+V_Q(n)\sqrt{\Delta_n}}
{(-2A_n)\,32768(n-3)(n-2)^2},\notag\\
\widehat Q_{f_n}(1)+\frac14\widehat R_{f_n}(1)
&=\frac{U_H(n)+V_H(n)\sqrt{\Delta_n}}
{(-2A_n)\,65536(n-3)(n-2)^2}.
\end{align}
The factors in these numerators are
\begin{align}\label{quartic-UV-factors}
U_-={}&(n-24)(n-22)(n-20)(n-18)(n-2)^2(n+4)u_-(n),\notag\\
V_-={}&(n-24)(n-22)(n-20)(n-18)(n-2)(n+4)v_-(n),\notag\\
U_2={}&(n-24)(n-22)(n-20)(n-18)(n-2)^2(n+4)u_2(n),\notag\\
V_2={}&(n-24)(n-22)(n-20)(n-18)(n-2)(n+4)v_2(n),\notag\\
U_Q={}&(n-24)^2(n-22)(n-20)(n-18)(n-16)(n-2)^2u_Q(n),\notag\\
V_Q={}&(n-24)(n-22)(n-20)(n-18)(n-16)(n-2)v_Q(n),\notag\\
U_H={}&2(n-24)^2(n-22)(n-20)(n-18)(n-16)(n-2)^2u_H(n),\notag\\
V_H={}&4(n-24)(n-22)(n-20)(n-18)(n-16)(n-2)v_Q(n).
\end{align}
The remaining integer polynomials are
\begin{align}\label{quartic-positive-polynomials}
d_{13}(n)&=14538969 n^{13} - 3251314596 n^{12}\notag\\
&\quad{} +331657384528 n^{11} - 20370494780432 n^{10}\notag\\
&\quad{} +837191962557936 n^{9} - 24187543505430144 n^{8}\notag\\
&\quad{} +501949979036960256 n^{7} - 7503665337837616128 n^{6}\notag\\
&\quad{} +79647600043952946176 n^{5} - 578502301857857167360 n^{4}\notag\\
&\quad{} +2656029018637785317376 n^{3} - 6277431388462957854720 n^{2}\notag\\
&\quad{} +1361345086315664769024 n +17771898772260409835520,\\[0.3em]
u_-(n)&=14538969 n^{13} - 2816495328 n^{12}\notag\\
&\quad{} +249332537856 n^{11} - 13337844686736 n^{10}\notag\\
&\quad{} +479897741537392 n^{9} - 12214457597957824 n^{8}\notag\\
&\quad{} +224743909765978624 n^{7} - 2993580873617317888 n^{6}\notag\\
&\quad{} +28324137894637139968 n^{5} - 181310351032624672768 n^{4}\notag\\
&\quad{} +698802194459424276480 n^{3} - 1048758450300727197696 n^{2}\notag\\
&\quad{} - 2308895635803601895424 n +8674110855699108986880,\\[0.3em]
v_-(n)&=3813 n^{4} - 205546 n^{3}\notag\\
&\quad{} +4003032 n^{2} - 32348240 n\notag\\
&\quad{} +84493824,\\[0.3em]
u_2(n)&=7157001 n^{13} - 1630974500 n^{12}\notag\\
&\quad{} +175300241768 n^{11} - 11631669601712 n^{10}\notag\\
&\quad{} +524621815120944 n^{9} - 16761301113579264 n^{8}\notag\\
&\quad{} +385240397312255616 n^{7} - 6362404296206497792 n^{6}\notag\\
&\quad{} +74270532769771466752 n^{5} - 590490539032576638976 n^{4}\notag\\
&\quad{} +2962955825001135046656 n^{3} - 7768082060472667078656 n^{2}\notag\\
&\quad{} +3278826490704930275328 n +20831217932333576355840,\\[0.3em]
v_2(n)&=9503 n^{4} - 550326 n^{3}\notag\\
&\quad{} +11495584 n^{2} - 99298560 n\notag\\
&\quad{} +275277312,\\[0.3em]
u_Q(n)&=14538969 n^{12} - 2249250570 n^{11}\notag\\
&\quad{} +156894317496 n^{10} - 6523125457640 n^{9}\notag\\
&\quad{} +179933260949408 n^{8} - 3463150236309536 n^{7}\notag\\
&\quad{} +47483610437351552 n^{6} - 462787506994390400 n^{5}\notag\\
&\quad{} +3118260427749099776 n^{4} - 13574822156040754176 n^{3}\notag\\
&\quad{} +32529103269544120320 n^{2} - 22452479645214523392 n\notag\\
&\quad{} -38809785197822607360,\\[0.3em]
v_Q(n)&=3813 n^{4} - 171170 n^{3}\notag\\
&\quad{} +2730300 n^{2} - 17330680 n\notag\\
&\quad{} +30447552,\\[0.3em]
u_H(n)&=14538969 n^{12} - 2248129216 n^{11}\notag\\
&\quad{} +158542976888 n^{10} - 6782600382248 n^{9}\notag\\
&\quad{} +197202121747696 n^{8} - 4117952286987328 n^{7}\notag\\
&\quad{} +63113260476922624 n^{6} - 705868833869469312 n^{5}\notag\\
&\quad{} +5570698047508768256 n^{4} - 28915991673618913280 n^{3}\notag\\
&\quad{} +85172838429775601664 n^{2} - 87175632491723096064 n\notag\\
&\quad{} -97192216767831736320.
\end{align}
After the shift $n=z+27$, direct expansion gives
\begin{align}\label{quartic-positive-coefficient-certificate}
&d_{13}(z+27),\ u_-(z+27),\ v_-(z+27),\notag\\
&\hspace{12mm}u_2(z+27),\ v_2(z+27),\notag\\
&\hspace{12mm}u_Q(z+27),\ v_Q(z+27),\ u_H(z+27)
 \in\mathbb Z_{>0}[z].
\end{align}
Hence all eight polynomials in \eqref{quartic-positive-polynomials} are positive
for $n\ge27$.  It follows first from \eqref{quartic-discriminant-all-n} that
$\Delta_n>0$, and then from \eqref{quartic-UV-factors} that every $U$ and $V$ in
\eqref{quartic-four-sign-forms} is positive.  The denominators in
\eqref{quartic-four-sign-forms} are positive because $A_n<0$.  This proves all
four strict inequalities.

At $n=27$, the same formula specializes to
\[
        a_0(27)
        =\frac{467551703+\sqrt{456713347098449}}{55250},
\]
which is the value of the algebraic branch in dimension $27$.
\end{proof}

\begin{lemma}[Reduced critical point]\label{lem:step23}
For all sufficiently large $N$, the reduced energy has an interior critical
point satisfying
\[
 \lambda_N=\epsilon_N(1+o(1)),
 \qquad \xi_N=o(\epsilon_N),
\]
and the corresponding normalized energy is strictly below the two-bubble
level.
\end{lemma}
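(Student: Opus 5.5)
We locate the critical point by a stability argument for the limiting functional. We apply Lemma~\ref{lem:step21} with $f=f_n$ from Lemma~\ref{lem:step22} (so $m_f=4$, $n\ge27>4m_f+8$), which gives
\[
 \mathscr J_N(\lambda',\xi')=2\mathscr E_2(\mathbb S^n)+2s_N\F(\xi',\lambda')+o_{C^1}(s_N)
\]
uniformly on $K=\{1/2\le\lambda'\le3/2,\ |\xi'|\le1\}$. We then set $G_N:=(\mathscr J_N-2\mathscr E_2(\mathbb S^n))/(2s_N)$, so that $G_N\to\F$ in $C^1(K)$.

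\textbf{Nondegenerate critical point of $\F$.} The point $(\xi',\lambda')=(0,1)$ is a strict nondegenerate local minimum of $\F$. In the variable $t=(\lambda')^2$ we have $\F(0,\sqrt t)=|W|^2P_{f_n}(t)$. Hence $P_{f_n}'(1)=0$ and $P_{f_n}''(1)>0$ give a critical point along the scale axis with positive second derivative. Next, \eqref{eq:F-first-center-step22} gives $\partial_{\xi'}\F(0,\lambda')=0$ for all $\lambda'$, so the mixed derivatives $\partial_{\lambda'}\partial_{\xi'}\F(0,1)$ vanish. With the normalization \eqref{eq:W-normalization-step22}, the center Hessian \eqref{eq:finite-dimensional-coefficients-step22} equals
\[
 |W|^2\operatorname{diag}\bigl(Q+\tfrac14R,\ldots,Q+\tfrac14R,\,Q,\ldots,Q\bigr),
\]
with $Q+\tfrac14R$ in the first four entries. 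This is positive definite by \eqref{quartic-all-n-center-signs}. Therefore the full Hessian at $(0,1)$ is positive definite, and $\F(0,1)=|W|^2P_{f_n}(1)<0$.

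\textbf{Persistence under $C^1$ perturbation.} We choose a small closed ball $B_r$ around $(0,1)$ in $K$ on which $\F$ has the single critical point $(0,1)$ and $\nabla\F\ne0$ on $\partial B_r$. The Brouwer degree of $\nabla\F$ on $B_r$ relative to $0$ equals $1$. Since $\nabla G_N\to\nabla\F$ uniformly, for large $N$ the field $\nabla G_N$ does not vanish on $\partial B_r$ and has degree $1$, hence it has a zero in $B_r$.

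A more elementary route uses only the $C^0$ closeness. We have $\min_{\partial B_r}\F>\F(0,1)$, and this gap survives the uniform $C^0$ convergence $G_N\to\F$. Hence $G_N$ attains an interior minimum in $B_r$, which is a critical point of $\mathscr J_N$ there. Taking $r=r_N\to0$ slowly, possible because $G_N\to\F$ uniformly, the critical point $(\xi'_N,\lambda'_N)$ tends to $(0,1)$. This gives $\lambda_N=\epsilon_N\lambda'_N=\epsilon_N(1+o(1))$ and $\xi_N=\epsilon_N\xi'_N=o(\epsilon_N)$.

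\textbf{Energy below the two-bubble level.} At the critical point,
\[
 \mathscr J_N=2\mathscr E_2(\mathbb S^n)+2s_N\bigl(\F(0,1)+o(1)\bigr)<2\mathscr E_2(\mathbb S^n),
\]
because $\F(0,1)<0$ and $s_N>0$. By Lemma~\ref{lem:step18}, this interior critical point is an actual solution.

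The main obstacle is the uniformity of the $o_{C^1}(s_N)$ remainder on a fixed neighbourhood of $(0,1)$. Lemma~\ref{lem:step21} supplies it, so the argument above is routine. The only point to check is that the mixed Hessian block vanishes, which follows from differentiating \eqref{eq:F-first-center-step22} in $\lambda'$.
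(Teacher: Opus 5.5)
Your proposal is correct and follows essentially the same argument as the paper. Both use the $C^1$ convergence of the normalized reduced functional to $\mathcal F$ and the nondegenerate strict local minimum at $(0,1)$ with $\mathcal F(0,1)<0$, which comes from Lemma~\ref{lem:step22} and the normalization of $\mathscr W$; both then take an interior minimizer on a small ball, show it converges to $(0,1)$, and invoke Lemma~\ref{lem:step18} to obtain a solution. Your degree-theoretic variant is a valid alternative, but the minimization route is exactly what the paper does.
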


\begin{proof}
Define
\begin{equation}\label{eq:normalized-reduced-functional-step23}
 \widehat{\mathscr J}_N(\lambda',\xi')
 :=\frac{\mathscr J_N(\lambda',\xi')
       -2\mathscr E_2(\mathbb S^n)}{2s_N}.
\end{equation}
By Lemma~\ref{lem:step21},
\(\widehat{\mathscr J}_N=\mathcal F+o_{C^1}(1)\) uniformly on the
normalized parameter set.  The formulas in Section~8 and
Lemma~\ref{lem:step22} give
\[
 \partial_{\lambda'}\mathcal F(0,1)=0,
 \qquad
 \partial_{\lambda'\lambda'}^2\mathcal F(0,1)
 =4|W|^2P_{f_n}''(1)>0,
\]
all mixed scale--center derivatives vanish, and
\[
 D_{\xi'\xi'}^2\mathcal F(0,1)
 =|W|^2Q_{f_n}(1)I+R_{f_n}(1)\mathscr W.
\]
Under \eqref{eq:W-normalization-step22}, the latter matrix has eigenvalues
\(|W|^2Q_{f_n}(1)\) and
\(|W|^2(Q_{f_n}(1)+\frac14R_{f_n}(1))\), both positive.  Thus \((0,1)\)
is a nondegenerate strict local minimum of \(\mathcal F\), and
\(\mathcal F(0,1)=|W|^2P_{f_n}(1)<0\).

Choose a small closed ball \(\overline{B_\rho}\) centered at \((0,1)\),
contained in the interior of the normalized parameter set, on whose
boundary \(\mathcal F\ge\mathcal F(0,1)+3\eta\) for some \(\eta>0\).
Uniform \(C^1\) convergence implies that, for large \(N\),
\(\widehat{\mathscr J}_N\) attains its minimum on this ball at an interior
point \((\xi_N',\lambda_N')\), and the nondegeneracy of the limiting
minimum gives \((\xi_N',\lambda_N')\to(0,1)\).  Hence
\[
 \lambda_N=\epsilon_N\lambda_N'=\epsilon_N(1+o(1)),
 \qquad
 \xi_N=\epsilon_N\xi_N'=o(\epsilon_N).
\]
Finally,
\[
 \mathscr J_N(\lambda_N',\xi_N')
 =2\mathscr E_2(\mathbb S^n)
  +2s_N\bigl(\mathcal F(0,1)+o(1)\bigr)
 <2\mathscr E_2(\mathbb S^n).
\]
Lemma~\ref{lem:step18} shows that the projected coefficients vanish at this
interior critical point.  The corresponding projected solution is therefore
a positive \(\Gamma_2^+\)-admissible solution of the full equation.
\end{proof}

\begin{lemma}[Fixed conformal class and noncompactness]\label{lem:step24}
For the single smooth metric $g$ defined by \eqref{eq:paired-metric}, activating successively the paired perturbations indexed
by $N\to\infty$ produces a sequence of smooth positive $J$-invariant
solutions in the one conformal class $[g]$, with unbounded supremum and
quotient tending from below to $2^{4/n}\mathcal Y_2(\mathbb S^n)$.
\end{lemma}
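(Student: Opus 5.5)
The plan is to assemble Lemma~\ref{lem:step24} from the fixed metric, Lemma~\ref{lem:step23}, Lemma~\ref{lem:step18}, and Lemma~\ref{lem:step21}, checking the four items of Theorem~\ref{thm:target} in turn.

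\emph{Step 1: the metric.} The metric $g$ of \eqref{eq:paired-metric} is built from the fixed tensor $h$ of \eqref{eq:hplus-hsum}. Take $m_f=4$ and $f=f_n$ from Lemma~\ref{lem:step22}, and fix $L$ and $N_0$ as in Proposition~\ref{prop:ordered-parameters-step17}. These choices are made once, before any $N$ is selected, so $g$ does not depend on $N$. Smoothness and $J^*g=g$ were already noted.

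For non-local-conformal-flatness, look at the uncut core of $h_N^-$ near $x_N^-$. There $g=\Omega^2e^{h}\delta$ and $h=\mu_N\epsilon_N^{2m_f}f(0)H(y)+O(|y|^4)$ with $f(0)=a_0(n)\neq0$. The Weyl tensor is conformally invariant, and at $y=0$ its linearization in $h$ is a nonzero multiple of $W$, because $H$ is the standard Weyl-quadratic seed. Hence $W_g(x_N^-)\neq0$.

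\emph{Step 2: the solutions.} For each large $N$, Lemma~\ref{lem:step23} gives an interior critical point $(\lambda_N',\xi_N')$. Lemma~\ref{lem:step18} then forces all projected coefficients to vanish. So $v_N:=v_{N,\lambda_N,\xi_N}$ solves $\mathcal N_g(v_N)=0$, is $J$-invariant, positive, and $\Gamma_2^+$-admissible by Lemma~\ref{lem:step17}. Smoothness comes from elliptic regularity of the uniformly elliptic equation in the cone.

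\emph{Step 3: blow-up.} Near $c_N(\xi_N)$ we have $\bar v_N\ge cU_{1,0}^{-1}U_{\lambda_N,c_N}$, and $\|w\|_{C^0}\le C\varepsilon_N$. This gives $\sup v_N\ge c\lambda_N^{-a}\to\infty$.

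\emph{Step 4: the quotient.} Since $\mathcal N_g(v_N)=0$, we have $\sigma_2\equiv\kappa=\frac14\binom n2$ for $g_{v_N}$. Therefore $\mathcal Q_g(v_N)=\kappa\,\mathrm{Vol}(g_{v_N})^{4/n}$ and $\mathscr E_g(v_N)=\frac4n\kappa\,\mathrm{Vol}(g_{v_N})$. For the round sphere, $\mathscr E_2(\mathbb S^n)=\frac4n\kappa|\mathbb S^n|$. Lemma~\ref{lem:step23} gives $\mathscr E_g(v_N)<2\mathscr E_2(\mathbb S^n)$, so $\mathrm{Vol}(g_{v_N})<2|\mathbb S^n|$. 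It follows that $\mathcal Q_g(v_N)<\kappa(2|\mathbb S^n|)^{4/n}=2^{4/n}\mathcal Y_2$. Finally, $\mathscr J_N\to2\mathscr E_2$ by \eqref{eq:two-ended-energy-step21}, which gives the limit.

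\emph{Main obstacle.} The delicate point is confirming that $\mathscr E_2(\mathbb S^n)$ in Lemma~\ref{lem:step19} really equals $\frac4n\kappa|\mathbb S^n|$ with the normalization \eqref{eq:global-energy-step18}. I would check this by direct evaluation at the round metric. The Weyl nonvanishing at $x_N^-$ is routine but also needs a short explicit check.
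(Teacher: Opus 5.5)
Your proposal is correct and follows essentially the same route as the paper: you fix $g$ with $f_n(0)\neq0$ so that the Weyl tensor is nonzero at $x_N^-$, you use Lemma~\ref{lem:step18} to make the projected coefficients vanish at the critical point of Lemma~\ref{lem:step23}, and you combine $\mathscr E_g(v)=\frac1n\binom n2\operatorname{Vol}(g_v)$ and $\mathcal Q_g(v)=\frac14\binom n2\operatorname{Vol}(g_v)^{4/n}$ with Lemma~\ref{lem:step21}; the identification $\mathscr E_2(\mathbb S^n)=\frac1n\binom n2|\mathbb S^n|$ that you flag is exactly the one the paper uses implicitly. The only difference is the blow-up step: you get $\sup v_N\ge c\lambda_N^{-a}$ from the global bound $\|w\|_{C^0}\le C\varepsilon_N$ of Lemma~\ref{lem:step17}, while the paper uses Lemma~\ref{lem:step20} to obtain the sharper $v_N(q_N^-)=\epsilon_N^{-a}(1+o(1))$, and either suffices for unboundedness.
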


\begin{proof}
Choose \(L\) and \(N_0\) as in
Proposition~\ref{prop:ordered-parameters-step17}.  The tensor \(h\), and
hence
\[
 g=g_{\rm rd}(e^h\,\cdot,\cdot),
\]
were fixed by \eqref{eq:hplus-hsum} and \eqref{eq:paired-metric} and are
independent of the selected index \(N\).  Since \(f_n(0)\ne0\) by
Lemma~\ref{lem:step22}, the standard curvature calculation at the centers
\(x_N^-\) shows that \(g\) is smooth, \(J\)-invariant, and not locally
conformally flat.

For each sufficiently large \(N\), let
\((\lambda_N',\xi_N')\) be the critical point from
Lemma~\ref{lem:step23} and set
\[
 \lambda_N=\epsilon_N\lambda_N',\qquad
 \xi_N=\epsilon_N\xi_N',\qquad
 v_N=v_{N,\lambda_N,\xi_N}.
\]
By Lemmas~\ref{lem:step18} and \ref{lem:step23}, all projected coefficients
vanish, so
\[
 \mathcal N_g(v_N)=0,
 \qquad
 \sigma_2(g_{v_N}^{-1}A_{g_{v_N}})=\frac14\binom n2,
 \qquad
 g_{v_N}=v_N^{8/(n-4)}g
\]
and \(g_{v_N}\) is \(\Gamma_2^+\)-admissible.  Uniqueness of the projected
fixed point in the \(J\)-invariant space gives \(v_N\circ J=v_N\).
Elliptic regularity yields \(v_N\in C^\infty(\mathbb S^n)\); hence all these
metrics lie in the fixed conformal class \([g]\).

Let \(q_N^-=x_N^-+\xi_N\).  At this point the approximation is the exact
left bubble, and Lemma~\ref{lem:step20} gives
\(w_{N,\lambda_N,\xi_N}(q_N^-)=O(A_N)=o(1)\).  Since
\(q_N^-\to p_-\), \(U_{1,0}(q_N^-)\to2^a\), and
\(\lambda_N=\epsilon_N(1+o(1))\),
\[
 v_N(q_N^-)=\lambda_N^{-a}(1+o(1))
 =\epsilon_N^{-a}(1+o(1))\longrightarrow\infty,
 \qquad a=\frac{n-4}{4}.
\]
Thus \(\sup_{\mathbb S^n}v_N\to\infty\), and the antipodal peak has the
same height.

For a normalized solution, the energy and quotient satisfy
\[
 \mathscr E_g(v)=\frac1n\binom n2\operatorname{Vol}(g_v),
 \qquad
 \mathcal Q_g(v)=\frac14\binom n2\operatorname{Vol}(g_v)^{4/n}.
\]
Lemma~\ref{lem:step21} and \(\mathcal F(0,1)<0\) give
\[
 \mathscr E_g(v_N)
 =2\mathscr E_2(\mathbb S^n)
  +2s_N\bigl(\mathcal F(0,1)+o(1)\bigr),
\]
so \(\operatorname{Vol}(g_{v_N})<2|\mathbb S^n|\) for large \(N\) and
\(\operatorname{Vol}(g_{v_N})\to2|\mathbb S^n|\).  Consequently
\[
 \mathcal Q_g(v_N)<2^{4/n}\mathcal Y_2(\mathbb S^n),
 \qquad
 \mathcal Q_g(v_N)\longrightarrow
 2^{4/n}\mathcal Y_2(\mathbb S^n).
\]
This proves noncompactness in the fixed conformal class and completes the
proof of Theorem~\ref{thm:target}.
\end{proof}

\section*{Acknowledgments}

The first author would like to thank Professor Xinan Ma for his warm
hospitality and helpful discussions during his visit to the University of
Science and Technology of China, where part of this work was carried out.
The research of J. Wei is partially supported by the GRF grant entitled
``On critical and supercritical Fujita equation''.

\subsection*{Declaration of AI-assisted technologies}

The central ideas of this work were conceived by the authors. OpenAI's ChatGPT was used to assist with language,
exposition, and preliminary consistency checks. All mathematical
arguments were independently verified by the authors, who take full
responsibility for the content of the paper.

\appendix
\section{Coefficient formulas for the reduced energy}
\label{app:coefficient-bookkeeping}

This appendix gives the coefficient functions used in
Section~8.  The formulas are one-ended: they compute the functional
$\mathcal F$ associated with the left bubble.  The antipodal end gives the
same contribution exactly.  Consequently the unnormalized two-ended energy
has coefficients $2P_f,2Q_f,2R_f$, whereas the normalized reduced
functional in \eqref{eq:normalized-reduced-functional-step23} has precisely
the coefficients $P_f,Q_f,R_f$ written below.

All tensors in this appendix are computed in Euclidean coordinates, and
indices are raised and lowered by $\delta$.  Let $g_t=e^{th}$, with $h$
symmetric and trace-free, and fix a positive function $u$.  Write
\begin{equation}\label{eq:app-B-expansion}
 B_u(t):=(u^{8/(n-4)}e^{th})^{-1}
 A_{u^{8/(n-4)}e^{th}}
 =B_u^0+t\dot B_u[h]+\frac{t^2}{2}\ddot B_u[h]+O(t^3).
\end{equation}
If $T_u^0$ is the first Newton tensor at $B_u^0$, then the first metric
source is
\begin{equation}\label{eq:app-first-source}
 S_u[h]=u^{p_*}(T_u^0)_{ji}\dot B_u[h]_{ij}.
\end{equation}
For the second variation define the local energy density
\begin{equation}\label{eq:app-local-energy-density}
 \mathfrak e_{e^h}(u)
 :=u^{p_*}\left[
 \sigma_2\bigl((u^{8/(n-4)}e^h)^{-1}
 A_{u^{8/(n-4)}e^h}\bigr)
 -\frac{n-4}{4n}\binom n2\right]
\end{equation}
and put
\begin{equation}\label{eq:app-G-def}
 G_u[h]:=\frac12\left.\frac{d^2}{dt^2}\right|_{t=0}
 \mathfrak e_{e^{th}}(u).
\end{equation}
Then
\begin{equation}\label{eq:app-G-general}
 G_u[h]=\frac12u^{p_*}\left\{
 (T_u^0)_{ji}\ddot B_u[h]_{ij}
 +\bigl(\tr\dot B_u[h]\bigr)^2
 -\tr\bigl((\dot B_u[h])^2\bigr)\right\}.
\end{equation}

For the coefficient computation one must distinguish the scale axis from
the translation directions.  Put
\[
 U=U_{1,0}=\left(\frac2{1+|y|^2}\right)^{(n-4)/4},
 \qquad s=1+|y|^2,
 \qquad w=\log\frac2{1+|y|^2},
\]
and let $E=y\cdot\nabla$ be the Euler vector field.  For a trace-free and
divergence-free symmetric tensor $h$, define
\begin{equation}\label{eq:app-VY}
 V_j[h]=y_a h_{aj},\qquad Y[h]=y_ay_bh_{ab}.
\end{equation}
The full first variation of the conformal Schouten endomorphism at the unit
bubble is
\begin{equation}\label{eq:app-full-Bdot}
 \dot B_U[h]_{ij}
 =\frac{s}{4}\bigl(Eh_{ij}-\partial_iV_j[h]-\partial_jV_i[h]\bigr)
 -\frac{s^2}{8(n-2)}\Delta h_{ij}
 +\frac12Y[h]\delta_{ij}.
\end{equation}
Only on the scale axis do $V[h]$ and $Y[h]$ vanish.  In particular,
\eqref{eq:app-full-Bdot} cannot be replaced by its tangential specialization
before differentiating with respect to the center.

Let $B_U(h,k)$ be the symmetric bilinear form obtained by polarizing
$G_U[h]$.  In terms of \eqref{eq:app-full-Bdot},
\begin{align}
 B_U(h,k)={}&\frac12U^{p_*}\left[
 \tr\dot B_U[h]\,\tr\dot B_U[k]
 -\tr\bigl(\dot B_U[h]\dot B_U[k]\bigr)\right]
 \nonumber\\
 &+\frac18U^{p_*}e^{-2w}\left[
 -\frac12\langle Dh,Dk\rangle
 -2(n-1)\mathscr D(h,k)
 -(n-1)(n-2)\mathscr W_2(h,k)
 \right],
 \label{eq:app-B-bilinear}
\end{align}
where
\begin{align}
 \mathscr D(h,k)
 &:=\partial_i\left[
 \frac12(h_{ia}k_{aj}+k_{ia}h_{aj})w_j\right],
 \label{eq:app-D-bilinear}\\
 \mathscr W_2(h,k)
 &:=\frac12(h_{ia}k_{aj}+k_{ia}h_{aj})w_iw_j,
 \qquad w_i=-\frac{2y_i}{1+|y|^2}.
 \label{eq:app-W2-bilinear}
\end{align}
Formula \eqref{eq:app-B-bilinear} is the full local quadratic density used
for the translation coefficients.

For the scale-axis coefficient, if
$h_{ij}=F(|y|^2)H_{ij}(y)$ with
$H_{ij}(y)=W_{ipjq}y_py_q$, then $V[h]=Y[h]=0$ and
\[
 \dot B_U[h]=\frac{1+|y|^2}{4}Eh
 -\frac{(1+|y|^2)^2}{8(n-2)}\Delta h,
 \qquad \tr\dot B_U[h]=0,
\]
while
\[
 \tr\ddot B_U[h]
 =-\frac{U^{-8/(n-4)}}{4(n-1)}
 \sum_{i,j,k}(\partial_kh_{ij})^2.
\]
Thus
\begin{align}
 G_U[h]={}&G_U^{\partial h}[h]+G_U^{E,E}[h]
 +G_U^{\Delta,E}[h]+G_U^{\Delta,\Delta}[h],
 \label{eq:app-scale-density-split}\\
 G_U^{\partial h}[h]
 &:=-\frac1{16}U^{p_*-8/(n-4)}
 \sum_{i,j,k}(\partial_kh_{ij})^2,\nonumber\\
 G_U^{E,E}[h]
 &:=-\frac1{32}U^{p_*}(1+|y|^2)^2
 \sum_{i,j}(Eh_{ij})^2,\nonumber\\
 G_U^{\Delta,E}[h]
 &:=\frac1{32(n-2)}U^{p_*}(1+|y|^2)^3
 \sum_{i,j}(\Delta h_{ij})(Eh_{ij}),\nonumber\\
 G_U^{\Delta,\Delta}[h]
 &:=-\frac1{128(n-2)^2}U^{p_*}(1+|y|^2)^4
 \sum_{i,j}(\Delta h_{ij})^2.\nonumber
\end{align}
This tangential formula is used only for $P_f$, not for the translation
Hessian.

For $a,b\ge0$, set
\begin{equation}\label{eq:app-I-def}
 F_a(r,t):=f^{(a)}(tr^2),\qquad
 \mathcal I_{\ell,m}^{ab}(t)
 :=\int_0^\infty
 \frac{r^m f^{(a)}(tr^2)f^{(b)}(tr^2)}{(1+r^2)^\ell}\,dr,
\end{equation}
and let $\omega_{n-1}=|\mathbb S^{n-1}|$.  The required angular identities
are collected next.

\begin{lemma}[Angular contractions]\label{lem:app-angular}
With $H_{ij}(y)=W_{ipjq}y_py_q$, on $\partial B_1(0)$ one has
\begin{align*}
 \int\sum_{i,j}H_{ij}^2
 &=\frac{\omega_{n-1}}{2n(n+2)}|W|^2,
 &
 \int\sum_{i,j,k}(\partial_kH_{ij})^2
 &=\frac{\omega_{n-1}}n|W|^2,\\
 \int\sum_sH_{is}H_{js}
 &=\frac{\omega_{n-1}}{2n(n+2)}\mathscr W_{ij},
 &
 \int\sum_{a,b}(\partial_iH_{ab})(\partial_jH_{ab})
 &=\frac{\omega_{n-1}}n\mathscr W_{ij},\\
 \int\sum_{a,b}H_{ab}(\partial_jH_{ab})y_i
 &=\frac{\omega_{n-1}}{n(n+2)}\mathscr W_{ij},&&
 \end{align*}
and
\begin{align*}
 \int\sum_{a,b}H_{ab}^2y_iy_j
 &=\frac{2\omega_{n-1}}{n(n+2)(n+4)}\mathscr W_{ij}
 +\frac{\omega_{n-1}}{2n(n+2)(n+4)}|W|^2\delta_{ij},\\
 \int\sum_{a,b,k}(\partial_kH_{ab})^2y_iy_j
 &=\frac{2\omega_{n-1}}{n(n+2)}\mathscr W_{ij}
 +\frac{\omega_{n-1}}{n(n+2)}|W|^2\delta_{ij}.
\end{align*}
Moreover,
\begin{align*}
 \partial_s(f(|y|^2)H_{ij})
 &=2f'(|y|^2)y_sH_{ij}+f(|y|^2)\partial_sH_{ij},\\
 \partial_{st}(f(|y|^2)H_{ij})
 &=2\delta_{st}f'H_{ij}+4f''y_sy_tH_{ij}
 +2f'y_t\partial_sH_{ij}+2f'y_s\partial_tH_{ij}
 +f\partial_{st}H_{ij}.
\end{align*}
\end{lemma}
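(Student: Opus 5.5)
The plan is to reduce every identity in Lemma~\ref{lem:app-angular} to the standard Gaussian (or spherical) moment formulas for monomials in $y$, combined with the algebraic symmetries of $W$. On the unit sphere,
\[
 \int_{\partial B_1}y_py_q=\frac{\omega_{n-1}}{n}\delta_{pq},
 \qquad
 \int_{\partial B_1}y_py_qy_ry_s=\frac{\omega_{n-1}}{n(n+2)}
 (\delta_{pq}\delta_{rs}+\delta_{pr}\delta_{qs}+\delta_{ps}\delta_{qr}),
\]
and the sixth moment carries the analogous factor $\omega_{n-1}/(n(n+2)(n+4))$ times the sum over the fifteen pairings.

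First, the $\partial H$ identities need only the second moment. Since $\partial_kH_{ij}=(W_{ikjq}+W_{iqjk})y_q$, the sum $\sum(\partial_kH_{ij})^2$ is a quadratic form in $y$. Integrating it gives $\frac{\omega_{n-1}}n\sum_{i,j,k,q}(W_{ikjq}+W_{iqjk})^2$. After relabeling this is $\frac{\omega_{n-1}}n|W|^2$, with $|W|^2$ as defined in \eqref{eq:W-center-matrix-step22}. The same computation with $k$ replaced by the free indices $i,j$ produces $\mathscr W_{ij}$. Here I would use the pair symmetry $W_{abcd}=W_{cdab}$ and antisymmetry to match the index pattern of $\mathscr W$.

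Second, the $H^2$ identities need the fourth moment. Of the three pairings in $\sum_{ij}W_{ipjq}W_{irjs}y_py_qy_ry_s$, the $\delta_{pq}$ pairing gives a Ricci-type trace, which vanishes because $W$ is trace-free. The other two pairings give $\frac12|W|^2$ after symmetrization, which explains the factor $1/(2n(n+2))$. The same argument with one free index yields $\mathscr W_{ij}$. For the mixed term $\sum H_{ab}\partial_jH_{ab}y_i$, I would use Euler homogeneity, $\sum_j y_j\partial_j H=2H$, together with the fourth moment. Equivalently, the $y_i$ weight can be moved onto a derivative; this produces the factor $1/(n(n+2))$, which is twice the $H^2$ constant.

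Third, the weighted identities $\int H^2y_iy_j$ and $\int(\partial H)^2y_iy_j$ require the sixth and fourth moments respectively. I would first decompose each result into a multiple of $\mathscr W_{ij}$ plus a multiple of $|W|^2\delta_{ij}$; this decomposition is forced by $O(n)$-covariance. The $\delta_{ij}$ coefficient is then fixed by taking the trace $i=j$, which must reproduce the unweighted integrals already computed. The $\mathscr W$ coefficient is fixed by counting the pairings that tie $i$ and $j$ to the $W$ indices, with all pairings involving a trace of $W$ discarded.

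Finally, the two derivative formulas for $f(|y|^2)H_{ij}$ follow directly from the product and chain rules, using $\partial_s|y|^2=2y_s$.

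I expect the main obstacle to be the bookkeeping in the sixth-moment contraction. In particular, one must check that the surviving pairings combine exactly into the particular symmetrized tensor $\mathscr W$ (with its $W_{ikpq}+W_{kqip}$ pattern), and this requires repeated use of the first Bianchi identity. A useful consistency check is to test the result on the self-dual four-dimensional block, where \eqref{eq:W-normalization-step22} makes $\mathscr W$ explicit.
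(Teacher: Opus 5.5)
Your proposal is correct. The paper states this lemma without proof, and the direct spherical-moment computation you describe is the natural, evidently intended verification; every displayed constant checks out (for instance, in the sixth-moment term the eight surviving cross pairings sum to $2\mathscr W_{ij}$ and the two pairings $(ij)$ sum to $\tfrac12|W|^2\delta_{ij}$). One small correction: the first Bianchi identity is not actually needed. Trace-freeness, the two antisymmetries, pair symmetry and relabeling of dummy indices already match every surviving pairing with the paper's definitions of $|W|^2$ and $\mathscr W_{ij}$.
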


For the rescaled tensor on the scale axis one has
\[
 \widehat h_{0,\sqrt t}=tF_0H,
 \qquad E\widehat h_{0,\sqrt t}=2t(F_0+tr^2F_1)H,
\]
\[
 \Delta\widehat h_{0,\sqrt t}
 =t^2\bigl(2(n+4)F_1+4tr^2F_2\bigr)H,
\]
and
\[
 |D\widehat h_{0,\sqrt t}|^2
 =t^2F_0^2|DH|^2+8t^3F_0F_1|H|^2
 +4t^4r^2F_1^2|H|^2.
\]
Substitution into \eqref{eq:app-scale-density-split} and radial integration
give the scale coefficient
\begin{align}
 P_f(t)={}&-\frac{2^{n-6}\omega_{n-1}}{n(n+2)}
 \Bigl[(n+2)t^2\mathcal I_{n-2,n+1}^{00}
 +4t^2\mathcal I_{n-2,n+3}^{00}
 +4t^3\mathcal I_{n-2,n+3}^{01}\nonumber\\
 &\qquad+2t^4\mathcal I_{n-2,n+5}^{11}
 +8t^3\mathcal I_{n-2,n+5}^{01}
 +4t^4\mathcal I_{n-2,n+7}^{11}\Bigr]\nonumber\\
 &+\frac{2^{n-4}\omega_{n-1}}{n(n+2)(n-2)}
 \Bigl[(n+4)t^3\mathcal I_{n-3,n+3}^{01}
 +(n+4)t^4\mathcal I_{n-3,n+5}^{11}\nonumber\\
 &\qquad+2t^4\mathcal I_{n-3,n+5}^{02}
 +2t^5\mathcal I_{n-3,n+7}^{12}\Bigr]\nonumber\\
 &-\frac{2^{n-6}\omega_{n-1}}{n(n+2)(n-2)^2}
 \Bigl[(n+4)^2t^4\mathcal I_{n-4,n+3}^{11}
 +4(n+4)t^5\mathcal I_{n-4,n+5}^{12}
 +4t^6\mathcal I_{n-4,n+7}^{22}\Bigr].
 \label{eq:app-P-formula}
\end{align}
Here and below all $\mathcal I$'s are evaluated at $t$.

For the center directions put
\begin{align*}
 h^0_{ab}&=tF_0H_{ab},\\
 h^p_{ab}&=\sqrt t\bigl(2ty_pF_1H_{ab}+F_0\partial_pH_{ab}\bigr),\\
 h^{pq}_{ab}&=4t^2y_py_qF_2H_{ab}+2t\delta_{pq}F_1H_{ab}
 +2tF_1(y_p\partial_qH_{ab}+y_q\partial_pH_{ab})
 +tF_0\partial_{pq}H_{ab}.
\end{align*}
The center Hessian is
\begin{equation}\label{eq:app-center-Hessian}
 \mathscr H_{pq}(t)
 :=\int_{\mathbb R^n}
 \left[2B_U(h^p,h^q)+B_U(h^0,h^{pq})+B_U(h^{pq},h^0)\right]dy
 =|W|^2Q_f(t)\delta_{pq}+R_f(t)\mathscr W_{pq}.
\end{equation}
The angular contractions of Lemma~\ref{lem:app-angular} yield
\begin{equation}\label{eq:app-Q-split}
 Q_f(t)=\omega_{n-1}\bigl(Q_f^{[-4]}(t)
 +Q_f^{[-3]}(t)+Q_f^{[-2]}(t)\bigr),
\end{equation}
where
\begingroup\small
\begin{align}
 Q_f^{[-4]}={}&-\frac{2^{n-4}}{n(n-2)^2(n+2)(n+4)}
 \Bigl[(n+4)^2(n+6)t^4\mathcal I_{n-4,n+3}^{12}
 +4(n+4)(n+6)t^5\mathcal I_{n-4,n+5}^{13}\nonumber\\
 &\quad+4(n+5)(n+6)t^5\mathcal I_{n-4,n+5}^{22}
 +4(n+4)t^6\mathcal I_{n-4,n+7}^{14}
 +16(n+6)t^6\mathcal I_{n-4,n+7}^{23}\nonumber\\
 &\quad+8t^7\mathcal I_{n-4,n+9}^{24}
 +8t^7\mathcal I_{n-4,n+9}^{33}\Bigr],
 \label{eq:app-Q-minus4}\\
 Q_f^{[-3]}={}&\frac{2^{n-3}}{n(n-2)(n+2)(n+4)}
 \Bigl[(n+4)(n+6)t^3\mathcal I_{n-3,n+3}^{02}
 +(n+4)^2t^3\mathcal I_{n-3,n+3}^{11}\nonumber\\
 &\quad+4(n+6)t^4\mathcal I_{n-3,n+5}^{03}
 +2(n+5)(n+10)t^4\mathcal I_{n-3,n+5}^{12}
 +4t^5\mathcal I_{n-3,n+7}^{04}\nonumber\\
 &\quad+2(3n+22)t^5\mathcal I_{n-3,n+7}^{13}
 +2(3n+20)t^5\mathcal I_{n-3,n+7}^{22}
 +4t^6\mathcal I_{n-3,n+9}^{14}
 +12t^6\mathcal I_{n-3,n+9}^{23}\Bigr],
 \label{eq:app-Q-minus3}\\
 Q_f^{[-2]}={}&-\frac{2^{n-4}}{n(n+2)(n+4)}
 \Bigl[(n+2)(n+4)t^2\mathcal I_{n-2,n+1}^{01}
 +4(n+4)t^2\mathcal I_{n-2,n+3}^{01}\nonumber\\
 &\quad+4(n+4)t^3\mathcal I_{n-2,n+3}^{02}
 +5(n+4)t^3\mathcal I_{n-2,n+3}^{11}
 +4(n+8)t^3\mathcal I_{n-2,n+5}^{02}\nonumber\\
 &\quad+4t^4\mathcal I_{n-2,n+5}^{03}
 +2(2n+17)t^3\mathcal I_{n-2,n+5}^{11}
 +2(n+14)t^4\mathcal I_{n-2,n+5}^{12}\nonumber\\
 &\quad+8t^4\mathcal I_{n-2,n+7}^{03}
 +4(n+14)t^4\mathcal I_{n-2,n+7}^{12}
 +4t^5\mathcal I_{n-2,n+7}^{13}
 +4t^5\mathcal I_{n-2,n+7}^{22}\nonumber\\
 &\quad+8t^5\mathcal I_{n-2,n+9}^{13}\nonumber\\
 &\quad+8t^5\mathcal I_{n-2,n+9}^{22}\Bigr].
 \label{eq:app-Q-minus2}
\end{align}
\endgroup

Similarly,
\begin{equation}\label{eq:app-R-split}
 R_f(t)=\omega_{n-1}\bigl(R_f^{[-4]}(t)
 +R_f^{[-3]}(t)+R_f^{[-2]}(t)+R_f^{\rm out}(t)\bigr),
\end{equation}
where
\begingroup\small
\begin{align}
 R_f^{[-4]}={}&-\frac{2^{n-4}}{n(n-2)^2(n+2)(n+4)}
 \Bigl[(n+2)(n+4)^3t^3\mathcal I_{n-4,n+1}^{11}
 +4(n+4)^2(3n+14)t^4\mathcal I_{n-4,n+3}^{12}\nonumber\\
 &\quad+8(n+4)(3n+16)t^5\mathcal I_{n-4,n+5}^{13}
 +4(7n^2+70n+176)t^5\mathcal I_{n-4,n+5}^{22}\nonumber\\
 &\quad+16(n+4)t^6\mathcal I_{n-4,n+7}^{14}
 +16(5n+28)t^6\mathcal I_{n-4,n+7}^{23}
 +32t^7\mathcal I_{n-4,n+9}^{24}
 +32t^7\mathcal I_{n-4,n+9}^{33}\Bigr],
 \label{eq:app-R-minus4}\\
 R_f^{[-3]}={}&\frac{2^{n-2}}{n(n-2)(n+2)(n+4)}
 \Bigl[2(n+4)(n+6)t^3\mathcal I_{n-3,n+3}^{02}
 +(n+4)^2(n+6)t^3\mathcal I_{n-3,n+3}^{11}\nonumber\\
 &\quad+8(n+6)t^4\mathcal I_{n-3,n+5}^{03}
 +2(n+6)(5n+26)t^4\mathcal I_{n-3,n+5}^{12}
 +8t^5\mathcal I_{n-3,n+7}^{04}\nonumber\\
 &\quad+8(2n+13)t^5\mathcal I_{n-3,n+7}^{13}
 +16(n+6)t^5\mathcal I_{n-3,n+7}^{22}
 +8t^6\mathcal I_{n-3,n+9}^{14}
 +24t^6\mathcal I_{n-3,n+9}^{23}\Bigr],
 \label{eq:app-R-minus3}\\
 R_f^{[-2]}={}&-\frac{2^{n-5}}{n(n+2)(n+4)}
 \Bigl[n(n+2)(n+4)t\mathcal I_{n-2,n-1}^{00}
 +16(n+2)(n+4)t^2\mathcal I_{n-2,n+1}^{01}\nonumber\\
 &\quad+32(n+4)t^2\mathcal I_{n-2,n+3}^{01}
 +40(n+4)t^3\mathcal I_{n-2,n+3}^{02}
 +4(n+16)(n+4)t^3\mathcal I_{n-2,n+3}^{11}\nonumber\\
 &\quad+32(n+8)t^3\mathcal I_{n-2,n+5}^{02}
 +32t^4\mathcal I_{n-2,n+5}^{03}
 +8(n^2+15n+62)t^3\mathcal I_{n-2,n+5}^{11}\nonumber\\
 &\quad+32(n+9)t^4\mathcal I_{n-2,n+5}^{12}
 +64t^4\mathcal I_{n-2,n+7}^{03}
 +64(n+9)t^4\mathcal I_{n-2,n+7}^{12}\nonumber\\
 &\quad+32t^5\mathcal I_{n-2,n+7}^{13}
 +32t^5\mathcal I_{n-2,n+7}^{22}\nonumber\\
 &\quad+64t^5\mathcal I_{n-2,n+9}^{13}
 +64t^5\mathcal I_{n-2,n+9}^{22}\Bigr],
 \label{eq:app-R-minus2}\\
 R_f^{\rm out}={}&\frac{2^{n-3}(n-1)}{n(n+2)}
 \Bigl[(n+2)t\mathcal I_{n-1,n+1}^{00}
 +4t^2\mathcal I_{n-1,n+3}^{01}
 -nt\mathcal I_{n,n+3}^{00}\Bigr].
 \label{eq:app-R-out}
\end{align}
\endgroup
The coefficient $24$ in the last term of
\eqref{eq:app-R-minus3} is essential.

The limiting projected correction makes no contribution
to the displayed scale value or center Hessian.  Indeed, its source is
\begin{equation}\label{eq:app-correction-source}
 \mathcal R_{\xi',\sqrt t}(y)
 =n(n-1)\frac{U_{\sqrt t,\xi'}(y)^{4(n-2)/(n-4)}}
 {(t+|y-\xi'|^2)^2}
 \widehat h_{ij}(y)\xi_i'\xi_j'.
\end{equation}
Thus $\mathcal R_{0,\sqrt t}=0$ and
$\partial_{\xi_p'}\mathcal R_{\xi',\sqrt t}|_{\xi'=0}=0$; the response
term is $O(|\xi'|^4)$ and hence
\begin{equation}\label{eq:app-correction-zero}
 Q_f^{\rm corr}(t)=R_f^{\rm corr}(t)=0.
\end{equation}

Finally, if $f(s)=\sum_{q=0}^{m_f}a_qs^q$, every integral in
\eqref{eq:app-P-formula}--\eqref{eq:app-R-out} is an explicit beta
integral:
\begin{align}
 \mathcal I_{\ell,m}^{\alpha\gamma}(t)
 ={}&\frac12\sum_{q=\alpha}^{m_f}
 \sum_{r=\gamma}^{m_f}a_qa_r
 \frac{q!}{(q-\alpha)!}\frac{r!}{(r-\gamma)!}
 t^{q+r-\alpha-\gamma}\nonumber\\
 &\quad\times
 B\left(\frac{m+1}{2}+q+r-\alpha-\gamma,
 \ell-\frac{m+1}{2}-q-r+\alpha+\gamma\right),
 \label{eq:app-beta-rule}
\end{align}
whenever the second beta argument is positive; terms whose derivative order
exceeds the degree of $f$ are absent.  Substituting
\eqref{eq:app-beta-rule} into
\eqref{eq:app-P-formula}--\eqref{eq:app-R-out} gives the exact finite
expansions used in Lemma~\ref{lem:step22}.


\begin{thebibliography}{99}

\bibitem{AmmannHumbertMorel}
B. Ammann, E. Humbert, and B. Morel,
\emph{Mass endomorphism and spinorial Yamabe type problems on conformally
flat manifolds}, Comm. Anal. Geom. \textbf{14} (2006), 163--182.

\bibitem{Brendle}
S. Brendle, \emph{Blow-up phenomena for the Yamabe equation},
J. Amer. Math. Soc. \textbf{21} (2008), 951--979.

\bibitem{BrendleMarques}
S. Brendle and F. C. Marques,
\emph{Blow-up phenomena for the Yamabe equation II},
J. Differential Geom. \textbf{81} (2009), 225--250.

\bibitem{CatinoMazzieri}
G. Catino and L. Mazzieri,
\emph{Connected sum construction for $\sigma_k$-Yamabe metrics},
J. Geom. Anal. \textbf{23} (2013), no.~2, 812--854.

\bibitem{CGY}
S.-Y. A. Chang, M. J. Gursky, and P. C. Yang,
\emph{An equation of Monge--Amp\`ere type in conformal geometry, and
four-manifolds of positive Ricci curvature},
Ann. of Math. \textbf{155} (2002), 709--787.

\bibitem{DruetMulti}
O. Druet, \emph{From one bubble to several bubbles: the low-dimensional
case}, J. Differential Geom. \textbf{63} (2003), 399--473.

\bibitem{DruetCompact}
O. Druet, \emph{Compactness for Yamabe metrics in low dimensions},
Int. Math. Res. Not. (2004), 1143--1191.

\bibitem{GeWang}
Y. Ge and G. Wang, \emph{On a fully nonlinear Yamabe problem},
Ann. Sci. \'{E}c. Norm. Sup\'er. \textbf{39} (2006), 569--598.

\bibitem{GuanWang}
P. Guan and G. Wang,
\emph{A fully nonlinear conformal flow on locally conformally flat
manifolds}, J. Reine Angew. Math. \textbf{557} (2003), 219--238.


\bibitem{GurskyViaclovskyVolume}
M. J. Gursky and J. A. Viaclovsky,
\emph{Volume comparison and the $\sigma_k$-Yamabe problem},
Adv. Math. \textbf{187} (2004), no.~2, 447--487.


\bibitem{GurskyViaclovsky}
M. J. Gursky and J. A. Viaclovsky,
\emph{Prescribing symmetric functions of the eigenvalues of the Ricci
tensor}, Ann. of Math. \textbf{166} (2007), 475--531.

\bibitem{JerisonLee}
D. Jerison and J. M. Lee, \emph{The Yamabe problem on CR manifolds},
J. Differential Geom. \textbf{25} (1987), 167--197.

\bibitem{KMS}
M. A. Khuri, F. C. Marques, and R. M. Schoen,
\emph{A compactness theorem for the Yamabe problem},
J. Differential Geom. \textbf{81} (2009), 143--196.

\bibitem{KMW}
S. Kim, M. Musso, and J. Wei,
\emph{A non-compactness result on the fractional Yamabe problem in large
dimensions}, J. Funct. Anal. \textbf{273} (2017), 3759--3830.

\bibitem{LiLi1}
A. Li and Y. Y. Li,
\emph{On some conformally invariant fully nonlinear equations},
Comm. Pure Appl. Math. \textbf{56} (2003), 1416--1464.

\bibitem{LiLi2}
A. Li and Y. Y. Li,
\emph{On some conformally invariant fully nonlinear equations, II:
Liouville, Harnack and Yamabe}, Acta Math. \textbf{195} (2005), 117--154.


\bibitem{LiNguyenCompactness}
Y. Y. Li and L. Nguyen,
\emph{A compactness theorem for a fully nonlinear Yamabe problem under a lower Ricci curvature bound},
J. Funct. Anal. \textbf{266} (2014), no.~6, 3741--3771.

\bibitem{LiNguyenWangNirenberg}
Y. Y. Li, L. Nguyen, and B. Wang,
\emph{On the $\sigma_k$-Nirenberg problem},
Amer. J. Math. \textbf{146} (2024), no.~1, 241--276.


\bibitem{LiXiong}
Y. Y. Li and J. Xiong,
\emph{Compactness of conformal metrics with constant $Q$-curvature. I},
Adv. Math. \textbf{345} (2019), 116--160.

\bibitem{LiZhang1}
Y. Y. Li and L. Zhang,
\emph{A Harnack type inequality for the Yamabe equation in low dimensions},
Calc. Var. Partial Differential Equations \textbf{20} (2004), 133--151.

\bibitem{LiZhang2}
Y. Y. Li and L. Zhang,
\emph{Compactness of solutions to the Yamabe problem. II},
Calc. Var. Partial Differential Equations \textbf{24} (2005), 185--237.

\bibitem{LiZhang3}
Y. Y. Li and L. Zhang,
\emph{Compactness of solutions to the Yamabe problem. III},
J. Funct. Anal. \textbf{245} (2007), 438--474.

\bibitem{LiZhu}
Y. Y. Li and M. Zhu,
\emph{Yamabe type equations on three-dimensional Riemannian manifolds},
Comm. Contemp. Math. \textbf{1} (1999), 1--50.

\bibitem{Marques}
F. C. Marques,
\emph{A priori estimates for the Yamabe problem in the non-locally
conformally flat case}, J. Differential Geom. \textbf{71} (2005), 315--346.

\bibitem{STW}
W. Sheng, N. S. Trudinger, and X.-J. Wang,
\emph{The Yamabe problem for higher order curvatures},
J. Differential Geom. \textbf{77} (2007), 515--553.

\bibitem{Viaclovsky}
J. A. Viaclovsky,
\emph{Conformal geometry, contact geometry, and the calculus of variations},
Duke Math. J. \textbf{101} (2000), 283--316.

\bibitem{WeiZhao}
J. Wei and C. Zhao,
\emph{Non-compactness of the prescribed $Q$-curvature problem in large
dimensions}, Calc. Var. Partial Differential Equations \textbf{46} (2013),
123--164.

\end{thebibliography}
\end{document}